\numberwithin{equation}{section}
\numberwithin{figure}{section}
\newtheorem {theo} {Theorem} [section]
\newtheorem {prop} [theo] {Proposition}
\newtheorem {cory} [theo] {Corollary}
\newtheorem {lem} [theo] {Lemma}
\newtheorem {defi} [theo] {Definition}
\newtheorem*{thmA}{Theorem~A}
\newtheorem*{thmB}{Theorem~B}
\theoremstyle{definition}
\newtheorem{example}{Example}
\newtheorem {obs} [theo] {Remark}
\def\sideremark#1{\ifvmode\leavevmode\fi\vadjust{\vbox to0pt{\vss % the remark
    \hbox to 0pt{\hskip\hsize\hskip1em           %                will appear only
 \vbox{\hsize2cm\tiny\raggedright\pretolerance10000%                on the side
 \noindent #1\hfill}\hss}\vbox to8pt{\vfil}\vss}}}%
\subjclass[2010]{37C15, 37F75}
\keywords{Almost regular germ, analytic invariants, Ecalle-Voronin moduli, Gevrey summability, transseries} 
\begin{document}

\title{Analytic moduli for parabolic Dulac germs}
\author{P. Marde\v si\'c$^{1}$, M. Resman$^{2}$}

\begin{abstract} In this paper we give moduli of analytic classification for parabolic \emph{Dulac} i.e. \emph{almost regular germs}. Dulac germs appear as first return maps of hyperbolic polycycles. Their moduli are given by a sequence of \emph{Ecalle-Voronin-like} germs of analytic diffeomorphisms. We state the result in a bigger class of \emph{parabolic generalized Dulac germs} having power-logarithmic asymptotic expansions.
\end{abstract}

\maketitle

\noindent \emph{Acknowledgement}. This research was supported by the Croatian Unity Through Knowledge Fund (UKF) \emph{My first collaboration grant} project no. 16, by the Croatian Science Foundation (HRZZ) grants UIP-2017-05-1020 and PZS-2019-02-3055 from \emph{Research Cooperability} funded by the European Social Fund, and by the EIPHI Graduate School (contract ANR-17-EURE-0002). The UKF grant fully supported the $6$-month stay of $^2$ at University of Burgundy in 2018.

\section{Introduction}\label{sec:zero} First return maps of planar polynomial vector fields are classically used for studying limit cycles, i.e. isolated periodic orbits. They are considered when dealing with the possibility of accumulation of limit cycles (the Dulac problem) or of their creation by deformation (the cyclicity problem related to the $16^{th}$ Hilbert problem). Most interesting is the study of first return maps in a neighborhood of polycycles which can be hyperbolic or non-hyperbolic. The non-accumulation problem posed by
Dulac \cite{Dulac} was solved independently by Ecalle \cite{ecalle} and Ilyashenko \cite{ilyalim}.

We call \emph{Dulac germs} the type of germs that appear as first return maps of hyperbolic polycycles.  In this case, Ilyashenko's approach \cite{ilhyp} consists in extending a Dulac germ (he calls them \emph{almost regular} germs) from the positive real line to a sufficiently big domain in the complex plane, the so-called standard quadratic domain, which is a biholomorphic image of $\mathbb C_+$. This extension ensured the injectivity of the mapping that assigns to a Dulac germ its power-logarithmic asymptotic expansion, called the Dulac expansion. 

However, for non-hyperbolic polycycles, the proof is much more involved \cite{ecalle} and \cite{ilyalim}. The first return maps then expand asymptotically as transseries that also include iterated logarithms and exponentials. Moreover, the domain of their complex extension is not as straightforward.

Here, we consider \emph{parabolic} (i.e. \emph{tangent to identity}) Dulac germs and give  complete invariants of their analytic  classification. 

\noindent Our construction resembles the construction of Voronin \cite{voronin} for analytic invariants of
parabolic germs analytic at 0, giving the so-called \emph{Ecalle-Voronin moduli}. Only here we get a doubly infinite sequence of pairs of germs giving the moduli. In our forthcoming
paper \cite{drugi}, we study the inverse realization problem for moduli. The problem is solved in a bigger class of \emph{generalized Dulac germs}. This is the reason that here we formulate our results on moduli of analytic classification in this bigger class.

The solutions  of Dulac's problem by Ilyashenko and Ecalle are still not very well understood. They show the necessity of a very precise knowledge of germs of first retun maps and, in particular, of the relationship between their assymptotic transserial expansion and the germ itself. 
In this work we develop techniques such as \emph{integral summability}, its iterations and \emph{$\log$-Gevrey asymptotic expansions}.  They are related to 
difference equations or differential equations that the objects in question satisfy. We hope that these techniques will be of use also in more general problems of associating germs to given transserial expansions, in particular in studying the first return map of non-hyperbolic polycycles.

On the other hand, note that, among Dulac germs, the parabolic germs 
studied in this paper are the most interesting from the point of view of cyclicity (Hilbert's $16^{th}$ problem). Even in the hyperbolic case, except for the result of Mourtada \cite{mou} in the generic (non-parabolic case), and some results for polycyles with a few vertices, very little is known about the cyclicity. In the generic case the study of the cyclicity of a hyperbolic polycyle relies on Mourtada's normal form \cite{mounor} which is a composition of an alternating sequence of powers and translations. The argument breaks down in the parabolic case and a more precise knowledge of the Dulac map is required. We hope that some combinatorial argument based on our moduli (and their unfolding) can give this extra information.

\medskip

\section{Main definitions}\label{sec:introduction}

%PUT HERE  PROOF THAT SERIES IN l FOR FATOU HAVE IN BOREL PLANE  ONLY FINITELY MANJY SINGS ON POSITIVE REAL AXIS AND THAT OUR SOLUTION IS IN FACT OBTAINED BY REAL RESUMMATION PROCEDURE. SHOW DETAILS

%SECTIONS RASPORED:

%1. The Fatou coordinate for an almost regular germ

%2. The formal classes of an almost regular germ
%--every almost regular germ can by parabolic change of variables from $\widehat{\mathcal L}_2$ be brought to its first term only. We do not need changes in $\widehat{\mathfrak L}$. The first term cannot be changed since the change is parabolic.

%--Classes wrt $\widehat{\mathcal L}_1$

%3. The changes of variables for an almost regular germs (with respect to normal forms in both $\widehat{\mathcal L}$ and $\widehat{\mathfrak{L}}$)

%4. Types of analytic equivalences of almost regular germs
%\medskip 

Let $\mathcal R_C$ be a subset of the Riemann surface of the logarithm, in the logarithmic chart $\zeta=-\log z$ given by: 
$$
\varphi\big(\mathbb C^+\setminus \overline {K(0,R)}\big),\ \varphi(\zeta)=\zeta+C(\zeta+1)^{\frac{1}{2}},\ C>0,\, R>0.
$$
Here, $\mathbb C^+=\{\zeta\in\mathbb C:\ \text{Re}(\zeta)>0\}$ and $\overline{K(0,R)}=\{\xi\in\mathbb C:|\xi|\leq R\}$. Such $\mathcal R_C$ (or its image in the logarithmic chart $\zeta=-\log z$) is called a \emph{standard quadratic domain} \cite{ilyalim}:

\begin{figure}[h!]
\includegraphics[scale=0.4]{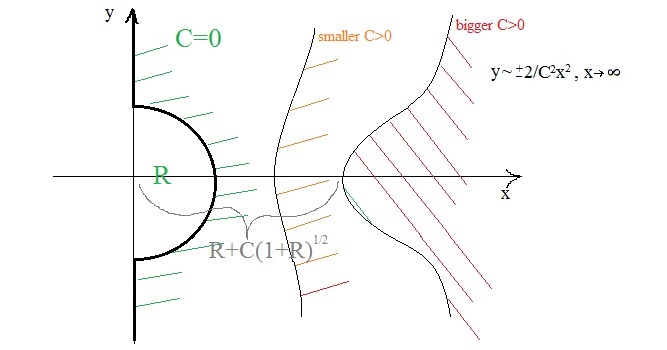}
\caption{The standard quadratic domain $\mathcal R_C$, $C>0$, in the logarithmic chart.}
\end{figure}

Note that, for $C=0$, we get the Riemann surface of the logarithm with bounded radii, while for $C>0$ we get its subdomain such that the radii become smaller on each level. Moreover, they tend to zero at an exponential rate. Indeed, if by $\varphi\in[(k-1)\pi,(k+1)\pi)$ we denote the $k$-th level of the surface $\mathcal R_C$, $k\in\mathbb Z,$ and by $\varphi_k=k\pi$, $k\in\mathbb Z,$ then the radii $r(\varphi_k)$ by levels $k\in\mathbb Z$ decrease at most at the rate:
\begin{equation}\label{eq:sie}
Ce^{-D\sqrt{\frac{|k|\pi}{2}}},\ |k|\to\infty,\ \text{ for some }C>0,\,D>0.
\end{equation}

By a \emph{germ on a standard quadratic domain} \cite{ilyalim}, we mean an equivalence class of functions that coincide on some standard quadratic domain (for arbitrarily big $R>0$ and $C>0$).

\begin{defi}[adapted from \cite{ilyalim}, \cite{roussarie}] We say that a germ $f$ is a \emph{Dulac germ} if it is
\begin{enumerate}

\item \emph{holomorphic} and bounded on some standard quadratic domain $\mathcal R_C$ and real on $\{z\in\mathcal R_C:\ \mathrm{Arg} (z)=0\}$,

\item admits in $\mathcal R_C$ a \emph{Dulac asymptotic expansion}\footnote{\emph{uniformly} on $\mathcal R_C$, see \cite[Section 24E]{ilya}: for every $\lambda>0$ there exists $n\in\mathbb N$ such that
$$
\big|f(z)-\sum_{i=1}^{n}z^{\lambda_i}P_i(-\log z)\big|=o(z^{\lambda}),
$$
uniformly on $\mathcal R_C$ as $|z|\to 0$. }
\begin{equation}\label{eq:dulac}
\widehat f(z)=\sum_{i=1}^{\infty}z^{\lambda_i} P_i(-\log z),\ c>0,\ z\to 0,
\end{equation}
where $\lambda_i\geq 1$, $i\in\mathbb N$, are strictly positive, finitely generated\footnote{\emph{Finitely generated} in the sense that there exists $n\in\mathbb N$ and $\alpha_1>0,\ldots,\alpha_n>0,$ such that each $\lambda_i$, $i\in\mathbb N$, is a finite linear combination of $\alpha_j$, $j=1,\ldots,n$, with coefficients from $\mathbb Z_{\geq 0}.$ For Dulac maps that are the first return maps of saddle polycycles, $\alpha_j$, $j=1,\ldots,n$, are related to the ratios of hyperbolicity of the saddles.} and strictly increasing to $+\infty$ and $P_i$ is a sequence of polynomials with real coefficients, and $P_1\equiv A$, $A>0$.

%\item Suppose $\widehat f(z)=z-az^{\lambda_1}\boldsymbol\ell^m+\ldots$, $a>0$, $m\in\mathbb N_-$. If there exist constants $C>0$ and $M\geq 0$ such that
%\begin{align*}
%\big|f(z)-(z-az^{\lambda_1}\boldsymbol\ell^m)\big|\leq C e^{M\sqrt{|k|}}&|z^{\lambda_1}\boldsymbol\ell^{m+1}|,\\
% &z\in\mathcal R_{C},\ \text{Arg}(z)\in \big((k-1)\pi,(k+1)\pi\big),\ k\in\mathbb Z.
%\end{align*}\edz{PRIPADNI ZAHTJEV KOJI TREBA I ZA DINAMIKU- DODATI I U GEN. DULAC!!!}
\end{enumerate}
A Dulac germ \emph{tangent to identity}, i.e. with $\lambda_1=1$ and $P_1\equiv 1$ in \eqref{eq:dulac}, is called a \emph{parabolic Dulac germ.}
\end{defi}
A series of the form \eqref{eq:dulac} is called a formal \emph{Dulac series}.  Note that all coefficients in the expansion are \emph{real}.

The choice of the class of \emph{Dulac} germs is motivated by the fact that the first return maps around hyperbolic saddle polycycles are analytic germs on open positive real line having a Dulac power-logarithm asymptotic expansions, as $x\to 0$, which extend holomorphically to a standard quadratic domain $\mathcal{R}_C$. Note that in Ilyashenko \cite{ilyalim} Dulac germs are called \emph{almost regular}. The existence of an extension to a sufficiently big complex domain was important in proving the fact that almost regular germs are uniquely determined by their Dulac asymptotic expansion, which was the key step in proving the non-accumulation theorem for limit cycles, see \cite{Dulac,ilyalim}. 

By \cite{ilyalim}, Dulac germs form a \emph{group} under the operation of composition. 
Here, we work with \emph{parabolic} Dulac germs, that is, with germs tangent to the identity: $f(z)=z+o(z)$. They form a subgroup under the operation of composition. We expect the \emph{hyperbolic} cases $f(z)=\lambda z+o(z)$, $|\lambda|\neq 1$, or $f(z)=Az^{\lambda_1}+o(z^{\lambda_1})$, $A\in\mathbb C$, $\lambda_1\neq 1$, to be analytically linearizable, as was the case of germs analytic at $0$, see e.g. \cite{gamelin}. Parabolic Dulac germs are thus the most interesting from the point of view of cyclicity, see a comment in the introduction of \cite{MRRZ2Fatou} for more details. 

\smallskip

In the forthcoming paper \cite{drugi}, we deal with the realization problem for moduli of analytic classification for parabolic Dulac germs. We solve there the realization problem in a bigger class of parabolic \emph{generalized Dulac germs}, which we introduce here. This class contains parabolic Dulac germs. The definition of this larger class is well-adapted to the solution of the realization problem considered in \cite{drugi}. 

For convenience, put:
$$
\boldsymbol\ell:=-\frac{1}{\log z}.
$$
Generalized Dulac expansions and germs are natural generalizations of Dulac expansions and germs: instead of polynomials in $\boldsymbol\ell^{-1}$ multiplying powers of $z$ in the Dulac expansion, in the generalized Dulac expansion we take \emph{series} in $\boldsymbol\ell$ that are \emph{canonically} summable on petals (in some generalized Gevrey sense) as analytic germs.
Here, in order to be able to uniquely define the asymptotic expansion of such a germ, the problem of canonical summation at limit ordinal steps occurs, since we do not request series in $\boldsymbol\ell$ multiplying each power of $z$ to be polynomial. This problem is solved by introducing the notion of \emph{$\log$-Gevrey summable series} in Section~\ref{sec:classes}. In Section~\ref{sec:proofB} we then construct the moduli of analytic classification for parabolic generalized Dulac germs.
\smallskip

Recall from \cite{mrrz2} that by $\widehat{\mathcal L}_1$ (or simply by $\widehat{\mathcal L}$) we denote the class of power-logarithmic transseries of the form:
$$
\widehat{f}(z)=\sum_{i=1}^{\infty}\sum_{m=N_i}^{\infty} a_{i,m} z^{\alpha_i} \boldsymbol\ell^{m},\ a_{i,m}\in\mathbb C,
$$
where $\alpha_i>0$ are \emph{finitely generated} and strictly increasing to $+\infty$, and $N_i\in\mathbb Z$, $i\in\mathbb N$.
By $\widehat{\mathcal L}_0$, we denote the subclass of $\widehat{\mathcal L}$ of power series, with strictly increasing, finitely generated powers. Furthermore, by $\widehat{\mathcal L}_k$, $k\in\mathbb N$, we denote the class of power-iterated logarithm transseries in variables $z,\,\boldsymbol\ell,\ \boldsymbol\ell_2=-\frac{1}{\log\boldsymbol\ell},\ldots,\boldsymbol\ell_k=-\frac{1}{\log \boldsymbol\ell_{k-1}}$, where powers of $z$ are strictly increasing and finitely generated, and powers of $\boldsymbol\ell_i$, $i=1,\ldots,k,$ belong to $\mathbb Z$. We put $\widehat{\mathfrak{L}}:=\cup_{i=0}^{\infty}\widehat{\mathcal L}_i$. If we allow the powers $\alpha_i$ of $z$ to start with negative powers, but stay strictly increasing, finitely generated and tending to $+\infty$, we denote the classes by $\widehat{\mathcal L}_k^\infty$, $k\in\mathbb N_0$, and $\widehat{\mathfrak L}^{\infty}$. The subset of all formal transseries  from $\widehat{\mathcal L}_k$ (or $\widehat{\mathcal L}_k^{\infty}$), $k\in\mathbb N_0$, whose \emph{leading term does not contain logarithm} is a group under composition. Note additionally that, if $\widehat f\in\widehat{\mathcal L}$ does not contain logarithm in the leading term, then $\boldsymbol\ell\mapsto -\frac{1}{\log \widehat f(e^{-1/\boldsymbol\ell})}$ is an exponential-power transseries in $\boldsymbol\ell$. By $\widehat{\mathcal L}_k^{inv}\subseteq \widehat{\mathcal L}_k$ we denote the subset of power-logarithm formal \emph{diffeomorphisms} (of the form $f(z)=cz+o(z),\ c\neq 0$), and by $\widehat{\mathcal L}_k^{\mathrm{id}}\subseteq \widehat{\mathcal L}_k$ we denote the subset of \emph{tangent to the identity} transseries (of the form $\widehat f(z)=z+\ldots$). Both are groups under composition. For more details, see \cite{mrrz2}. 

To express that the coefficients of transseries are \emph{real}, we will write simply $\widehat{\mathcal L}_k(\mathbb R),\ k\in\mathbb N_0$, and $\widehat{\mathfrak L}(\mathbb R)$. Parabolic Dulac and parabolic generalized Dulac series belong to $\widehat{\mathcal L}(\mathbb R)$. They both form groups under compositions, see Proposition~\ref{prop:grupa}.
\smallskip

In \cite[Theorem A]{mrrz2}, we have provided the formal classification theorem for transseries in $\widehat{\mathcal L}(\mathbb R)$. The results can be directly applied to formal classification of formal parabolic Dulac series and of formal parabolic generalized Dulac series. The formal classification is provided in the subgroup $\widehat{\mathcal L}^{inv}(\mathbb R)$ of $\widehat{\mathcal L}(\mathbb R)$ of power-logarithm formal diffeomorphisms (that is, of the form $f(z)=cz+o(z),\ c\neq 0$) with real coefficients and with finitely generated support. Let:
$$
f(z)=z-az^\alpha\boldsymbol \ell^{m}+o(z^\alpha\boldsymbol \ell^{m}),
$$
$\alpha>1$,\ $m\in\mathbb Z$, be a parabolic generalized Dulac series. Then, it is formally equivalent in $\widehat{\mathcal L}^{inv}(\mathbb R)$ to any of the normal forms:
\begin{align}
&f_F(z)=z- z^\alpha\boldsymbol\ell^{m}+\rho z^{2\alpha-1}\boldsymbol\ell^{2m+1},\nonumber\\
&f_1(z)=\text{Exp}(X_1).\mathrm{id},\ X_1(z)=\frac{-z^\alpha\boldsymbol\ell^{m}}{1+\frac{-\alpha}{2}z^{\alpha-1}\boldsymbol\ell^{m}+\big(\frac{m}{2}+\rho\big) z^{\alpha-1}\boldsymbol\ell^{m+1}}\frac{d}{dz}.\label{eq:norma}
\end{align}
The formal invariants are $(\alpha,m,\rho)$, $\alpha>1$, $m\in\mathbb Z$, $\rho\in\mathbb R$. Note that, if $f$ is normalized, that is, if $a=1$, then the reduction to the normal forms can be done in the smaller subgroup of $\widehat{\mathcal L}^{\mathrm{id}}(\mathbb R)$, of \emph{tangent to the identity} transseries. 

\noindent Let $f_c$, $c\in\mathbb R$, be the time-$c$ map of the vector field $X_1$:
$$f_c(z):=\mathrm{Exp}(cX_1).\mathrm{id}.$$ It is analytic on $\mathcal R_C$. Its (unique) power-logarithmic expansion will be denoted by $\widehat f_c(z)$.
\medskip

Note that the formal changes of variables needed for reduction to the formal normal form of a parabolic Dulac series exit the class of formal Dulac series and are transserial (belong to $\widehat{\mathcal L}(\mathbb R)$). Moreover, the blocks in $\boldsymbol\ell$ are in general not convergent, neither do they belong to the class of \emph{parabolic generalized Dulac series} that we introduce below in Definitionw~\ref{def:gD}. The class of parabolic generalized Dulac germs is here introduced as the class in which we obtain the \emph{full} realization result for analytic moduli in the forthcoming paper \cite{drugi} (realization of \emph{every} symmetric sequence of diffeomorphisms). For the subclass of parabolic Dulac germs, we expect a weaker realization result. The characterization of moduli that can be realized for true parabolic Dulac germs is left for future work. On the other hand, the formal changes of variables for parabolic Dulac and parabolic generalized Dulac series belong to a class of so-called \emph{block iterated integrally summable series}, introduced in Subsection~\ref{subsec:iis}. It is an even bigger subclass of $\widehat{\mathcal L}(\mathbb R)$, containing the parabolic generalized Dulac series. 

To be able to uniquely define generalized Dulac expansions in the definition of parabolic generalized Dulac germs below, we introduce in Section~\ref{sec:classes} the definition of \emph{$log$-Gevrey asymptotic expansions} on particular cusps that are $\boldsymbol\ell$-images of sectors, and state some of their properties. The definition is motivated by the notion of \emph{Gevrey asymptotic expansions of order $k$}, $k\in\mathbb N$, on sectors. For the precise definition of an asymptotic expansion on a sector and of Gevrey asymptotic expansion of some order on a sector, see e.g. \cite[Section 1]{Mloday} or \cite{bahlser}. The origin of the name \emph{$\log$-Gevrey} is related to the fact that $\log$-Gevrey asymptotic expansion on some domain is Gevrey of every order $k$, but not necessarily convergent at $0$, therefore being somewhere inbetween Gevrey-type and convergent.
\smallskip

We will prove in Theorem~A (1) in Section~\ref{sec:proofA} a more general fact that flower-like dynamics of a germ $f$ defined on a standard quadratic domain $\mathcal R_C$ follows merely from the assumptions that $f$ is an analytic germ on $\mathcal R_C$ which satisfies the uniform estimate for the leading terms:
\begin{align*}
\big|f(z)-(z-az^{\alpha}\boldsymbol\ell^m)\big|\leq  c |z^\alpha\boldsymbol\ell^{m+1}|,\ z\in\mathcal R_{C},\,\ a>0,\ c>0.
\end{align*} More precisely, we will prove in Section~\ref{sec:proofA} the following proposition:

\begin{prop}[Flower-like dynamics of a parabolic germ]\label{prop:leau} The invariant sets for the dynamics of a parabolic germ $f$ on $\mathcal R_C$ which satisfies: \begin{equation}\label{eq:q}|f(z)-z+az^\alpha\boldsymbol\ell^{m}|\leq c|z^\alpha\boldsymbol\ell^{m+1}|, \ c>0,\ z\in\mathcal R_C,\ \alpha>1,\ m\in\mathbb Z,\ a>0,\end{equation} are open \emph{attracting petals} $V_j^+$, $j\in\mathbb Z$, on $\mathcal R_C$ $($or possibly $\mathcal R_{C'}\subset\mathcal R_C$$)$, centered along the tangential directions $$1^{-\frac{1}{\alpha-1}},$$ and of opening $\frac{2\pi}{\alpha-1}$. Analogously, the invariant sets for the dynamics of the inverse germ  $f^{-1}(z)=z+az^\alpha\boldsymbol\ell^{m}+o(z^\alpha\boldsymbol\ell^{m})$ are open \emph{repelling} petals $V_j^-$, $j\in\mathbb Z$, on $\mathcal R_C$ $($or possibly $\mathcal R_{C'}\subset\mathcal R_C$$)$, centered along the tangential directions $$(-1)^{-\frac{1}{\alpha-1}},$$ and of opening $\frac{2\pi}{\alpha-1}$. %They are \emph{repelling petals} for the dynamics of $f$. 
At the intersections of attracting and repelling petals the orbits for the discrete dynamics are \emph{closed}.
\end{prop}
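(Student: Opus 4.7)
The plan is to adapt the classical Fatou-Leau flower theorem to the present setting, where $f$ is defined on the Riemann surface of the logarithm and its leading perturbation carries the logarithmic factor $\boldsymbol\ell^{m}$. The core idea is to rectify $f$, up to a small error, to the translation $w\mapsto w+1$ via a Fatou-type coordinate, so that the petals appear as preimages of large half-planes.

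Concretely, I would set
$$\Psi(z) := \frac{1}{a(\alpha-1)\,z^{\alpha-1}\boldsymbol\ell^{m}(z)}.$$
Using $\boldsymbol\ell'(z)=\boldsymbol\ell(z)^{2}/z$, one computes
$$\Psi'(z) = -\frac{1}{a\,z^{\alpha}\boldsymbol\ell^{m}(z)}\Bigl(1+\tfrac{m}{\alpha-1}\boldsymbol\ell(z)\Bigr),$$
and a first-order Taylor expansion at $z$, combined with the hypothesis \eqref{eq:q}, then yields
$$\Psi(f(z)) = \Psi(z) + 1 + O\bigl(\boldsymbol\ell(z)\bigr)$$
uniformly on a deep enough subdomain $\mathcal R_{C'}\subset\mathcal R_{C}$; the quadratic remainder $(f(z)-z)^{2}\Psi''(\xi)$ is of order $z^{\alpha-1}\boldsymbol\ell^{m}(z)$ and is thus absorbed into $O(\boldsymbol\ell)$. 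In the new coordinate $w=\Psi(z)$, $f$ becomes an $O(\boldsymbol\ell)$ perturbation of $w\mapsto w+1$.

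For each $j\in\Z$ let $\theta_{j}=2\pi j/(\alpha-1)$, so that the rays $\arg z=\theta_{j}$ enumerate the attracting directions on the Riemann surface: the condition $\arg(-az^{\alpha})=\arg(-z)$ that selects the tangential directions of the perturbation $-az^{\alpha}\boldsymbol\ell^{m}$ forces $(\alpha-1)\arg z\in 2\pi\Z$, i.e.\ $z$ points along an element of $1^{-1/(\alpha-1)}$. Inside $\mathcal R_{C'}$ choose, for each $j$, a sector $S_{j}$ centered on $\theta_{j}$ of angular opening just under $2\pi/(\alpha-1)$, on which $z^{\alpha-1}$ is single-valued and $\Psi$ is conformal, mapping $S_{j}$ onto a slit neighbourhood of $\infty$ in the $w$-plane. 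I would define $V_{j}^{+}$ as the \emph{wide} Fatou petal, obtained as the saturation inside $S_{j}$ of the narrow half-plane preimage $\Psi^{-1}(\{\operatorname{Re} w>R\})$ under those inverse iterates of $f$ that remain in $S_{j}$, for $R$ large enough. Forward invariance $f(V_{j}^{+})\subset V_{j}^{+}$ and $f^{n}(z)\to 0$ for every $z\in V_{j}^{+}$ follow from $\operatorname{Re}\Psi(f(z))\geq\operatorname{Re}\Psi(z)+\tfrac12$ on $\Psi^{-1}(\operatorname{Re} w>R)$, together with the fact that $|\Psi(z)|\to\infty$ is equivalent to $z\to 0$ inside $S_{j}$. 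The boundary of $V_{j}^{+}$ at $0$ accumulates tangentially along the two adjacent repelling directions $\theta_{j}\pm\pi/(\alpha-1)$, giving opening exactly $2\pi/(\alpha-1)$. Repelling petals $V_{j}^{-}$ come from the same construction applied to $f^{-1}$, whose leading part $+az^{\alpha}\boldsymbol\ell^{m}$ has attracting directions $(2j+1)\pi/(\alpha-1)$, i.e.\ along $(-1)^{-1/(\alpha-1)}$. For any $z\in V_{j}^{+}\cap V_{k}^{-}$, both $f^{n}(z)\to 0$ and $f^{-n}(z)\to 0$ as $n\to+\infty$, so the full discrete orbit $\{f^{n}(z):n\in\Z\}$ accumulates only at $0$ and is thereby closed in $\mathcal R_{C'}\cup\{0\}$.

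The main obstacle I anticipate is keeping the entire family of petals, indexed by $j\in\Z$, inside the standard quadratic domain. By \eqref{eq:sie} the admissible radii of $\mathcal R_{C}$ decay like $e^{-D\sqrt{|j|\pi/2}}$ in the level $j$, while the natural radial extent of a petal along $\theta_{j}$ depends polynomially on $|\log|z||$ through the factor $\boldsymbol\ell^{-m}$; one therefore has to pass to a sufficiently deep $\mathcal R_{C'}$ and verify, uniformly in $j$, that this logarithmic factor does not push the petal past the boundary of $\mathcal R_{C'}$. Once this radial-versus-logarithmic bookkeeping is handled, the remainder is a cosmetic adaptation of the classical Fatou-Leau argument.
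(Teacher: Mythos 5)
Your proposal is correct and follows essentially the same route as the paper: both rectify via the leading term $\frac{1}{a(\alpha-1)}z^{-(\alpha-1)}\boldsymbol\ell^{-m}$ of the Fatou coordinate, use the uniform estimate \eqref{eq:q} to show the conjugated map is an $O(\boldsymbol\ell)$-perturbation of the unit translation with a bound independent of the level of the Riemann surface, and then run the classical Leau--Fatou construction of petals as preimages of (saturated/increasing) half-plane domains, finally applying the same scheme to $f^{-1}$. The ``radial-versus-logarithmic bookkeeping'' you flag as a concern is exactly what the paper resolves by observing that the uniform bound gives an angle $\alpha_R$ independent of the level, so the petal radii shrink no faster than the radii of a (possibly smaller) standard quadratic domain.
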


This in particular applies to parabolic generalized Dulac germs defined below. Therefore, in Definition~\ref{def:gD} below we may assume in advance the existence of attracting and repelling petals $V_j^\pm$ for the dynamics of $f$, $j\in\mathbb Z$. The petals are described in detail in Theorem~A (1).

\begin{defi}[Parabolic generalized Dulac germs]\label{def:gD} We say that a parabolic germ $f$, analytic on a standard quadratic domain $\mathcal R_C$,  that maps $\{\mathrm{arg}(z)=0\}\cap\mathcal R_C$ to $\{\mathrm{arg}(z)=0\}\cap\mathcal R_C$, satisfying \begin{align}\label{eq:uniest}|f(z)-z+az^{\alpha}\boldsymbol\ell^m|\leq c|z^{\alpha}\boldsymbol\ell^{m+1}|,\ a\neq 0,\ \alpha>1,\ m\in\mathbb Z, \ c>0,\ z\in\mathcal R_C,\end{align}  is a \emph{parabolic generalized Dulac germ} if, on its every invariant petal $V_j^{\pm},\ j\in\mathbb Z$, of opening $\frac{2\pi}{\alpha-1} $, it admits an asymptotic expansion of the form:
\begin{equation}\label{eq:gDasy}
f(z)=z+\sum_{i=1}^{n} z^{\alpha_i} R_i^{j,\pm}(\boldsymbol\ell)+o(z^{\alpha_n+\delta_n}), \ \delta_n>0,
\end{equation}
for every $n\in\mathbb N$, as $z\to 0$ on $V_j^{\pm}$. Here, $\alpha_1=\alpha$, $\alpha_i>1$ are strictly increasing to $+\infty$ and finitely generated, and $R_i^{j,\pm}(\boldsymbol\ell)$ are analytic functions on open cusps $\boldsymbol\ell(V_j^{\pm})$ which admit common \emph{$\log$-Gevrey asymptotic expansions}\footnote{Adaptation of \emph{Gevrey asymptotic expansions} of some order, see Section~\ref{sec:classes} for precise definitions.} $\hat R_i(\boldsymbol\ell)$ of order strictly bigger than $\frac{\alpha-1}{2}$, as $\boldsymbol\ell\to 0$:
$$
\hat R_i(\boldsymbol\ell)=\sum_{k=N_i}^{\infty}a_k^i \boldsymbol\ell^k,\ a_k^i\in\mathbb R,\ N_i\in\mathbb Z.
$$
\smallskip

We then say that the series $\widehat f\in\widehat{\mathcal L}$ given by:
$$
\widehat f(z):=z+\sum_{i=1}^{\infty} z^{\alpha_i} \widehat R_i(\boldsymbol\ell)
$$
is the generalized Dulac asymptotic expansion of $f$. Such $\widehat f$ is called a parabolic \emph{generalized Dulac series}.
\end{defi}

We prove in Proposition~\ref{prop:grupa} in Section~\ref{sec:proofA} that parabolic generalized Dulac germs form a group under composition.

\smallskip

Note that the assumption \eqref{eq:uniest} is automatically satisfied for parabolic Dulac germs (see Proposition~\ref{prop:unidulac} in Section~\ref{sec:proofA}), since Dulac asymptotic expansions are uniform on whole $\mathcal R_C$. For parabolic generalized Dulac germs, on the other hand, we request asymptotic expansions \eqref{eq:gDasy} only by petals, and the bounds are not necessarily uniform by petals. The additional uniform request \eqref{eq:uniest} is therefore important in their definition to ensure that the rate of decrease of radii of petals for the dynamics of a parabolic generalized Dulac germ by levels of the Riemann surface of the logarithm is not quicker than dictated by a standard quadratic domain, see Section~\ref{sec:proofA}. 
\smallskip

Note that a generalized Dulac asymptotic expansion is an asymptotic expansion in the class of transseries $\widehat {\mathcal L}(\mathbb R)$. As discussed in \cite{MRRZ2Fatou}, an asymptotic expansion of a germ in $\widehat{\mathcal L}(\mathbb R)$ is in general not unique and well-defined. The generalized Dulac expansion is a sectional asymptotic expansion that becomes unique after canonical choice of section functions (\emph{summation rule}) at limit ordinal steps. See \cite{MRRZ2Fatou} for precise definition of sectional asymptotic expansions on $\mathbb R$ and, for generalization to complex sectors, see Definition~\ref{def:assy} in the Appendix. The existence of a canonical choice of section functions is here guaranteed by the request that $R_i^{j,\pm}(\boldsymbol\ell)$ admit \emph{$\log$-Gevrey power asymptotic expansions of some order} on cusps which are $\boldsymbol\ell$-images of sectors of sufficiently big opening, called the \emph{$\boldsymbol\ell$-cusps}. Then, by Corollary~\ref{cory:vary} (variation of \emph{Watson's lemma}),  their \emph{$\log$-Gevrey sum} on each petal is \emph{unique}. This $\log$-Gevrey condition is a stronger condition than \emph{Gevrey summability of order $m$} \cite{Mloday}, for any $m>0$, see Section~\ref{sec:classes}. It ensures unique summability on $\boldsymbol\ell$-cusps, that do not contain sectors of any positive opening. The request of $\log$-Gevrey summability for parabolic generalized Dulac germs generalizes the polynomiality request for Dulac germs.

Parabolic Dulac germs are trivially parabolic generalized Dulac germs. In that case we have a canonical choice of polynomial sections. Polynomial functions in $\boldsymbol\ell^{-1}$ are convergent Laurent series in $\boldsymbol\ell$. %In particular, they expand $\log$-Gevrey of every order in themselves. %We will prove below (see Remark~\ref{rem:gd}) that the class of almost regular germs is not closed to finite changes of variables used for reduction to their formal normal forms, in particular, to prenormalizations. On the other hand, the generalized Dulac class is closed to such changes.
\smallskip

%Note that  in Definition~\ref{def:gD} $R_i^{j,\pm}(\boldsymbol\ell)$ may in particular be integrally summable of depth $0$ and even $\log$-Gevrey themselves (see Section~\ref{sec:classes}), in which case $\widehat R_i$ are just their $\log$-Gevrey asymptotic expansions. 
In the forthcoming paper \cite{drugi} about the converse problem of realization of a sequence of diffeomorphisms as analytic moduli of power-logarithmic germs, as realizing germs we will obtain parabolic generalized Dulac germs.
% that expand in a particular form:
%$$
%f(z)=z-z^2\boldsymbol\ell^m+\sum_{k=3}^{\infty}z^k R_k(\boldsymbol\ell),
%$$
%where the asymptotic expansion of $R_k$ is $\log$-Gevrey of order $1$.

\medskip

We additionally suppose in the sequel that $f(z)=z-az^\alpha\boldsymbol\ell^m+o(z^\alpha\boldsymbol\ell^m)$, where $a>0$, $\alpha>1$, $m\in\mathbb Z$, so that $\{\mathrm{arg}(z)=0\}\cap \mathcal R_C$ is an invariant attracting direction. In the case $a<0$, we switch to the inverse germ $f^{-1}$, which is by Proposition~\ref{prop:grupa} also a parabolic generalized Dulac germ. By a real homothecy $\varphi(z)=a^{-\frac{1}{\alpha-1}}z$, $a^{-\frac{1}{\alpha-1}}\in\mathbb R$, we transform $f(z)=z-az^\alpha\boldsymbol\ell^m+o(z^\alpha\boldsymbol\ell^m)$ to a \emph{normalized} germ \begin{equation}\label{eq:normali}f(z)=z-z^\alpha\boldsymbol\ell^m+o(z^\alpha\boldsymbol\ell^m).\end{equation} Note that we cannot by homothecy conjugation transform the first coefficient $-1$ to $+1$, since we would lose the invariance of the positive real line. Therefore, in the sequel, we always suppose that we work with a germ (parabolic Dulac or generalized Dulac) whose first coefficient is equal to $a=1$. This first normalization step, performed for simplicity, is not important from the viewpoint of analytic conjugacies of generalized Dulac germs discussed later, since homothecy  is an analytic germ on a standard quadratic domain. \smallskip

We will work simultaneously in the $z$-chart (which we will call the \emph{original chart}) and in the $(\zeta=-\log z)$-chart (which we will call the \emph{logarithmic chart}), choosing the chart which is more convenient.
\medskip

Here, we are interested in \emph{complete invariants of analytic classification} in the sense of Ecalle-Voronin moduli for analytic diffeomorphisms. We first define in Section~\ref{sec:ane} what we mean by saying that two parabolic (generalized) Dulac germs $f$ and $g$ are analytically conjugated. In the case of analytic germs $\mathbb C\{z\}$ it was clear that $f,\ g\in\mathbb C\{z\}$ are analytically equivalent if there exists a germ of an analytic diffeomorphism $\varphi\in\mathbb C\{z\}$ such that
$$
g=\varphi^{-1}\circ f\circ \varphi.
$$
Evidently, the analytic equivalence implies formal equivalence in the sense of formal Taylor series in $\widehat{\mathcal L}_0(\mathbb R)$. In this paper, we will \emph{respect} the $\widehat{\mathcal L}(\mathbb R)$-formal classes, and we adopt the following definition of analytic equivalence of parabolic generalized Dulac germs:
\begin{defi}[Analytic equivalence]\label{def:jedan} We say that two normalized parabolic generalized Dulac germs $f$ and $g$ of the form \eqref{eq:normali} defined on a standard quadratic domain $\mathcal R_C$  are \emph{analytically conjugated} if:
\begin{enumerate}
\item $f$ and $g$ are $\widehat{\mathcal L}(\mathbb R)$-formally conjugated\footnote{i.e. have the same formal invariants $(\alpha,m,\rho),\ \alpha>1,\ m\in\mathbb Z,\ \rho\in\mathbb R.$}, and
\item there exists a germ of diffeomorphism $h(z)=z+o(z)$ of a standard quadratic domain $\mathcal R_C$, such that:
$$
g=h^{-1}\circ f\circ h, \text{ on }\mathcal R_C.
$$
\end{enumerate}
\end{defi} 

If we allow non-normalized germs $f$ and $g$, then $h$ is possibly composed with additional homothecy, so $h$ is requested to be of the more general form $h(z)=cz+o(z)$, $c>0$. For simplicity, we assume that $f$ and $g$ are normalized and that $h$ is tangent to the identity. Two non-normalized germs are analytically conjugated if and only if their normalizations are analytically conjugated. Definition~\ref{def:jedan} is analyzed in more detail in Section~\ref{sec:ane}.
\smallskip

We prove in Section~\ref{subsec:types} the following Proposition~\ref{prop:natkonj}. It explains why it is natural to define analytic conjugacy of two parabolic generalized Dulac germs in Definition~\ref{def:jedan} just as a tangent to the identity bounded analytic germ on a standard quadratic domain, \emph{without any further asymptotic condition}. Proposition~\ref{prop:natkonj} states that it is automatically \emph{coherent} with the formal conjugacy in $\widehat{\mathcal L}^{inv}(\mathbb R)$.
\begin{prop}\label{prop:natkonj} Let $f$ and $g$ be two normalized parabolic generalized Dulac germs, analytically conjugated on a standard quadratic domain $\mathcal R_C$ $($in the sense of Definition~\ref{def:jedan}$)$. Let $h(z)=z+o(z)$, $z\to 0$, $z\in\mathcal R_C$, be their analytic conjugacy. Let $\widehat h\in\widehat {\mathcal L}^{\mathrm{id}}(\mathbb R)$ be their formal conjugacy:
$$
\widehat g=\widehat h^{-1}\circ \widehat f\circ\widehat h.
$$ 
Then $h$ admits $\widehat h$ as its \emph{generalized block iterated integral asymptotic expansion}\footnote{For notion of a \emph{generalized block iterated integral asymptotic expansion}, see Section~\ref{subsec:types}}, up to non-uniqueness of formal conjugacy $\widehat h$ described after Proposition~\ref{prop:gDul}.
\end{prop}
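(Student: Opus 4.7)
The plan is to mirror, on the analytic side, the block-by-block construction of the formal conjugacy $\widehat h = z + \sum_i z^{\beta_i} \widehat H_i(\boldsymbol\ell)$ produced by Theorem~A of \cite{mrrz2}, and then invoke log-Gevrey uniqueness. Recall that the formal blocks $\widehat H_i$ are determined inductively: once the previous blocks are fixed, $\widehat H_i$ satisfies a cohomological equation of the shape $Y \circ \widehat f - Y = \widehat \Phi_i$, where $\widehat \Phi_i$ is a polynomial expression in $\widehat f$, $\widehat g$ and the earlier blocks. The formal inversion $Y \mapsto Y\circ\widehat f - Y$ is realized by integration along the formal flow of $\widehat f$, and its iteration is precisely what the notion of a \emph{generalized block iterated integral asymptotic expansion} of Section~\ref{subsec:types} is meant to encode.

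I would then work on a single invariant petal $V_j^\pm$ furnished by Proposition~\ref{prop:leau}. Substituting $h = \mathrm{id} + \psi$ into $h\circ f = g\circ h$ and rearranging yields a single functional equation $\psi\circ f - \psi = \Phi(\psi)$, whose right-hand side depends linearly on $\psi$ up to higher-order corrections and possesses a generalized Dulac expansion on $V_j^\pm$ inherited from those of $f$ and $g$. By Theorem~A (1), $f$ is analytically conjugate on $V_j^\pm$ to a unit translation via a Fatou coordinate, so $\psi \mapsto \psi\circ f - \psi$ admits an analytic right inverse given by the discrete primitive $\Phi \mapsto -\sum_{n\geq 0}\Phi\circ f^{\circ n}$. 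Peeling off one block at a time, I would apply this inverse to the leading generalized Dulac block of $\Phi$ to obtain the leading block of $\psi$, subtract it from $h$, and iterate. At each step the leading block of the remainder admits a log-Gevrey asymptotic expansion of order strictly greater than $\frac{\alpha-1}{2}$ on the $\boldsymbol\ell$-cusp $\boldsymbol\ell(V_j^\pm)$, and its formal part must coincide with the corresponding block $\widehat H_i$ of $\widehat h$. Once every block of $h$ is matched to the log-Gevrey sum of the corresponding formal block, Corollary~\ref{cory:vary} (the log-Gevrey Watson's lemma) guarantees that $\widehat h$ is exactly the generalized block iterated integral asymptotic expansion of $h$. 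The residual ambiguity of $\widehat h$ noted after Proposition~\ref{prop:gDul} corresponds to the formal kernel of the cohomological operator, i.e.\ to formal time-shifts along the normal-form vector field $X_1$, and is fixed once $\widehat h$ is normalized so as to match $h$ at a base point on the invariant axis $\{\mathrm{arg}\,z = 0\}$.

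The main obstacle will be the analytic version of the stability statement used above: showing that the discrete primitive $\Phi \mapsto -\sum_{n\geq 0}\Phi\circ f^{\circ n}$ \emph{preserves} log-Gevrey asymptotic expansions of order strictly greater than $\frac{\alpha-1}{2}$ on $\boldsymbol\ell$-cusps. Since a cusp contains no sector of positive opening, standard Ramis--Sibuya arguments do not apply, and the preservation estimate has to be done directly using the finer log-Gevrey kernel bounds developed in Section~\ref{sec:classes}, together with the control on the decay rate of the radii of $\mathcal R_C$ provided by \eqref{eq:sie}.
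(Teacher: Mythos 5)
Your approach is genuinely different from the paper's, and it contains a gap that goes to the heart of the machinery the paper was built to supply.

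The paper proves Proposition~\ref{prop:natkonj} as a direct corollary of Proposition~\ref{prop:gic}, whose proof is declared similar to that of Proposition~\ref{prop:gi}. That proof does not solve the cohomological equation $\psi\circ f - \psi = \Phi$ for $h$ directly. Instead it produces a sectorial reduction via the Fatou coordinates, $\varphi_j^\pm := (\Psi_j^\pm)^{-1}\circ\Psi_0$, shows block-by-block that this particular reduction admits the formal reduction as its block iterated integral asymptotic expansion, and then invokes Lemma~\ref{lem:loic} (a Liouville-type periodicity argument: the difference of two Fatou coordinates with the right leading asymptotics is $1$-periodic on a right half-plane, extends to an entire function of sublinear growth, and is therefore constant). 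That lemma is what converts ``\emph{a} normalization with the expansion'' into ``\emph{every} tangent-to-identity normalization has the expansion up to precomposition by $f_c$.'' Your sketch contains no substitute for this step. Fixing $\widehat h$ by ``matching $h$ at a base point'' does not explain why a given analytic $h$ equals some explicitly constructed reduction precomposed with $f_{-C}$; that is precisely what the Liouville argument establishes, and it cannot be bypassed by a pointwise normalization.

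The more serious gap, however, is the log-Gevrey claim. You assert that at each step of your peeling-off scheme the leading block of the remainder has a log-Gevrey asymptotic expansion of order $>\tfrac{\alpha-1}{2}$, and you plan to close the argument via Corollary~\ref{cory:vary}. But the blocks $\widehat R_i(\boldsymbol\ell)$ of the formal conjugacy $\widehat h$ are \emph{not} log-Gevrey (i.e.\ not $0$-integrally summable); they are \emph{integrally summable of length $k$}, $k\geq 1$, with respect to previous blocks, in the sense of Definition~\ref{def:isk}. This is visible already from the elimination formulas \eqref{eq:ma} and \eqref{eq:ma1}: each block of the change of variables involves an integral of a log-Gevrey function against $e^{\pm\gamma/\boldsymbol\ell}\boldsymbol\ell^{p}$, which takes one out of the log-Gevrey class, and iterating this is exactly why the notions of integral summability of increasing length and of block iterated integral summability (Definitions~\ref{def:isk}, \ref{def:dai}, \ref{def:giis}) were introduced. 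The worry you flag at the end --- whether the discrete primitive $\Phi\mapsto -\sum_{n\geq 0}\Phi\circ f^{\circ n}$ preserves log-Gevrey expansions on $\boldsymbol\ell$-cusps --- is therefore not the obstacle to be surmounted: the answer is ``no, and by design.'' Each application raises the integral-summability length by one. Consequently Corollary~\ref{cory:vary} cannot be applied to the blocks of $h$; the block-level uniqueness you want comes instead from Propositions~\ref{prop:u} and \ref{prop:uis} (uniqueness of the exponent of integration and of representations as rational combinations), embedded in the structured decomposition $\widehat h = \widehat\varphi_1\circ\widehat\varphi_2^{-1}$ through the normal form that Definition~\ref{def:giiaa} prescribes.
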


In Section~\ref{subsec:types}, we define in which sense an analytic conjugacy of two parabolic (generalized) Dulac germs expands asymptotically in $\widehat{\mathcal L}(\mathbb R)$. Indeed, due to the fact that we deal with transserial asymptotic expansions, which are not unique, we need to determine the section function (a \emph{summation rule}) that we use at each limit ordinal step. The conjugacies are not again parabolic generalized Dulac germs, but more complicated. We introduce \emph{integral sums of length $k$, $k\in\mathbb N_0,$} in Definition~\ref{def:isk} and the notion of \emph{generalized block iterated integral summability} in Definitions~\ref{def:giis} and \ref{def:giiaa}. We prove in Proposition~\ref{prop:gic} that any analytic conjugacy of two parabolic (generalized) Dulac germs admits a formal conjugacy in $\widehat{\mathcal {L}}(\mathbb R)$ as its generalized block integral asymptotic expansion, up to non-uniqueness of the formal conjugacy described after Proposition~\ref{prop:gDul}.
\medskip

\section{Main theorems}

Theorem~A describes the local dynamics of a parabolic (generalized) Dulac germ on a standard quadratic domain $\mathcal R_C$ (considered on the Riemann surface of the logarithm or in the logarithmic chart $\zeta=-\log z$, as the ramified neighborhood of $z=0$ or $\text{Re}(\zeta)=+\infty$). The local dynamics resembles the well-known flower dynamics for parabolic analytic diffeomorphisms given by the \emph{Leau-Fatou flower theorem}, see e.g. \cite{loray}, but with countably many tangential attracting and repelling directions situated equidistantly on the Riemann surface of the logarithm.
\begin{thmA}[Leau-Fatou for parabolic generalized Dulac germs]\label{thm:leau}  
Let $$f(z)=z-az^\alpha\boldsymbol\ell^m+o(z^\alpha\boldsymbol\ell^m),\ \alpha>1,\ a>0,\ m\in\mathbb Z,$$ be a \emph{parabolic generalized Dulac germ} on a standard quadratic domain $\mathcal R_C$, $C>0$. 
\begin{enumerate}
\item There exist countably many overlapping attracting and repelling open petals\footnote{Classically, as in \cite{loray}, a petal of opening $\theta\in(0,2\pi]$ is a union of open sectors of openings continuously increasing towards $\theta$, with continuously decreasing radii.} $V_i^+$ resp. $V_i^-$, $i\in\mathbb Z$, for $f$ on $\mathcal R_C$, which are maximal invariant domains for the dynamics of $f$ resp. $f^{-1}$. They are centered at complex directions $1^{\frac{1}{\alpha-1}}$ resp. $(-1)^{\frac{1}{\alpha-1}}$ on $\mathcal R_C$, which are tangential directions for the dynamics of $f$ resp. $f^{-1}$. The opening angle of each petal is equal to $\frac{2\pi}{\alpha-1}$.

\item On each open petal $V_i^{\pm}$, there exists a $($unique up to an additive constant$)$ holomorphic Fatou coordinate $\Psi_{i}^{\pm}$, which conjugates $f$ to the translation:
$$
\Psi_i^{\pm}(f(z))-\Psi_i^{\pm}(z)=1,\ z\in V_i^{\pm},
$$
and admits a formal Fatou coordinate $\widehat \Psi\in\widehat{\mathcal L}_2^\infty(\mathbb R)$ as its integral asymptotic expansion\footnote{For details, see Section~\ref{subsec:dva}.}, as $z\to 0$ $($up to a constant$)$. 

\item On each open petal $V_i^{\pm}$, the germ $f$ is conjugated analytically by a germ $\varphi_i^{\pm}(z)=a^{-\frac{1}{\alpha-1}}z+o(z)$, to its formal normal form $f_1=\mathrm{Exp}(X_1).\mathrm{id}$ from \eqref{eq:norma}. The conjugation on each petal is unique, up to a precomposition $\varphi_i\circ f_c$ with time-$c$ map, $c\in\mathbb R$, of vector field $X_1$. Its block iterated integral asymptotic expansion\footnote{For details, see Section~\ref{subsec:dva}. Here, this expansion is considered after postcomposition of $\widehat\varphi$ and $\varphi_i^{\pm}$ by the inverse of the first change of variables that is a homothecy, that is, by $\varphi_0(z)=a^{\frac{1}{\alpha-1}}z$, since we work here with non-normalized germs.} is, up to precomposition with $\widehat f_c$, equal to the formal conjugacy $\widehat{\varphi}\in\widehat{\mathcal L}^{inv}(\mathbb R)$.
\end{enumerate}

\end{thmA}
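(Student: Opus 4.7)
The plan is to reduce everything to a near-translation model via a Fatou-type coordinate. For Part (1), the existence and shape of the petals already follow from Proposition~\ref{prop:leau}, which applies because the uniform leading-term bound \eqref{eq:uniest} is built into Definition~\ref{def:gD}. Concretely, I would introduce the change of chart $w=\frac{1}{(\alpha-1)z^{\alpha-1}\boldsymbol\ell^{m}}$, adjusted by lower-order corrections coming from the formal normal form so that $f$ becomes $w\mapsto w+1+o(1)$ as $w\to\infty$. In this chart, \eqref{eq:uniest} turns into an estimate $|F(w)-(w+1)|\le C|w|^{-\delta}$, $\delta>0$, on the image of $\mathcal R_C$. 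The $(\alpha-1)$-st power produces $\alpha-1$ tangential attracting (resp.\ repelling) directions per turn around $z=0$, and hence a $\mathbb Z$-indexed family of maximal invariant open petals of opening $\frac{2\pi}{\alpha-1}$, located as described in the statement.

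For Part (2), on each attracting petal $V_i^+$ (and symmetrically on each $V_i^-$) I construct the Fatou coordinate by the standard limit
$$
\Psi_i^+(z)=\lim_{n\to\infty}\bigl(W(f^n(z))-n\bigr),
$$
where $W$ denotes the chart above. Convergence is uniform on compacts of $V_i^+$ because the bound $|F(w)-(w+1)|\le C|w|^{-\delta}$ makes the telescoping increments absolutely summable along orbits. This yields a univalent holomorphic solution of the Abel equation $\Psi_i^+(f(z))-\Psi_i^+(z)=1$, and any other solution differs by a holomorphic function that is $1$-periodic in the $w$-chart and bounded at infinity, hence is constant. To identify the asymptotic expansion as $\widehat\Psi\in\widehat{\mathcal L}_2^\infty(\mathbb R)$, I insert the generalized Dulac expansion \eqref{eq:gDasy} into the Abel equation and solve it formally block by block; each solution step involves an integration that produces the $\boldsymbol\ell_2$-terms characteristic of $\widehat{\mathcal L}_2^\infty$. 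The summability of each resulting $\boldsymbol\ell$-block on $V_i^\pm$ is certified by the $\log$-Gevrey uniqueness lemma (Corollary~\ref{cory:vary}), which identifies the analytic increments $W\circ f-W-1$ with their formal counterparts and produces the integral asymptotic expansion in the sense of Section~\ref{subsec:types}.

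For Part (3), let $\Psi_1$ denote the explicit Fatou coordinate of the normal form $f_1=\mathrm{Exp}(X_1).\mathrm{id}$ from \eqref{eq:norma}, computed as a primitive of $X_1^{-1}$ (evaluated along a path inside the corresponding petal of $f_1$). On each petal $V_i^\pm$ set
$$
\varphi_i^\pm:=\Psi_1^{-1}\circ \Psi_i^\pm.
$$
The Abel equations for $\Psi_i^\pm$ and $\Psi_1$ give $\varphi_i^\pm\circ f=f_1\circ\varphi_i^\pm$, so $\varphi_i^\pm$ is an analytic conjugacy; comparing leading terms of $\Psi_i^\pm$ and $\Psi_1^{-1}$ shows $\varphi_i^\pm(z)=a^{-1/(\alpha-1)}z+o(z)$. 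The ambiguity $\Psi_i^\pm\mapsto \Psi_i^\pm+c$ translates at the level of $\varphi_i^\pm$ into precomposition by $f_c=\mathrm{Exp}(cX_1).\mathrm{id}$, yielding the stated uniqueness. Composing the block iterated integral asymptotic expansions of $\Psi_i^\pm$ and $\Psi_1^{-1}$ from the transserial calculus of \cite{mrrz2} and re-expressing the result as a formal diffeomorphism in $\widehat{\mathcal L}^{inv}(\mathbb R)$ produces, after postcomposition with the inverse of the initial homothecy normalization $\varphi_0(z)=a^{1/(\alpha-1)}z$, the formal conjugacy $\widehat\varphi$ of the formal classification, unique up to precomposition with $\widehat f_c$.

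The main obstacles I expect are twofold. First, showing that the limit defining $\Psi_i^\pm$ converges on the \emph{full} maximal petals supplied by Proposition~\ref{prop:leau} (not only on a smaller sub-petal obtained by standard sectorial contraction) requires careful use of the standard quadratic domain geometry, and in particular of the radius estimate \eqref{eq:sie}, to bound how $|z|$ shrinks along orbits near the boundary of each $V_i^\pm$. Second, and more substantial, is upgrading the classical-looking asymptotic expansion of $\Psi_i^\pm$ and $\varphi_i^\pm$ to a genuine \emph{integral}, respectively \emph{block iterated integral}, asymptotic expansion in the transserial class $\widehat{\mathcal L}_2^\infty(\mathbb R)$: the canonical choice of section at each limit-ordinal stage is dictated precisely by the $\log$-Gevrey summation rule of Section~\ref{sec:classes}, and verifying that the formal integrations of the Abel equation are compatible with that rule is the technical heart of the argument.
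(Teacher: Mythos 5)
Your overall plan is the same as the paper's: pass to a near-translation chart, build the Fatou coordinate from an orbit series, and define the sectorial conjugacy as a composition of Fatou coordinates. Parts (1) and (3) of your outline are structurally sound (in Part (3), your $\varphi_i^\pm=\Psi_1^{-1}\circ\Psi_i^\pm$ is the inverse of the paper's $\varphi_j^\pm=(\Psi_j^\pm)^{-1}\circ\Psi_0$; merely a direction choice).

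The gap is in Part (2). The estimate $|F(w)-(w+1)|\le C|w|^{-\delta}$, $\delta>0$, does not follow from \eqref{eq:uniest}. In the chart $w=-\frac{1}{a(\alpha-1)}z^{-\alpha+1}\boldsymbol\ell^{-m}$, the bound \eqref{eq:uniest} only yields $|F(w)-(w+1)|\le c(\log R)^{-1}$ for $|w|>R$ (the paper's \eqref{eq:before}): the error in \eqref{eq:uniest} beats the leading term by a single factor of $|\boldsymbol\ell|\sim 1/\log|w|$, which is logarithmic, not power-law. With this estimate, the orbit increments $|W(f^{n+1}(z))-W(f^{n}(z))-1|$ behave like $1/\log(|w|+n)$, and the naive limit $\lim_n\bigl(W(f^{n}(z))-n\bigr)$ \emph{diverges}. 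Your phrase ``adjusted by lower-order corrections'' points in the right direction, but as stated the correction is unspecified, and the claim with $\delta>0$ is misleading: even after the right correction the error has power exponent exactly $1$ in $w$, improved only by an extra $(\log|w|)^{-2}$.

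What actually makes the series converge is precisely that extra factor. The paper first isolates the unbounded \emph{infinite part} $\Psi_\infty^{j,\pm}$ of the Fatou coordinate as a finite sum of explicit line integrals on the $\boldsymbol\ell$-cusp (the blocks of $\widehat\Psi$ with nonpositive $z$-exponent, via \eqref{eq:fati}). It then writes $\Psi_j^\pm=\Psi_\infty^{j,\pm}+R_j^\pm$ and solves the modified Abel equation $R_j^\pm\circ f-R_j^\pm=\delta_j^\pm$ for the bounded remainder via $R_j^\pm=-\sum_{i\ge 1}\delta_j^\pm(f^{\circ i})$. Crucially $\delta_j^\pm(z)=O(z^{\alpha-1}\boldsymbol\ell^{m+2})$, with \emph{two} extra powers of $\boldsymbol\ell$, giving along orbits $|\delta_j^\pm(f^{\circ i}(z))|\le C\,i^{-1}(\log i)^{-2}$, which is summable. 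Without computing $\Psi_\infty^{j,\pm}$ and extracting this two-$\boldsymbol\ell$ gain, the convergence in your Part (2) does not close; this, rather than the domain-boundary control you flag, is the real technical heart of the existence proof.
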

The formal Fatou coordinate $\widehat\Psi$ is a formal solution of Abel equation for the parabolic generalized Dulac expansion $\widehat f$ of $f$:
$$
\widehat{\Psi}(\widehat f)-\widehat\Psi=1.
$$
The proof of Theorem A is given is Section~\ref{sec:proofA}. As was done in \cite{MRRZ2Fatou} for the Fatou coordinate of a parabolic Dulac germ, we prove in Section~\ref{sec:proofA} that the formal Fatou coordinate of a parabolic generalized Dulac germ is unique (up to a constant) in $\widehat{\mathfrak L}(\mathbb R)$, and belongs moreover to $\widehat{\mathcal L}_2^\infty(\mathbb R)$. 
\medskip

Note that we prove in Proposition~\ref{prop:leau} the statement of Theorem~A (1) under weaker conditions on the germ $f$, not assuming the full generalized Dulac asymptotic expansion, but just the beginning of it. However, the proof of the existence of the petalwise analytic Fatou coordinate (and then also of the sectorial conjugacy) in Section~\ref{sec:proofA} relies on the existence of the transasymptotic expansion of a generalized Dulac germ $f$ \emph{at least up to the residual term} $z^{2\alpha-1}\boldsymbol\ell^{2m+1}$. This enables us to deduce the infinite part of the Fatou coordinate in the proof of Theorem A (2) and then prove the convergence and analyticity of the infinitesimal remainder using a modified Abel equation.
\medskip 

The local dynamics of a parabolic generalized Dulac germ $f$ is illustrated on a standard quadratic domain in the logarithmic chart in Figure~\ref{figu:fig3}.
\begin{figure}[h!]
\includegraphics[scale=0.2]{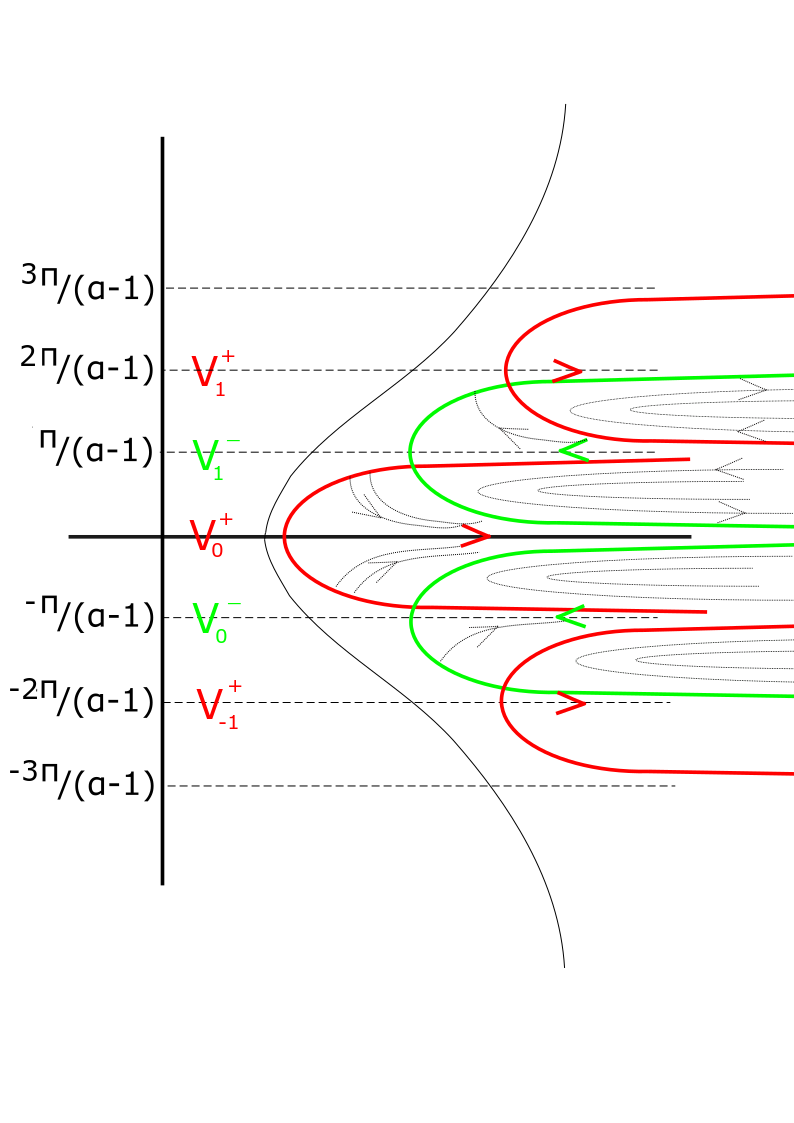}
\vspace{-1cm}
\caption{Local dynamics of a parabolic generalized Dulac germ in the logarithmic chart $\zeta=-\log z$.}\label{figu:fig3}
\end{figure}
\smallskip

In Theorem B we describe the complete system of analytic invariants for a normalized parabolic generalized Dulac germ $f$, with respect to analytic conjugation $\varphi(z)=z+o(z),\ z\to 0,$ on a standard quadratic domain. This result is similar to the result of Ecalle, Voronin \cite{voronin} for parabolic analytic germs of diffeomorphisms, where the analytic class was given by the formal class and a finite number of analytic diffeomorphisms (analytic moduli), after appropriate identifications, see \cite{voronin} or \cite{loray}.

Let $f$ be a parabolic normalized generalized Dulac germ defined on a standard quadratic domain. We suppose for simplicity that $\alpha=2$. This can always be done without loss of generality, since we can apply an analytic change of variables $h(z)=(\alpha-1)^{-\frac{m}{\alpha-1}}z^{\frac{1}{\alpha-1}}$ from one standard quadratic domain to another, see Proposition~\ref{prop:dva} for more details. It is just more convenient, since every level of the surface of the logarithm contains only one petal of opening $2\pi$. It is then easier to express the diminishing of the size of the domain of the definition of analytic moduli, see \eqref{eq:fall} below. The case $\alpha>1,\ \alpha\neq 2$, can be analyzed in the same way, but then a finite number of moduli that correspond to one level of the surface of the logarithm ($\lfloor2\pi/\alpha\rfloor$ of them) have the same radius. 

Let
$$
f(z)=z-z^2\boldsymbol\ell^m+o(z^2\boldsymbol\ell^m),\ a>0,\ m\in\mathbb Z,\ z\in\mathcal{R}_C,
$$
be a parabolic generalized Dulac germ.
Let $(\Psi_{\pm}^i)_{i\in\mathbb Z}$ be its analytic Fatou coordinates on attracting i.e. repelling petals $V_{\pm}^i$ (of opening $2\pi$) along the domain. We prove in Section~\ref{sec:proofB} that there exists a \emph{symmetric} (with respect to $\{\mathrm{arg}(z)=0\}$-axis) sequence $(h_{0,\infty}^i)_{i\in\mathbb Z}$ of analytic germs of diffeomorphism, called the \emph{horn maps} for $f$, that satisfy:
\begin{align*}
&h_0^i(t):=e^{-2\pi i \Psi_+^{i-1}\circ (\Psi_{-}^{i})^{-1}(-\frac{\log t}{2\pi i})},\ t\in(\mathbb C,0),\\
&h_\infty^i(t):=e^{2\pi i \Psi_-^i\circ (\Psi_{+}^i)^{-1}(\frac{\log t}{2\pi i})},\ t\in(\mathbb C,0),\ i\in\mathbb Z,
\end{align*}
and that this sequence and the formal class $(2,m,\rho)$ are a \emph{complete system of analytic invariants} of a parabolic generalized Dulac germ $f$.

Due to the standard quadratic domain of definition of $f$, the radii of definition $R_i$ of the sequence of its horn maps are bounded from below by:
\begin{equation}\label{eq:fall}
R_i\geq K_1 e^{-Ke^{C\sqrt{|i|}}},\ K_1,\ K,\ C>0,\ i\in\mathbb Z.
\end{equation}
Also the converse holds: if all the horn maps are the identities defined at least on the discs of radii given by \eqref{eq:fall}, then the sectorial Fatou coordinates glue to an analytic map at least on a standard quadratic domain.

\begin{thmB}[Analytic invariants of a parabolic generalized Dulac germ]\label{thm:modi} Let $f$ and $g$ be two normalized parabolic generalized Dulac germs that belong to the same $\widehat{\mathcal L}(\mathbb R)$-formal class $(2,m,\rho)$, $m\in\mathbb Z$, $\rho\in\mathbb R$. Let $$(h_{0}^i,h_{\infty}^i;\ R_i^f)_{i\in\mathbb Z} \text{ and } (k_{0}^i,k_{\infty}^i;\ R_i^g)_{i\in\mathbb Z}$$ be their sequences of horn maps  with radii of convergence $(R_i^f)_i$ resp. $(R_i^g)_i$, where $R_i^f$ and $R_i^g$ satisfy bounds of the type \eqref{eq:fall}. Then $f$ and $g$ are analytically conjugated (by tangent to the identity analytic change of variables) on some standard quadratic domain if and only if there exist sequences $(a_i)_i,\,(b_i)_i\in\mathbb C^*$ such that:
\begin{equation}\label{eq:ekvij} h_0^i(t)=a_{i-1}\cdot k_0^i\big(b_i t\big),\ h_\infty^i(t)=b_i\cdot  k_\infty^i\big(a_{i} t\big),\ t\in(\mathbb C,0),\ i\in\mathbb Z.\end{equation} 
\end{thmB}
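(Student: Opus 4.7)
The plan is to adapt the classical Ecalle--Voronin construction \cite{voronin,loray} to the Riemann surface of the logarithm, replacing the pair of petals and the pair of horn maps of the analytic case by the countable sequences produced by Theorem~A. Fix, via Theorem~A(2), analytic Fatou coordinates $\Psi^{\pm}_{i,f}$ on the petals $V^{\pm}_i$ of $f$ and analytic Fatou coordinates $\Psi^{\pm}_{i,g}$ on the petals $V^{\pm}_i$ of $g$; the two petal structures coincide because $f$ and $g$ share the formal invariants $(2,m,\rho)$.

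For the necessity direction, assume $h(z)=z+o(z)$ is an analytic conjugacy of $f$ to $g$ on some standard quadratic domain. Being tangent to the identity, $h$ preserves the tangential directions of Theorem~A(1) and therefore, after possibly restricting to some $\mathcal R_{C'}\subset\mathcal R_C$, sends each petal $V^{\pm}_i$ of $f$ onto the corresponding $V^{\pm}_i$ of $g$. The composition $\Psi^{\pm}_{i,g}\circ h$ is then a Fatou coordinate for $f$ on $V^{\pm}_i$, so uniqueness of Fatou coordinates up to an additive constant yields $\alpha_i,\beta_i\in\mathbb C$ with
\[
\Psi^{+}_{i,g}\circ h=\Psi^{+}_{i,f}+\alpha_i,\qquad \Psi^{-}_{i,g}\circ h=\Psi^{-}_{i,f}+\beta_i.
\]
Inserting these identities into the formulas defining $k_0^i$ and $k_\infty^i$, and converting the additive shifts into multiplicative factors via the exponential $t=e^{\pm 2\pi i\,w}$, one obtains exactly \eqref{eq:ekvij} with $a_i=e^{2\pi i\alpha_i}$ and $b_i=e^{-2\pi i\beta_i}$.

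For the sufficiency direction, given sequences $(a_i),(b_i)$ satisfying \eqref{eq:ekvij}, pick branches $\alpha_i,\beta_i\in\mathbb C$ with $a_i=e^{2\pi i\alpha_i}$ and $b_i=e^{-2\pi i\beta_i}$ and define $h$ petalwise by
\[
h(z):=(\Psi^{+}_{i,g})^{-1}\bigl(\Psi^{+}_{i,f}(z)+\alpha_i\bigr) \text{ on } V^{+}_i, \qquad h(z):=(\Psi^{-}_{i,g})^{-1}\bigl(\Psi^{-}_{i,f}(z)+\beta_i\bigr) \text{ on } V^{-}_i.
\]
Each petalwise piece conjugates $f$ to $g$ by construction, so the only task is consistency on the overlaps. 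On the upper overlap $V^{+}_i\cap V^{-}_i$, the relation $h_\infty^i(t)=b_i\,k_\infty^i(a_i t)$ is equivalent, after taking logarithms, to
\[
\Psi^{-}_{i,g}\circ(\Psi^{+}_{i,g})^{-1}(w)=\Psi^{-}_{i,f}\circ(\Psi^{+}_{i,f})^{-1}(w-\alpha_i)+\beta_i
\]
on the corresponding half-plane; evaluating at $w=\Psi^{+}_{i,f}(z)+\alpha_i$ shows that the value of $h(z)$ from the $V^{+}_i$-formula coincides with that from the $V^{-}_i$-formula. The lower overlap $V^{+}_{i-1}\cap V^{-}_i$ is treated identically, now using $h_0^i(t)=a_{i-1}\,k_0^i(b_i t)$, with the shift $i\mapsto i-1$ reflecting exactly the petal shift produced by the lower intersection. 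Hence $h$ glues to a holomorphic diffeomorphism of the union of petals intertwining $f$ and $g$; tangency to the identity is secured by using the integer ambiguity in $\alpha_i,\beta_i$ to match the shared leading asymptotics of the Fatou coordinates of $f$ and $g$ enforced by the formal class $(2,m,\rho)$.

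The principal obstacle is to upgrade the domain $\bigcup_i(V^{+}_i\cup V^{-}_i)$ on which $h$ is a priori defined to a genuine standard quadratic domain. Along the Riemann surface of the logarithm the petal radii decay at the exponential rate \eqref{eq:sie}, so the intersections $V^{+}_i\cap V^{-}_i$ and $V^{+}_{i-1}\cap V^{-}_i$ become increasingly narrow for $|i|\to\infty$, and the gluing identities above can be invoked only insofar as $k_0^i,k_\infty^i$ remain defined on discs of correspondingly small radius. The hypothesis \eqref{eq:fall} is imposed precisely to guarantee this: under it the petalwise overlaps remain nonempty throughout some $\mathcal R_{C''}\subset\mathcal R_C$, and the glued conjugacy extends analytically over the whole of this standard quadratic domain. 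The converse bookkeeping, announced immediately before Theorem~B, that horn-map radii satisfying \eqref{eq:fall} force the petalwise conjugacies to glue into an analytic map on a standard quadratic domain, is the technical heart of the argument, to be carried out in Section~\ref{sec:proofB}.
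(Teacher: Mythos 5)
Your proposal follows the same Ecalle--Voronin-type construction that the paper uses in Section~\ref{sec:proofB}: in both directions it invokes the petalwise Fatou coordinates of Theorem~A, the uniqueness up to additive constants of Fatou coordinates with the prescribed asymptotics (Proposition~\ref{prop:Fatouexp}(2)), and the translation between additive shifts of $\Psi_\pm^j$ and the multiplicative identifications \eqref{eq:ekvij} of horn maps. Your version is somewhat more explicit than the paper's about the exponential conversion $a_i=e^{2\pi i\alpha_i},\ b_i=e^{-2\pi i\beta_i}$, and both treatments lean on the pre-Theorem-B discussion of how the bound \eqref{eq:fall} on the horn-map radii matches the decay rate of petal radii on a standard quadratic domain, rather than re-deriving it inside the proof.
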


\noindent More details and the proof of Theorem~B are given in Section~\ref{sec:proofB}. We also show in Proposition~\ref{prop:sim} in Section~\ref{sec:proofB} that the horn maps of a parabolic generalized Dulac germ from a $\mathcal L(\mathbb R)$-formal class $(2,m,\rho)$, due to realness of coefficients, are \emph{symmetric} with respect to the real axis. That is, 
\begin{equation*}\label{eq:simimm}
\big(h_{0}^{-i+1}\big)^{-1}(t)\equiv \overline{h_{\infty}^i(\,\overline t\,)},\ i\in\mathbb Z,
\end{equation*}
on their domains of definition.
\smallskip

Note that the request that the germs are \emph{normalized} as in \eqref{eq:normali} is just convenient for simplicity, but not important. Indeed, two parabolic generalized Dulac germs are analytically conjugated by a change of variables $\varphi(z)=bz+o(z),\ z\to 0,$ $b>0$, on a standard quadratic domain, if and only if their normalizations are analytically conjugated by a tangent to the identity change of variables on a standard quadratic domain. Indeed, we only need one additional homothecy that is analytic on a standard quadratic domain. Therefore it suffices to consider the moduli and the question of analytic conjugacy only for normalized generalized Dulac germs. That is, to consider conjugacies that are tangent to the identity.

\section{$\log$-Gevrey classes $LG_m(S)$ on $\boldsymbol\ell$-cusps.} \label{sec:classes}

Here we define $\log$-Gevrey classes of germs and formal series used in Definition~\ref{def:gD} of parabolic generalized Dulac germs. We also state and prove some of their properties: closedness to summation, almost closedness to multiplication and to differentiation. That will be needed for the description of the Fatou coordinate and of sectorial analytic reductions of parabolic generalized Dulac germs in Section~\ref{subsec:dva} and later in \cite{drugi}. The proofs are somewhat similar to the proofs in \cite{bahlser, Mloday} for classical Gevrey classes of functions, which are differential algebras. 
\smallskip

In the sequel, we will call an \emph{$\boldsymbol\ell$-cusp} an open cusp that is the image of an open sector $V$ of positive opening at $0$ by the change of variables $\boldsymbol\ell=-\frac{1}{\log z}$, and we will denote it by $S=\boldsymbol\ell(V)$. See Figure~\ref{figura:fig3}. Any open $\boldsymbol\ell$-cusp $\boldsymbol\ell(V')\subset S$, where $V'\subset V$ is a proper subsector, will be called a \emph{proper $\boldsymbol\ell$-subcusp} of $S$.

\begin{figure}[h!]
\includegraphics[scale=0.2]{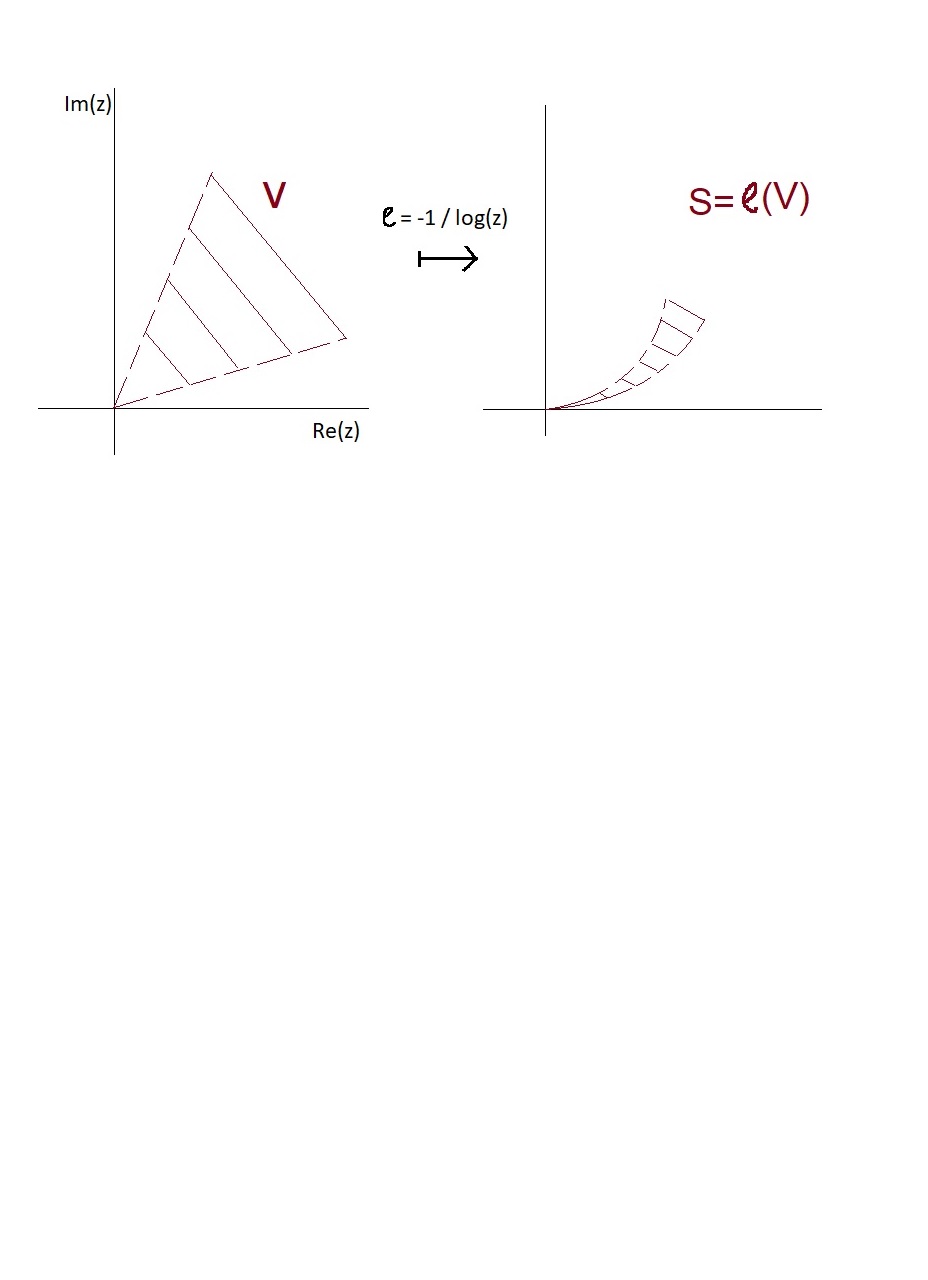}
\vspace{-4.5cm}
\caption{$\boldsymbol\ell$-cusp.}\label{figura:fig3}
\end{figure}

Such $\boldsymbol\ell$-cusps will inherit the property of differentiation of power asymptotic expansions term by term from the corresponding sectors of positive opening (a property that cannot be claimed for $\mathbb R_+$ or for general cusps), as will be shown in Corollary~\ref{cor:diffe} below. 

%Also, the notation $\subset$ instead of $\subseteq$ will always mean a \emph{proper subset}.

\begin{defi}[$\log$-Gevrey asymptotic expansions on $\boldsymbol\ell$-cusps]\label{def:df} Let $F$ be a germ analytic on an $\boldsymbol\ell$-cusp $S=\boldsymbol\ell(V)$. We say that $F$ admits $\widehat F(\boldsymbol\ell)=\sum_{k=0}^{\infty} a_k \boldsymbol\ell^k$, $a_k\in\mathbb C$, as its \emph{$\log$-Gevrey asymptotic expansion of order $m>0$} if, for every proper $\boldsymbol\ell$-subcusp $S'=\boldsymbol\ell(V')\subset S$, $V'\subset V,$ there exists a constant $C_{S'}>0$ such that, for every $n\in\mathbb N$, $n\geq 2,$ it holds that:
\begin{equation}\label{eq:lg1}
|F(\boldsymbol\ell)-\sum_{k=0}^{n-1}a_k \boldsymbol\ell^k|\leq C_{S'} \cdot m^{-n}\cdot (\log n)^n \cdot e^{-\frac{n}{\log n}}|\boldsymbol\ell|^n,\ \boldsymbol\ell\in S'.
\end{equation}
By $LG_m(S)\subseteq\mathcal O(S)$, $S$ an $\boldsymbol\ell$-cusp, $m>0$, we denote the set of all germs analytic on $S$ that admit a $\log$-Gevrey asymptotic expansion of order $m>0$ on $S$, as $\boldsymbol\ell\to 0$. By $\widehat{LG}_m(S)\subseteq \mathbb C[[\boldsymbol\ell]]$, we denote the set of their $\log$-Gevrey asymptotic expansions of order $m>0$ on $S$.
\end{defi}
%In the sequel, by saying only \emph{$\log$-Gevrey asymptotic expansion}, we mean of order $m=1$. 
%\smallskip

For the definition of generalized Dulac germs, we will need the following generalization of the definition.
\begin{obs}[Generalization of Definition~\ref{def:df} to Laurent formal expansions] We will say that $F$ analytic on an $\boldsymbol\ell$-cusp $S$ admits a formal \emph{Laurent series} $\widehat F(\boldsymbol\ell)\in\mathbb C((\boldsymbol\ell))$\,\footnote{By $\mathbb C((\boldsymbol\ell))$ we denote the fraction field of $\mathbb C[[\boldsymbol\ell]]$, that is, the set of all formal Laurent series in $\boldsymbol\ell$ with complex coefficients. Similarly for $\mathbb R((\boldsymbol\ell)).$}, $\widehat F(\boldsymbol\ell)=\sum_{k=N}^{\infty}a_k\boldsymbol\ell^k$, $N\in\mathbb Z$, as its $\log$-Gevrey expansion of order $m>0$, if $F(\boldsymbol\ell)\boldsymbol\ell^{-k}$ admits $\widehat F(\boldsymbol\ell)\boldsymbol\ell^{-k}$ as its $\log$-Gevrey expansion of order $m>0$ on $S$ in the sense of the above definition. We will denote by the same notation $LG_m(S)$ the set of all such germs, and by $\widehat{LG}_m(S)\subseteq \mathbb C((\mathbb \ell))$ the set of their $\log$-Gevrey asymptotic expansions of order $m>0$.
\end{obs}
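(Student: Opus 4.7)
The statement is a definition rather than a theorem, extending Definition~\ref{def:df} from ordinary formal power series to formal Laurent series by reduction: $F \in \mathcal{O}(S)$ is declared to admit $\widehat F(\boldsymbol\ell)=\sum_{k=N}^{\infty}a_k\boldsymbol\ell^k$ as its $\log$-Gevrey expansion of order $m$ when the analytic germ $\boldsymbol\ell^{-N}F$ admits the genuine formal power series $\boldsymbol\ell^{-N}\widehat F=\sum_{j\geq 0}a_{j+N}\boldsymbol\ell^j$ as its $\log$-Gevrey expansion in the original sense. (The $k$ in the statement is naturally read as the leading index $k=N$; one can more generally multiply by $\boldsymbol\ell^{-k}$ for any integer $k\leq N$ after zero-padding $\widehat F$ in degrees $k,\ldots,N-1$.) What requires verification for this to be a sound definition is threefold: independence of the shifting index, consistency with Definition~\ref{def:df} in the power-series case $N\geq 0$, and an equivalent direct tail estimate for $F$.

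The plan is first to unwind the reduction. Since $\boldsymbol\ell^{-N}$ is holomorphic and nonvanishing on $S$ (the tip $\boldsymbol\ell=0$ does not belong to the open cusp), the pair $(G,\widehat G):=(\boldsymbol\ell^{-N}F,\boldsymbol\ell^{-N}\widehat F)$ is a legitimate input to Definition~\ref{def:df}. Multiplying the inequality \eqref{eq:lg1} for $G$ through by $|\boldsymbol\ell|^{N}$ gives the equivalent direct estimate
$$
\bigl|F(\boldsymbol\ell)-\sum_{k=N}^{N+n-1}a_k\boldsymbol\ell^k\bigr|\leq C_{S'}\,m^{-n}(\log n)^n\,e^{-n/\log n}\,|\boldsymbol\ell|^{N+n},\quad n\geq 2,\ \boldsymbol\ell\in S',
$$
on every proper $\boldsymbol\ell$-subcusp $S'\subset S$, which is the direct analogue of \eqref{eq:lg1} for Laurent expansions. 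Consistency with the original definition when $N\geq 0$ is then immediate, provided independence of the shift is established.

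Independence of the shifting index is the only nontrivial point. If one uses $\boldsymbol\ell^{-k}$ in place of $\boldsymbol\ell^{-N}$, setting $d=N-k\geq 0$, then applying the log-Gevrey estimate for $\boldsymbol\ell^{-N}F$ at order $n-d$ and multiplying by $|\boldsymbol\ell|^{d}$ yields a tail bound for $\boldsymbol\ell^{-k}F$ at order $n$ weighted by
$$
m^{-(n-d)}\bigl(\log(n-d)\bigr)^{n-d}\,e^{-(n-d)/\log(n-d)}
$$
rather than the expected $m^{-n}(\log n)^n e^{-n/\log n}$. The hard (and essentially the only) point is the elementary asymptotic comparison of these two weights: using $\log(n-d)=\log n-d/n+O(d^2/n^2)$ one checks that $\bigl(\log(n-d)/\log n\bigr)^{n-d}\to 1$ and $n/\log n-(n-d)/\log(n-d)=O(d/\log n)\to 0$, so both the $(\log n)^n$ and $e^{-n/\log n}$ corrections are bounded uniformly in $n\geq d+2$, while the factor $m^d$ and the finitely many indices $n\in\{2,\ldots,d+1\}$ (handled via boundedness of $F$ on $S'$) are absorbed into a new subcusp constant. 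This establishes coherence, so that $LG_m(S)$ and $\widehat{LG}_m(S)$ are unambiguously defined on Laurent series, and the closure properties of $LG_m(S)$ established later in Section~\ref{sec:classes} transfer verbatim to the Laurent setting via the same index-shift reduction.
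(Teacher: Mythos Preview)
Your identification is correct: this remark is purely a definition, and the paper offers no proof or justification for it whatsoever---it simply states the reduction and moves on. Your proposal therefore goes beyond what the paper does, by checking that the definition is well-posed (independence of the shift exponent, consistency with the $N\geq 0$ case, and an explicit tail estimate for $F$ itself).

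One minor comment: you are right that the ``$k$'' in the paper's phrasing ``$F(\boldsymbol\ell)\boldsymbol\ell^{-k}$'' is a slip for the leading index $N$ (since $k$ is already the summation variable in $\widehat F$). Your reading is the only sensible one. Your asymptotic comparison of the weights $m^{-(n-d)}(\log(n-d))^{n-d}e^{-(n-d)/\log(n-d)}$ versus $m^{-n}(\log n)^n e^{-n/\log n}$ is correct and is precisely what is needed to show the definition does not depend on the choice of shift; the paper simply takes this for granted.
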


Recall that we say that $F(z)$ admits $\hat F(z)$ as its Gevrey asymptotic expansion of order $k>0$ on sector $V$ if, for every proper subsector $V'\subset V$, there exist constants $C,\ D>0$ such that, for every $n\in\mathbb N$, it holds that:
$$
|F(z)-\sum_{k=0}^{n-1}a_k z^k|\leq C D^n (n!)^{1/k} |z|^n,\ z\in V'.
$$
We can now check using \eqref{eq:lg1} and \emph{Stirling's formula} $\sqrt n (n/e)^n\sim n!,\ n\to\infty,$ that $\log$-Gevrey asymptotic expansion of some order $m>0$ is a Gevrey asymptotic expansion of \emph{every order\footnote{This can be seen by writing: $(\log n)^ne^{-\frac{n}{\log n}}=e^{n\log\log n-\frac{n}{\log n}}$ and, by Stirling's formula, $(n!)^{\frac{1}{k}}=e^{\frac{1}{2k}\log n+\frac{n}{k}\log n-\frac{n}{k}}$, $k>0$, and comparing the order of growth of the exponents as $n\to\infty$.} $k>0$}, as introduced in \cite{Mloday, bahlser, loray2}. 

Here, we have to consider functions on $\boldsymbol\ell$-cusps that do not contain any sectors of positive opening. This is why we need a stronger growth estimate, $\log$-Gevrey, in order to apply \emph{Watson-type} lemma. 
Recall that, classically, if $F$ admits zero Gevrey asymptotic expansion of order $k>0$ on $V$, then, for every proper subsector $V'\subset V$, there exist constants $C>0$ and $M>0$ such that:
$$
|F(z)|\leq C e^{-\frac{M}{|z|^{k}}},\ z\in V'.
$$
Also, classical \emph{Watson's lemma} guarantees that, if $F$ is an analytic function with zero Gevrey asymptotic expansion of order $k>0$ on a sector of opening strictly bigger than $\frac{\pi}{k}$, then $F$ is necessarily zero on this sector.

\begin{prop} Let $F$ admit zero $\log$-Gevrey asymptotic expansion of order $m>0$ on an $\boldsymbol\ell$-cusp $S=\boldsymbol\ell(V)$. Then, for every $\boldsymbol\ell$-subcusp $S'\subset S$ and every $\delta>0$, there exist constants $C,\ M>0$, such that
\begin{equation}\label{eq:c}
|F(\boldsymbol\ell)|\leq Ce^{-Me^{\frac{m-\delta}{|\boldsymbol\ell|}}},\ \boldsymbol\ell\in S'.
\end{equation}
\end{prop}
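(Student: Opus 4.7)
The plan is to optimize the integer $n$ in the zero $\log$-Gevrey bound
\begin{equation*}
|F(\boldsymbol\ell)|\leq C_{S'}\, m^{-n}(\log n)^n\, e^{-n/\log n}\,|\boldsymbol\ell|^n,\qquad n\geq 2,\ \boldsymbol\ell\in S',
\end{equation*}
provided by the hypothesis, by choosing $n$ as a function of $|\boldsymbol\ell|$. Since the target bound is doubly exponentially small in $1/|\boldsymbol\ell|$, I expect the optimal $n$ to be exponentially large in $1/|\boldsymbol\ell|$. The key observation is that $m^{-n}(\log n)^n|\boldsymbol\ell|^n=\bigl(|\boldsymbol\ell|\log n/m\bigr)^n$ becomes geometrically small as soon as $|\boldsymbol\ell|\log n<m$, and pushing this ratio down to $(m-\delta)/m$ corresponds exactly to the $\delta$-loss one allows in the target exponent.

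Concretely, fix $\delta\in(0,m)$ and a subcusp $S'\subset S$, and let $C_{S'}$ be the constant provided by the hypothesis on $S'$. For $\boldsymbol\ell\in S'$ with $r:=|\boldsymbol\ell|$ small enough that $e^{(m-\delta)/r}\geq 2$, I take
\begin{equation*}
n(r):=\lfloor e^{(m-\delta)/r}\rfloor\geq 2.
\end{equation*}
Then $\log n(r)\leq(m-\delta)/r$, hence $|\boldsymbol\ell|\log n(r)/m\leq(m-\delta)/m$. Combined with $e^{-n/\log n}\leq 1$, the $\log$-Gevrey bound gives
\begin{equation*}
|F(\boldsymbol\ell)|\leq C_{S'}\Bigl(\tfrac{m-\delta}{m}\Bigr)^{n(r)}=C_{S'}\,e^{-\kappa\, n(r)},\qquad \kappa:=\log\bigl(m/(m-\delta)\bigr)>0.
\end{equation*}
The choice of $n(r)$ also ensures $n(r)\geq\tfrac{1}{2}e^{(m-\delta)/r}$ in the range considered (since $e^x-1\geq x/2 \cdot \text{something}$... more precisely, $e^x-1\geq e^x/2$ for $x\geq\log 2$), so
\begin{equation*}
|F(\boldsymbol\ell)|\leq C_{S'}\exp\!\Bigl(-\tfrac{\kappa}{2}\, e^{(m-\delta)/|\boldsymbol\ell|}\Bigr),
\end{equation*}
which is the claimed bound with $M=\kappa/2$.

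The part of the cusp $S'$ on which $|\boldsymbol\ell|$ is bounded away from $0$ will be handled trivially: $S'$ is a bounded set in the $\boldsymbol\ell$-plane, the $n=2$ instance of the hypothesis gives a uniform bound $|F|\leq \widetilde C|\boldsymbol\ell|^2$ on $S'$, and $\exp(-Me^{(m-\delta)/|\boldsymbol\ell|})$ is bounded below by a positive constant on this bounded-away-from-$0$ part, so enlarging $C$ absorbs this finite region into the single constant. The main — essentially only — obstacle is identifying the correct asymptotic regime $n\approx e^{(m-\delta)/|\boldsymbol\ell|}$; once this is read off from the shape of the target bound, the factor $e^{-n/\log n}$ plays no essential role (it only strengthens the estimate), and the argument collapses to elementary algebra with $\log$ and $\log\log$.
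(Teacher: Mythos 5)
Your proof is correct. It is also genuinely different from the paper's in the way the minimizing $n$ is handled, and it is worth noting what each route buys.

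The paper treats $n$ as a continuous parameter, minimizes $\log\bigl(m^{-n}(\log n)^n e^{-n/\log n} a^n\bigr)$ by calculus, and reads off the critical point $n_a\sim e^{m/a}$. At that critical point the factor $(a\log n/m)^n$ is $\approx 1$, so the entire decay is supplied by the factor $e^{-n/\log n}\sim e^{-\frac{a}{m}e^{m/a}}$, which is then loosened to match the $\delta$-weakened form in the statement. You instead deliberately back off from the critical point and choose the explicit integer $n(r)=\lfloor e^{(m-\delta)/r}\rfloor$; at this sub-critical $n$ the ratio $r\log n(r)/m$ is bounded by $(m-\delta)/m<1$, so you get geometric decay $\bigl((m-\delta)/m\bigr)^{n(r)}$ directly, and you can simply discard the factor $e^{-n/\log n}$ by bounding it by $1$. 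What the paper's approach buys is the slightly sharper intermediate bound $C e^{-\frac{a}{m}e^{m/a}}$ (no $\delta$-loss at that stage), at the cost of an informal optimization argument relying on $\sim$ asymptotics and a continuous relaxation of the integer variable. What your approach buys is that every step is an explicit inequality with a chosen integer $n$, the $\delta$-loss appears exactly where the statement allows it, and the argument is elementary algebra rather than calculus. Your observation that the $e^{-n/\log n}$ term is unnecessary once you target the $\delta$-weakened bound is a small but genuine simplification not present in the paper's proof. Your treatment of the boundary region $|\boldsymbol\ell|$ bounded away from $0$ via the $n=2$ bound and absorbing into $C$ is also a valid point of care that the paper leaves implicit.
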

\begin{proof} Let $S'\subset S$. Putting $|\boldsymbol\ell|=a$ and zero expansion in \eqref{eq:lg1}, we get:
\begin{equation}\label{eq:min}
|F(\boldsymbol\ell)|\leq C m^{-n} (\log n)^n \cdot e^{-\frac{n}{\log n}} a^n,\ |\boldsymbol\ell|=a, \ n\in\mathbb N,\ n\geq 2.
\end{equation}
We maximize by $n=n(a)$ (we take the logarithm, differentiate by $n$ and find the critical point $n_a$). We get $n_a\sim e^{m/a}$ and $\log n_a\sim \frac{m}{a}$, as $a\to 0$. Now from \eqref{eq:min} we get that there exists $C>0$ such that:
$$
|F(\boldsymbol\ell)|\leq C e^{-\frac{1}{m} a e^\frac{m}{a}},\ \boldsymbol\ell\in S'.
$$
In other words, putting back $a=|\boldsymbol\ell|$, 
$$
|F(\boldsymbol\ell)|\leq  C e^{-\frac{1}{m}|\boldsymbol\ell|e^\frac{m}{|\boldsymbol\ell|}},\ \boldsymbol\ell\in S'.
$$
The result follows.
\end{proof}

\begin{cory}[Variation of \emph{Watson's lemma} for $\log$-Gevrey expansions]\label{cory:vary} Let $m>0$. Let $F(\boldsymbol\ell)$ be analytic on $\boldsymbol\ell$-cusp $S_m=\boldsymbol\ell(V_m)$, where $V_m$ is a sector of opening strictly bigger than $\frac{\pi}{m}$. Let $F$ admit the zero asymptotic expansion $\log$-Gevrey of order $m$ on $S_m$. Then $F$ is equal to zero on $S_m$.
\end{cory}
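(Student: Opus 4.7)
The plan is to reduce to the classical Watson's lemma by pulling the problem back from the $\boldsymbol\ell$-cusp $S_m$ to the sector $V_m$ via the change of variables $z = e^{-1/\boldsymbol\ell}$. I set $G(z) := F(-1/\log z)$, which is holomorphic on $V_m$ and which vanishes identically if and only if $F$ does on $S_m$.

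First I apply the preceding proposition to the zero $\log$-Gevrey asymptotic expansion: for any proper $\boldsymbol\ell$-subcusp $S' \subset S_m$ and any small $\delta > 0$, there exist $C, M > 0$ with $|F(\boldsymbol\ell)| \leq C e^{-M e^{(m-\delta)/|\boldsymbol\ell|}}$ on $S'$. Next I translate this into a bound on $G$: on the corresponding proper subsector $V' := \boldsymbol\ell^{-1}(S') \subset V_m$, the elementary inequality $|\boldsymbol\ell|^{-1} = |\log z| \geq \log(1/|z|)$ (valid for $|z| < 1$) gives $e^{(m-\delta)/|\boldsymbol\ell|} \geq |z|^{-(m-\delta)}$, so that
$$
|G(z)| \leq C e^{-M/|z|^{m-\delta}},\quad z \in V'.
$$
This is precisely the statement that $G$ admits the zero asymptotic expansion, Gevrey of order $m - \delta$, on the sector $V_m$.

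Finally, since the opening of $V_m$ is strictly greater than $\pi/m$, I fix $\delta > 0$ small enough that this opening is still strictly greater than $\pi/(m-\delta)$; then the classical Watson's lemma (cf.\ \cite{Mloday, bahlser}) applied to $G$ on $V_m$ forces $G \equiv 0$, and therefore $F \equiv 0$ on $S_m$. The only delicate point is the exponent bookkeeping, namely verifying that the inequality $\log(1/|z|) \leq |\log z|$ together with the choice of $\delta$ preserve enough margin for the Watson opening threshold; once this is pinned down, the argument is mechanical.
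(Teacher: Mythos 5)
Your proof is correct and follows essentially the same route as the paper: use the preceding proposition to get the doubly-exponential bound on proper subcusps, push it forward by $z=e^{-1/\boldsymbol\ell}$ using $|\log z|\geq-\log|z|$ to obtain a Gevrey-$(m-\delta)$ estimate in $z$, and invoke classical Watson's lemma after shrinking $\delta$ so that the opening threshold $\pi/(m-\delta)$ is still beaten. The only cosmetic difference is that you conclude directly on $V_m$ by reading the per-subsector bounds as a Gevrey asymptotic expansion on $V_m$, whereas the paper applies Watson on one well-chosen subsector $V'$ and then extends to $V_m$ by uniqueness of analytic continuation — the two are equivalent.
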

\begin{proof}
Take any $\boldsymbol\ell$-subcusp $S'=\boldsymbol\ell(V')\subset S_m$. Here, $V'$ is any subsector of $V$ of opening strictly less than the opening of $V_m$. By \eqref{eq:c}, since $\widehat F(\boldsymbol\ell)=0$, we get that, for any $\delta>0$, there exist constants $C,\ M>0$ such that
$$
|F(\boldsymbol\ell)|\leq Ce^{-Me^{\frac{m-\delta}{|\boldsymbol\ell|}}},\ \boldsymbol\ell\in S'.
$$
Passing to the variable $z=e^{-1/\boldsymbol\ell}$ and putting $\tilde F(z):=F(\boldsymbol\ell)$, we get ($|\log z|\geq -\log |z|,\ z\in\mathbb C$):
$$
|\tilde F(z)|\leq C e^{-\frac{M}{|z|^{m-\delta}}},\ z\in V'\subset V_m.
$$
Now take $V'$ of opening strictly bigger than $\pi/m$. Then take $\delta>0$ sufficiently small, such that $V'$ is moreover of opening strictly bigger than $\pi/(m-\delta)$. By classical \emph{Watson's lemma} (see \cite{bahlser}), we get $\tilde F\equiv 0$ on $V'$. Therefore, $\tilde F\equiv 0$ on whole $V_m$ by the uniqueness of the analytic extension.
\end{proof}

Unlike $m$-Gevrey classes, see e.g. \cite{Mloday}, or \cite{sauzin} for $1$-Gevrey, the $\log$-Gevrey classes $LG_m(S)$, $m>0$, $S$ an $\boldsymbol\ell$-cusp, from Definition~\ref{def:df},  are not differential algebras. However, we prove a weaker statement about closedness to operations $+,\cdot,\frac{d}{d\boldsymbol\ell}$ in Propositions~\ref{prop:closum} - \ref{prop:clodif}:
\begin{prop}[$\log$-Gevrey classes under summation]\label{prop:closum} Let $m>0$ and let $S=\boldsymbol\ell(V)$ be an $\boldsymbol\ell$-cusp, such that the opening of $V$ is strictly bigger than $\frac{\pi}{m}$. The sets $LG_m(S)$ and $\widehat{LG}_m(S)$, are groups by operation $+$. The mapping $f\mapsto\widehat f$ that attributes $\widehat f\in \widehat{LG}_m(S)$ to $f\in LG_m(S)$ is an isomorphism of these groups, respecting the operation $+$. %The sets $LG(V)$ and $\widehat{LG}(V)$ are closed under differentiation , only the order might change. POPRAVITI
\end{prop}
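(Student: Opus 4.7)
The plan is to verify the group axioms for $(LG_m(S),+)$ and $(\widehat{LG}_m(S),+)$, and then show that the evaluation map $F \mapsto \widehat F$ is a bijective additive homomorphism. The bulk of the group structure is elementary: the zero function lies in $LG_m(S)$ with zero expansion, negation preserves the class with negated expansion, and closure under addition is a direct consequence of the triangle inequality applied to \eqref{eq:lg1}. Indeed, if $F$ and $G$ satisfy the $\log$-Gevrey bound of order $m$ with constants $C^F_{S'}$ and $C^G_{S'}$ on a proper $\boldsymbol\ell$-subcusp $S'\subset S$, then $F+G$ satisfies the same bound with constant $C^F_{S'}+C^G_{S'}$ and coefficients $a_k+b_k$. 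This simultaneously shows closure in $LG_m(S)$, closure in $\widehat{LG}_m(S)$ (as the image of a map that respects $+$), and additivity of $F\mapsto\widehat F$. Associativity, commutativity and the existence of inverses are then inherited from the ambient algebras $\mathcal O(S)$ and $\mathbb C((\boldsymbol\ell))$.

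Surjectivity of $F \mapsto \widehat F$ onto $\widehat{LG}_m(S)$ is built into the definition of the target. For injectivity I would proceed in two steps. First, the expansion of a given $F\in LG_m(S)$ is unique as a formal Laurent series: if $F$ admitted two distinct $\log$-Gevrey expansions with coefficients $(a_k)$ and $(b_k)$, then substituting their difference into \eqref{eq:lg1} and letting $|\boldsymbol\ell|\to 0$ along a fixed proper $\boldsymbol\ell$-subcusp forces $a_k = b_k$ for all $k$ by induction on the first index where they would differ. Second, the map separates functions: if $F_1,F_2\in LG_m(S)$ share the same expansion $\widehat F$, then $F_1-F_2\in LG_m(S)$ by the closure argument just established, and it admits the zero $\log$-Gevrey expansion of order $m$. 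Here the opening hypothesis enters decisively: since the opening of $V$ strictly exceeds $\pi/m$, Corollary~\ref{cory:vary} (the $\log$-Gevrey variant of Watson's lemma) gives $F_1-F_2\equiv 0$ on $S$.

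The main obstacle, and the real content of the statement, is this last separation property. Without the opening bound $\mathrm{opening}(V)>\pi/m$ one could not rule out nonzero flat perturbations, and the assignment $F\mapsto\widehat F$ would only be a group epimorphism, not an isomorphism. The hypothesis on the opening of $V$ is used precisely, and only, to invoke Corollary~\ref{cory:vary}; once this is in place, the remaining verifications are immediate.
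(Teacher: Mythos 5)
Your proof is correct and takes essentially the same approach as the paper: closure under addition via the triangle inequality applied to \eqref{eq:lg1}, uniqueness of the formal expansion, and injectivity of $f\mapsto\widehat f$ by invoking Corollary~\ref{cory:vary} (the $\log$-Gevrey Watson-type lemma), which is precisely where the opening hypothesis is used. Your write-up simply fills in details that the paper leaves implicit.
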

\begin{proof} The uniqueness of the $\log$-Gevrey asymptotic expansion is obvious. The injectivity follows by Corollary~\ref{cory:vary}.
The homomorphism property is obvious by the definition of $\log$-Gevrey asymptotic expansions and formula \eqref{eq:lg1}.
\end{proof}

\begin{prop}[$\log$-Gevrey classes under multiplication]\label{prop:clomult} Let $m,\,n>0$. Let $S=\boldsymbol\ell(V)$ be an $\boldsymbol\ell$-cusp. Let $f\in LG_m(S)$ and $g\in LG_n(S)$, with asymptotic expansions $\widehat f\in\widehat{LG}_m(S)$ and $\widehat g\in\widehat{LG}_n(S)$. Let $s:=\min\{m,n\}$. Then $fg\in LG_r(S)$ for every $0<r<s$. Moreover, $fg$ admits the formal product $\widehat f\cdot \widehat g$ as its $\log$-Gevrey asymptotic expansion of order $r$ on $S$.
\end{prop}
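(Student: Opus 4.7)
The plan is to follow the classical Gevrey multiplication strategy, with the key technical feature being the strict inequality $r<s$, which provides room to absorb the subexponential losses typical of $\log$-Gevrey bounds. Write $\phi(k):=(\log k)^k e^{-k/\log k}$ throughout.

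\emph{Step 1 (coefficient bounds).} From Definition~\ref{def:df}, by the usual triangle inequality applied to \eqref{eq:lg1} at orders $n$ and $n+1$ evaluated at a fixed $\boldsymbol\ell_0$ in a slightly smaller $\boldsymbol\ell$-subcusp, I would derive $|a_n|\leq C_f\, m^{-n}\phi(n)$ and $|b_n|\leq C_g\, (m')^{-n}\phi(n)$, extending to the Laurent case via the remark following Definition~\ref{def:df}.

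\emph{Step 2 (submultiplicativity and Cauchy product bound).} The key combinatorial estimate is
\[
\phi(i)\phi(j)\leq \phi(i+j)\,e^{(i+j)/\log(i+j)},
\]
which follows from monotonicity of $\log\log$ in the inequality $i\log\log i+j\log\log j\leq (i+j)\log\log(i+j)$, together with $-i/\log i-j/\log j\leq 0$. Combined with Step 1, this yields
\[
|c_k|\leq C_{f,g}(k+1)\,s^{-k}\phi(k)\,e^{k/\log k}.
\]
For any $r<s$, picking $r_1\in(r,s)$, the ratio $(r_1/s)^k e^{k/\log k}$ has exponent $k\log(r_1/s)+k/\log k\to-\infty$ (exponential decay beats subexponential growth), so it is bounded; absorbing the polynomial factor $(k+1)(r/r_1)^k$ gives $|c_k|\leq C_0\, r^{-k}\phi(k)$. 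This establishes that $\widehat f\cdot\widehat g\in\widehat{LG}_r(S)$ as a formal series.

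\emph{Step 3 (remainder estimate).} To conclude $fg\in LG_r(S)$ with asymptotic expansion $\widehat f\cdot\widehat g$, I would use the decomposition
\[
fg-\sum_{k=0}^{n-1}c_k\boldsymbol\ell^k = \sum_{k=n}^{2n-2} d_k\boldsymbol\ell^k + R_n^f\cdot g + f_n\cdot R_n^g,
\]
where $d_k$ are the restricted-Cauchy coefficients of $f_n g_n$ (so $|d_k|\leq|c_k|$), and I have used the identity $R_n^f g_n+R_n^f R_n^g=R_n^f g$. The terms $R_n^f\cdot g$ and $f_n\cdot R_n^g$ are controlled directly using the $\log$-Gevrey bounds for $R_n^f,\,R_n^g$ and the boundedness of $f,\,g$ on any proper $\boldsymbol\ell$-subcusp (splitting $f_n=f-R_n^f$ handles the stray $|R_n^f R_n^g|$ contribution). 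The main obstacle is the partial Cauchy sum $\sum_{k=n}^{2n-2}d_k\boldsymbol\ell^k$: with the bound from Step 2 it reduces to estimating $r^{-n}\phi(n)|\boldsymbol\ell|^n\cdot\sum_{j=0}^{n-2}r^{-j}(\phi(n+j)/\phi(n))|\boldsymbol\ell|^j$, and from $\frac{d}{dk}\log\phi(k)\sim\log\log k$ one sees $\phi(n+j)/\phi(n)\leq(C\log n)^j$, so a geometric-series bound only works when $|\boldsymbol\ell|\log n$ stays below a fraction of $r$. In the complementary regime $|\boldsymbol\ell|\log n\gtrsim r$, however, the target $C\,r^{-n}\phi(n)|\boldsymbol\ell|^n=C\,(|\boldsymbol\ell|\log n/r)^n e^{-n/\log n}$ grows superexponentially in $n$, so the trivial a priori bound $|R_n^{fg}|\leq 2M_fM_g$ on the subcusp already satisfies the estimate. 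This two-regime split, allowed precisely by the strict inequality $r<s$, completes the proof.
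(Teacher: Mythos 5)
Your Steps 1--2 are serviceable (though your combinatorial estimate $\phi(i)\phi(j)\le\phi(i+j)e^{(i+j)/\log(i+j)}$ is lossier than necessary: using $\log k,\log(N-k)\le\log N$ and $e^{-k/\log k}\le e^{-k/\log N}$ one gets the cleaner $\phi(k)\phi(N-k)\le\phi(N)$ directly). The real trouble is Step 3, where your decomposition diverges from the paper's and produces terms you do not actually control.

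The paper uses the Balser-style telescoping split
\begin{equation*}
fg-\sum_{k<N}c_k\boldsymbol\ell^k
= f\Big(g-\sum_{k<N}b_k\boldsymbol\ell^k\Big)
+\sum_{k=0}^{N-1}b_k\boldsymbol\ell^k\Big(f-\sum_{i<N-k}a_i\boldsymbol\ell^i\Big),
\end{equation*}
in which every summand already carries the full factor $|\boldsymbol\ell|^N$. Bounding each $|b_k|$ by $Cp^{-k}\phi(k)$ and each remainder of $f$ by $C m^{-(N-k)}\phi(N-k)|\boldsymbol\ell|^{N-k}$, the submultiplicativity $\phi(k)\phi(N-k)\le\phi(N)$ collapses the sum to $\le CN\,s^{-N}\phi(N)|\boldsymbol\ell|^N$, and the single polynomial factor $N$ is absorbed by $r<s$. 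There is no partial Cauchy tail and no quadratic term; a side benefit is that no bound on $c_k$ is needed at all, only on $b_k$, so your Step 2 becomes superfluous.

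Your ``product of partial sums'' split $fg-\sum_{k<n}c_k\boldsymbol\ell^k=\sum_{k=n}^{2n-2}d_k\boldsymbol\ell^k+R_n^f g+f_n R_n^g$ is algebraically correct but leaves two genuine gaps. First, the two-regime argument for the tail does not close: at the boundary $|\boldsymbol\ell|\log n\approx r$ the target is $(|\boldsymbol\ell|\log n/r)^n e^{-n/\log n}\approx e^{-n/\log n}$, which is small, not superexponentially large, and the geometric-series bound fails there too; moreover the fallback ``trivial a priori bound $|R_n^{fg}|\le 2M_fM_g$'' is simply false, since $R_n^{fg}=fg-\sum_{k<n}c_k\boldsymbol\ell^k$ involves the partial sums of a generically divergent series, which are unbounded in $n$ for fixed $\boldsymbol\ell\neq 0$. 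Second, the cross term $R_n^fR_n^g$ is not controlled: splitting $f_n=f-R_n^f$ only isolates it, and the ratio against the target is
\begin{equation*}
\frac{|R_n^fR_n^g|}{r^{-n}\phi(n)|\boldsymbol\ell|^n}
\;\lesssim\;\Big(\frac{r|\boldsymbol\ell|\log n}{mn}\Big)^{\!n}e^{-n/\log n},
\end{equation*}
which tends to $+\infty$ as $n\to\infty$ for any fixed $\boldsymbol\ell\neq 0$. Both problems vanish if you switch to the Balser decomposition, which is the decisive structural difference with the paper's proof.
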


\noindent The proof is in the Appendix.

\begin{prop}[$\log$-Gevrey classes under differentiation]\label{prop:clodif} Let $f\in LG_m(S)$ admit $\widehat f(\boldsymbol\ell)$ as a $\log$-Gevrey asymptotic expansion of order $m$ on an $\boldsymbol\ell$-cusp $S=\boldsymbol\ell(V)$. Then, for every $0<r<m$, all derivatives $f^{(k)}$, $k\in\mathbb N$, belong to $LG_r(S)$ and admit formal derivatives $\widehat f^{(k)}(\boldsymbol\ell)$ as their $\log$-Gevrey asymptotic expansions of order $r$ on $S$.
\end{prop}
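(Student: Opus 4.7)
The plan is to reduce to the case $k=1$ and iterate. Since $LG_{r'}(S) \subset LG_r(S)$ for $r < r'$, once the $k=1$ statement is established, then for any target $r < m$ and any $k \geq 2$, we may pick $r < r_{k-1} < \cdots < r_1 < m$ and apply the $k=1$ result inductively (first with order $m$ producing $f' \in LG_{r_1}$, next with order $r_1$ producing $f'' \in LG_{r_2}$, and so on) to obtain $f^{(k)} \in LG_r(S)$ with expansion $\widehat f^{(k)}$.

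For $k = 1$: fix $r < m$ and a proper $\boldsymbol\ell$-subcusp $S'' = \boldsymbol\ell(V'') \subsetneq S$. Choose an intermediate subcusp $S' = \boldsymbol\ell(V')$ with $S'' \subsetneq S' \subsetneq S$ and a constant $K \in (1, m/r)$. Since $|\boldsymbol\ell|$ is uniformly bounded on $S''$, one can choose $c>0$ small enough that for every $z_0 \in V''$ the closed disk $\{|z - z_0| \leq c|z_0|\}$ lies in $V'$ and moreover $|\boldsymbol\ell(z)| \leq K|\boldsymbol\ell(z_0)|$ for every $z$ on it (using the elementary bound $|\log z| \geq |\log z_0| - \log(1+c)$). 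For $\boldsymbol\ell_0 := \boldsymbol\ell(z_0)$, Cauchy's integral formula in the $z$-variable applied to the holomorphic function $\tilde R_n(z) := R_n(\boldsymbol\ell(z)) = f(\boldsymbol\ell(z)) - \sum_{k=0}^{n-1} a_k \boldsymbol\ell(z)^k$, combined with the chain-rule identity $R_n^{(1)}(\boldsymbol\ell_0) = \tilde R_n'(z_0)/\boldsymbol\ell'(z_0)$ and $\boldsymbol\ell'(z) = \boldsymbol\ell^2/z$, yields
\begin{equation*}
|R_n^{(1)}(\boldsymbol\ell_0)| \;\leq\; \frac{C_{S'}}{c}\,(K/m)^n (\log n)^n e^{-n/\log n}\,|\boldsymbol\ell_0|^{n-2}.
\end{equation*}
The exponent $n-2$ (instead of $n-1$) reflects the unavoidable loss coming from the quadratic thinness of the $\boldsymbol\ell$-cusp.

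To control the $N$-th remainder of $f'$ at $\boldsymbol\ell_0$ with the target decay $|\boldsymbol\ell_0|^N$, take $n = N+2$ in the above bound. Since $R_{N+2}^{(1)}(\boldsymbol\ell_0)$ equals the $(N+1)$-th remainder of $f'$, not the $N$-th, one must restore the single missing degree-$N$ term:
\begin{equation*}
f'(\boldsymbol\ell_0) - \sum_{k=1}^{N} k a_k \boldsymbol\ell_0^{k-1} \;=\; R_{N+2}^{(1)}(\boldsymbol\ell_0) + (N+1)\,a_{N+1}\,\boldsymbol\ell_0^N.
\end{equation*}
The coefficient estimate $|a_{N+1}| \leq C_{S'}\, m^{-(N+1)}(\log(N+1))^{N+1} e^{-(N+1)/\log(N+1)}$ follows from the $\log$-Gevrey hypothesis by subtracting two consecutive remainder bounds and passing to $\boldsymbol\ell \to 0$ within $S'$.

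Comparing the sum of both contributions to the target bound $r^{-N}(\log N)^N e^{-N/\log N} |\boldsymbol\ell_0|^N$: the Cauchy piece produces a ratio dominated by $(Kr/m)^N$ times polynomial factors in $\log N$, which is uniformly bounded since $Kr/m < 1$; the restored term produces a ratio of the form $(r/m)^{N+1}$ times at most polynomial factors in $N$ and $\log N$, also bounded since $r/m < 1$. This yields the required constant $C_{S''}$, completing the $k=1$ case. The main obstacle throughout is the quadratic thinness of $\boldsymbol\ell$-cusps, which costs one power of $|\boldsymbol\ell_0|$ per derivative in the Cauchy estimate; this loss is recovered at the price of infinitesimally decreasing the order from $m$ to any $r < m$, which is precisely the concession the statement allows.
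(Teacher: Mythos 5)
Your proof is correct and follows essentially the same route as the paper's: both arguments apply Cauchy's integral formula in the $z$-chart on a disk of radius $\sim c|z_0|$, use the chain rule $\frac{d}{dz}=\frac{\boldsymbol\ell^2}{z}\frac{d}{d\boldsymbol\ell}$ to pay the unavoidable $|\boldsymbol\ell|^{-2}$ price from the cusp's quadratic thinness, restore the missing polynomial term via the coefficient bound $|a_{N+1}|$, and absorb the loss by decreasing the order from $m$ to any $r<m$. The only differences are cosmetic bookkeeping: the paper shrinks the subcusp by an angle $\arcsin(\delta)$ and lets $\delta\to 0$, while you fix $r<m$ up front, pick $K\in(1,m/r)$, and insert an intermediate subcusp $S'$; both accomplish the same absorption of the subexponential factors $(\log n)^n e^{-n/\log n}$ into the change of order.
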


\noindent The proof is in the Appendix.

\medskip

Note that, in the course of the proof of Proposition~\ref{prop:clodif}, we have also proved the following corollary. We use the Cauchy formula in the variable $z$, similarly as in the proof of Proposition~\ref{prop:clodif}.
\begin{cory}[Termwise differentiation of power asymptotic expansions on $\boldsymbol\ell$-cusps]\label{cor:diffe} The power asymptotic expansions on $\boldsymbol\ell$-cusps $S:=\boldsymbol\ell(V)$, where $V$ is a sector at $0$ of positive opening, $f(\boldsymbol\ell)\sim \sum_{n=0}^{\infty} a_n \boldsymbol\ell^n,\ \boldsymbol\ell\in S,\ \boldsymbol\ell\to 0,$ may be differentiated term by term. That is, $$\frac {d}{d\boldsymbol\ell}f(\boldsymbol\ell)\sim \sum_{n=1}^{\infty} n a_n \boldsymbol\ell^{n-1},\ \boldsymbol\ell\in S,\ \boldsymbol\ell\to 0.$$
\end{cory}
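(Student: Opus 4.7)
The plan is to apply Cauchy's formula to the remainder of the asymptotic expansion, exactly as in the proof of Proposition~\ref{prop:clodif}, but now with only an ordinary (non-Gevrey) power asymptotic bound on the remainder. In the proof of Proposition~\ref{prop:clodif} the $\log$-Gevrey hypothesis enters only through the $n$-dependence of the constants; for a merely asymptotic expansion the same Cauchy-estimate argument, performed at one fixed order $N$ at a time, still yields the claim term by term.

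First I would establish the key geometric fact about $\boldsymbol\ell$-cusps that makes everything work: for any proper subcusp $S' = \boldsymbol\ell(V') \subset S = \boldsymbol\ell(V)$ there exists a constant $c>0$ such that, for every $\boldsymbol\ell_0 \in S'$ of sufficiently small modulus, the disk $\{\boldsymbol\ell:\,|\boldsymbol\ell - \boldsymbol\ell_0| \leq c|\boldsymbol\ell_0|^2\}$ is contained in $S$. This is exactly the feature that distinguishes $\boldsymbol\ell$-cusps (images of sectors of positive opening) from $\mathbb R_+$ or from more general cusps, where the width would shrink faster than $|\boldsymbol\ell|^2$. To check it, one transfers to the $z$-variable via $z = e^{-1/\boldsymbol\ell}$: since $\frac{d\boldsymbol\ell}{dz} = \frac{\boldsymbol\ell^2}{z}$, any disk $D(z_0,\delta|z_0|) \subset V$ (with $\delta>0$ depending only on the opening of $V'$ inside $V$) is mapped by $\boldsymbol\ell$ onto a region containing some $D(\boldsymbol\ell_0, c|\boldsymbol\ell_0|^2)$.

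With this in hand, fix $N \in \mathbb N$ and choose a proper subcusp $S''$ with $S' \subset S'' \subset S$ such that the circles $|\boldsymbol\ell - \boldsymbol\ell_0| = c|\boldsymbol\ell_0|^2$ lie in $S''$ for all $\boldsymbol\ell_0 \in S'$ of small enough modulus. Writing
\[
f(\boldsymbol\ell) = \sum_{n=0}^{N+1} a_n \boldsymbol\ell^n + R_{N+2}(\boldsymbol\ell), \qquad |R_{N+2}(\boldsymbol\ell)| \leq C_{N+2}|\boldsymbol\ell|^{N+2}, \quad \boldsymbol\ell \in S'',
\]
and applying Cauchy's formula to the analytic remainder on the circle $|\boldsymbol\ell - \boldsymbol\ell_0| = c|\boldsymbol\ell_0|^2$, I obtain
\[
|R_{N+2}'(\boldsymbol\ell_0)| \leq \frac{1}{c|\boldsymbol\ell_0|^2}\max_{|\boldsymbol\ell - \boldsymbol\ell_0| = c|\boldsymbol\ell_0|^2}|R_{N+2}(\boldsymbol\ell)| = O(|\boldsymbol\ell_0|^N).
\]
Differentiating the truncated series term by term then gives
\[
f'(\boldsymbol\ell_0) - \sum_{n=1}^{N} n a_n \boldsymbol\ell_0^{n-1} = (N+1)a_{N+1}\boldsymbol\ell_0^N + R_{N+2}'(\boldsymbol\ell_0) = O(|\boldsymbol\ell_0|^N), \quad \boldsymbol\ell_0 \in S',
\]
which is exactly the desired asymptotic expansion of $f'$ to order $N-1$ on $S'$. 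As $N$ and $S'$ are arbitrary, the full term-by-term expansion on $S$ follows.

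The only real obstacle is the geometric lemma about disks of radius $\sim |\boldsymbol\ell_0|^2$ inside an $\boldsymbol\ell$-cusp; once this is verified via the $z$-variable change, the rest is the standard Cauchy-formula estimate on remainders. This is also precisely what fails on the real axis or on thinner cusps, which is why the statement is formulated specifically for $\boldsymbol\ell$-cusps coming from sectors of positive opening in $z$.
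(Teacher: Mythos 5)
Your argument is correct and is essentially the paper's proof, which simply points back to the Cauchy-formula estimate in the proof of Proposition~\ref{prop:clodif} with the $\log$-Gevrey bookkeeping dropped once the order $N$ is fixed. The only superficial difference is that you run the Cauchy estimate directly in the $\boldsymbol\ell$-variable after first establishing, via the $z$-chart, that disks $D(\boldsymbol\ell_0,c|\boldsymbol\ell_0|^2)$ fit inside the cusp (a step that, to be rigorous, should invoke a Koebe-type distortion bound for the univalent map $z\mapsto -1/\log z$ on $D(z_0,\delta|z_0|)$, rather than only the value of $\boldsymbol\ell'(z_0)=\boldsymbol\ell_0^2/z_0$), whereas the paper performs the Cauchy integral on circles $|\xi-z_0|=\delta|z_0|$ in the $z$-chart and transfers the bound back through $\frac{d}{dz}=\frac{\boldsymbol\ell^2}{z}\frac{d}{d\boldsymbol\ell}$ --- in both versions the positive opening of $V$ is the operative hypothesis.
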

\noindent More precisely, we say here that $f$ admits an asymptotic expansion $\widehat f(\boldsymbol\ell)=\sum_{n=0}^{\infty} a_n \boldsymbol\ell^n$ on an $\boldsymbol\ell$-cusp $S=\boldsymbol\ell(V)$ if, for every proper $\boldsymbol\ell$-subcusp $\boldsymbol\ell(V')\subset S$, where $V'\subset V$ is a proper subsector, there exists a constant $C_n>0$ such that:
$$
\Big|f(\boldsymbol\ell)- \sum_{k=0}^{n-1} a_k \boldsymbol\ell^{k}\Big|\leq C_n|\boldsymbol\ell|^n,\ \boldsymbol\ell\in S',\ n\in\mathbb N.
$$ 
\medskip

Note that, classically, by the Cauchy formula, we need a sector of positive opening to be able to differentiate asymptotic expansions term by term, see e.g. \cite{wasow}. Here, the property is inherited due to the special form of $\boldsymbol\ell$-cusps, that are $\boldsymbol\ell$-images of sectors.

\begin{prop}[$\log$-Gevrey classes under compositions with analytic germs]\label{prop:clocomp}
Let $S=\boldsymbol\ell(V)$ be an $\boldsymbol\ell$-cusp\footnote{of arbitrarily small radius} . Let $f,\,h\in LG_m(S)$, $g\in LG_p(S)$, $m,\,p>0$, with $\log$-Gevrey expansions $\widehat f,\ \widehat h\in\widehat{LG}_m(S)$ and $\widehat g\in\widehat{LG}_p(S)$. Let $f(\boldsymbol\ell)=o(1),$ as $\boldsymbol\ell\to 0$ on $V$. Let $F$ be an analytic or meromorphic germ at $0$, with Taylor i.e. Laurent expansion $\widehat F$ at $0$. 

\begin{enumerate}

\item $F\circ f\in LG_r(S)$, for every $0<r<m$, with $\log$-Gevrey expansion $\widehat F\circ \widehat f\in\widehat{LG}_r(S)$ of order $r$.

\item If $\widehat g\neq 0$, then $\frac{h}{g}\in LG_q(S)$\footnote{Note that $\frac{h}{g}$ is well-defined and analytic on $\boldsymbol\ell$-cusp $S$ (of sufficiently small radius), since $g$ cannot have an accumulation of zero points in $S$ at $0$. Otherwise, it is easy to see that its asymptotic expansion $\widehat g$ in power-log scale on $S$ would be $0$.}, for every $0<q<\min\{m,p\}$, and admits $\frac{\widehat h}{\widehat g}\in \widehat{LG}_q(S)$ as its $\log$ Gevrey asymptotic expansion of order $q$.
\end{enumerate}
\end{prop}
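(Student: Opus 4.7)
The plan is to treat part (1) by expanding $F$ into its convergent Taylor (or Laurent) series at the origin, splitting the resulting series for $F\circ f$ into a finite polynomial part handled by iterating Proposition~\ref{prop:clomult} and a geometric tail that is exponentially small in its index, and then identifying $\widehat F\circ\widehat f$ as the resulting $\log$-Gevrey expansion. Part (2) will follow by reducing $1/g$ to a composition of the form covered by part (1).

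For part (1), I would write $F(w)=\sum_{k=0}^{\infty}c_k w^k$ with $|c_k|\leq M\rho^{-k}$ for some $M,\rho>0$; in the meromorphic case, one first factors out $w^{-N_F}$ and applies the argument to the analytic factor, the prefactor being absorbed by the Laurent generalization of Definition~\ref{def:df}. Since $f=o(1)$, on any proper $\boldsymbol\ell$-subcusp $S'\subset S$ of small enough radius one has $|f|\leq\rho/2$, so $F\circ f$ is analytic there. Fix $0<r<m$. The first step is to prove, by induction on $k\in\mathbb N$ using Proposition~\ref{prop:clomult}, that $f^k\in LG_r(S)$ with formal expansion $\widehat f^{\,k}$ and with a constant $C_{S',k}$ in \eqref{eq:lg1} that grows at most geometrically, $C_{S',k}\leq A^k$, for some $A=A(r,S')>0$. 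For each truncation index $n\geq 2$, split
\[
F(f(\boldsymbol\ell))=\sum_{k=0}^{K}c_k f(\boldsymbol\ell)^k+\sum_{k>K}c_k f(\boldsymbol\ell)^k,
\]
with a cutoff $K=K(n)$ chosen to grow slowly (say as $c\log n$) in $n$. The tail is dominated by the geometric series $M\sum_{k>K}(|f|/\rho)^k$, decaying exponentially in $K$ and hence absorbed by the $\log$-Gevrey bound of order $r'$ slightly below $r$. For the polynomial part, subtract the order-$n$ truncation of $\widehat F\circ\widehat f$, apply the $\log$-Gevrey bound of order $r$ to each $f^k$ for $k\leq K$, and sum using $|c_k|\leq M\rho^{-k}$; the cumulative factor $A^K$ is then sub-exponential in $n$ and fits inside the bound of order $r'$. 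This identifies $\widehat F\circ\widehat f$ as the $\log$-Gevrey expansion and yields $F\circ f\in LG_{r'}(S)$ for every $r'<m$.

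For part (2), let $N\in\mathbb Z$ be the order of the leading term of $\widehat g$, so that $\widehat g(\boldsymbol\ell)=a_N\boldsymbol\ell^N(1+\widehat u(\boldsymbol\ell))$ with $a_N\neq 0$ and $\widehat u(\boldsymbol\ell)=o(1)$; correspondingly $g=a_N\boldsymbol\ell^N(1+u)$ with $u\in LG_p(S)$ tending to $0$ on proper subcusps. Applying part (1) to the analytic germ $F(w)=1/(1+w)$ and to $u$ gives $1/(1+u)\in LG_{p'}(S)$ for every $0<p'<p$, with expansion $1/(1+\widehat u)$. Multiplying by the convergent Laurent prefactor $a_N^{-1}\boldsymbol\ell^{-N}$ is harmless, since it is $\log$-Gevrey of every order on $S$, so $1/g\in LG_{p'}(S)$. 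A final use of Proposition~\ref{prop:clomult} applied to $h\cdot(1/g)$ delivers $h/g\in LG_q(S)$ for every $0<q<\min\{m,p\}$, with formal expansion $\widehat h/\widehat g$, as required.

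The main obstacle I foresee is the uniform geometric control of the constants $C_{S',k}$ for the iterated products $f^k$ in part (1): without such control, the infinite sum $\sum c_k f^k$ cannot be compared term by term with the $\log$-Gevrey ansatz under any reasonable calibration of the cutoff $K$. Obtaining it requires re-examining the proof of Proposition~\ref{prop:clomult}, absorbing the order loss $m\to r$ in a single step at the outset rather than incrementally at each induction step, and tracking only the dependence of the multiplicative constant on a uniform bound $\sup_{S'}|f|\leq B$ rather than on $k$ itself.
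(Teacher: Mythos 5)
Your proposal takes a genuinely different route from the paper. The paper proves part (1) by passing to the derivative formulation of the $\log$-Gevrey bound and applying \emph{Fa\`a di Bruno's formula} directly to $\big(\boldsymbol\ell^{-\min\{r_F,0\}}F\big)\circ f$: the analytic bound $|(\boldsymbol\ell^{-r_F}F)^{(k)}|\le C\,k!\,D_V^k$ (with $D_V$ made small by shrinking the cusp radius) and the $\log$-Gevrey derivative bound $|f^{(k)}|\le E\,k!\,r^{-k}(\log k)^k e^{-k/\log k}$ are combined via the multinomial theorem into a one-shot estimate, with the order loss appearing only through the factor $(1+E_VD_V)^{n-1}\le(1-\delta)^{-n}$; the meromorphic case and part (2) are then reduced to the analytic one by factoring out $\boldsymbol\ell^{-\operatorname{ord}}$. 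You instead expand $F$ in its Taylor series, split $\sum c_k f^k$ at a slowly growing cutoff $K(n)\sim c\log n$, and bound the polynomial part by iterating Proposition~\ref{prop:clomult}.

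The problem is in exactly the place you flagged. Proposition~\ref{prop:clomult}, as stated and as proved, \emph{strictly decreases} the $\log$-Gevrey order with each application: the convolution sum in \eqref{eq:jaojao} has $\sim N$ terms and produces an extra factor of $N$, which is absorbed only by lowering $s$ to some $r<s$ via $N\le(s/r)^N$. Naively iterating therefore gives $f^k\in LG_{r_k}(S)$ with a strictly decreasing chain $m>r_1>r_2>\cdots$, not membership in a single fixed $LG_r(S)$ with controlled constant $A^k$. Your proposed remedy --- re-prove the multiplication estimate so that the order is dropped to $r<m$ once and for all and the constant degrades only geometrically in $k$ --- is exactly the missing sub-lemma, and it is not a cosmetic rearrangement of the existing proof: you need a uniform version of the coefficient bound \eqref{eq:izvod} and a summation argument that accumulates at worst a factor $c^k$ across all $k$ convolutions while always landing back in $LG_r$. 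As written, this is asserted rather than proved, and it is the crux of the whole argument: without it the cutoff calibration $K\sim c\log n$ has nothing to act on. Until that sub-lemma is established, the proposal has a genuine gap. (Note also that the natural way to prove the sharpened power bound is to control derivatives of $w\mapsto w^k$ composed with $f$ --- i.e., Fa\`a di Bruno again --- so the ``elementary'' route ultimately circles back to the paper's mechanism.)

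Your treatment of part (2) via $g=a_N\boldsymbol\ell^N(1+u)$ and $F(w)=1/(1+w)$ is fine and slightly cleaner than the paper's $y\mapsto 1/y$ (which invokes the meromorphic case of part (1)); but it rests on part (1), so it inherits the same gap.
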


\noindent The proof is in the Appendix.

\section{Analytic equivalence of parabolic generalized Dulac germs}\label{sec:ane}
Note that in the case of regular parabolic diffeomorphisms analytic equivalence implies formal equivalence. For parabolic generalized Dulac germs $f$ defined on a standard quadratic domain $\mathcal R_C$ the situation is more complicated. We will see that a tangent to identity analytic conjugacy of two parabolic (generalized) Dulac germs on $\mathcal R_C$ will not necessarily imply the asymptotic expansion of the conjugacy germ in $\widehat{\mathcal L}(\mathbb R)$. However, this will be true in $\widehat{\mathcal L}_2(\mathbb R)$, see Lemma~\ref{lem:loic}. The reason is the possibility of formal reduction of $\widehat f(z)$ to a simpler normal form by formal conjugacy belonging to a wider class $\widehat{\mathcal L}_2(\mathbb R)\supset \widehat{\mathcal L}(\mathbb R)$, that allows also elimination of the residual term, as in Remark~\ref{rem:elim} below. 

However, in this paper, when we define the \emph{analytic equivalence} of two parabolic generalized Dulac germs, we \emph{respect} the $\widehat{\mathcal L}(\mathbb R)$-formal classes. The analytic equivalence of two parabolic generalized Dulac germs is defined in Definition~\ref{def:jedan} in Section~\ref{sec:introduction}.
\smallskip

In the following example we show that, in Definition~\ref{def:jedan}, $(1)$ does not follow from $(2)$. Indeed, if only $(2)$ is satisfied, then $h$ allows a generalized block iterated integral expansion in the larger class $\widehat{\mathcal L}_2^{\mathrm{id}}(\mathbb R)$, a subgroup of $\widehat{\mathcal L}_2(\mathbb R)$ of tangent to the identity transseries.

\begin{example}\label{ex:pr} There exist parabolic generalized Dulac germs that verify $(2)$ in Definition~\ref{def:jedan}, but are not formally equivalent in $\widehat {\mathcal L}(\mathbb R)$. They are in fact formally equivalent in $\widehat{\mathcal L}_2(\mathbb R)$. This wider class can be used to eliminate the residual invariant, see Proposition~\ref{elim} below.
Take e.g. two parabolic generalized Dulac germs that are models for two different formal classes with respect to $\widehat{\mathcal L}(\mathbb R)$: the time-one maps of two vector fields:
$$
f(z)=\text{Exp}\Big(\frac{z^2}{1+z-z\boldsymbol\ell}\frac{d}{dz}\Big).\mathrm{id},\ \text{and } g(z)=\text{Exp}\Big(\frac{z^2}{1+z}\frac{d}{dz}\Big).\mathrm{id}.
$$
Both are obviously parabolic Dulac germs, defined on a whole neighborhood of zero of the Riemann surface of the logarithm $\mathcal R$, and polynomial in $\boldsymbol\ell$. They both admit \emph{global} Fatou coordinates $\Psi_f,\ \Psi_g:\mathcal R\to\widetilde{\mathcal R}$, where $\widetilde{\mathcal R}$ denotes a neighborhood of the infinity on the Riemann surface of the logarithm. The Fatou coordinates are, up to a complex additive constant, given by 
\begin{align*}
&\Psi_f(z)=\int \frac{dz}{z^2}+\int\frac{dz}{z}-\int\frac{\boldsymbol\ell}{z}\,dz=-\frac{1}{z}+\log z+\log(-\log z),\\
&\Psi_g(z)=-\frac{1}{z}+\log z,\ z\in\mathcal R.
\end{align*}
Function $\Psi_g$ is injective on $\mathcal R$. By the Abel equation for the Fatou coordinate, one analytic conjugacy conjugating $g$ to $f$, defined and analytic on $\mathcal R$ and tangent to identity, is given by:
$$
\varphi:=\Psi_g^{-1}\circ \Psi_f.
$$
On the other hand, $f$ and $g$ have a different formal invariants $\rho$ in the class $\widehat{\mathcal L}(\mathbb R)$. Indeed, the asymptotic expansion $\widehat\varphi$ of $\varphi$ belongs to $\widehat{\mathcal L}_2(\mathbb R)$, which can easily be seen e.g. by Taylor expansion of the left-hand side in $\Psi_g(x+h(x))=\Psi_f,\ \varphi=\mathrm{id}+h$, or by writing $\tilde\Psi_f=-e^{1/\boldsymbol\ell}-\frac{1}{\boldsymbol\ell}-\log\boldsymbol\ell$ and $\tilde\Psi_g=-e^{1/\boldsymbol\ell}-\frac{1}{\boldsymbol\ell}$ in the $\boldsymbol\ell$-variable. Here, it is obvious that by precomposition of $\tilde\Psi_g$ with power-exponential transseries, we cannot generate a logarithmic term in $\tilde\Psi_f$.
\end{example}
\medskip

Let $\widehat{h}\in\widehat{\mathcal L}(\mathbb R)$ denote the formal conjugacy between generalized Dulac expansions $\widehat f$ and $\widehat g$ of two analytically conjugated (in the sense of Definition~\ref{def:jedan}) parabolic generalized Dulac germs $f$ and $g$, $$\widehat g=\widehat h^{-1}\circ \widehat f\circ \widehat h,\ \widehat h\in\widehat{\mathcal L}(\mathbb R).$$ The formal conjugacy $\widehat h\in \widehat{\mathcal L}(\mathbb R)$ is derived in \cite{mrrz2}. We will show in Section~\ref{subsec:types} that $h$ from Definition~\ref{def:jedan}, up to some controlled change, then admits the formal conjugacy $\widehat h \in\widehat{\mathcal L}(\mathbb R)$ as its \emph{generalized block iterated integral} sectional asymptotic expansion, see Definition~\ref{def:giiaa} and Proposition~\ref{prop:gic}. Indeed, the transserial asymptotic expansions of germs in $\widehat{\mathfrak L}(\mathbb R)$ are not unique, see \cite{MRRZ2Fatou}. To make them unique, we must choose a canonical \emph{summation rule} at limit ordinal steps. That is, we must fix an appropriate section function (see \cite{MRRZ2Fatou} and Definition~\ref{def:assy} in the Appendix for a complex version). 
\bigskip

We repeat here the $\widehat{\mathcal L}(\mathbb R)$-normal form result for parabolic generalized Dulac germs derived in \cite{mrrz2}. We give here an alternative proof working \emph{block-by-block}, not \emph{term-by-term} as in \cite{mrrz2}. As opposed to termwise eliminations done previously in \cite{mrrz2}, the importance of blockwise eliminations is that they determine the integral form of each block. This will be important to define the appropriate integral asymptotic expansions for conjugacies in Section~\ref{subsec:types}. The proof here is inductive, by \emph{block-by-block} eliminations. By a \emph{block}, we mean all monomials in a transseries having the same power of $z$.  In the proof we need the following Lemma~\ref{lem:elim}.

%Finally, the formal invariants of a generalized Dulac series are dependent on the class in which we admit the formal changes of variables. In our former paper \cite{mrrz2}, we admit formal changes of variables in $\widehat{\mathcal L}(\mathbb R)$ and get three formal invariants $(\alpha,m,\rho)$. We will stick to it in this paper and in our Definition~\ref{def:jedan} of analytic conjugacy. 

\begin{prop}[Formal normal forms of parabolic generalized Dulac transseries, \cite{mrrz2}]\label{elim}
Let $\widehat f(z)=z-z^\alpha\boldsymbol\ell^{m}+\text{h.o.t.}$\footnote{higher order terms}, $\alpha>1,\ m\in\mathbb Z,$ be a normalized parabolic generalized Dulac transseries. By a formal tangent to the identity formal series $\widehat\varphi\in\widehat{\mathcal L}^{\mathrm{id}}(\mathbb R)$ it can be reduced to any of the normal forms:
\begin{align}
&\widehat f_F(z)=z- z^\alpha\boldsymbol\ell^{m}+\rho z^{2\alpha-1}\boldsymbol\ell^{2m+1},\nonumber\\
&\widehat f_1(z)=\mathrm{Exp}(X_1).\mathrm{id},\ X_1(z)=\frac{-z^\alpha\boldsymbol\ell^{m}}{1+\frac{-\alpha}{2}z^{\alpha-1}\boldsymbol\ell^{m}+\big(\frac{m}{2}+\rho\big) z^{\alpha-1}\boldsymbol\ell^{m+1}}\frac{d}{dz}.\label{eq:norma1}
\end{align}
The $\widehat{\mathcal L}(\mathbb R)$-formal invariants are $(\alpha,m,\rho)$, $\alpha>1$, $m\in\mathbb Z$, $\rho\in\mathbb R$.
\end{prop}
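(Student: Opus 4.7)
I would proceed by induction \emph{block-by-block}, where a block at level $\alpha_i$ is the Laurent series $z^{\alpha_i} R_i(\boldsymbol\ell)$ collecting all monomials of $\widehat f$ with the same power of $z$. Writing
$$
\widehat f(z) = z - z^{\alpha} \boldsymbol\ell^{m} + \sum_{i \geq 2} z^{\alpha_i} R_i(\boldsymbol\ell),
$$
with $\alpha=\alpha_1<\alpha_2<\cdots$ finitely generated, I would build $\widehat\varphi \in \widehat{\mathcal L}^{\mathrm{id}}(\mathbb R)$ as an infinite composition $\widehat\varphi = \cdots \circ \widehat\varphi^{(3)} \circ \widehat\varphi^{(2)}$ of block changes $\widehat\varphi^{(i)}(z) = z + z^{\alpha_i - \alpha + 1} A_i(\boldsymbol\ell)$, chosen so that after the $i$-th step the block at the power $z^{\alpha_i}$ of the conjugate coincides with the prescribed target: either $0$ for $\widehat f_F$, or $\rho\,\boldsymbol\ell^{2m+1}$ in the single exceptional case $\alpha_i = 2\alpha-1$. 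Unlike the term-by-term scheme in \cite{mrrz2}, treating the entire Laurent series $R_i$ at once is what produces the integral (as opposed to purely formal-series) representation of $A_i(\boldsymbol\ell)$ needed in Section~\ref{subsec:types}.

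\smallskip

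Plugging $\widehat\varphi^{(i)}$ into the conjugacy equation $\widehat f^{(i-1)} \circ \widehat\varphi^{(i)} = \widehat\varphi^{(i)} \circ \widehat f^{(i)}$ (with $\widehat f^{(i-1)}$ the already partly-normalized transseries and $\widehat f^{(i)}$ the target after step $i$), collecting terms of order $z^{\alpha_i}$ and converting $z\partial_z$ into $\boldsymbol\ell^{2}\partial_{\boldsymbol\ell}$, the extraction yields a first-order linear inhomogeneous ODE for $A_i(\boldsymbol\ell)$ of schematic shape
\begin{equation*}
P_i(\boldsymbol\ell)\, A_i'(\boldsymbol\ell) + Q_i(\boldsymbol\ell)\, A_i(\boldsymbol\ell) \;=\; R_i^{(i-1)}(\boldsymbol\ell) - R_i^{\mathrm{target}}(\boldsymbol\ell),
\end{equation*}
where $P_i,Q_i$ are explicit Laurent polynomials in $\boldsymbol\ell$ depending only on $\alpha_i,\alpha,m$, and the right-hand side also absorbs the cross-terms inherited from the earlier blocks. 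This is precisely the functional equation handled by Lemma~\ref{lem:elim}, whose solution is given in closed integral form and places $A_i$ in $\mathbb R((\boldsymbol\ell))$.

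\smallskip

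The solvability analysis then locates the unique obstruction. For $\alpha_i \neq 2\alpha - 1$ the operator on the left is surjective on $\mathbb R((\boldsymbol\ell))$, so the whole block can be killed. For $\alpha_i = 2\alpha - 1$ the weights match at the resonance $\alpha-1$, the cokernel of the operator is one-dimensional and spanned by the monomial $\boldsymbol\ell^{2m+1}$; consequently the coefficient of $\boldsymbol\ell^{2m+1}$ in $R_{2\alpha-1}^{(i-1)}$ is non-removable, and setting $\rho$ equal to that coefficient makes the rest of the block eliminable. This simultaneously produces $\widehat f_F$ and identifies the formal invariants $(\alpha,m,\rho)$. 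The equivalence of $\widehat f_F$ with $\widehat f_1 = \mathrm{Exp}(X_1).\mathrm{id}$ then follows by applying the same block scheme to $\widehat f_1$: a direct expansion shows that its $z^{\alpha}\boldsymbol\ell^{m}$ and $z^{2\alpha-1}\boldsymbol\ell^{2m+1}$ coefficients are $-1$ and $\rho$, so $\widehat f_1$ and $\widehat f_F$ are reduced to the same $\widehat{\mathcal L}(\mathbb R)$-model and are therefore formally conjugated in $\widehat{\mathcal L}^{\mathrm{id}}(\mathbb R)$.

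\smallskip

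\noindent\emph{Main obstacle.} The delicate step is the bookkeeping of cross-terms: plugging $z + z^{\alpha_i-\alpha+1} A_i(\boldsymbol\ell)$ into $z^{\alpha_j} R_j(\boldsymbol\ell)$ and into the logarithmic factors $\boldsymbol\ell(\cdot)^{k}$ produces contributions at higher powers of $z$ (levels $\alpha_j$ with $j>i$). One must verify that (i) no contribution is ever generated at already-normalized lower levels, so the induction does not unravel, and (ii) the right-hand side of the block ODE at step $j$ remains in $\mathbb R((\boldsymbol\ell))$ so that Lemma~\ref{lem:elim} continues to apply. Both checks rest on the finite generation of the support of $\widehat f$ (which supplies a well-ordering compatible with the block filtration) and on the closedness of $\mathbb R((\boldsymbol\ell))$ under the operations generated by $z\partial_z$ and by substitution into $\boldsymbol\ell(\cdot)$; this is what makes the inductive block scheme close.
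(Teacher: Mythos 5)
Your block-by-block scheme, conversion of the conjugacy equation to a linear ODE in $\boldsymbol\ell$ for each block coefficient via the Lie bracket, identification of the one-dimensional cokernel at the resonance $\alpha_i = 2\alpha-1$, and appeal to Lemma~\ref{lem:elim} for the integral form of the elementary changes all track the paper's actual proof closely. There is, however, a gap at the very first step that is not merely notational.

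Your decomposition $\widehat f(z) = z - z^\alpha\boldsymbol\ell^m + \sum_{i\geq 2} z^{\alpha_i}R_i(\boldsymbol\ell)$, with $\alpha_i>\alpha$, and the composition $\widehat\varphi = \cdots\circ\widehat\varphi^{(3)}\circ\widehat\varphi^{(2)}$ beginning at $i=2$, implicitly assume that the block of $\widehat f$ at level $z^\alpha$ is exactly $-\boldsymbol\ell^m$. For a general normalized parabolic generalized Dulac transseries the first block is a full Laurent series $z^\alpha\widehat T_0(\boldsymbol\ell)$ with $\widehat T_0(\boldsymbol\ell) = -\boldsymbol\ell^m + \text{h.o.t.}$ in $\boldsymbol\ell$, and killing the tail of $\widehat T_0$ is a genuine, separate step: it requires an elementary change $\widehat\varphi^{(1)}(z) = z + zA_1(\boldsymbol\ell)$ with $A_1\in\boldsymbol\ell\,\mathbb R[[\boldsymbol\ell]]$. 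Crucially, this step does \emph{not} reduce to the same first-order linear inhomogeneous ODE as your $i\geq 2$ steps. Because $\widehat\varphi^{(1)} - \mathrm{id}$ has the same $z$-order as the identity ($\gamma_1 = \alpha_1 - \alpha + 1 = 1$), expanding $\widehat f\circ\widehat\varphi^{(1)}$ mixes contributions at the same $z$-power non-perturbatively, so the Lie-bracket linearization is not valid; this is exactly why Lemma~\ref{lem:elim} gives a structurally different, Fatou-coordinate-composition formula for the case $\beta_i = \alpha$ in \eqref{eq:ma}, and why its proof explicitly remarks that $\widehat\varphi_0$ does not satisfy a simple Lie bracket equation. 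Your plan should include this distinguished step, and with it comes the observation (which your solvability analysis as stated misses) that, since $A_1$ must have $\mathrm{ord}_{\boldsymbol\ell}\geq 1$, the leading monomial $-\boldsymbol\ell^m$ of $\widehat T_0$ is also non-removable — the first of the two obstructions, alongside $\rho\,\boldsymbol\ell^{2m+1}$ at $z^{2\alpha-1}$. A secondary point worth noting explicitly: at the residual level the formal integral in Lemma~\ref{lem:elim} would generate a $\boldsymbol\ell_2$ term precisely on the coefficient of $\boldsymbol\ell^{2m+1}$ — that is the mechanism that puts this monomial in the cokernel while keeping $\widehat\varphi$ inside $\widehat{\mathcal L}^{\mathrm{id}}(\mathbb R)$ rather than $\widehat{\mathcal L}_2(\mathbb R)$.
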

Evidently, since $\widehat{\mathcal L}^{inv}(\mathbb R)$ is a group under composition, any two parabolic generalized Dulac series $\widehat f$ and $\widehat g$ with the same formal invariants $(\alpha,m,\rho)$, are formally conjugated in $\widehat{\mathcal L}^{inv}(\mathbb R)$.

\begin{lem}[Blockwise eliminations]\label{lem:elim} Let $\widehat f\in \widehat{\mathcal L}(\mathbb R)$,\ $\widehat f(z)=z-z^\alpha\boldsymbol\ell^{m}+z^{\beta_i}\widehat T_i(\boldsymbol\ell)+\text{h.o.b.}$\footnote{higher order blocks}, $\alpha>1$, $m\in\mathbb Z$, $\widehat T_i(\boldsymbol\ell)\in\mathbb R((\boldsymbol\ell))$. We can eliminate the block $z^{\beta_i}\widehat T_i(\boldsymbol\ell)$, except possibly the monomial $z^{2\alpha-1}\boldsymbol\ell^{2m+1}$ in the residual block $\beta_i=2\alpha+1$, by \emph{elementary} change of variables $\widehat\varphi_i(z)=z+z^{\gamma_i}\widehat{R}_i(\boldsymbol\ell)$, where $\gamma_i=\beta_i-\alpha+1$ and $\widehat R_i(\boldsymbol\ell)\in \mathbb R((\boldsymbol\ell))$ is given by:
\begin{equation}\label{eq:ma}
\begin{cases}
\widehat R_i(\boldsymbol\ell)=-\frac{1}{z^{\gamma_i-\alpha}\boldsymbol\ell^{-m}}\int z^{\gamma_i-\alpha}\boldsymbol\ell^{-2m-2}\widehat T_i(\boldsymbol\ell)d\boldsymbol\ell
,&\ \ \beta_i>\alpha,\\
\widehat R_i(\boldsymbol\ell)=\frac{1}{z}\Big[\big(\int\frac{z^{1-\alpha}}{\boldsymbol\ell^{m+2}}d\boldsymbol\ell\big)^{-1}\circ \big(\int\frac{z^{1-\alpha}}{\widehat T_0(\boldsymbol\ell)\boldsymbol\ell^2}d\boldsymbol\ell\big)\Big]-1,&\ \ \beta_i=\alpha,
\end{cases}
\end{equation}
with $0$ as constant of formal integration. Here, $\widehat T_0(\boldsymbol\ell)\in\mathbb R((\boldsymbol\ell))$, $\widehat T_0(\boldsymbol\ell)=-\boldsymbol\ell^m+h.o.t.$, is the whole first block of $\widehat f(z)$, with $z^\alpha$. In the case $\beta_i=2\alpha+1$, $\widehat T_i(\boldsymbol\ell)$ is the remainder of the residual block, without the term $\rho z^{2\alpha-1}\boldsymbol\ell^{2m+1},\ \rho\in\mathbb Z$.
\end{lem}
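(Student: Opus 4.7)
The plan is to determine $\widehat R_i$ from the formal conjugacy identity $\widehat\varphi_i\circ\widehat g=\widehat f\circ\widehat\varphi_i$, where $\widehat\varphi_i(z)=z+z^{\gamma_i}\widehat R_i(\boldsymbol\ell)$ with $\gamma_i=\beta_i-\alpha+1$, and the goal is $\widehat g(z)=z-z^\alpha\boldsymbol\ell^m+\mathrm{h.o.b.}$ without a $z^{\beta_i}$-block (save possibly a residual monomial). I would first record the $\boldsymbol\ell$-shifts induced by composition, namely $\boldsymbol\ell(\widehat\varphi_i(z))=\boldsymbol\ell+z^{\gamma_i-1}\boldsymbol\ell^2\widehat R_i(\boldsymbol\ell)+\ldots$ and $\boldsymbol\ell(\widehat g(z))=\boldsymbol\ell-z^{\alpha-1}\boldsymbol\ell^{m+2}+\ldots$, which both follow from $d\boldsymbol\ell/dz=\boldsymbol\ell^2/z$. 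Expanding both sides and collecting the contributions at $z^{\beta_i}=z^{\gamma_i+\alpha-1}$ yields, after imposing that the $z^{\beta_i}$-block in $\widehat g$ vanishes, a first-order linear ODE in $\boldsymbol\ell$:
\begin{equation*}
\boldsymbol\ell^{m+2}\widehat R_i'(\boldsymbol\ell)-m\boldsymbol\ell^{m+1}\widehat R_i(\boldsymbol\ell)+(\gamma_i-\alpha)\boldsymbol\ell^m\widehat R_i(\boldsymbol\ell)=-\widehat T_i(\boldsymbol\ell).
\end{equation*}

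For $\beta_i>\alpha$, i.e.\ $\gamma_i>1$, the integrating factor is $\mu(\boldsymbol\ell)=\boldsymbol\ell^{-m}e^{-(\gamma_i-\alpha)/\boldsymbol\ell}=\boldsymbol\ell^{-m}z^{\gamma_i-\alpha}$ using $z=e^{-1/\boldsymbol\ell}$. Dividing the ODE by $\boldsymbol\ell^{m+2}$ and multiplying by $\mu$ produces $(\mu\widehat R_i)'=-z^{\gamma_i-\alpha}\boldsymbol\ell^{-2m-2}\widehat T_i$, whose formal integration gives precisely the closed form in \eqref{eq:ma}. To check that the antiderivative lies in $\mathbb R((\boldsymbol\ell))$, I would expand $\widehat T_i(\boldsymbol\ell)=\sum c_k\boldsymbol\ell^k$ and observe that, treating $z^{\gamma_i-\alpha}$ as a formal parameter that cancels after multiplication by $\mu^{-1}$, each term $c_k\boldsymbol\ell^{k-2m-2}$ integrates to a Laurent series in $\boldsymbol\ell$, except precisely when $k-2m-2=-1$ and simultaneously $\gamma_i-\alpha=0$, i.e.\ $\beta_i=2\alpha-1$ and $k=2m+1$. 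In that unique case $\int c_k\boldsymbol\ell^{-1}d\boldsymbol\ell=c_k\log\boldsymbol\ell\notin\mathbb R((\boldsymbol\ell))$, which is the residual obstruction producing the surviving monomial $\rho z^{2\alpha-1}\boldsymbol\ell^{2m+1}$.

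For the first-block case $\beta_i=\alpha$ (so $\gamma_i=1$), the integrating factor degenerates and I would switch to a Fatou-coordinate argument. Letting $\widehat\Psi_f$ and $\widehat\Psi_{\mathrm{norm}}$ denote the formal time functions attached to the leading blocks of $\widehat f$ and of the target $z-z^\alpha\boldsymbol\ell^m$, separation of variables $dz=z\,d\boldsymbol\ell/\boldsymbol\ell^2$ gives
\begin{equation*}
\widehat\Psi_f(z)=\int\frac{z^{1-\alpha}}{\widehat T_0(\boldsymbol\ell)\boldsymbol\ell^2}\,d\boldsymbol\ell,\qquad \widehat\Psi_{\mathrm{norm}}(z)=\int\frac{z^{1-\alpha}}{\boldsymbol\ell^{m+2}}\,d\boldsymbol\ell.
\end{equation*}
The elementary conjugacy is then $\widehat\varphi_i=\widehat\Psi_{\mathrm{norm}}^{-1}\circ\widehat\Psi_f$, and writing $\widehat\varphi_i(z)=z(1+\widehat R_i(\boldsymbol\ell))$ recovers the stated formula.

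The principal technical obstacle is bookkeeping the non-analytic $\boldsymbol\ell$-shifts under composition: both $\boldsymbol\ell\circ\widehat\varphi_i$ and $\boldsymbol\ell\circ\widehat g$ contribute genuine terms at level $z^{\beta_i}$ through $d\boldsymbol\ell/dz=\boldsymbol\ell^2/z$, turning the blockwise elimination from an algebraic into a differential problem, and verifying that exactly one monomial within the residual block $\beta_i=2\alpha-1$ fails to be Laurent-integrable is what isolates the invariant $\rho$.
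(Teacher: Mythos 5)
Your proof is correct and follows essentially the same path as the paper: for $\beta_i>\alpha$ you derive the linear ODE in $\boldsymbol\ell$ that the paper obtains via the Lie bracket equation $[-z^\alpha\boldsymbol\ell^m, z^{\gamma_i}\widehat R_i]=-z^{\beta_i}\widehat T_i$, and you solve it with the same integrating factor $\mu=\boldsymbol\ell^{-m}z^{\gamma_i-\alpha}$, while for $\beta_i=\alpha$ you switch to the Fatou-coordinate identity exactly as the paper does. Two small remarks: the stated reason for treating $\beta_i=\alpha$ separately is not that the integrating factor degenerates (it is $\boldsymbol\ell^{-m}z^{1-\alpha}$, perfectly well defined) but that with $\gamma_i=1$ the change $\widehat\varphi_i=\mathrm{id}+z\widehat R_i(\boldsymbol\ell)$ has order $1$ and the higher Taylor terms of the conjugacy contribute at level $z^\alpha$, so the linear (Lie-bracket) equation no longer captures the full constraint; and your correct identification of the residual obstruction at $\beta_i=2\alpha-1$, $k=2m+1$ agrees with the normal form $\rho z^{2\alpha-1}\boldsymbol\ell^{2m+1}$ even though the lemma statement writes $\beta_i=2\alpha+1$, which appears to be a typo.
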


The above integrals are formally integrated by parts, putting $dv=e^{\frac{\alpha-\gamma_i}{\boldsymbol\ell}}\boldsymbol\ell^{-2}$, and $u$ equal to the remainder of the subintegral function. We always choose $0$ as the constant of formal integration in \eqref{eq:ma}, in order that $\widehat \varphi_i(z)$ be an \emph{elementary} change of variables with only one block. 
Note that by taking $\beta_1=\alpha$ we eliminate the first block, except for the first term. The proof of Lemma~\ref{lem:elim} is in the Appendix.
\smallskip

We also state the following generalization of Lemma~\ref{lem:elim}, which is proved in the same way as Lemma~\ref{lem:elim}, so we omit the proof. We will need Lemma~\ref{lem:elim1} in Subsection~\ref{subsec:iis} to prove the block iterated integral summability of formal conjugacies of two parabolic generalized Dulac germs.

\begin{lem}[Formal conjugation of two transseries]\label{lem:elim1} Let $\widehat f,\ \widehat g\in \widehat{\mathcal L}(\mathbb R)$ with the same formal invariants $(\alpha,m,\rho)$. Let, for $n\in\mathbb N_0$,
\begin{align*}
&\widehat f(z)=z-z^\alpha\boldsymbol\ell^{m}+\sum_{i=1}^{n} z^{\alpha_i} \widehat T_i(\boldsymbol\ell)+z^{\alpha_{n+1}}\widehat F_{n+1}(\boldsymbol\ell)+\text{h.o.b.},\\
&\widehat g(z)=z-z^\alpha\boldsymbol\ell^{m}+\sum_{i=1}^{n} z^{\alpha_i} \widehat T_i(\boldsymbol\ell)+z^{\alpha_{n+1}}\widehat G_{n+1}(\boldsymbol\ell)+\text{h.o.b.},
\end{align*}
where $\alpha>1$, $m\in\mathbb Z$, $\alpha_i\geq \alpha$ strictly increasing as $i\to\infty$, $\widehat T_i\in\mathbb R((\boldsymbol\ell)),\ i=1,\ldots,n,\ \widehat F_{n+1},\,\widehat G_{n+1}\in\mathbb R((\boldsymbol\ell))$, and $\widehat F_{n+1}\neq 0$ $(\widehat G_{n+1}$ can be $0)$. Then
$$
\widehat \varphi_{n+1}\circ \widehat g\,\circ \widehat  \varphi_{n+1}^{-1}=z-z^\alpha\boldsymbol\ell^{m}+\sum_{i=1}^{n} z^{\alpha_i} \widehat T_i(\boldsymbol\ell)+z^{\alpha_{n+1}}\widehat F_{n+1}(\boldsymbol\ell)+\text{h.o.b.},
$$
where $\widehat{\varphi}_{n+1}(z)=z+z^{\gamma_{n+1}}\widehat R_{n+1}(\boldsymbol\ell)$, where $\gamma_{n+1}=\alpha_{n+1}-\alpha+1$ and $\widehat R_{n+1}(\boldsymbol\ell)\in \mathbb R((\boldsymbol\ell))$ is given by:
\begin{equation}\label{eq:ma1}
\begin{cases}
\widehat R_{n+1}(\boldsymbol\ell)=-\frac{1}{z^{\gamma_{n+1}-\alpha}\boldsymbol\ell^{-m}}\int z^{\gamma_{n+1}-\alpha}\boldsymbol\ell^{-2m-2}\big(\widehat G_{n+1}(\boldsymbol\ell)-\widehat F_{n+1}(\boldsymbol\ell)\big)d\boldsymbol\ell
,&\alpha_{n+1}>\alpha,\\[0.2cm]
\widehat R_{n+1}(\boldsymbol\ell)=\frac{1}{z}\Big[\big(\int\frac{z^{1-\alpha}}{\widehat F_1(\boldsymbol\ell)\boldsymbol\ell^2}d\boldsymbol\ell\big)^{-1}\circ \big(\int\frac{z^{1-\alpha}}{\widehat G_1(\boldsymbol\ell)\boldsymbol\ell^2}d\boldsymbol\ell\big)\Big]-1,&\alpha_{n+1}=\alpha,
\end{cases}
\end{equation}
with $0$ as constant of formal integration. Here, $\widehat F_1(\boldsymbol\ell)=-\boldsymbol\ell^m+h.o.t.$, $\widehat G_1(\boldsymbol\ell)=-\boldsymbol\ell^m+h.o.t.,\ \widehat F_1,\,\widehat G_1\in\mathbb R((\boldsymbol\ell))$, are the whole first blocks of $\widehat f$, resp.\ $\widehat g$ $($that is, blocks of $z^\alpha)$.
\end{lem}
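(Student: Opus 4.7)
The plan is to follow the blockwise strategy of Lemma~\ref{lem:elim}, with the only change that the elimination target $0$ is replaced by the prescribed block $\widehat F_{n+1}(\boldsymbol\ell)$. Since $\widehat f$ and $\widehat g$ share the blocks $z^\alpha, z^{\alpha_1}, \ldots, z^{\alpha_n}$, and since the correction $\widehat h(z) := z^{\gamma_{n+1}}\widehat R_{n+1}(\boldsymbol\ell)$ has $\gamma_{n+1} = \alpha_{n+1} - \alpha + 1 \geq 1$, the conjugation $\widehat \varphi_{n+1} \circ \widehat g \circ \widehat \varphi_{n+1}^{-1}$ leaves these shared blocks untouched: all quadratic-and-higher contributions in $\widehat h$, as well as cross-terms coming from blocks of $\widehat g$ strictly above $z^\alpha$, land at orders strictly greater than $z^{\alpha_{n+1}}$ as soon as $\gamma_{n+1} > 1$. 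The borderline case $\gamma_{n+1} = 1$, i.e.\ $\alpha_{n+1} = \alpha$, will require a separate treatment.

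For $\alpha_{n+1} > \alpha$, expand to first order in $\widehat h$,
\[
\widehat \varphi_{n+1} \circ \widehat g \circ \widehat \varphi_{n+1}^{-1} = \widehat g + \widehat h \circ \widehat g - \widehat g' \cdot \widehat h + O(\widehat h^2),
\]
and extract the coefficient of $z^{\alpha_{n+1}}$. Using $\widehat g(z) - z = -z^\alpha \boldsymbol\ell^m + \text{h.o.b.}$ together with $d\boldsymbol\ell/dz = \boldsymbol\ell^2/z$, this produces a first-order linear ODE in $\boldsymbol\ell$ for $\widehat R_{n+1}$ whose source term is proportional to $\widehat G_{n+1} - \widehat F_{n+1}$. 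The integrating factor is $\mu(\boldsymbol\ell) := z^{\gamma_{n+1}-\alpha}\boldsymbol\ell^{-m}$, formally well-defined via $e^{-1/\boldsymbol\ell} = z$, and formal integration with $0$ as constant of integration yields exactly the first line of \eqref{eq:ma1}.

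For $\alpha_{n+1} = \alpha$ we have $\gamma_{n+1} = 1$, and $\widehat h$ is of the same order as the leading block, so the first-order linearization is not sufficient. Following the alternative construction of Lemma~\ref{lem:elim}, introduce the formal time functions $\widehat T_f(\boldsymbol\ell) := \int \frac{z^{1-\alpha}}{\widehat F_1(\boldsymbol\ell)\boldsymbol\ell^2}\,d\boldsymbol\ell$ and $\widehat T_g(\boldsymbol\ell) := \int \frac{z^{1-\alpha}}{\widehat G_1(\boldsymbol\ell)\boldsymbol\ell^2}\,d\boldsymbol\ell$, which formally straighten the flows of the leading blocks of $\widehat f$ and $\widehat g$ respectively (both are well-defined as formal transseries because $\widehat F_1, \widehat G_1 = -\boldsymbol\ell^m + \text{h.o.t.}$ are invertible in $\mathbb R((\boldsymbol\ell))$). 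The required change of variables is then $\widehat T_f^{-1}\circ \widehat T_g$, and writing it as $\mathrm{id} + z\widehat R_1(\boldsymbol\ell)$ recovers the second line of \eqref{eq:ma1}.

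The main obstacle is the combinatorial bookkeeping: verifying that every term omitted in the first-order expansion contributes only at blocks strictly above $z^{\alpha_{n+1}}$, in particular that cross-products between $\widehat h$ and higher blocks $z^{\alpha_i}\widehat T_i$ with $i \leq n$ do not leak back into block $z^{\alpha_{n+1}}$. In the borderline case $\alpha_{n+1} = \alpha$ this cannot be checked by linearization, and instead rests on the formal invertibility of $\widehat T_f$ in $\widehat{\mathcal L}(\mathbb R)$ together with the fact that $\widehat T_f^{-1}\circ\widehat T_g = \mathrm{id} + O(z)$ is tangent to the identity, which in turn uses the shared leading monomial $-\boldsymbol\ell^m$ of $\widehat F_1$ and $\widehat G_1$. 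Once these verifications are in place, the derivation of \eqref{eq:ma1} is identical to that of \eqref{eq:ma} in Lemma~\ref{lem:elim}, and the proof is omitted as stated in the excerpt.
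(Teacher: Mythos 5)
Your proposal follows the approach the paper points to: for $\alpha_{n+1}>\alpha$, linearizing the conjugation produces the same Lie-bracket / first-order linear ODE in $\boldsymbol\ell$ as in Lemma~\ref{lem:elim}, with the source $\widehat T_i$ replaced by the difference of the $(n+1)$-st blocks, and the integrating factor $z^{\gamma_{n+1}-\alpha}\boldsymbol\ell^{-m}$ then yields the first line of \eqref{eq:ma1}; for $\alpha_{n+1}=\alpha$, comparing the leading blocks of the formal Fatou coordinates of $\widehat f$ and $\widehat g$ gives the second line. The paper itself omits the proof and defers to the method of Lemma~\ref{lem:elim}, so your reconstruction is essentially what is intended.
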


\medskip
\noindent \emph{Proof of Proposition~\ref{elim}}. Let 
$$
\widehat f(z)=z+z^\alpha \widehat T_0(\boldsymbol\ell)+z^{\alpha_1} \widehat T_1(\boldsymbol\ell)+\ldots
$$
Here, $\widehat T_i(\boldsymbol\ell)\in \mathbb R((\boldsymbol\ell))$, $i\in\mathbb N_0$, with the leading term of $\widehat T_0$ equal to $\mathrm{Lt}(\widehat T_0(\boldsymbol\ell))=-\boldsymbol\ell^m$, $m\in\mathbb Z$.

We proceed similarly as in \cite[Theorem A]{mrrz2}, but eliminate block by block instead of term by term. We first eliminate the initial block $z^\alpha \widehat T_0(\boldsymbol\ell)$, which can be eliminated except for the first term $-z^\alpha\boldsymbol\ell^m$. Indeed, in each step of the elimination we search for a formal change of variables:
$$
\widehat \varphi_i(z)=z+z^{\gamma_i} \widehat R_i(\boldsymbol\ell),\ \text{ord}(z^{\gamma_i} \widehat R_i)\succ (1,0),\ i\in\mathbb N_0,
$$
where $\widehat R_i\in\mathbb R((\boldsymbol\ell))$. Here, the order of a transseries $\text{ord}(.)$ means the lexicographic order of its first term.

By Lemma~\ref{lem:elim},
in order to remove the first block $z^\alpha \widehat T_0(\boldsymbol\ell)$ (that is, its part that can be eliminated), we need to take $\gamma_0=1$. But then $\mathrm{ord}_{\boldsymbol\ell}(\widehat R_0)\geq 1$. Therefore, the first monomial $-z^\alpha\boldsymbol \ell^{m}$ cannot be eliminated. The remainder of the first block is eliminated by the change $\widehat\varphi_0(z)=z+z\widehat R_0(\boldsymbol\ell)$, where $\widehat R_0(\boldsymbol\ell)\in\boldsymbol\ell\,\mathbb R[[\boldsymbol\ell]]$ is given in \eqref{eq:ma} (it is deduced by integration by parts and formal composition). 

Furthermore, it is obvious by integration by parts of the explicit formula \eqref{eq:ma} in Lemma~\ref{lem:elim} that, if $\gamma_i\neq 1$ and $\gamma_i\neq \alpha$, then $\widehat T_i(\boldsymbol\ell)\in\mathbb R((\boldsymbol\ell))$ implies $\widehat R_i(\boldsymbol\ell)\in\mathbb R((\boldsymbol\ell))$
 and no iterated logarithms are generated. Therefore, we do not need iterated logarithms to remove blocks before the residual block, and consequently all $\widehat T_i(\boldsymbol\ell)$ up to the residual block (inclusive) belong to $\mathbb R((\boldsymbol\ell))$. That is, $\widehat f_i,\ \widehat\varphi_i\in\widehat{\mathcal L}(\mathbb R)$ for all eliminatons before the residual. Here, $\widehat f_i$ denotes the initial transseries after first $i$ changes of variables, that is, $\widehat f_i=(\circ_{j=0,\ldots,i} \widehat\varphi_j)^{-1}\circ\widehat f \circ (\circ_{j=0,\ldots,i} \widehat\varphi_j)$, $i\in\mathbb N_0$.

If $\beta_i=2\alpha-1$ (elimination of the residual block $z^{2\alpha-1} \widehat T_i(\boldsymbol\ell)$, $\widehat T_i(\boldsymbol\ell)\in\mathbb R((\boldsymbol\ell))$), by Lemma~\ref{lem:elim} we choose $\gamma_i=\alpha$ and
$$
\widehat R_i(\boldsymbol\ell)=-\boldsymbol\ell^{m}\int \boldsymbol\ell^{-2m-2}\widehat T_i(\boldsymbol\ell)d\boldsymbol\ell.
$$
It is easy to see that, by an appropriate $\widehat R_i(\boldsymbol\ell)\in\mathbb R((\boldsymbol\ell))$, we can eliminate all terms in $\widehat T_i(\boldsymbol\ell)$ except for the residual term $z^{2\alpha-1}\boldsymbol\ell^{2m+1}$. Set of parabolic transseries from $\widehat{\mathcal L}(\mathbb R)$ is a group for composition, so after eliminating the residual block we have $\widehat f_i:=\widehat \varphi_i^{-1}\circ \widehat f_{i-1}\circ \widehat\varphi_i\in \widehat{\mathcal L}(\mathbb R)$. After eliminating the residual block, further blocks in $\widehat f_i$ do not contain iterated logarithms. By Lemma~\ref{lem:elim}, for $\beta_i> 2\alpha-1$ ($\gamma_i> \alpha$), we remove all further blocks $z^{\beta_i}\widehat T_i(\boldsymbol\ell)$, $\widehat T_i(\boldsymbol\ell)\in\mathbb R((\boldsymbol\ell))$, using changes of variables with $\widehat R_i(\boldsymbol\ell)\in\mathbb R((\boldsymbol\ell))$, as was explained before. %Therefore $\widehat{\mathcal L}_2(\mathbb R)$ is preserved by the changes of variables. \emph{If we do not remove the residual term} $z^{2\alpha-1}\boldsymbol\ell^{2m+1}$, we need only $\widehat R_i(\boldsymbol\ell)\in\widehat{\mathcal L}_0^\infty(\mathbb R)$ and our change of variables belongs to $\widehat{\mathcal L}(\mathbb R)$.

Inductively, to remove all possible terms, we solve a sequence of Lie bracket equations, see \cite[Theorem A]{mrrz2} for details. But in contrast with \cite{mrrz2} where we eliminate term by term, we solve here only countably many equations and not a transfinite number of equations. Indeed, due to the fact that $\alpha_i$ in $\widehat f$ are finitely generated, the set of all $\gamma_i$ needed in blockwise eliminations is also finitely generated. For details, see \cite{mrrz2}. 
\hfill $\Box$

\begin{obs}\label{rem:elim}
If we allow formal conjugation in Proposition~\ref{elim} in the larger class $\widehat{\mathcal L}_2(\mathbb R)$, we are able to eliminate also the residual term $\rho z^{2\alpha-1}\boldsymbol\ell^{2m+1}$. By a formal diffeomorphism $\widehat\varphi\in\widehat{\mathcal L}_2(\mathbb R)$, a parabolic generalized Dulac series $\widehat f$ from Proposition~\ref{elim} can be reduced to any of the normal forms:
\begin{align*}
&f_F(z)=z-z^\alpha\boldsymbol\ell^{m},\nonumber\\ &\widehat f_2(z)=\mathrm{Exp}( -z^{\alpha}\boldsymbol\ell^m\frac{d}{dz}).\mathrm{id}.
\end{align*}
The $\widehat{\mathcal L}_2(\mathbb R)$-formal invariants are $(\alpha,m)$. The proof is similar to the proof of Proposition~\ref{elim}. Now Lemma~\ref{lem:elim} can be simply rewritten with $\widehat R_i(\boldsymbol\ell),\ \widehat T_i(\boldsymbol\ell)\in\widehat{\mathcal L}_{\boldsymbol\ell}^\infty(\mathbb R)$\footnote{$\widehat R_i(\boldsymbol\ell)$ and $\widehat T_i(\boldsymbol\ell)$ are transseries in integer powers of $\boldsymbol\ell$ and $\boldsymbol\ell_2$, see the notation \eqref{eq:nozn}.}, instead of $\widehat R_i(\boldsymbol\ell),\ \widehat T_i(\boldsymbol\ell)\in\mathbb R((\boldsymbol\ell))$. In order to remove the residual term $z^{2\alpha-1}\boldsymbol\ell^{2m+1}$ in appropriate $\widehat{T}_i(\boldsymbol\ell)$, we need a double logarithm monomial $\boldsymbol\ell^{m}\boldsymbol\ell_2^{-1}$ in the corresponding $\widehat R_i(\boldsymbol\ell)$ in the change of variables $\widehat\varphi_i$. Therefore, the residual elementary change $\widehat \varphi_i$ belongs to $\widehat{\mathcal L}_2(\mathbb R)$. The blocks of $\widehat f_i$ after removing the residual block in general also contain double logarithms, that is, $\widehat T_i(\boldsymbol\ell)\in \widehat{\mathcal L}_{\boldsymbol\ell}^\infty(\mathbb R)$. By a generalization of Lemma~\ref{lem:elim} to $\widehat{\mathcal L}_2(\mathbb R)$, we remove them by $\widehat R_i(\boldsymbol\ell)\in \widehat{\mathcal L}_{\boldsymbol\ell}^\infty(\mathbb R)$, in the subsequent changes of variables. The parabolic transseries in $\widehat{\mathcal L}_2(\mathbb R)$ also form a group under composition.  %Thus the class $\widehat{\mathcal L}(\mathbb R)$ for the changes of variables that was used in \cite{mrrz2} does not suffice if we want to eliminate the residual term $z^{2\alpha-1}\boldsymbol\ell^{2m+1}$, and we need changes of variables in the wider class $\widehat{\mathcal L}_2(\mathbb R)$. 
\end{obs}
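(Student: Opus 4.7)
The plan is to follow the blockwise elimination strategy of Proposition~\ref{elim}, but to enlarge the arena from $\mathbb R((\boldsymbol\ell))$ to $\widehat{\mathcal L}_{\boldsymbol\ell}^{\infty}(\mathbb R)$ (formal series in integer powers of $\boldsymbol\ell$ and $\boldsymbol\ell_2$) so as to capture the single obstruction that prevents elimination of the residual term in $\widehat{\mathcal L}(\mathbb R)$. The first step is to state and prove a cosmetic generalization of Lemma~\ref{lem:elim} in which $\widehat T_i(\boldsymbol\ell),\widehat R_i(\boldsymbol\ell)\in\widehat{\mathcal L}_{\boldsymbol\ell}^{\infty}(\mathbb R)$ instead of $\mathbb R((\boldsymbol\ell))$, with $\widehat R_i$ still given by \eqref{eq:ma}. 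The proof is identical in spirit: one expands $\widehat\varphi_i^{-1}\circ\widehat f\circ\widehat\varphi_i$ and identifies the block of order $(\beta_i,\cdot)$, reducing the block-killing condition to the same first-order linear ODE whose explicit solution is \eqref{eq:ma}; the verification only needs that $\widehat{\mathcal L}_{\boldsymbol\ell}^{\infty}(\mathbb R)$ is closed under $\frac{d}{d\boldsymbol\ell}$, formal integration (in particular $\int\boldsymbol\ell^{-1}d\boldsymbol\ell=-\boldsymbol\ell_2^{-1}$), multiplication and composition, which is classical.

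Next, we run the elimination block by block exactly as in the proof of Proposition~\ref{elim}. Up to and including the pre-residual blocks, nothing changes: all $\widehat R_i$ and all subsequent $\widehat T_i$ stay in $\mathbb R((\boldsymbol\ell))$, so the generalized lemma reduces to the original one. The new ingredient appears at the residual block $\beta_i=2\alpha-1$, where after the reductions of Proposition~\ref{elim} we are left with $\widehat T_i(\boldsymbol\ell)=\rho\,\boldsymbol\ell^{2m+1}$. Taking $\gamma_i=\alpha$ in \eqref{eq:ma} and computing formally,
\begin{equation*}
\widehat R_i(\boldsymbol\ell)=-\boldsymbol\ell^{m}\int \boldsymbol\ell^{-2m-2}\cdot\rho\,\boldsymbol\ell^{2m+1}\,d\boldsymbol\ell=-\rho\,\boldsymbol\ell^{m}\int\boldsymbol\ell^{-1}d\boldsymbol\ell=\rho\,\boldsymbol\ell^{m}\boldsymbol\ell_2^{-1},
\end{equation*}
so that $\widehat\varphi_i(z)=z+\rho\,z^{\alpha}\boldsymbol\ell^{m}\boldsymbol\ell_2^{-1}\in\widehat{\mathcal L}_2^{\mathrm{id}}(\mathbb R)\setminus\widehat{\mathcal L}^{\mathrm{id}}(\mathbb R)$. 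This is the unique step where a double logarithm is created, and it kills the residual invariant $\rho$.

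After this residual step, the transseries $\widehat f_i$ in general contains blocks whose coefficients lie in $\widehat{\mathcal L}_{\boldsymbol\ell}^{\infty}(\mathbb R)\setminus\mathbb R((\boldsymbol\ell))$, because composition with $\widehat\varphi_i$ propagates the $\boldsymbol\ell_2^{-1}$ factor into all later blocks. We continue with the generalized Lemma~\ref{lem:elim}: for each $\gamma_i>\alpha$ the formula \eqref{eq:ma} produces $\widehat R_i(\boldsymbol\ell)\in\widehat{\mathcal L}_{\boldsymbol\ell}^{\infty}(\mathbb R)$ (by repeated integration by parts the integrand is reduced to a convergent expression in $\boldsymbol\ell$ and $\boldsymbol\ell_2^{-1}$), so each such block can be eliminated by an elementary change $\widehat\varphi_i\in\widehat{\mathcal L}_2^{\mathrm{id}}(\mathbb R)$, killing the whole block including any double-logarithm terms.

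Finally, one assembles the infinite composition $\widehat\varphi=\cdots\circ\widehat\varphi_2\circ\widehat\varphi_1\circ\widehat\varphi_0$. Finite generation of the exponents $\alpha_i$ in $\widehat f$ implies finite generation of the $\gamma_i$, so only countably many elementary changes are needed (not transfinitely many), and the order of each change strictly increases, ensuring formal convergence in $\widehat{\mathcal L}_2^{\mathrm{id}}(\mathbb R)$. The equivalence of the two normal forms $\widehat f_F$ and $\widehat f_2$ is the same computation as in Proposition~\ref{elim}. The only nontrivial ingredient is the group property of parabolic transseries in $\widehat{\mathcal L}_2(\mathbb R)$ under composition; I expect this to be the main technical obstacle, but it follows along the same lines as the corresponding fact for $\widehat{\mathcal L}(\mathbb R)$ (proved in \cite{mrrz2}), since $\widehat{\mathcal L}_2$ is obtained by adjoining one further layer of iterated logarithm and the arguments of \cite{mrrz2} go through verbatim with $\boldsymbol\ell$ replaced by $(\boldsymbol\ell,\boldsymbol\ell_2)$.
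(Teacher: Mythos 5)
Your proposal is correct and follows essentially the same route as the paper: generalize Lemma~\ref{lem:elim} to $\widehat R_i,\widehat T_i\in\widehat{\mathcal L}_{\boldsymbol\ell}^{\infty}(\mathbb R)$, observe that the pre-residual eliminations are unchanged, compute that the residual block with $\gamma_i=\alpha$ and $\widehat T_i=\rho\boldsymbol\ell^{2m+1}$ forces $\widehat R_i=\rho\boldsymbol\ell^m\boldsymbol\ell_2^{-1}$ via $\int\boldsymbol\ell^{-1}d\boldsymbol\ell=\log\boldsymbol\ell=-\boldsymbol\ell_2^{-1}$, and then sweep the remaining double-logarithm blocks by the same formula. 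The closing appeal to the group structure of parabolic transseries in $\widehat{\mathcal L}_2(\mathbb R)$ and to finite generation of the exponents also mirrors the paper's argument.
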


\section{Integral summability}\label{sec:summa}

We describe here the \emph{integrally summable} nature of blocks in formal conjugacies $\widehat\varphi\in\widehat{\mathcal L}(\mathbb R)$ that reduce a parabolic generalized Dulac germ to its formal normal form, and in its formal Fatou coordinate $\widehat\Psi\in\widehat{\mathcal L}_2(\mathbb R)$. After introducing some necessary definitions, the statement is given in Propositions~\ref{prop:fffatou} and \ref{prop:ok} below.
\smallskip

\subsection{The notion of integral summability}\label{subsec:iis}

In this subsection we apply Definition~\ref{def:isk} of \emph{integral summability} of length $1$ to blocks of formal Fatou coordinates of a generalized Dulac germ, see Proposition~\ref{prop:fffatou}. Definition~\ref{def:isk} is a generalization of the notion of integral summability introduced before in \cite[Definition 3.9]{MRRZ2Fatou} for the formal Fatou coordinate of parabolic Dulac germs. We will again use the same name for simplicity. For the formal conjugacies, we will moreover need integral summability of higher lengths from Definition~\ref{def:isk} and the notion of \emph{block iterated integral summability} from Definition~\ref{def:dai} in the next subsection. This notion is motivated by Lemmas~\ref{lem:elim} and \ref{lem:elim1}, showing that we apply one integration at every step of the construction of $\widehat\varphi$. By Proposition~\ref{elim}, up to some initial transformation due to the elimination of the first block, $\widehat\varphi$ is constructed as composition of countably many elementary changes $\widehat{\varphi}_i=z+z^{\beta_i}\widehat R_i(\boldsymbol\ell),\ i\in\mathbb N$, with strictly increasing $\beta_i>1$. By formulas \eqref{eq:ma} and \eqref{eq:ma1}, $\widehat R_1(\boldsymbol\ell)$ is \emph{integrally summable of length $1$}, while $\widehat R_2(\boldsymbol\ell),\ \widehat R_3(\boldsymbol\ell)$, etc., become \emph{integrally summable of higher lengths} with respect to the previous $\widehat R_i(\boldsymbol\ell)$, as defined in the following inductive definition. 

The notion of integral summability resembles the notion of block iterated integrals introduced by Chen in \cite{chen} and studied extensively in the context of first return maps, \cite{gavrilov}, \cite{jessie}, among others.

\smallskip
For the subsequent use, let us introduce the notation $\widehat{\mathcal L}_{\boldsymbol\ell}^{\infty}(\mathbb R)$ for the set of transseries in $\boldsymbol\ell$ and $\boldsymbol\ell_2$ with \emph{integer exponents} and real coefficients:
\begin{equation}\label{eq:nozn}
\widehat F(\boldsymbol\ell)=\sum_{(m,n)\in A\subseteq \mathbb Z\times \mathbb Z} a_{m,n}\boldsymbol\ell^m\boldsymbol\ell_2^n,\ a_{m,n}\in\mathbb R,
\end{equation}
where $A\subseteq \mathbb Z\times \mathbb Z$ is well-ordered.
\smallskip

\begin{defi}[Integral summability of series of length $k$, $k\in\mathbb N_0$]\label{def:isk} Let $0<\theta\leq 2\pi$ and let $S_\theta=\boldsymbol\ell(V_\theta)$ be an $\boldsymbol\ell$-cusp, where $V_\theta$ is a sector or a petal of opening $\theta$. We say that a series $\widehat F(\boldsymbol\ell)\in\widehat{\mathcal L}_{\boldsymbol\ell}^\infty(\mathbb R)$, with at most one double-logarithmic term, is:
\begin{itemize}
\item[(1)]\emph{integrally summable of length $0$} on $S_\theta$ if there exists $m>\frac{\pi}{\theta}$ such that $\widehat F(\boldsymbol\ell)\in \widehat{LG}_m(S_{\theta})$. We call $\boldsymbol\ell\mapsto F(\boldsymbol\ell)\in LG_m(S_\theta)$ its  \emph{$0$-integral sum}.%\footnote{In notation from Section~\ref{sec:classes}, assume that $\widehat F$ can be written in a non-unique way as $\widehat F=\frac{\widehat P_1}{\widehat Q_1}=\frac{\widehat P_2}{\widehat Q_2}$, where $\widehat P_{1},\ \widehat Q_{1}\in\widehat{LG}_m(S_\theta)$ and $\widehat P_{2},\ \widehat Q_{2}\in\widehat{LG}_n(S_\theta)$, $m,\,n>\frac{\pi}{\theta}$, with $\log$-Gevrey sums $P_{1,2},\ Q_{1,2}$ on $S_\theta$. Then $\widehat P_1(\boldsymbol\ell)\widehat Q_2(\boldsymbol\ell)=\widehat P_2(\boldsymbol\ell)\widehat Q_1(\boldsymbol\ell)$. By Proposition~\ref{prop:clomult}, both sides belong to $\widehat{LG}_s(S_\theta),\ \frac{\pi}{\theta}<s<\min\{m,n\}$. By Watson's uniqueness Corollary~\ref{cory:vary}, for their $s$-$\log$-Gevrey sums on $S_\theta$ it follows that $P_1 Q_2=P_2 Q_1$ on $S_\theta$. Therefore, the sum $F:=\frac{P_1}{Q_1}=\frac{P_2}{Q_2}$ on $S_\theta$ is well-defined.},\footnote{Note also that $F$ is well-defined and analytic on $\boldsymbol\ell$-cusp $S_\theta$, since $Q$ cannot have an accumulation of zero points in $S_\theta$ at $0$. Otherwise, it is easy to see that its asymptotic expansion $\widehat Q$ in power-log scale on $S_\theta$ would be $0$.}. %\edz{Pavao: In the Fatou coordinate, in each block we need integrally summable, we have to control denominators $\widehat Q$ in all blocks-they are powers of the initial block, therefore zero points do not approach zero in blocks-important so that $\Psi$ exists on some positive radius.}.
\smallskip

\item[(2)] \emph{integrally summable of length $1$} on $S_\theta$, if $\widehat F(\boldsymbol\ell)$ is not integrally summable of length $0$ on $S_\theta$, and if there exist exponents $\alpha_1\in\mathbb R$ and $p_1\in\mathbb Z$ and $\widehat R(\boldsymbol\ell)\in\widehat{\mathcal L}_{\boldsymbol\ell}^\infty(\mathbb R)$ integrally summable of length $0$ on $S_\theta$ $($with $0$-integral sum $R)$, such that
\begin{equation}\label{eq:is1}
\frac{d}{d\boldsymbol\ell}\Big(e^{-\frac{\alpha_1}{\boldsymbol\ell}}\boldsymbol\ell^{p_1}\widehat F(\boldsymbol\ell)\Big)=e^{-\frac{\alpha_1}{\boldsymbol\ell}} \boldsymbol\ell^{2p_1-2}\widehat R(\boldsymbol\ell).
\end{equation}
The germ 
$$F(\boldsymbol\ell):=\frac{\int_*^{\boldsymbol\ell} e^{-\frac{\alpha_1}{\eta}} \eta^{2p_1-2} R(\eta) d\eta}{e^{-\frac{\alpha_1}{\boldsymbol\ell}}\boldsymbol\ell^{p_1}},$$
analytic on $S_\theta$, is called  a \emph{$1$-integral sum of $\widehat F$ on $S_\theta$}. Here, $*$ is $0$ if $(\alpha_1,\mathrm{ord}(\widehat R)+2p_1-1)\succ (0,0)$, that is, if the subintegral function is bounded at $0$, or $\boldsymbol\ell_0\in S_\theta$, if not. It is unique up to an additive term $Ce^{\frac{\alpha_1}{\boldsymbol\ell}}\boldsymbol\ell^{-p_1}=Cz^{-\alpha_1}\boldsymbol\ell^{-p_1}$, $C\in\mathbb R$. The pair of exponents $(\alpha_1,p_1)\in(\mathbb R,\mathbb Z)$ is called \emph{the exponent of integration} of $\widehat F(\boldsymbol\ell)$.

\smallskip

\item[(3)] in general, a series $\widehat F(\boldsymbol\ell)\in\widehat{\mathcal L}_{\boldsymbol\ell}^\infty(\mathbb R)$ is \emph{integrally summable of length $k$, $k\geq 2,$ on $S_\theta$, with respect to a set $\widehat U:=\big\{\widehat R_i^{j_i}(\boldsymbol\ell):\,i=1,\ldots,k-1,\ j_i=1,\ldots,r_i\}\subseteq \widehat{\mathcal L}_{\boldsymbol\ell}^{\infty}(\mathbb R)$, where $\widehat R_i^{j_i}(\boldsymbol\ell)\in \widehat U$, $j_i=1,\ldots,r_i$, are integrally summable of length $i$ on $S_\theta$, $i=1,\ldots,k-1$}, with respect to previous $\big\{\widehat R_n^{j_n}(\boldsymbol\ell):\,n=1,\ldots,i-1,\ j_n=1,\ldots,r_n\}\subseteq \widehat U$ and where all elements of $\widehat U$ have different exponents of integration, if $\widehat F(\boldsymbol\ell)$ is not of the form 
$$
\widehat F(\boldsymbol\ell)=\widehat S_{\leq l}^{\widehat R_1^{1},\ldots,\widehat R_1^{r_1};\ldots;\widehat R_{l}^{1},\ldots,\widehat R_l^{r_l}}(\boldsymbol\ell)\in\widehat {\mathcal L}_{\boldsymbol\ell}^\infty(\mathbb R), \ l\leq k-1,
$$
where $\widehat S_{\leq l}^{\widehat R_1^{1},\ldots,\widehat R_1^{r_1};\ldots;\widehat R_{l}^{1},\ldots,\widehat R_l^{r_l}}(\boldsymbol\ell)\in\widehat {\mathcal L}_{\boldsymbol\ell}^\infty(\mathbb R)$ is:
\begin{itemize}
%\item[$i)$] $k=1$: $\widehat S_{\leq 0}(\boldsymbol\ell)$ integrally summable of length $0$,

\item[$i)$] $l=1$: $\widehat S_{\leq 1}^{\widehat R_1^{1},\ldots,\widehat R_1^{r_1}}(\boldsymbol\ell)$ is a finite algebraic combination $($with operations $+,\cdot,/,\frac{d}{d\boldsymbol\ell}$$)$ in integrally summable series of length $0$ and in $\widehat R_1^{j_1}(\boldsymbol\ell)$, $j_1=1,\ldots,r_1$, which is not integrally summable of length $0$, 

\item[$ii)$] $l\geq 2$: $\widehat S_{\leq l}^{\widehat R_1^{1},\ldots,\widehat R_1^{r_1};\ldots;\widehat R_{l}^{1},\ldots,\widehat R_l^{r_l}}(\boldsymbol\ell)$ is a finite algebraic combination $($with operations $+,\cdot,/,\frac{d}{d\boldsymbol\ell}$$)$ in integrally summable series of length $0$, in $\widehat R_1^{j_1}(\boldsymbol\ell)\subseteq \widehat U$, $j_1=1,\ldots,r_1,$ and in integrally summable series $\widehat R_i^{j_i}(\boldsymbol\ell)\in \widehat U$, $j_i=1,\ldots,r_i$, of length $i$ with respect to previous $\big\{\widehat R_n^{j_n}(\boldsymbol\ell):\,n=1,\ldots,i-1,\ j_n=1,\ldots,r_n\}\subseteq \widehat U$, $1\leq i\leq l$, that does not belong to $\widehat S_{\leq l-1}^{\widehat R_1^{1},\ldots,\widehat R_1^{r_1};\ldots;\widehat R_{l-1}^{1},\ldots,\widehat R_{l-1}^{r_{l-1}}}(\boldsymbol\ell)$,
\end{itemize}
\smallskip 

\noindent and if there exists a pair of exponents $(\alpha_k,p_k)\in(\mathbb R,\mathbb Z)$ such that 
\begin{equation}\label{eq:is2}
\frac{d}{d\boldsymbol\ell}\Big(e^{-\frac{\alpha_k}{\boldsymbol\ell}} \boldsymbol\ell^{p_k} \widehat F(\boldsymbol\ell)\Big)=e^{-\frac{\alpha_k}{\boldsymbol\ell}} \boldsymbol\ell^{2p_k-2}\widehat S_{\leq k-1}^{\widehat R_1^{1},\ldots,\widehat R_1^{r_1};\ldots;\widehat R_{k-1}^{1},\ldots,\widehat R_{k-1}^{r_{k-1}}}(\boldsymbol\ell),
\end{equation}
where $\widehat S_{\leq k-1}^{\widehat R_1^{1},\ldots,\widehat R_1^{r_1};\ldots;\widehat R_{k-1}^{1},\ldots,\widehat R_{k-1}^{r_{k-1}}}(\boldsymbol\ell)\in\widehat{\mathcal L}_{\boldsymbol\ell}^\infty(\mathbb R)$ are of the above described form. \newline Denote by $S_{\leq k-1}^{R_1^{1},\ldots,R_1^{r_1};\ldots;R_{k-1}^{1},\ldots,R_{k-1}^{r_{k-1}}}(\boldsymbol\ell)$  a sum of $\widehat S_{\leq k-1}^{\widehat R_1^{1},\ldots,\widehat R_1^{r_1};\ldots;\widehat R_{k-1}^{1},\ldots,\widehat R_{k-1}^{r_{k-1}}}(\boldsymbol\ell)$ $($taken in the natural way, respecting operations, and with some fixed choice of constants in integrals for $R_i^{j_i}(\boldsymbol\ell)$, $i=1,\ldots,l$, $j_i=1,\ldots,r_i$, from previous steps$)$. Then the germ \begin{equation*}F(\boldsymbol\ell):=\frac{\int_*^{\boldsymbol\ell} e^{-\frac{\alpha_k}{\eta}} \eta^{2p_k-2} S_{\leq k-1}^{R_1^{1},\ldots,R_1^{r_1};\ldots;R_{k-1}^{1},\ldots,R_{k-1}^{r_{k-1}}}(\eta) d\eta}{e^{-\frac{\alpha_k}{\boldsymbol\ell}}\boldsymbol\ell^{p_k}},\end{equation*}  analytic on $S_\theta$, is called its \emph{$k$-integral sum}. Here, the choice of a base point $*$ is as above in $(2)$. This $k$-sum is unique $\big($after fixing one particular sum  $S_{\leq k-1}^{R_1^{1},\ldots,R_1^{r_1};\ldots;R_{k-1}^{1},\ldots,R_{k-1}^{r_{k-1}}}(\boldsymbol\ell)$ of $\widehat S_{\leq k-1}^{\widehat R_1^{1},\ldots,\widehat R_1^{r_1};\ldots;\widehat R_{k-1}^{1},\ldots,\widehat R_{k-1}^{r_{k-1}}}(\boldsymbol\ell)\big)$ up to an additive term $Ce^{\frac{\alpha_k}{\boldsymbol\ell}}\boldsymbol\ell^{-p_k}=Cz^{-\alpha_k}\boldsymbol\ell^{-p_k}$, $C\in\mathbb R$ $($due to the possible change in $\boldsymbol\ell_0\in \boldsymbol\ell(V_j^{\pm})$ if the subinegral function is not bounded at $0)$.
\end{itemize}
%\medskip
%The $k$-tuple $(\alpha_1,\alpha_2,\ldots,\alpha_k)$ will be called \emph{the exponent of integration of $\widehat F$ of order $k$} if there exist $\widehat F_i$ integrally summable of order $k-i$, $i=0,\ldots,k,$ such that $\widehat F_0=\widehat F$, $\frac{d}{d\boldsymbol\ell}(z^{\alpha_{k-i} }\widehat F_{i})=z^{\alpha_{k-i}}\widehat F_{i+1}$, $i=0\ldots k-1$.
\end{defi}

%We say that $\widehat R$ is integrally summable of depth $n\geq 0$ with exponents of integration $(\alpha_n,\ldots,\alpha_1)$ if $\alpha_n$ is the exponent of integration of $\widehat R$, and each integrally summable series of smaller depth $1\leq i<n$ appearing in the iterative definition has the same exponent of integration $\alpha_{i}$. 
%with exponents of integration $(\alpha_n,\ldots,\alpha_1)$ is unique up to $C_1z^{-\alpha_1}+C_2 z^{-\alpha_2}+\ldots+C_n z^{-\alpha_n}$, $C_i\in\mathbb R$.
Note that equation \eqref{eq:is1} in Definition~\ref{def:isk} is equivalent to solving a non-homogenous linear ordinary differential equation:
$$
\boldsymbol\ell^2\frac{d}{d\boldsymbol\ell}\widehat F+(\alpha_1+p_1\boldsymbol\ell)\widehat F=\boldsymbol\ell^{p_1}\widehat R.
$$
Similarly for equation \eqref{eq:is2}.
\smallskip

Note that integration paths in the integrals from $*$ to $\boldsymbol\ell$ do not matter. Indeed, $\boldsymbol\ell(V_\theta)$ is a cusp with sufficiently small radius (a germ), simply connected, and the subintegral function is analytic on $\boldsymbol\ell(V_\theta)$. 
\smallskip

By the name a \emph{block iterated integral sum} $$U:=\big\{R_n^{j_n}(\boldsymbol\ell):\,n=1,\ldots,k-1,\ j_n=1,\ldots,r_n\}$$ of a set $$\widehat U=\big\{\widehat R_n^{j_n}(\boldsymbol\ell):\,n=1,\ldots,k-1,\ j_n=1,\ldots,r_n\}$$ of integrally summable series $\widehat R_i^r(\boldsymbol\ell)$ with respect to previous ones $\widehat R_j^p(\boldsymbol\ell)$, $1\leq j<i$, of strictly lower lengths, as described in Definition~\ref{def:isk} above, we assume the choice of only one constant of integration $\boldsymbol\ell_0$ in each step, while the subintegral functions in each step are determined by constants of integration from previous steps.

\begin{obs} If $\widehat F$ in \eqref{eq:is1} is already integrally summable of length $0$ on $S_\theta$, then \eqref{eq:is1} is satisfied with every exponent of integration $(\alpha,p)\in(\mathbb R,\mathbb Z)$. Similarly, let $k\geq 2$. If $\widehat F(\boldsymbol\ell)$ is already a finite algebraic combination in integrally summable series of length $0$, in $\widehat R_1^{j_1}(\boldsymbol\ell)$, $j_1=1,\ldots,r_1$, which are integrally summable of length $1$, and in integrally summable series $\widehat R_l^{j_l}(\boldsymbol\ell)$, $j_l=1,\ldots,r_l,$ of all lengths $2\leq l\leq k-1$ with respect to previous $\{\widehat R_n^{j_n}(\boldsymbol\ell):\,n=1,\ldots,l-1,\ j_n=1,\ldots,r_n\}$, then \eqref{eq:is2} holds with \emph{every} exponent of integration $(\alpha,p)\in(\mathbb R,\mathbb Z)$. Then their sums may be taken in the natural way, respecting the operations (with ambiguity in the choice of additive terms in each step). That is why, in each step of the inductive Definition~\ref{def:isk}, we exclude such \emph{trivial} cases, to have uniqueness of exponents of integration in each step.
\end{obs}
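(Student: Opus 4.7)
The plan is to prove both halves of the observation by direct algebraic manipulation: solve the defining equation \eqref{eq:is1} (respectively \eqref{eq:is2}) for the unknown right-hand-side series in terms of $\widehat F$ and $\widehat F'$, and then invoke the closure results of Section~\ref{sec:classes} to check that the resulting series lies in the required class of ``trivial'' combinations. This furnishes the right-hand-side series for an arbitrary choice of exponents of integration, which is exactly the content of the remark.

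For the length-$0$ case, first I expand the derivative in \eqref{eq:is1} and divide through by $e^{-\alpha_1/\boldsymbol\ell}\boldsymbol\ell^{2p_1-2}$. A short calculation yields the explicit expression
$$\widehat R(\boldsymbol\ell) \;=\; \boldsymbol\ell^{-p_1}\bigl(\alpha_1 + p_1\boldsymbol\ell\bigr)\widehat F(\boldsymbol\ell)\;+\;\boldsymbol\ell^{2-p_1}\widehat F'(\boldsymbol\ell),$$
valid for any $(\alpha_1,p_1)\in\mathbb R\times\mathbb Z$. Under the hypothesis $\widehat F\in\widehat{LG}_m(S_\theta)$ with $m>\pi/\theta$, Proposition~\ref{prop:clodif} gives $\widehat F'\in\widehat{LG}_r(S_\theta)$ for every $0<r<m$, and the Laurent-series extension of Definition~\ref{def:df} together with Proposition~\ref{prop:clomult} ensures that multiplication by the monomials $\boldsymbol\ell^{-p_1}$, $\boldsymbol\ell^{1-p_1}$, $\boldsymbol\ell^{2-p_1}$ and addition preserve the class. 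Choosing $r\in(\pi/\theta,m)$, one concludes $\widehat R\in\widehat{LG}_r(S_\theta)$, so $\widehat R$ is integrally summable of length $0$ and \eqref{eq:is1} indeed holds for the arbitrary $(\alpha_1,p_1)$.

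The length-$k$ case for $k\geq 2$ proceeds in exactly the same way. Solving \eqref{eq:is2} for the right-hand side yields
$$\widehat S_{\leq k-1}(\boldsymbol\ell) \;=\; \boldsymbol\ell^{-p_k}\bigl(\alpha_k + p_k\boldsymbol\ell\bigr)\widehat F(\boldsymbol\ell)\;+\;\boldsymbol\ell^{2-p_k}\widehat F'(\boldsymbol\ell).$$
By assumption, $\widehat F$ is already a finite algebraic combination, under $+,\cdot,/,\tfrac{d}{d\boldsymbol\ell}$, of length-$0$ integrally summable series and of the integrally summable series $\widehat R_l^{j_l}$ of lengths $1,\ldots,k-1$ with respect to the chain $\widehat U$; this class is manifestly stable under the further operations of differentiation and multiplication by integer powers of $\boldsymbol\ell$. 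Hence the displayed $\widehat S_{\leq k-1}$ is itself an admissible combination of the form $\widehat S_{\leq k-1}^{\widehat R_1^1,\ldots;\widehat R_{k-1}^1,\ldots}(\boldsymbol\ell)$, so \eqref{eq:is2} holds with the arbitrary pair $(\alpha_k,p_k)$.

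The concluding sentence of the remark, that the exponents of integration are ill-defined in these ``trivial'' cases, follows at once: since any pair $(\alpha,p)\in\mathbb R\times\mathbb Z$ produces a valid right-hand side, no canonical pair can be attached to $\widehat F$, which is precisely the reason the inductive Definition~\ref{def:isk} opens each clause by excluding membership in all previous strata. The only subtle point is that differentiation loses an arbitrarily small amount of $\log$-Gevrey order; but because the opening $\theta$ is fixed and $m>\pi/\theta$ is assumed strictly, there is always slack to choose the new order $r$ strictly above $\pi/\theta$, so this is not a genuine obstacle. I do not foresee any other difficulty.
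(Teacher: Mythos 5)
Your proposal is correct, and it supplies exactly the computation that the paper leaves implicit: the Observation has no proof of record, so there is no alternative argument to compare against, and the route you take — differentiate, divide out $e^{-\alpha/\boldsymbol\ell}\boldsymbol\ell^{2p-2}$, and read off the forced right-hand side $\widehat R(\boldsymbol\ell)=\boldsymbol\ell^{-p}(\alpha+p\boldsymbol\ell)\widehat F(\boldsymbol\ell)+\boldsymbol\ell^{2-p}\widehat F'(\boldsymbol\ell)$ — is the natural and essentially unique one. Your invocation of Proposition~\ref{prop:clodif} for the loss of $\log$-Gevrey order under differentiation, the Laurent extension of Definition~\ref{def:df} for monomial shifts, and Propositions~\ref{prop:closum}--\ref{prop:clomult} for closure under $+$ and $\cdot$ is exactly what is needed in the length-$0$ case, and the observation that $m>\pi/\theta$ leaves slack to pick $r\in(\pi/\theta,m)$ correctly addresses the only subtlety. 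For $k\geq 2$ the same formula for the forced right-hand side shows it is again a rational combination of the same data (differentiation and multiplication by Laurent monomials stay inside the class of algebraic combinations from Definition~\ref{def:isk}~(3)), so the equation \eqref{eq:is2} is solvable for every $(\alpha_k,p_k)$; the tiny point you could have flagged, but which does not affect the conclusion, is that the resulting combination may in fact be of strictly lower type $\widehat S_{\leq l}$ with $l<k-1$, which only strengthens the case for excluding such $\widehat F$ from the length-$k$ stratum.
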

%Also note that, if $\widehat F(\boldsymbol\ell)$ is integrally\edz{IS THIS TRUE??} summable of length $0$ on $S_\theta$ and also allows a decomposition~\eqref{dec}  for some fixed $\alpha>1$ and $\widehat T_0(\boldsymbol\ell)$ Laurent at $0$, then its $0$-integral sum equals the sum subordinate to this decomposition and given by \eqref{fla}, up to the choice of $\boldsymbol\ell_0\in S_\theta$ in \eqref{fla}. Indeed, by differentiating formally by $\frac{d}{dz}$ the decomposition \eqref{dec}:
%$$
%\int z^\alpha \widehat T_0(\boldsymbol\ell)\boldsymbol\ell^{-2} dz\circ (z+z\widehat F(\boldsymbol\ell))=\big(\int z^{\alpha}\widehat R(\boldsymbol\ell)\boldsymbol\ell^{-2}dz\big),
%$$
%we get
%\begin{equation}\label{eq:nei}
%\widehat R(\boldsymbol\ell)=(1+\widehat F(\boldsymbol\ell))^{\alpha} \cdot \widehat T_0\Big(\frac{\boldsymbol\ell}{1+\boldsymbol\ell \log(1+\widehat F(\boldsymbol\ell))}\Big)\big(1+\boldsymbol\ell \log(1+\widehat F(\boldsymbol\ell))}\big)^{2}\cdot (1+\widehat F'(\boldsymbol\ell)\boldsymbol\ell^2).
%\end{equation}
%Since $\widehat F\in\widehat {LG}_m(S_\theta)$, it admits a unique $\log$-Gevrey sum $F\in LG_m(S_\theta)$ on $S_\theta$. By Propositions~\ref{prop:closum}-\ref{prop:clocomp} in Section~\rer{sec:}, the left-hand side and the right-hand side of equation~\eqref{eq:nei} belong to $\widehat {LG}_r(S_\theta)$ for every $0<r<m$, with their unique $\log$-Gevrey sums on $S_\theta$ respecting the operations. Therefore, we can rewrite the equality \eqref{eq:nei}, but with $\log$-Gevrey sums of order $0<r<m$. 
%\end{obs}

\begin{prop}[Uniqueness of integral sums of length $k$, $k\geq 1$]\label{prop:u} Let $\widehat F(\boldsymbol\ell)\in\widehat{\mathcal L}_{\boldsymbol\ell}^\infty(\mathbb R)$ be integrally summable of length $1$ on $\boldsymbol\ell$-cusp $S_\theta$, or integrally summable on $S_\theta$ of length $k$, $k\geq 2$, with respect to $\{\widehat R_n^{j_n}(\boldsymbol\ell):\,n=1,\ldots,k-1,\ j_n=1,\ldots,r_n\}\subset\widehat{\mathcal L}_{\boldsymbol\ell}^\infty(\mathbb R)$, which are integrally summable of lengths $1,\ldots k-1$ resp. on $S_\theta$, as defined in Definition~\ref{def:isk}. Then the exponent of integration $(\alpha,p)\in(\mathbb R,\mathbb Z)$ of $\widehat F$ is unique.
\end{prop}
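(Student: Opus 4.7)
The plan is to assume two candidates $(\alpha,p),(\alpha',p')\in(\mathbb R,\mathbb Z)$, with $(\alpha,p)\neq(\alpha',p')$, both realize $\widehat F$ as integrally summable of length $k$ on $S_\theta$, and to force $\widehat F$ into a strictly smaller class than the one used to define length-$k$ summability---the class of length-$0$ summable series when $k=1$, or the class of expressions $\widehat S_{\leq k-1}^{\widehat R_1^{1},\ldots,\widehat R_{k-1}^{r_{k-1}}}$ when $k\geq 2$. This will contradict the non-triviality clauses built into Definition~\ref{def:isk}.

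The first step is to unpack the two instances of \eqref{eq:is1} (or \eqref{eq:is2}). Expanding the Leibniz rule on the left and dividing through by $e^{-\alpha/\boldsymbol\ell}\boldsymbol\ell^{p}$, respectively $e^{-\alpha'/\boldsymbol\ell}\boldsymbol\ell^{p'}$, converts each equation into a first-order linear ODE for $\widehat F$,
\[
\widehat F'(\boldsymbol\ell)+\big(\alpha\boldsymbol\ell^{-2}+p\boldsymbol\ell^{-1}\big)\widehat F(\boldsymbol\ell)=\boldsymbol\ell^{p-2}\widehat S(\boldsymbol\ell),
\]
together with its primed analogue with right-hand side $\boldsymbol\ell^{p'-2}\widehat S'(\boldsymbol\ell)$. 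Subtracting and multiplying by $\boldsymbol\ell^{2}$ yields the purely algebraic identity
\[
\big((\alpha-\alpha')+(p-p')\boldsymbol\ell\big)\,\widehat F(\boldsymbol\ell)=\boldsymbol\ell^{p}\widehat S(\boldsymbol\ell)-\boldsymbol\ell^{p'}\widehat S'(\boldsymbol\ell).
\]
By hypothesis the left-hand prefactor is a nonzero polynomial in $\boldsymbol\ell$, so dividing through expresses $\widehat F$ explicitly as an algebraic combination of the data on the right.

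To close the argument I would then invoke the closure results of Section~\ref{sec:classes}. For $k=1$: $\widehat S=\widehat R$ and $\widehat S'=\widehat R'$ lie in $\widehat{LG}_m(S_\theta)$ for some $m>\pi/\theta$, so by Propositions~\ref{prop:closum}--\ref{prop:clomult} the numerator on the right is in $\widehat{LG}_m(S_\theta)$ (in the Laurent sense of the remark after Definition~\ref{def:df}), and by Proposition~\ref{prop:clocomp}(2) the quotient $\widehat F$ then lies in $\widehat{LG}_r(S_\theta)$ for every $0<r<m$; choosing $r\in(\pi/\theta,m)$ makes $\widehat F$ integrally summable of length $0$, contradicting the non-triviality clause of Definition~\ref{def:isk}(2). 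For $k\geq 2$ the argument is purely formal: the monomial factors keep $\boldsymbol\ell^{p}\widehat S$ and $\boldsymbol\ell^{p'}\widehat S'$ within the finite algebraic combinations (via $+,\cdot,/,\tfrac{d}{d\boldsymbol\ell}$) of the generators $\widehat R_n^{j_n}\in\widehat U$ and length-$0$ summable series, and division by the nonzero polynomial $(\alpha-\alpha')+(p-p')\boldsymbol\ell$ is itself one of the admitted operations, so the identity above exhibits $\widehat F$ as an element of $\widehat S_{\leq k-1}^{\widehat R_1^{1},\ldots,\widehat R_{k-1}^{r_{k-1}}}$, contradicting Definition~\ref{def:isk}(3). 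The main obstacle is the $k=1$ case: one has to ensure the $\log$-Gevrey order does not drop below $\pi/\theta$ through the successive multiplications, subtraction and division, and this is precisely guaranteed by the ``for every $0<r<m$'' slack in Proposition~\ref{prop:clocomp}(2), provided we start from $m>\pi/\theta$ \emph{strictly}, which is the very definition of integral summability of length $0$.
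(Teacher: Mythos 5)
Your proof is correct and follows essentially the same strategy as the paper: assuming two distinct exponents of integration, deriving the algebraic identity $\big((\alpha-\alpha')+(p-p')\boldsymbol\ell\big)\widehat F=\boldsymbol\ell^{p}\widehat S-\boldsymbol\ell^{p'}\widehat S'$, and concluding a contradiction with the non-triviality clauses of Definition~\ref{def:isk} via the closure properties of Section~\ref{sec:classes}. The paper arrives at the same identity by a slightly different computation (pulling out a factor $z^{\alpha-\beta}\boldsymbol\ell^{p-q}$ and differentiating the product directly) rather than via the rewritten first-order ODEs, but this is a presentational difference only.
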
    

The proof, similar as in \cite{MRRZ2Fatou}, is in the Appendix. Proposition~\ref{prop:u} states that the $1$-integral sums and the $k$-integral sums with respect to given integrally summable series of lower lengths are \emph{unique}, up to the choice of the constants of integration in the iterative Definition~\ref{def:isk}. 

\medskip

In Definition~\ref{def:isk}, we also implicitely assume that, if $\widehat S_{\leq k}^{\widehat R_1^{1},\ldots,\widehat R_1^{r_1};\ldots;\widehat R_{k}^{1},\ldots,\widehat R_k^{r_k}}(\boldsymbol\ell)\in\widehat{\mathcal L}_{\boldsymbol\ell}^\infty(\mathbb R)$ can be represented as a finite algebraic combination with operations $+,\cdot,/,\frac{d}{d\boldsymbol\ell}$ in integrally summable series $\widehat R_i^{j_i}(\boldsymbol\ell)$, $j_i=1,\ldots,r_i$, of lengths $i\in\{0,\ 1,\ldots,\ k\}$ respectively, with respect to previous $\{\widehat R_n^{j_n}(\boldsymbol\ell):\,n=1,\ldots,i-1,\ j_n=1,\ldots,r_n\}$, then it can be given a unique sum (up to addition of constants of integration). We prove this in Proposition~\ref{prop:uis} below. In the proof we use the following auxiliary Lemma~\ref{lem:axi}, whose proof is in the Appendix.

\begin{lem}\label{lem:axi} Any finite algebraic combination $\widehat S_{\leq k}^{\widehat R_1^{1},\ldots,\widehat R_1^{r_1};\ldots;\widehat R_{k}^{1},\ldots,\widehat R_k^{r_k}}(\boldsymbol\ell)\in\widehat{\mathcal L}_{\boldsymbol\ell}^\infty(\mathbb R)$, $k\in\mathbb N$, with respect to operations $+,\cdot,/,\frac{d}{d\boldsymbol\ell}$, with $\widehat R_i^{j_i}(\boldsymbol\ell)$, $j_i=1,\ldots,r_{i},$ integrally summable of length $i$, $i=1,\ldots,k$,  as in Definition~\ref{def:isk}, is equal\footnote{after all differentiations} to a \emph{rational function} in series $\widehat R_n^{j_n}(\boldsymbol\ell)$, $n=1,\ldots,k$, $j_n=1,\ldots,r_n$, and in integrally summable series of length $0$. 
\end{lem}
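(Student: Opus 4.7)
\smallskip

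The plan is to induct on $k$, using as the key algebraic tool the defining equation \eqref{eq:is2} for an integrally summable series of length $i\geq 1$, which after expanding the outer derivative and dividing by $e^{-\alpha_i/\boldsymbol\ell}\boldsymbol\ell^{p_i}$ rearranges to
\begin{equation}\label{eq:rearr}
\frac{d}{d\boldsymbol\ell}\widehat R_i^{j_i}(\boldsymbol\ell)=-\Bigl(\frac{\alpha_i}{\boldsymbol\ell^2}+\frac{p_i}{\boldsymbol\ell}\Bigr)\widehat R_i^{j_i}(\boldsymbol\ell)+\boldsymbol\ell^{p_i-2}\widehat S_{\leq i-1}^{\widehat R_1^1,\ldots,\widehat R_{i-1}^{r_{i-1}}}(\boldsymbol\ell).
\end{equation}
The crucial point is that the derivative of a length-$i$ series is thus an \emph{algebraic} (indeed rational in $\boldsymbol\ell$) expression in $\widehat R_i^{j_i}$ itself plus an object $\widehat S_{\leq i-1}^{\ldots}$ built from series of strictly lower length.

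For the base case $k=0$, combinations involve only length-$0$ series under $+,\cdot,/,\frac{d}{d\boldsymbol\ell}$, and Proposition~\ref{prop:clodif} shows that $LG_m(S_\theta)$ is closed under differentiation (with an arbitrarily small drop in the order $m$). Hence every derivative produced remains itself a length-$0$ series, and after all differentiations are carried out the resulting object is a rational combination of length-$0$ series, which is exactly what the statement asks for. For the inductive step, suppose the claim holds for all combinations involving series of length at most $k-1$, and consider a combination $\widehat S_{\leq k}^{\ldots}$. I would perform the differentiations outside-in, using the sum, product, quotient and chain rules to push every $\frac{d}{d\boldsymbol\ell}$ onto atoms; each time it lands on a length-$k$ series $\widehat R_k^{j_k}$, substitute \eqref{eq:rearr}, thereby replacing the derivative by a rational expression in $\widehat R_k^{j_k}$ and a combination $\widehat S_{\leq k-1}^{\ldots}$ that involves only series of length $\leq k-1$. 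By the inductive hypothesis, the latter is rational in $\{\widehat R_n^{j_n}:n\leq k-1\}$ and length-$0$ series, so after this single substitution the number of $\frac{d}{d\boldsymbol\ell}$ applied to length-$k$ atoms has strictly decreased, and the terms of length $\leq k-1$ that appeared admit a rational form. Repeating this on all remaining derivatives terminates because the original combination has only finitely many occurrences of $\frac{d}{d\boldsymbol\ell}$.

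The main obstacle is bookkeeping: one must verify that iterating the substitution \eqref{eq:rearr} together with chain/product/quotient rules does not create, through some cross-term, a derivative of a length-$k$ (or worse, length $>k$) series that cannot be further reduced. This is controlled by the fact that \eqref{eq:rearr} expresses $\frac{d}{d\boldsymbol\ell}\widehat R_k^{j_k}$ strictly in terms of $\widehat R_k^{j_k}$ and the fixed lower-length combination $\widehat S_{\leq k-1}^{\ldots}$ that enters the original definition of $\widehat R_k^{j_k}$ as integrally summable of length $k$; no new exponent of integration or new integrally summable series is ever introduced, so the descent is strict and terminates after finitely many applications. The final expression involves only the atoms $\widehat R_n^{j_n}$ for $n=1,\ldots,k$, rational functions of $\boldsymbol\ell$, and length-$0$ series combined by $+,\cdot,/$, which is exactly the claimed rationality.
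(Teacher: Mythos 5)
Your proposal is correct and follows essentially the same route as the paper's own proof: differentiate the defining relation \eqref{eq:is2} to express $\frac{d}{d\boldsymbol\ell}\widehat R_i^{j_i}$ as a rational combination of $\boldsymbol\ell$, $\widehat R_i^{j_i}$ itself, and the lower-length combination $\widehat S_{\leq i-1}$, then induct on the maximal length $k$. The paper states this computation and concludes with a one-line ``we proceed further by induction''; you simply make explicit the base case (closure of $\log$-Gevrey classes under $\frac{d}{d\boldsymbol\ell}$ via Proposition~\ref{prop:clodif}) and the termination argument by the strictly decreasing count of derivatives falling on length-$k$ atoms, which the paper leaves implicit.
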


 %That is, if there are two such representations, their sums are equal (up to constants of integration).
\begin{prop}\label{prop:uis}\

\noindent Let $\widehat U=\{\widehat R_n^{j_n}(\boldsymbol\ell):\,n=1,\ldots,k,\ j_n=1,\ldots,r_n\}\subseteq \widehat{\mathcal L}_{\boldsymbol\ell}^{\infty}(\mathbb R)$ be a set of consecutively integrally summable series with respect to all previous ones which are of strictly lower length, as described in Definition~\ref{def:isk}, with a set of strictly increasing integral exponents $\{\beta_1,\beta_2,...,\beta_{r_1+\ldots+r_k}\}$ respectively.%\footnote{This sequence corresponds to the sequence of changes of variables $z+z^\beta R$ reducing $f$ to fnf, at the points where their length changes (at other points, they are algebraic combinations of previous R-s), whose R-s, by the proof .... later follow a sequence: 0-it.sumable, 0.s., 1.s. ,1-s, combination of previous 1-s,0-s,1-s, comb. of previous, comb.of previous,2-s wrt previous, comb. of previous, 0-s, comb of previous, 3-s wrt previous) etc. with strictly increasing integrating exponents $\beta_i$ corresponding to $z^\beta R(ell)$ in the consecutive changes.}. 
\ Here, the subscripts of the elements of the set $\widehat U$ denote the length of integral summability of the corresponding transseries.\\ Let $\{R_n^{j_n}(\boldsymbol\ell):\,n=1,\ldots,k,\ j_n=1,\ldots,r_n\}$ be one fixed choice of their integral sums, as described after Definition~\ref{def:isk}. Let $\widehat S\in\widehat{\mathcal L}_{\boldsymbol\ell}^{\infty}(\mathbb R)$ be a transseries that can be represented as a rational function in elements of $\widehat U$ and in integrally summable series of lengths $0$. Then this representation is unique\footnote{We say that two rational functions in elements of $\widehat U$ and in integrally summable series of length $0$ are equal, if, up to operations $+,\cdot,/$ on integrally summable series of order $0$, they are equal in the sense of algebraic expressions. This means that, after substituting each series (from $\widehat U$ or $0$-integrally summable) participating in these rational expressions with a formal variable, the rational algebraic expressions thus obtained are equal up to standard algebraic simplifications.}. Therefore, the sum of $S$, respecting algebraic operations and with respect to fixed sums $\{R_n^{j_n}(\boldsymbol\ell):\,n=1,\ldots,k,\ j_n=1,\ldots,r_n\}$ of elements of $\widehat U$ is \emph{unique}.
\end{prop}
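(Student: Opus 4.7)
The plan is to recast the statement as an \emph{algebraic independence} claim: the elements $\{\widehat R_n^{j_n}\}$ of $\widehat U$ are algebraically independent over the ring $\mathcal R_0$ of length-$0$ integrally summable series. By Lemma~\ref{lem:axi}, any rational expression in elements of $\widehat U$ and $\mathcal R_0$ allowed by the proposition reduces, after performing all differentiations, to an honest rational function in these generators. Subtracting two representations of $\widehat S$ and clearing denominators, the problem becomes to show that a polynomial $P \in \mathcal R_0[X_n^{j_n}]$ with $P(\widehat R_n^{j_n}) = 0$ as a transseries in $\widehat{\mathcal L}_{\boldsymbol\ell}^\infty(\mathbb R)$ must be the zero polynomial modulo the algebraic relations already identified within $\mathcal R_0$. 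Uniqueness of the sum of $\widehat S$ then follows: once constants of integration are fixed in each $R_n^{j_n}$, substitution defines a ring homomorphism on $\mathcal R_0[\widehat R_n^{j_n}]$, so distinct rational representations would map to distinct sums.

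I would argue by double induction: on the maximum level $k_*$ of a variable occurring in $P$, and, for fixed $k_*$, on the degree of $P$ in a distinguished level-$k_*$ variable $\widehat R_{k_*}^{j_*}$. The base case $k_* = 0$ is tautological. For the inductive step, write $P = \sum_{d=0}^{D} c_d\,(X_{k_*}^{j_*})^d$ with $c_D \neq 0$, where the $c_d$ lie in the subring $\mathcal R'$ generated by $\mathcal R_0$, by all lower-level elements of $\widehat U$, and by the other level-$k_*$ variables. Differentiating the identity $P(\widehat R_{k_*}^{j_*}) = 0$ with respect to $\boldsymbol\ell$ and substituting the defining relation
\[
\boldsymbol\ell^2\,\frac{d\widehat R_{k_*}^{j_*}}{d\boldsymbol\ell} + \bigl(\alpha_{k_*}^{j_*} + p_{k_*}^{j_*}\boldsymbol\ell\bigr)\,\widehat R_{k_*}^{j_*} \;=\; \boldsymbol\ell^{p_{k_*}^{j_*}}\,\widehat S_{\leq k_*-1}^{j_*}
\]
from Definition~\ref{def:isk} yields a second polynomial identity in $\widehat R_{k_*}^{j_*}$ of degree at most $D$, with coefficients again in $\mathcal R'$. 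A suitable $\mathcal R'$-linear combination of the two relations cancels the degree-$D$ coefficient and produces a polynomial relation of degree strictly less than $D$; by the inductive minimality this combination must vanish identically as a polynomial in $X_{k_*}^{j_*}$ over $\mathcal R'$.

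The identical vanishing pins down the logarithmic derivative $(c_D/c_0)'/(c_D/c_0)$ inside the fraction field of $\mathcal R'$ as $D\,(\alpha_{k_*}^{j_*} + p_{k_*}^{j_*}\boldsymbol\ell)/\boldsymbol\ell^{2} + (\text{lower-level terms})$. When $\alpha_{k_*}^{j_*} \ne 0$, the term $D\alpha_{k_*}^{j_*}/\boldsymbol\ell^{2}$ is not the logarithmic derivative of any element of $\mathrm{Frac}(\mathcal R')$, since its formal antiderivative $-D\alpha_{k_*}^{j_*}/\boldsymbol\ell$ is not the logarithm of any transseries in $\widehat{\mathcal L}_{\boldsymbol\ell}^\infty(\mathbb R)$; this yields the desired contradiction and forces $c_D = 0$. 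The degenerate case $\alpha_{k_*}^{j_*} = 0$, where discrimination rests on the $p$-component alone, is the main technical obstacle: here $Dp_{k_*}^{j_*}/\boldsymbol\ell$ \emph{is} a logarithmic derivative (of $\boldsymbol\ell^{Dp_{k_*}^{j_*}}$), so the preceding argument breaks, and one must instead invoke Proposition~\ref{prop:u} together with the distinctness of the integration exponents across $\widehat U$ to rule out that $c_D$ could coincide with an element of $\mathcal R'$ carrying the exponent $(0,\,Dp_{k_*}^{j_*})$, again forcing $c_D = 0$ and closing the induction.
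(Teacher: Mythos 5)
Your approach is genuinely different from the paper's. You differentiate the minimal-degree polynomial relation and substitute the first-order ODE $\boldsymbol\ell^2 X' + (\alpha+p\boldsymbol\ell)X = \boldsymbol\ell^p\widehat S$, which leaves a residual $-(\alpha+p\boldsymbol\ell)X/\boldsymbol\ell^2$ term and forces a logarithmic-derivative analysis to close. The paper instead multiplies each monomial by the matching product $\prod e^{-\alpha_i/\boldsymbol\ell}\boldsymbol\ell^{p_i}$ \emph{before} differentiating, so that by the defining relation $\frac{d}{d\boldsymbol\ell}(e^{-\alpha/\boldsymbol\ell}\boldsymbol\ell^p X)=e^{-\alpha/\boldsymbol\ell}\boldsymbol\ell^{2p-2}\widehat S$ each top-level factor is cleanly replaced by lower-level data with no residual $X$-term; iterating this degree-reduction lands directly on an expression $\widehat R_{l_0}^{r_1}=$ (rational in strictly lower lengths), contradicting the ``not of the form $\widehat S_{\leq l}$'' clause in Definition~\ref{def:isk}. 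This neatly sidesteps the case split on $\alpha_{k_*}^{j_*}$ and the question of which elements of $\mathrm{Frac}(\mathcal R')$ are logarithmic derivatives, which you have to grapple with.

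And that grappling is where the gaps lie. The one you flag, $\alpha_{k_*}^{j_*}=0$, is real and only gestured at. But the $\alpha_{k_*}^{j_*}\neq 0$ branch also has a hole: the assertion that $D\alpha_{k_*}^{j_*}/\boldsymbol\ell^2$ ``is not the logarithmic derivative of any element of $\mathrm{Frac}(\mathcal R')$'' is not the right statement to prove, and the formal-antiderivative justification does not support even it. What you need is that $D(\alpha+p\boldsymbol\ell)/\boldsymbol\ell^2 + (\text{lower-level terms})$ is not of the form $(c_D/c_0)'/(c_D/c_0)$ for $c_D/c_0\in\mathrm{Frac}(\mathcal R')$, and this fails for the reason you give: the generators of $\mathcal R'$ that are integrally summable of some length $\geq 1$ with exponent $(\gamma,q)$ satisfy $\widehat R'/\widehat R = -(\gamma+q\boldsymbol\ell)/\boldsymbol\ell^2 + \boldsymbol\ell^{q-2}\widehat S/\widehat R$, so monomials $\prod\widehat R_i^{a_i}$ already contribute a $-(\sum a_i\gamma_i)/\boldsymbol\ell^2$ term; distinct exponents alone do not prevent $\sum a_i\gamma_i$ from equalling $-D\alpha$. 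Ruling this out would require a much finer accounting of the extra $\boldsymbol\ell^{q-2}\widehat S/\widehat R$ contributions and of non-monomial $c_D/c_0$, using Proposition~\ref{prop:u} and the exponent hypothesis from the start, not only in the $\alpha=0$ degenerate case. As written, the inductive step does not close in either branch.
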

Note that, by Lemma~\ref{lem:axi}, Proposition~\ref{prop:uis} states that, if $\widehat S(\boldsymbol\ell)\in\widehat{\mathcal L}_{\boldsymbol\ell}^\infty(\mathbb R)$ is representable as a finite algebraic combination of elements of $\widehat U$ and of integrally summable series of length $0$, then its sum, respecting the algebraic operations, with respect to given sums of $\widehat U$, is unique. 

\begin{proof} Suppose that there exists a rational function in elements of $\widehat U$ and in $0$-integrally summable series that equals $0$. %First, if $n=0$, the statement is obvious since the algebraic combination of two integrally summable series of length zero ist again of length zero (see Propositions~\ref{prop:closum}-\ref{prop:clodif}). Let $n\in\mathbb N$ and suppose that there exists an algebraic combination $\widehat S_{\leq n}^{\widehat R_1,\ldots,\widehat R_n}(\boldsymbol\ell)$ in integrally summable series of length $0$ and in $\widehat R_1(\boldsymbol\ell),\ldots,\widehat R_n(\boldsymbol\ell)$ integrally summable of length $i$ (with operations $+$, $\cdot$, $/$, $\frac{d}{d\boldsymbol\ell}$), where $\widehat R_l(\boldsymbol\ell)$ is integrally summable of length $1\leq l\leq n$ with respect to $\widehat R_1(\boldsymbol\ell),\ldots, \widehat R_{l-1}(\boldsymbol\ell)\in\widehat{\mathcal L}_{\boldsymbol\ell}^\infty(\mathbb R)$, that equals zero. We prove that its sum is zero, up to constants of integration in integrally summable series $\widehat R_1(\boldsymbol\ell),\ldots,\widehat R_n(\boldsymbol\ell)$.\edz{POOOOPRAVIIIUTIIII}
Then, its numerator is a polynomial in integrally summable series of length $0$ and in series from $\widehat U$. We prove that, up to some operations on $0$-integrally summable series, the algebraic expression of this polynomial, as described in the footnote, is equal to $0$. Therefore, the sum taken in the natural way, respecting operations, is zero, and the statement is proven.

Suppose that the length $0\leq l_0\leq k$ is the highest possible length of integrally summable series in this polynomial. We express now the monomial of the highest order in variables $\widehat R_{l_0}^{j_{l_0}}(\boldsymbol\ell)$, $j_{l_0}=1,\ldots,r_{l_0}$, as a rational function in different monomials (of the order less or equal) in variables $\widehat R_{l_0}^{j_{l_0}}(\boldsymbol\ell)$, $j_{l_0}=1,\ldots,r_{l_0}$, and in monomials made of integrally summable series of length $0$ and of series $\widehat R_n^{j_n}(\boldsymbol\ell),\ n=1,\ldots,l_0-1,\ j_n=1,\ldots,r_{l_0-1},$ of strictly lower lengths. In the denominator, there are only monomials in integrably summable series of length $0$ and in series $\widehat R_n^{j_n}(\boldsymbol\ell),\ n=1,\ldots,l_0-1,\ j_n=1,\ldots,r_{l_0-1},$ of strictly lower lengths. We \emph{decrease the order} by multiplying this highest-order monomial with appropriate $e^{-\frac{\alpha}{\boldsymbol\ell}}\boldsymbol\ell^p$ (as many of them as there are series in this monomial) and differentiating. Now the order of the monomials on the left-hand side is decreased by $1$. We multiply by the denominator of the right-hand side. We repeat the procedure with now the highest order monomial in variables $\widehat R_{l_0}^{j_{l_0}}(\boldsymbol\ell)$, $j_{l_0}=1,\ldots,r_{l_0}$, whose order is now equal or strictly smaller. In each step, we simplify the algebraic expression as described in the footnote, up to operations on $0$-integrally summable series. In the case that in some step we get trivial algebraic combination, the initial rational function was trivial, and the procedure stops (meaning that the initial polynomial in the numerator was algebraically trivial, what we wanted to prove). If not, we proceed. After repeating the procedure finitely many times, if the procedure does not stop before, we end up with a simple monomial $\widehat R_{l_0}^r(\boldsymbol\ell)$, $r\in\{1,\ldots,r_{l_0}\}$, of length $l_0$, expressed as a polynomial in integrally summable series of strictly lower lengths and in one other monomial $\widehat R_{l_0}^{r_1}(\boldsymbol\ell)$, $r_1\neq r$, of length $l_0$, divided by a polynomial in integrally summable series of strictly lower lengths than $l_0$. Since $\widehat R_{l_0}^{r_{1}}(\boldsymbol\ell)$ and $\widehat R_{l_0}^r(\boldsymbol\ell)$ have different exponents of integration by assumption, by one more multiplication by appropriate $e^{-\frac{\alpha}{\boldsymbol\ell}}\boldsymbol\ell^p$ and differentiation, we end up with series $\widehat R_{l_0}^{r_1}(\boldsymbol\ell)$ of length $l_0$ being equal to a rational function in series of strictly lower lengths, which is a contradiction. 
\end{proof}

%\begin{prop}
%If $\widehat F$ is integrally summable of depth $k$, the exponent of integration $(\alpha_1,\ldots,\alpha_k)$ is unique.
%\end{prop}
%It can be proven inductively, analogously as in the proof of Proposition~\ref{prop:u} in the Appendix. 

%\begin{prop}
%The integral sum of order $k$ with exponents $(\alpha_1,\ldots,\alpha_{k})$ is unique up to $$C_{k} z^{\alpha_{k}}+C_{k-1}z^{\alpha_{k-1}+\alpha_{k-2}-1}+C_{k-2}z^{\alpha_{k-2}+\alpha_{k-1}+\alpha_{k}-2}+\ldots+C_1 z^{\alpha_1+\ldots+\alpha_{k}-k},\ C_i\in\mathbb C.$$ 
%\end{prop}

%\begin{obs} Note that, by definition of integral summability and Propositions~\ref{prop:closum}-\ref{prop:cloder} that show that $\log$-Gevrey classes are preserved under differentiation, multiplication and summation
%\end{obs}

\noindent Note that Definition~\ref{def:isk}, point (2), of integral summability of length $1$ corresponds to Definition 3.9 in \cite{MRRZ2Fatou}, since $\frac{d}{dz}=\frac{\boldsymbol\ell^2}{z}\frac{d}{d\boldsymbol\ell}$. 

\begin{obs} It can be checked directly using formula \eqref{eq:is1} that the sum of two integrally summable series $\widehat F(\boldsymbol\ell)$ and $\widehat G(\boldsymbol\ell)$ of length $1$  with the same exponents of integration $(\alpha,m)$ can either be integrally summable of length $0$, or integrally summable of length $1$ with the same exponent of integration $(\alpha,m)$. %Similarly, using \eqref{eq:is2}, the sum of two integrally summable series $\widehat F(\boldsymbol\ell)$  and $\widehat G(\boldsymbol\ell)$ of length $k\geq 2$ with respect to the same integrally summable series of lower lengths and with the same exponents of integration $(\alpha,m)$ can either belong to the class $\widehat S_{\leq l}(\boldsymbol\ell)$, $l\leq k-1$, or be an integrally summable series of the same length $k$ with respect to the same integrally summable series of lower lengths and with the same exponent of integration $(\alpha,m)$. 

The sum of two integrally summable series $\widehat F(\boldsymbol\ell)$ and $\widehat G(\boldsymbol\ell)$ of length $k\geq 2$ with respect to the same integrally summable series of lower lengths, where exponents of integration of $\widehat F$ and $\widehat G$ are not necessarily equal, is equal to some algebraic combination $\widehat S_{\leq l}(\boldsymbol\ell)$, for some $l\leq k,$ with respect to the same integrally summable series of lower lengths. Indeed, one can show by differentiation and using formulas \eqref{eq:is1} and \eqref{eq:is2} that the sum satisfies formula \eqref{eq:is2} with an algebraic combination $\widehat S_{\leq l}(\boldsymbol\ell),\ l\leq k,$ on the right-hand side, and with \emph{every} exponent of integration. Let $(\alpha,m),\ (\beta,n)$ be the exponents of integration of $\widehat F(\boldsymbol\ell)$, $\widehat G(\boldsymbol\ell)$ respectively, in general not equal. For \emph{every} $(\gamma,q)\in\mathbb R$, it holds that:
{\small
\begin{align}\label{eq:prr}
&\frac{d}{d\boldsymbol\ell}\Big(e^{-\frac{\gamma}{\boldsymbol\ell}}\boldsymbol\ell^q \big(\widehat F(\boldsymbol\ell)+\widehat G(\boldsymbol\ell)\big)\Big)=\\
&=\frac{d}{d\boldsymbol\ell}\Big(e^{-\frac{\alpha}{\boldsymbol\ell}}\boldsymbol\ell^m\widehat F(\boldsymbol\ell)\cdot e^{-\frac{\gamma-\alpha}{\boldsymbol\ell}}\boldsymbol\ell^{q-m}+e^{-\frac{\beta}{\boldsymbol\ell}}\boldsymbol\ell^n\widehat G(\boldsymbol\ell)\cdot e^{-\frac{\gamma-\beta}{\boldsymbol\ell}}\boldsymbol\ell^{q-n}\Big)\nonumber\\
&=e^{-\frac{\alpha}{\boldsymbol\ell}}\boldsymbol\ell^{2m-2}\widehat R_1(\boldsymbol\ell)\cdot e^{-\frac{\gamma-\alpha}{\boldsymbol\ell}}\boldsymbol\ell^{q-m}+e^{-\frac{\alpha}{\boldsymbol\ell}}\boldsymbol\ell^m\widehat F(\boldsymbol\ell)\cdot e^{-\frac{\gamma-\alpha}{\boldsymbol\ell}}\boldsymbol\ell^{q-m-2}\big((q-m)\boldsymbol\ell+(\gamma-\alpha)\big)+\nonumber\\
&+e^{-\frac{\beta}{\boldsymbol\ell}}\boldsymbol\ell^{2n-2}\widehat R_2(\boldsymbol\ell)\cdot e^{-\frac{\gamma-\beta}{\boldsymbol\ell}}\boldsymbol\ell^{q-n}+e^{-\frac{\beta}{\boldsymbol\ell}}\boldsymbol\ell^n\widehat F(\boldsymbol\ell)\cdot e^{-\frac{\gamma-\beta}{\boldsymbol\ell}}\boldsymbol\ell^{q-n-2}\big((q-n)\boldsymbol\ell+(\gamma-\beta)\big) =\nonumber\\
&=e^{-\frac{\gamma}{\boldsymbol\ell}}\boldsymbol\ell^{2q-2}\Big(\widehat R_1(\boldsymbol\ell)\cdot\boldsymbol\ell^{m-q}+\boldsymbol\ell^{-q}\cdot \big((q-m)\boldsymbol\ell+(\gamma-\alpha)\big)\cdot \widehat F(\boldsymbol\ell)+\nonumber\\
&\quad\qquad\qquad\qquad\qquad\qquad\qquad +\widehat R_2(\boldsymbol\ell)\cdot\boldsymbol\ell^{n-q}+\boldsymbol\ell^{-q}\cdot \big((q-n)\boldsymbol\ell+(\gamma-\beta)\big)\cdot \widehat G(\boldsymbol\ell)\Big).\nonumber
\end{align}}Here, since $\widehat F(\boldsymbol\ell)$ and $\widehat G(\boldsymbol\ell)$ are integrally summable of length $k$, $\widehat R_1(\boldsymbol\ell)$ and $\widehat R_2(\boldsymbol\ell)$ are both of the type $\widehat S_{\leq (k-1)}(\boldsymbol\ell)$. The term in brackets on the right-hand side is then obviously of the type $\widehat S_{\leq l}(\boldsymbol\ell)$, for some $l\leq k$ (due to possible cancellations). In particular, if $(\alpha,m)=(\beta,n)$, $\widehat F(\boldsymbol\ell)+\widehat G(\boldsymbol\ell)$ is by \eqref{eq:prr} integrally summable of order $k$ with the same exponent of integration $(\gamma,q):=(\alpha,m)=(\beta,n)$.

\noindent Now, by \eqref{eq:prr}, for any exponents of integration $(\gamma,q)\neq(\delta,p)$, it holds that
\begin{align*}
\frac{d}{d\boldsymbol\ell}\Big(e^{-\frac{\gamma}{\boldsymbol\ell}}\boldsymbol\ell^q \big(\widehat F(\boldsymbol\ell)+\widehat G(\boldsymbol\ell)\big)\Big),\ \frac{d}{d\boldsymbol\ell}\Big(e^{-\frac{\delta}{\boldsymbol\ell}}\boldsymbol\ell^p \big(\widehat F(\boldsymbol\ell)+\widehat G(\boldsymbol\ell)\big)\Big)
\end{align*}
are of the type $\widehat S_{\leq l_1}(\boldsymbol\ell)$ resp. $\widehat S_{\leq l_2}(\boldsymbol\ell)$, for some $l_1,\,l_2\leq k$. Then
\begin{align}\label{eq:ou}
e^{-\frac{\gamma}{\boldsymbol\ell}}\boldsymbol\ell^{2q-2}\widehat R_3(\boldsymbol\ell)&=\frac{d}{d\boldsymbol\ell}\Big(e^{-\frac{\gamma}{\boldsymbol\ell}}\boldsymbol\ell^q \big(\widehat F(\boldsymbol\ell)+\widehat G(\boldsymbol\ell)\big)\Big)=\\
&=\frac{d}{d\boldsymbol\ell}\Big(e^{-\frac{\delta}{\boldsymbol\ell}}\boldsymbol\ell^p \big(\widehat F(\boldsymbol\ell)+\widehat G(\boldsymbol\ell)\big)\cdot e^{-\frac{\gamma-\delta}{\boldsymbol\ell}}\boldsymbol\ell^{q-p}\Big)=\nonumber\\
&=e^{-\frac{\delta}{\boldsymbol\ell}}\boldsymbol\ell^{2p-2}\widehat R_4(\boldsymbol\ell)\cdot e^{-\frac{\gamma-\delta}{\boldsymbol\ell}}\boldsymbol\ell^{q-p}+\nonumber\\
&\quad\qquad +e^{-\frac{\delta}{\boldsymbol\ell}}\boldsymbol\ell^p \big(\widehat F(\boldsymbol\ell)+\widehat G(\boldsymbol\ell)\big)\cdot e^{-\frac{\gamma-\delta}{\boldsymbol\ell}}\boldsymbol\ell^{q-p-2}\big((q-p)\boldsymbol\ell+(\gamma-\delta)\big)=\nonumber\\
&=e^{-\frac{\gamma}{\boldsymbol\ell}}\Big(\boldsymbol\ell^{p+q-2}\widehat R_4(\boldsymbol\ell)+\boldsymbol\ell^{q-2}\big(\widehat F(\boldsymbol\ell)+\widehat G(\boldsymbol\ell)\big)\big((q-p)\boldsymbol\ell+(\gamma-\delta)\big)\Big).\nonumber
\end{align}
Here, $\widehat R_3(\boldsymbol\ell)$ is of the type $\widehat S_{\leq l_1}(\boldsymbol\ell)$, and $\widehat R_4(\boldsymbol\ell)$ of the type $\widehat S_{\leq l_2}(\boldsymbol\ell)$. Comparing the sides in \eqref{eq:ou}, we conclude that $\widehat F(\boldsymbol\ell)+\widehat G(\boldsymbol\ell)$ is of the type $\widehat S_{\leq l}(\boldsymbol\ell)$, for some $l\leq\max\{l_1,l_2\}$. Note that $\max\{l_1,l_2\}\leq k$. Therefore, $\widehat F(\boldsymbol\ell)+\widehat G(\boldsymbol\ell)$ is of the type $S_{\leq l}$, for some $l\leq k$.

A similar claim can be proven for products $\widehat F(\boldsymbol\ell)\cdot \widehat G(\boldsymbol\ell)$. It can be proven similarly as in \eqref{eq:prr} and \eqref{eq:ou} above that, if both $\widehat F(\boldsymbol\ell)$ and $\widehat G(\boldsymbol\ell)$ are integrally summable of length $k\geq 2$ with respect to the same integrally summable series of lower lengths, then their product is of the type $\widehat S_{\leq l}(\boldsymbol\ell)$, $l\leq k$, with respect to the same integrally summable series of lower lengths. This holds independently of the exponents of integration of $\widehat F(\boldsymbol\ell)$,\ $\widehat G(\boldsymbol\ell)$. 
\end{obs}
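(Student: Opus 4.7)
The plan is to treat the two subclaims of the remark separately, handling first the length-$1$ case and then the length-$k$ case for $k\ge 2$.

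For the length-$1$ case with a common exponent $(\alpha,m)$, I would exploit linearity of the formal derivative $\tfrac{d}{d\boldsymbol\ell}$. Since both $\widehat F$ and $\widehat G$ satisfy \eqref{eq:is1} with the \emph{same} integrating weight $e^{-\alpha/\boldsymbol\ell}\boldsymbol\ell^{m}$, summing the two equations gives
\begin{equation*}
\frac{d}{d\boldsymbol\ell}\bigl(e^{-\alpha/\boldsymbol\ell}\boldsymbol\ell^{m}(\widehat F+\widehat G)\bigr)=e^{-\alpha/\boldsymbol\ell}\boldsymbol\ell^{2m-2}(\widehat R_{1}+\widehat R_{2}),
\end{equation*}
where $\widehat R_{1},\widehat R_{2}$ are the length-$0$ series associated to $\widehat F,\widehat G$ by Definition~\ref{def:isk}(2). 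By Proposition~\ref{prop:closum}, $\widehat R_{1}+\widehat R_{2}$ is again length-$0$ integrally summable on the same $\boldsymbol\ell$-cusp. Thus $\widehat F+\widehat G$ satisfies \eqref{eq:is1} with that very exponent $(\alpha,m)$, and Proposition~\ref{prop:u} (uniqueness of exponent) then forces the dichotomy: either $\widehat F+\widehat G$ already lies in the trivial class excluded by Definition~\ref{def:isk} and is length-$0$, or it is length-$1$ with the prescribed exponent $(\alpha,m)$.

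For the case $k\ge 2$ with possibly distinct exponents $(\alpha,m),(\beta,n)$, the strategy is to first verify that for \emph{every} candidate exponent $(\gamma,q)$, the sum satisfies \eqref{eq:is2} with a right-hand side of type $\widehat S_{\le k-1}$. I would factor
\begin{equation*}
e^{-\gamma/\boldsymbol\ell}\boldsymbol\ell^{q}\widehat F=\bigl(e^{-\alpha/\boldsymbol\ell}\boldsymbol\ell^{m}\widehat F\bigr)\cdot e^{-(\gamma-\alpha)/\boldsymbol\ell}\boldsymbol\ell^{q-m},
\end{equation*}
and analogously for $\widehat G$, then apply the Leibniz rule and substitute the defining identities \eqref{eq:is2} of $\widehat F,\widehat G$. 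The derivative of each additional exponential-power weight produces an explicit polynomial factor $(q-m)\boldsymbol\ell+(\gamma-\alpha)$, resp.\ $(q-n)\boldsymbol\ell+(\gamma-\beta)$, times $\widehat F$, resp.\ $\widehat G$, and after grouping the expression takes the required form $e^{-\gamma/\boldsymbol\ell}\boldsymbol\ell^{2q-2}\widehat S_{\le k-1}^{*}(\boldsymbol\ell)$. To recover $\widehat F+\widehat G$ as an $\widehat S_{\le l}$ in its own right I would then reproduce the manoeuvre of \eqref{eq:ou}: evaluate the previous identity at two distinct exponents $(\gamma,q)\neq(\delta,p)$, write the same derivative in two ways, and solve for $\widehat F+\widehat G$. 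This exhibits the sum as a rational combination of type-$\widehat S_{\le \max\{l_{1},l_{2}\}}$ data divided by the non-vanishing polynomial $(q-p)\boldsymbol\ell+(\gamma-\delta)$, giving $\widehat F+\widehat G\in\widehat S_{\le l}$ for some $l\le k$.

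The main obstacle will be the bookkeeping of the length $l$ through the computation: the substitutions of \eqref{eq:is2} into a Leibniz expansion must be verified not to silently introduce new genuinely length-$k$ building blocks on the right-hand side. Lemma~\ref{lem:axi} is the structural tool that resolves this, since it allows one to rewrite any algebraic combination, after the differentiations have been executed, as a rational function in the fixed underlying basic integrally summable series, so that no spurious ascent of length occurs and the final type remains bounded by $k$.
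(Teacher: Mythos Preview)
Your approach is essentially the paper's own argument: linearity for the length-$1$ case, then the Leibniz factorisation of \eqref{eq:prr} followed by the two-exponent elimination of \eqref{eq:ou} for $k\ge 2$. One small correction: after the Leibniz step the bracketed right-hand side is \emph{not} of type $\widehat S_{\le k-1}$ but only $\widehat S_{\le k}$, since the terms $((q-m)\boldsymbol\ell+(\gamma-\alpha))\widehat F$ and $((q-n)\boldsymbol\ell+(\gamma-\beta))\widehat G$ contain $\widehat F,\widehat G$ themselves, which are length-$k$. This is precisely why the second manoeuvre \eqref{eq:ou} is needed: comparing the computation at two distinct exponents $(\gamma,q)\neq(\delta,p)$ produces an identity in which $\widehat F+\widehat G$ can be isolated algebraically, with coefficient the nonzero polynomial $(q-p)\boldsymbol\ell+(\gamma-\delta)$, and the remaining data $\widehat R_3,\widehat R_4$ are of bounded type. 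Lemma~\ref{lem:axi} is not really the device that controls the length here; the elimination step \eqref{eq:ou} is what does the work.
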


\begin{prop}[Formal Fatou coordinate for a parabolic generalized Dulac germ]\label{prop:fffatou}
Let $f$ be a parabolic generalized Dulac germ on a standard quadratic domain and let $\widehat f(z)=z-z^\alpha\widehat R_1(\boldsymbol\ell)+\text{h.o.b.},\ \alpha>1,\ m\in\mathbb Z,$ be its generalized Dulac expansion. There exists a unique, up to an additive constant, formal Fatou coordinate $\widehat\Psi\in\widehat{\mathfrak L}^\infty(\mathbb R)$. Moreover, $\widehat\Psi\in\widehat{\mathcal L}_2^\infty(\mathbb R)$ and it is of the form:
$$
\widehat\Psi(z)=\sum_{i\in\mathbb N} z^{\beta_i} \widehat T_i(\boldsymbol\ell),
$$
where $\widehat T_i(\boldsymbol\ell)\in\widehat{\mathcal L}_{\boldsymbol\ell}^\infty(\mathbb R)$ are integrally summable of length $1$ with exponent of integration $(\beta_i,0)$, as in Definition~\ref{def:isk}, on $\boldsymbol\ell$-cusps $\boldsymbol\ell(V_{\pm}^j)$ of petals\footnote{For definition of petals, see Theorem~A in Section~\ref{sec:introduction} and its proof in Section~\ref{sec:proofA}.} $V_{\pm}^j$, $j\in\mathbb Z$, of opening $\frac{2\pi}{\alpha-1}$, and $(\beta_i)_i\in\mathbb R$ are finitely generated, strictly increasing to $+\infty$, with $\beta_1=-\alpha+1$ and first finitely many of them negative.
\end{prop}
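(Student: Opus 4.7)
\medskip

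\noindent\textbf{Proof plan.} The plan is to build $\widehat\Psi$ blockwise from Abel's equation $\widehat\Psi\circ\widehat f - \widehat\Psi = 1$, reading off both the structure $\widehat\Psi\in\widehat{\mathcal L}_2^\infty(\mathbb R)$ and the integral summability of each block directly from the first-order linear ODE that the block satisfies. To make the emergence of the iterated logarithm and of the exponent $\beta_1 = 1-\alpha$ transparent I first reduce $\widehat f$ to its formal normal form: by Proposition~\ref{elim} there is $\widehat\varphi\in\widehat{\mathcal L}^{\mathrm{id}}(\mathbb R)$ with $\widehat\varphi^{-1}\circ\widehat f\circ\widehat\varphi = \widehat f_1 = \mathrm{Exp}(X_1).\mathrm{id}$, and a formal Fatou coordinate of $\widehat f_1$ is the formal primitive $\widehat\Psi_1 = \int dz/X_1(z)$.

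Using $dz = z\boldsymbol\ell^{-2}\,d\boldsymbol\ell$, every monomial $z^\gamma\boldsymbol\ell^k$ in $1/X_1$ integrates as $\int e^{-\gamma/\boldsymbol\ell}\boldsymbol\ell^{k-2}\,d\boldsymbol\ell$. For $\gamma\neq 0$ integration by parts in $\boldsymbol\ell$ yields a contribution of the form $z^\gamma\widehat T(\boldsymbol\ell)$ with $\widehat T\in\mathbb R((\boldsymbol\ell))$, so no iterated logarithm appears. The single resonant case $\gamma=0$ is forced by the form \eqref{eq:norma1} of $X_1$: it occurs only through the residual monomial $(\tfrac{m}{2}+\rho)z^{\alpha-1}\boldsymbol\ell^{m+1}$ in the denominator $D$, producing the unique contribution $(\tfrac{m}{2}+\rho)\log\boldsymbol\ell = -(\tfrac{m}{2}+\rho)\boldsymbol\ell_2^{-1}$, located in the block $\beta=0$. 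Therefore $\widehat\Psi_1\in\widehat{\mathcal L}_2^\infty(\mathbb R)$, with $\beta_1 = 1-\alpha$, exponents $\beta_i$ strictly increasing to $+\infty$ and finitely generated (inheriting the generators of the support of $\widehat f$), only finitely many of them negative, and with a single $\boldsymbol\ell_2^{-1}$-term.

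To identify the integral summability of the blocks I plug the ansatz $\widehat\Psi(z) = \sum_i z^{\beta_i}\widehat T_i(\boldsymbol\ell)$ directly into the Abel equation for $\widehat f$ and match coefficients of equal $z$-powers. Since $\widehat\Psi'(z) = \sum_i z^{\beta_i-1}\bigl(\beta_i\widehat T_i + \boldsymbol\ell^2\widehat T_i'\bigr)$ (using $d\boldsymbol\ell/dz = \boldsymbol\ell^2/z$), the $i$-th block of the Abel equation reduces to a first-order linear ODE of the form $\boldsymbol\ell^2\widehat T_i' + \beta_i\widehat T_i = \widehat S_i(\boldsymbol\ell)$ with $\widehat S_i\in\widehat{\mathcal L}_{\boldsymbol\ell}^\infty(\mathbb R)$ a rational combination of the blocks of $\widehat f$ and of the previously constructed $\widehat T_j$, $j<i$. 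Multiplying by $e^{-\beta_i/\boldsymbol\ell}$ rewrites this as $\tfrac{d}{d\boldsymbol\ell}\bigl(e^{-\beta_i/\boldsymbol\ell}\widehat T_i\bigr) = e^{-\beta_i/\boldsymbol\ell}\boldsymbol\ell^{-2}\widehat S_i$, which is exactly the defining relation \eqref{eq:is1} of integral summability of length $1$ with exponent of integration $(\beta_i,0)$ in the sense of Definition~\ref{def:isk}(2). Working with $\widehat f$ itself (rather than through the pullback $\widehat\Psi_1\circ\widehat\varphi^{-1}$) avoids having to track how composition interacts with integral summability.

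For uniqueness up to an additive constant in $\widehat{\mathfrak L}^\infty(\mathbb R)$, if $\widehat\Psi_a,\widehat\Psi_b$ are two formal Fatou coordinates then $\widehat\Phi := \widehat\Psi_a - \widehat\Psi_b$ satisfies $\widehat\Phi\circ\widehat f = \widehat\Phi$, whence formal Taylor expansion yields $\widehat\Phi'(z)(\widehat f(z)-z) + \mathrm{h.o.t.} = 0$; since the leading block of $\widehat f(z)-z$ is $-z^\alpha\boldsymbol\ell^m\cdot(\text{unit})$, matching blocks of minimal $z$-order inductively forces every nonconstant block of $\widehat\Phi$ to vanish, leaving only a constant. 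The main obstacle I anticipate is propagating the single $\boldsymbol\ell_2^{-1}$-structure through the inductive construction: once the block $\beta=0$ carries $\boldsymbol\ell_2^{-1}$, one must verify that the right-hand sides $\widehat S_i(\boldsymbol\ell)$ arising in the blockwise ODEs remain in $\widehat{\mathcal L}_{\boldsymbol\ell}^\infty(\mathbb R)$ with at most one $\boldsymbol\ell_2^{-1}$-term, so that no further iterated logarithms are spawned and $\widehat\Psi$ stays in $\widehat{\mathcal L}_2^\infty(\mathbb R)$; this requires careful bookkeeping of how the $\boldsymbol\ell_2^{-1}$-containing block interacts with the remaining blocks through differentiation and multiplication by blocks of $\widehat f$ inside the Taylor expansion of $\widehat\Psi\circ\widehat f$.
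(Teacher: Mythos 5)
Your overall plan — construct $\widehat\Psi$ block by block from the formal Taylor expansion of Abel's equation, identify the blockwise first-order ODE with the defining relation \eqref{eq:is1} of integral summability of length $1$ with exponent $(\beta_i,0)$, and dispose of uniqueness by the usual argument on $\widehat\Phi\circ\widehat f=\widehat\Phi$ — is the same as the paper's. However there is a genuine gap precisely where you flag your ``main obstacle,'' and the description of the right-hand sides $\widehat S_i$ is the source of it. You write that $\widehat S_i$ is ``a rational combination of the blocks of $\widehat f$ and of the previously constructed $\widehat T_j$.'' If that were literally so, matching against Definition~\ref{def:isk} would make $\widehat T_i$ integrally summable of length $\geq 2$ (since the length-$1$ series $\widehat T_j$ would appear on the right-hand side), contradicting the claim of length $1$; worse, since $\widehat T_j$ can carry a $\boldsymbol\ell_2^{-1}$ term (when $\beta_j=0$), such an $\widehat S_i$ would not even lie in $\mathbb R((\boldsymbol\ell))$, so it could not be integrally summable of length $0$ at all.

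The observation you need, and which the paper makes explicitly, is that the Taylor expansion of the Abel equation involves only the \emph{derivatives} $\widehat\Psi_j',\ \widehat\Psi_j'',\ \dots$, never $\widehat T_j$ itself. Now $\widehat\Psi_j(z)=z^{\beta_j}\widehat T_j(\boldsymbol\ell)=e^{-\beta_j/\boldsymbol\ell}\widehat T_j(\boldsymbol\ell)$, and the defining relation \eqref{eq:is1} with $p_1=0$ gives, using $\frac{d}{dz}=\frac{\boldsymbol\ell^2}{z}\frac{d}{d\boldsymbol\ell}$,
$$
\widehat\Psi_j'(z)=\frac{\boldsymbol\ell^2}{z}\,\frac{d}{d\boldsymbol\ell}\Bigl(e^{-\beta_j/\boldsymbol\ell}\widehat T_j\Bigr)=\frac{\boldsymbol\ell^2}{z}\cdot e^{-\beta_j/\boldsymbol\ell}\boldsymbol\ell^{-2}\widehat R_j=z^{\beta_j-1}\widehat R_j(\boldsymbol\ell),
$$
where $\widehat R_j$ is the length-$0$ ($\log$-Gevrey) series from the integral-summability relation of $\widehat T_j$. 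Hence $\widehat\Psi_j'$, and by iteration all higher derivatives, are monomials in $z$ times $\log$-Gevrey series with no $\boldsymbol\ell_2^{-1}$. Combined with the fact that the blocks of $\widehat f$ are $\log$-Gevrey by definition, and with the closure of $\log$-Gevrey classes under $+$, $\cdot$, $\frac{d}{d\boldsymbol\ell}$ and division by $\widehat R_1\neq 0$ (Propositions~\ref{prop:closum}--\ref{prop:clocomp}), this forces $\widehat S_i\in\mathbb R((\boldsymbol\ell))$ to be $\log$-Gevrey of order $>\frac{\alpha-1}{2}$, which is exactly the hypothesis needed in Definition~\ref{def:isk}(2) to certify $\widehat T_i$ as integrally summable of length $1$ with exponent $(\beta_i,0)$. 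This same computation resolves your anticipated $\boldsymbol\ell_2^{-1}$-propagation issue for free: differentiating $\widehat\Psi_j$ annihilates the $\log\boldsymbol\ell$ term, so no further iterated logarithms are spawned, and the single $\boldsymbol\ell_2^{-1}$ in $\widehat\Psi$ appears only at the resonant block $\beta_i=0$ from integrating $\boldsymbol\ell^{-1}$. Without this observation the claim of length $1$ (and membership in $\widehat{\mathcal L}_2^\infty(\mathbb R)$) is unproved.

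The preliminary detour through $\int dz/X_1$ for the normal form is a reasonable way to motivate $\beta_1=1-\alpha$ and the $\boldsymbol\ell_2^{-1}$ term, but it is not needed for the actual proof once the above is in place, and you are right that pulling $\widehat\Psi_0$ back through $\widehat\varphi^{-1}$ would be harder to control.
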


\begin{proof} The proof is similar to the proof of \cite[Theorem]{MRRZ2Fatou} for formal Fatou coordinate of a parabolic Dulac germ, by Abel difference equation and blockwise construction of the formal Fatou coordinate. The only difference is that, instead of polynomials, in every block of $\widehat f$ we have a \emph{$\log$-Gevrey series} in $\boldsymbol\ell$, but still canonically summable by Section~\ref{sec:classes}. Accordingly, our Definition~\ref{def:isk} of \emph{integrally summable series of length $1$} is a generalization of the notion of \emph{integrally summable series} from Definition 3.8 in \cite{MRRZ2Fatou}, used in the formal Fatou coordinate of parabolic Dulac germs. %Here, in the blockwise construction of the blocks $\widehat\Psi_i$, $i\in\mathbb N_0$, of the formal Fatou coordinate, in every step we get that $\widehat\Psi_i'(z)z^{-\beta_i+1}$ is integrally summable of length $0$ with exponent of integration $\beta_i-1$, with only powers of the first block of $\widehat f$, $\widehat T_1(\boldsymbol\ell)\neq 0$, in the denominator. %Since $\widehat T_1(\boldsymbol\ell)$, $\log$-Gevrey on $\boldsymbol\ell$-cusps $\boldsymbol\ell(V_\pm^j)$, $j\in\mathbb Z$, is a nonzero power asymptotic expansion of its sums $T_{1,\pm}^{j}(\boldsymbol\ell)$, there is no accumulation of zero points of $T_{1,\pm}^{j}(\boldsymbol\ell)$ at $\boldsymbol\ell=0$. Therefore, the blocks $\Psi_{i,\pm}^{j}$ of the Fatou coordinate are analytic on petals $V_{\pm}^j$, $j\in\mathbb Z$, $i\in\mathbb N_0$.

The proof is following the same lines as proof of Theorem~A (2). Let $\widehat g(z)=\widehat f(z)-\mathrm{id}=z^{\alpha}\widehat R_1(\boldsymbol\ell)+z^{\alpha_1}\widehat R_2(\boldsymbol\ell)+\mathrm{h.o.b.}$, $\alpha_1>\alpha>1$, $\widehat R_1(\boldsymbol\ell)$ and $\widehat R_2(\boldsymbol\ell)$ $\log$-Gevrey of order strictly bigger than $\frac{\alpha-1}{2}$ on $\boldsymbol\ell(V_\pm^{j})$. We construct the formal Fatou coordinate block-by-block, using the formal Taylor expansion of the Abel equation $\widehat\Psi(\widehat f)-\widehat\Psi=1$:
\begin{equation}\label{eq:aabel}
\widehat\Psi'(z)\widehat g(z)+\frac{1}{2!}\widehat \Psi''(z)\widehat g^2(z)+\ldots=1.
\end{equation}
Let $\widehat\Psi_1$ be the lowest-order block od $\widehat\Psi$. Since the order (in $z$) of the summands in the Taylor expansion strictly increases, we get
$$
\widehat\Psi_1'(z)\cdot z^\alpha \widehat R_1(\boldsymbol\ell)=1.
$$ 
Since $\widehat R_1(\boldsymbol\ell)$ is $\log$-Gevrey of order strictly bigger than $\frac{\alpha-1}{2}$ on $\boldsymbol\ell(V_\pm^j)$ by definition of generalized Dulac expansions, and since $\frac{1}{\widehat R_1(\boldsymbol\ell)}$ is by Proposition~\ref{prop:clocomp} also $\log$-Gevrey of order strictly bigger than $\frac{\alpha-1}{2}$ on $\boldsymbol\ell(V_\pm^j)$, $z^{\alpha-1}\widehat \Psi_1(z)=z^{\alpha-1}\int \frac{z^{-\alpha}\,dz}{\widehat R_1(\boldsymbol\ell)}$ is, by Definition~\ref{def:isk}, $1$-integrally summable in $\boldsymbol\ell$ with integration exponent $(-\alpha+1,0)$ on $\boldsymbol\ell(V_\pm^j)$, $j\in\mathbb Z$. Now $\beta_1:=-\alpha+1$.

To deduce the second block $\widehat\Psi_2(\boldsymbol\ell)$, we put $\widehat\Psi(z)=\widehat\Psi_1(z)+\widehat \Psi_2(z)+\mathrm{h.o.b.}$ in the equation \eqref{eq:aabel}. After cancelations, estimating the lowest-order block of the right and of the left-hand side as before, we get:
\begin{equation}\label{eq:fati}
\widehat\Psi_2'(z)\cdot z^\alpha\widehat R_1(\boldsymbol\ell)=\begin{cases}z^{\alpha_1-\alpha}\frac{\widehat R_2(\boldsymbol\ell)}{\widehat R_1(\boldsymbol\ell)},\ &\alpha_1<2\alpha-1,\\
z^{\alpha-1}(-\frac{1}{2})(\alpha \widehat R_1+\widehat R_1'\cdot\boldsymbol\ell^2),\ &\alpha_1>2\alpha-1,\\
\text{the sum of both},&\alpha_1=2\alpha-1.
\end{cases}
\end{equation}
In the denominator we can get only powers of $\widehat R_1(\boldsymbol\ell)\neq 0$. By closedness of $\log$-Gevrey expansions to algebraic operations and to differentiation from Propositions~\ref{prop:closum}-\ref{prop:clocomp}, we conclude that
$
z^{-\min\{\alpha-\alpha_1+1,-\alpha+2\}}\widehat\Psi_2(z)
$
is again $1$-integrally summable on $\boldsymbol\ell(V_\pm^j)$, with exponent of integration $(\min\{\alpha-\alpha_1+1,-\alpha+2\},0)$. We put $\beta_2:=\min\{\alpha-\alpha_1+1,-\alpha+2\}$. We continue by induction. Precisely, in the $i$-th step of the induction, $i\in\mathbb N$, we suppose that $z^{-\beta_1}\widehat\Psi_1,\ldots,z^{-\beta_i}\widehat \Psi_i$,  are $1$-integrally summable on $\boldsymbol\ell(V_\pm^j)$ with exponents of integration $(\beta_1,0),\ldots,(\beta_i,0)$ respectively. Therefore, all $\widehat\Psi_1'(z),\ldots,\widehat \Psi_i'(z)$ are products of some power of $z$ and of some $\log$-Gevrey series of order strictly bigger than $\frac{\alpha-1}{2}$ on $\boldsymbol\ell(V_\pm^j)$. Due to the closedness of $\log$-Gevrey series to algebraic operations, all their derivatives are of the same form. Moreover, all blocks of $\widehat g^k(z)$, $k\in\mathbb N$, are products of powers of $z$ with $\log$-Gevrey series of order strictly bigger than $\frac{\alpha-1}{2}$, since $\widehat f$ is generalized Dulac. Now, from \eqref{eq:aabel}, it follows that $\widehat \Psi_{i+1}'(z)$ is again a product of a power of $z$ with a $\log$-Gevrey series of order strictly bigger than $\frac{\alpha-1}{2}$ on $\boldsymbol\ell(V_\pm^j)$, well-defined since only powers of $\widehat R_1(\boldsymbol\ell)\neq 0$ appear in the denominator. Therefore, there exists a unique $\beta_{i+1}$ such that $z^{-\beta_{i+1}}\widehat \Psi_{i+1}(z)$ is integrally summable of order $1$ in $\boldsymbol\ell$ on $\boldsymbol\ell(V_{\pm}^j)$. The exponent of integration is $(\beta_{i+1},0)$, and by the algorithm $\beta_{i+1}>\beta_i$.
\end{proof}

\subsection{The notion of block iterated integral summability}\label{subsec:iis}\

The Definition~\ref{def:dai} below of \emph{block iterated integral summability} is adapted to formal normalizations of parabolic generalized Dulac germs. The \emph{technical} complication in the definition of the final composition with $\widehat h_0^{-1}$ comes from the elimination of the first block in reducing a parabolic generalized Dulac transseries to its normal form.  The form of this first change of variables is more complicated than the form of the changes of variables for eliminations of the higher-order blocks, as can be seen in Lemma~\ref{lem:elim}. By performing this elimination as the first step, we would change substantially the type of blocks in the initial generalized parabolic Dulac expansion (they would no more be $\log$-Gevrey, that is, integrally summable of order $0$). This would imply technical difficulties in describing subsequent changes by the notion of integral summability of length $k$, $k\geq 1$, introduced in Definition~\ref{def:isk}. We avoid this difficulty by first reducing the initial parabolic generalized Dulac transseries to a parabolic generalized Dulac transseries with the \emph{same initial block}, which we call an \emph{auxiliary normal form}. Finally, applying one additional change of variables, $\widehat h_0^{-1}$, we reduce it to a normal form from \eqref{eq:norma1}. This will be clarified in the proof of Proposition~\ref{prop:rednorm} below.

\begin{defi}[Block iterated integrally summable transseries] \label{def:dai}

Let $\widehat{\varphi}\in\widehat{\mathcal L}^{\mathrm{id}}(\mathbb R)$, $\widehat\varphi(z)=z+z\widehat R_0(\boldsymbol\ell)+\mathrm{h.o.b.}$, where $\widehat R_0(\boldsymbol\ell)\in\boldsymbol\ell\mathbb R[[\boldsymbol\ell]]$, be a parabolic transseries. Here, it may also be $\widehat R_0=0$. 

We say that $\widehat\varphi$ is \emph{block iterated integrally summable} with parameters $(\alpha,m,\rho)\in\mathbb R_{>1}\times \mathbb Z\times \mathbb R$  on a petal $V$ of opening $\frac{2\pi}{\alpha-1}$, if the following holds:
\begin{enumerate}
\item there exists a $\widehat T_0(\boldsymbol\ell)\in\mathbb R((\boldsymbol\ell))$, $\widehat T_0(\boldsymbol\ell)=-\boldsymbol\ell^m+\mathrm{h.o.t.}$, $\log$-Gevrey summable of order strictly bigger than $\frac{\alpha-1}{2}$ on $\boldsymbol\ell$-cusp $\boldsymbol\ell(V)$, such that\footnote{with $0$ as constant of formal integration}:
\begin{equation}\label{eq:kkk}
\Big(-\int\frac{z^{1-\alpha}}{\boldsymbol\ell^{m+2}}d\boldsymbol\ell\Big)\circ \Big(z+z\widehat R_0(\boldsymbol\ell)\Big)=\int\frac{z^{1-\alpha}}{\widehat T_0(\boldsymbol\ell)\boldsymbol\ell^2}d\boldsymbol\ell,
\end{equation}

\item For $\widehat h_0\in\widehat{\mathcal L}^{\mathrm{id}}(\mathbb R)$ defined by $($with formal integration constant equal to $0$$)$:
\begin{equation}\label{eq:kkkk}
\widehat h_0(z):=\Big(-\int \frac{z^{1-\alpha}}{\boldsymbol\ell^{m+2}}d\boldsymbol\ell-\frac{\alpha}{2}\boldsymbol\ell^{-1}+\big(\frac m 2+\rho\big)\boldsymbol\ell_2^{-1}\Big)^{-1}\circ\Big(\int \frac{z^{-\alpha+1}}{\widehat T_0(\boldsymbol\ell)\boldsymbol\ell^2}d\boldsymbol\ell-\frac{\alpha}{2}\boldsymbol\ell^{-1}+b \boldsymbol\ell_2^{-1}\Big),
\end{equation}
where $b\in\mathbb R$ is uniquely chosen such that $\widehat h_0(z)$ does not contain iterated logarithms,

\noindent $\widehat\varphi\circ \widehat h_0$ can be decomposed in a sequence\footnote{Here and in the sequel: $\circ_{i\in\mathbb N}\widehat\varphi_i:=\widehat\varphi_1\circ\widehat\varphi_2\circ\ldots$.} $\widehat\varphi\circ \widehat h_0=\circ_{i\in\mathbb N}\widehat\varphi_i$ of \emph{elementary} changes of variables of the form \begin{equation*}\widehat\varphi\circ \widehat h_0=\circ_{i\in\mathbb N}\widehat\varphi_i,\ \widehat\varphi_i(z)=z+z^{\beta_i}\widehat R_i(\boldsymbol\ell),\end{equation*} where $(\beta_i)_{i\in\mathbb N}$, $\beta_i>1$, is a strictly increasing sequence of real numbers, either finite or tending to $+\infty$, and $\widehat R_i(\boldsymbol\ell)\in\mathbb R((\boldsymbol\ell))$, $\widehat R_i\neq 0$, $i\in\mathbb N$, form a sequence \begin{equation}\label{eq:hak}\widehat{\mathcal H}=\{\widehat R_i(\boldsymbol\ell):i\in\mathbb N\}\subseteq \mathbb R((\boldsymbol\ell))\end{equation} with the following properties:
\begin{enumerate}
\item each series $\widehat R_i(\boldsymbol\ell)\in\widehat{\mathcal H}$ is either integrally summable of some order $n_i\in\mathbb N_0$, $n_i\leq i$, with respect to all previous elements of $\widehat{\mathcal H}$ that are integrally summable of strictly lower lengths, or an algebraic combination of all previous series $\widehat  R_1(\boldsymbol\ell),\ldots,\widehat R_{i-1}(\boldsymbol\ell)$;
\item the orders $i\mapsto n_i$ are not necessarily increasing.
\end{enumerate}
\end{enumerate}
\end{defi}
Note that we can also present the block iterated integrally summable transseries $\widehat \varphi$ in Definition~\ref{def:dai} as the following compositon:
\begin{equation}\label{eq:jkl}
\widehat \varphi=\big(\circ_{i\in\mathbb N} \widehat\varphi_i\big)\circ\widehat h_0^{-1},
\end{equation}
where $\widehat h_0$ and $\widehat\varphi_i$, $i\in\mathbb N$, are as described in Definition~\ref{def:dai}.

\medskip

Note that to $\widehat {\mathcal H}$ we can attribute a collection (non-unique, depending on a choice of consecutive constants of integration) of analytic \emph{integral sums} of its terms on $\boldsymbol\ell(V)$, \begin{equation}\label{eq:ih}\mathcal{H}^{h_0}=\{R_i(\boldsymbol\ell):\,i\in\mathbb N\}.\end{equation} Note that sums in $\mathcal H^{h_0}$ also depend on the choice of the constant in the definition of $h_0$. Here, $R_i(\boldsymbol\ell)$ are integral sums of given length of $\widehat R_i(\boldsymbol\ell)$, $i\in\mathbb N$, on $\boldsymbol\ell(V)$, with respect to previous integral sums, as in Definition~\ref{def:isk}. They differ by the choice of the constants of integration in iterative definitions of the integral sums. The series $\widehat R_i(\boldsymbol\ell)$ are their common asymptotic expansions.
\medskip

\begin{prop}[Uniqueness of decomposition]\label{prop:dai}
Let $\widehat{\varphi}\in\widehat{\mathcal L}^{\mathrm{id}}(\mathbb R)$, $\widehat\varphi(z)=z+z\widehat R_0(\boldsymbol\ell)+\mathrm{h.o.b.}$, $\widehat R_0(\boldsymbol\ell)\in\boldsymbol\ell\mathbb R[[\boldsymbol\ell]]$, be block iterated integrally summable with parameters $(\alpha,m,\rho)$ on a petal $V$ of opening $\frac{2\pi}{\alpha-1}$. Its decomposition
$$
\widehat\varphi=\big(\circ_{i\in\mathbb N}\widehat \varphi_i\big)\circ \widehat h_0^{-1},
$$
with  $\widehat h_0(z)$ and elementary changes $\widehat\varphi_i(z)$  as described in 
Definition~\ref{def:dai}, is unique.
\end{prop}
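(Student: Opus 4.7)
The plan is to establish uniqueness in two stages: first that $\widehat h_0$ is uniquely determined by $\widehat\varphi$, and then that the sequence of elementary factors $\widehat\varphi_i$ is uniquely determined by $\widehat\varphi\circ\widehat h_0$.

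For the first stage, $\widehat h_0$ is constructed from $\widehat T_0(\boldsymbol\ell)$ via formula \eqref{eq:kkkk}, in which the constant $b\in\mathbb R$ is uniquely fixed by the requirement that $\widehat h_0$ contain no iterated logarithms; so it suffices to show that $\widehat T_0$ is uniquely determined by $\widehat R_0$ through \eqref{eq:kkk}. The idea is to differentiate \eqref{eq:kkk} formally in $\boldsymbol\ell$: the right-hand side becomes $\frac{z^{1-\alpha}}{\widehat T_0(\boldsymbol\ell)\boldsymbol\ell^2}$, while the left-hand side is uniquely determined by $\widehat R_0$ via formal composition and differentiation. Since the leading term of $\widehat T_0$ is $-\boldsymbol\ell^m$, the denominator $\widehat T_0\boldsymbol\ell^2$ is invertible in $\mathbb R((\boldsymbol\ell))$, and solving for $\widehat T_0$ yields a unique expression. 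Consequently $\widehat h_0$ is unique.

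For the second stage, set $\widehat\Phi:=\widehat\varphi\circ\widehat h_0\in\widehat{\mathcal L}^{\mathrm{id}}(\mathbb R)$ and suppose it admits a decomposition $\widehat\Phi=\circ_{i\in\mathbb N}\widehat\varphi_i$, with $\widehat\varphi_i(z)=z+z^{\beta_i}\widehat R_i(\boldsymbol\ell)$, $\beta_i>1$ strictly increasing, $\widehat R_i\neq 0$. I carry out a lowest-block induction. Because $\beta_i>\beta_1$ for $i\geq 2$ and composition of parabolic tangent-to-identity transseries preserves the leading non-identity block (all higher compositional corrections contribute only blocks of order strictly larger than $(\beta_1,0)$ in the lexicographic order), the lowest-order non-identity block of $\widehat\Phi$ equals $z^{\beta_1}\widehat R_1(\boldsymbol\ell)$, uniquely determining $\beta_1$ and $\widehat R_1$. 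Having determined $\widehat\varphi_1$, I pass to $\widehat\varphi_1^{-1}\circ\widehat\Phi=\circ_{i\geq 2}\widehat\varphi_i$ and iterate, extracting $(\beta_i,\widehat R_i)$ one by one. The well-posedness of this block-by-block extraction relies on the fact that the $\beta_i$ are finitely generated and strictly increasing, so at every finite stage the remaining composition $\circ_{i\geq n+1}\widehat\varphi_i$ is of the form $z+z^{\beta_{n+1}}\widehat R_{n+1}(\boldsymbol\ell)+\mathrm{h.o.b.}$, matching the lowest-block pattern used above.

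The main obstacle is the first stage: the relation \eqref{eq:kkk} linking $\widehat R_0$ to $\widehat T_0$ is implicit and intertwines formal $\boldsymbol\ell$-antiderivatives with a transseries composition in $z$, so one must verify carefully that formal differentiation in $\boldsymbol\ell$ kills the constants of integration and yields a genuine algebraic identity from which $\widehat T_0$ can be recovered. Once this extraction is secured, the second stage is a routine block-by-block induction built on the strictly increasing exponents $\beta_i$, so no further subtlety arises.
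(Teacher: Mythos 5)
Your proof is correct and follows essentially the same route as the paper: recover $\widehat T_0$ (hence $\widehat h_0$) from \eqref{eq:kkk} using $\alpha$, $m$ and $\widehat R_0$, then peel off the elementary factors of $\widehat\varphi\circ\widehat h_0$ one at a time by comparing lowest non-identity blocks, using that $\beta_i$ is strictly increasing. Your explicit differentiation argument for extracting $\widehat T_0$ just fills in a detail the paper leaves implicit, and the constants-of-integration worry you flag is moot because Definition~\ref{def:dai} fixes them to $0$.
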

\begin{proof} Since $\alpha$, $m$ and $\widehat R_0(\boldsymbol\ell)$ are given, $\widehat T_0(\boldsymbol\ell)\in\mathbb R((\boldsymbol\ell))$ is uniquely determined from equation \eqref{eq:kkk}, and it is of the form $\widehat T_0(\boldsymbol\ell)=-\boldsymbol\ell^m+\mathrm{h.o.t.}$ Then $\widehat h_0(z)$ is uniquely given by \eqref{eq:kkkk}, where $0$ is taken as the formal integration costant. Note that $\widehat h_0\in\widehat{\mathcal L}^{\mathrm{id}}(\mathbb R)$ is parabolic, so $\widehat\varphi\circ\widehat h_0(z)$ is also parabolic. Let us denote its first block by: $\widehat\varphi\circ\widehat h_0(z)=z+z^{\beta_1}\widehat R_1(\boldsymbol\ell)+\text{h.o.b.}$, where $\widehat R_1(\boldsymbol\ell)\in\mathbb R((\boldsymbol\ell))$. Now, since $\widehat\varphi\circ\widehat h_0(z)$ admits a decomposition $\circ_{i\in\mathbb N}\widehat\varphi_i$, where $\beta_i>1$ and strictly increasing, the first change of variables $\widehat\varphi_1$ should be of the form $\widehat\varphi_1(z)=z+z^{\beta_1}\widehat R_1(\boldsymbol\ell)$, and, moreover, $\widehat R_1(\boldsymbol\ell)\in\mathbb R((\boldsymbol\ell))$ is integrally summable of length $0$ or $1$.  We then compose on the left with the inverse $\widehat{\varphi}_1^{-1}$, and continue by induction, to uniquely deduce $\widehat\varphi_i$, $i\in\mathbb N$.
\end{proof}

\begin{prop}[Block iterated integral summability of formal reduction to the formal normal form]\label{prop:rednorm} Let $\widehat f(z)=z-z^\alpha\boldsymbol\ell^m+\text{h.o.t.}$, $\alpha>1$ and $m\in\mathbb Z$, be a normalized parabolic generalized Dulac transseries, belonging to the formal class $(\alpha,m,\rho)$, $\alpha>1,\ m\in\mathbb Z,\ \rho\in\mathbb R$. Let $V$ be a petal of opening $\frac{2\pi}{\alpha-1}$. Let $\widehat f_1$ be its formal normal form defined in \eqref{eq:norma1}. Then:
\begin{enumerate}
\item there \emph{exists} a formal normalization $\widehat\varphi\in\widehat{\mathcal L}^{\mathrm{id}}(\mathbb R)$ which reduces $\widehat f$ to $\widehat f_1$ and which is block iterated integrally summable with parameters $(\alpha,m,\rho)$ on $V$.
\item every formal normalization $\widehat \varphi\in\widehat{\mathcal L}^{\mathrm{id}}(\mathbb R)$ reducing $\widehat f$ to $\widehat f_1$ is, up to precomposition by $\widehat f_c$, $c\in\mathbb R$, block iterated integrally summable with parameters $(\alpha,m,\rho)$ on $V$. Here, $f_c=\mathrm{Exp}\big(c X_1\frac{d}{dz}\big).\mathrm{id},\ c\in\mathbb R,$ where $X_1$ is given in \eqref{eq:norma1}. 
\end{enumerate}
\end{prop}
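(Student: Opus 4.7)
The plan is to decompose the normalization into two stages: first reduce $\widehat f$ to an \emph{auxiliary normal form} $\widehat f_{\text{aux}}$ which shares the same first block $z^\alpha\widehat T_0(\boldsymbol\ell)$ as $\widehat f$ — so that only elementary changes of order strictly larger than $z$ will be needed — and then conjugate $\widehat f_{\text{aux}}$ to $\widehat f_1$ by a single change $\widehat h_0^{-1}$ applied at the very end, which carries the first-block normalization. Concretely, I take $\widehat T_0(\boldsymbol\ell)\in\mathbb R((\boldsymbol\ell))$ to be the first-block coefficient of $\widehat f$; by the generalized Dulac hypothesis it starts with $-\boldsymbol\ell^m$ and is $\log$-Gevrey of order strictly bigger than $\tfrac{\alpha-1}{2}$ on $\boldsymbol\ell(V)$. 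I then define $\widehat h_0$ by \eqref{eq:kkkk}. The two factors entering \eqref{eq:kkkk} are, up to higher-order terms, precisely the leading parts of the formal Fatou coordinates of $\widehat f_1$ and of a germ with first block $z^\alpha\widehat T_0$ and residual invariant encoded by $b$ (cf.\ Proposition \ref{prop:fffatou}); the hypothesis that $\widehat f$ lies in the class $(\alpha,m,\rho)$ forces the unique value of $b$ which cancels the $\boldsymbol\ell_2^{-1}$-contributions, so that $\widehat h_0\in\widehat{\mathcal L}^{\mathrm{id}}(\mathbb R)$ is well-defined. Setting $\widehat f_{\text{aux}}:=\widehat h_0^{-1}\circ\widehat f_1\circ\widehat h_0$, the structure of \eqref{eq:kkkk} guarantees that $\widehat f_{\text{aux}}$ has the same first block $z^\alpha\widehat T_0(\boldsymbol\ell)$ as $\widehat f$, while its higher blocks inherit a log-Gevrey structure from $\widehat h_0$ and $\widehat f_1$ via the closure properties of Section \ref{sec:classes}.

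Next I invoke Lemma \ref{lem:elim1} iteratively to produce $\widehat\psi=\circ_{i\in\mathbb N}\widehat\varphi_i\in\widehat{\mathcal L}^{\mathrm{id}}(\mathbb R)$ conjugating $\widehat f$ to $\widehat f_{\text{aux}}$ by elementary changes $\widehat\varphi_i(z)=z+z^{\beta_i}\widehat R_i(\boldsymbol\ell)$, $\widehat R_i\in\mathbb R((\boldsymbol\ell))$, with $1<\beta_1<\beta_2<\cdots$ (all exponents strictly bigger than $1$ because the first blocks of $\widehat f$ and $\widehat f_{\text{aux}}$ already coincide), each $\widehat R_i$ being given by the first line of \eqref{eq:ma1}. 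The set $\{\beta_i\}$ is discrete, finitely generated, and either finite or tending to $+\infty$ by finite generation of the supports of $\widehat f$ and $\widehat f_{\text{aux}}$. Setting $\widehat\varphi:=\widehat\psi\circ\widehat h_0^{-1}$ then yields $\widehat\varphi^{-1}\widehat f\widehat\varphi=\widehat h_0\widehat f_{\text{aux}}\widehat h_0^{-1}=\widehat f_1$ together with $\widehat\varphi\circ\widehat h_0=\widehat\psi=\circ_i\widehat\varphi_i$, which is exactly the form \eqref{eq:jkl} demanded by Definition \ref{def:dai}.

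It remains to verify condition (2)(a) of Definition \ref{def:dai}: each $\widehat R_i$ should be either an algebraic combination of $\widehat R_1,\ldots,\widehat R_{i-1}$, or integrally summable of some length $n_i$ with respect to the previous elements of strictly lower length. Rewriting \eqref{eq:ma1} via $z=e^{-1/\boldsymbol\ell}$ gives
\[
\widehat R_i(\boldsymbol\ell)=-\frac{1}{e^{-(\beta_i-\alpha)/\boldsymbol\ell}\boldsymbol\ell^{-m}}\int e^{-(\beta_i-\alpha)/\boldsymbol\ell}\boldsymbol\ell^{-2m-2}\bigl(\widehat G_i(\boldsymbol\ell)-\widehat F_i(\boldsymbol\ell)\bigr)\,d\boldsymbol\ell,
\]
which is exactly the shape of Definition \ref{def:isk} with integration exponent $(\beta_i-\alpha,-m)$. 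Here $\widehat G_i(\boldsymbol\ell)-\widehat F_i(\boldsymbol\ell)$ is the difference of the $\beta_i$-blocks of the intermediate iterate $\widehat\varphi_{i-1}^{-1}\circ\cdots\circ\widehat\varphi_1^{-1}\circ\widehat f\circ\widehat\varphi_1\circ\cdots\circ\widehat\varphi_{i-1}$ and of $\widehat f_{\text{aux}}$; by Taylor expansion of composition and the closure properties of Propositions \ref{prop:closum}--\ref{prop:clocomp}, this difference is a finite algebraic combination of $\widehat R_1,\ldots,\widehat R_{i-1}$ and of $\log$-Gevrey series of order $>\tfrac{\alpha-1}{2}$. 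Consequently, by Definition \ref{def:isk}(3), $\widehat R_i$ is either itself an algebraic combination of the previous $\widehat R_j$'s — in which case $\widehat\varphi_i$ is trivial from the integration viewpoint — or integrally summable of some length $n_i\leq i$ with respect to previous $\widehat R_j$'s of strictly lower length. This completes the verification of Definition \ref{def:dai}, and hence part (1).

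\textbf{Proof of (2).} For any other normalization $\widehat\varphi\in\widehat{\mathcal L}^{\mathrm{id}}(\mathbb R)$ reducing $\widehat f$ to $\widehat f_1$, the composition $\widehat\sigma:=\widehat\varphi_0^{-1}\circ\widehat\varphi$ (with $\widehat\varphi_0$ the normalization constructed in (1)) commutes with $\widehat f_1=\mathrm{Exp}(X_1).\mathrm{id}$. Since the formal centralizer of $\widehat f_1$ inside $\widehat{\mathcal L}^{\mathrm{id}}(\mathbb R)$ is precisely the one-parameter family $\{\widehat f_c:c\in\mathbb R\}$ of time-$c$ maps of $X_1$, there exists a unique $c\in\mathbb R$ with $\widehat\sigma=\widehat f_c$, whence $\widehat\varphi\circ\widehat f_{-c}=\widehat\varphi_0$ is block iterated integrally summable. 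The main technical obstacle throughout is the final verification in (1): it requires careful tracking of how successive elementary compositions expand each intermediate block as an algebraic combination of the already-constructed $\widehat R_j$'s and $\log$-Gevrey data, and repeated use of the inductive Definition \ref{def:isk} together with the closure properties of Section \ref{sec:classes}. A secondary but more delicate point is the existence of the constant $b$ in \eqref{eq:kkkk} rendering $\widehat h_0$ free of iterated logarithms — this is the only place where the residual formal invariant $\rho$ of $\widehat f$ genuinely enters the construction.
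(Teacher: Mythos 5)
Your proof reproduces the paper's own two-stage decomposition almost exactly: first reduce $\widehat f$ to an auxiliary normal form $\widehat f_{\text{aux}}$ sharing the same initial block $z^\alpha\widehat T_0(\boldsymbol\ell)$ by an infinite composition $\widehat\psi=\circ_i\widehat\varphi_i$ of elementary changes produced by Lemma~\ref{lem:elim1}, and then apply the single change $\widehat h_0^{-1}$ given by~\eqref{eq:kkkk}, with the constant $b$ pinned down by membership in the formal class $(\alpha,m,\rho)$; the uniqueness statement in~(2) is handled by the centralizer of $\widehat f_1$, which is exactly Proposition~\ref{prop:ok}. So the strategy is the same.

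There is, however, one substantive gap in the justification. You define $\widehat f_{\text{aux}}:=\widehat h_0^{-1}\circ\widehat f_1\circ\widehat h_0$ and assert that its higher blocks \emph{inherit a $\log$-Gevrey structure from $\widehat h_0$ and $\widehat f_1$ via the closure properties of Section~\ref{sec:classes}}. This step does not follow as stated. The closure lemmas of Section~\ref{sec:classes} concern sums, products, quotients and derivatives of $\log$-Gevrey series \emph{in the variable $\boldsymbol\ell$}; they say nothing about conjugation by a full power-logarithmic transseries, and the $\boldsymbol\ell$-coefficients of $\widehat h_0$ themselves are \emph{not} manifestly $\log$-Gevrey — they arise from a formal inversion of $\widehat\Psi_0$ composed with the iterated-integral expression $\widehat\Psi_2$, so a priori they are only $\log$-Gevrey up to integral summability of higher lengths. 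The paper avoids this issue by exhibiting the auxiliary normal form explicitly as the time-one map in~\eqref{eq:t2},
\begin{equation*}
\widehat f_2(z)=\mathrm{Exp}\Big(\frac{z^\alpha \widehat T_0(\boldsymbol\ell)}{1+\frac{\alpha}{2} z^{\alpha-1}\widehat T_0(\boldsymbol\ell)+b\, z^{\alpha-1}\widehat T_0(\boldsymbol\ell)\boldsymbol\ell}\,\frac{d}{dz}\Big).\mathrm{id},
\end{equation*}
so that each $\boldsymbol\ell$-coefficient of each block of $\widehat f_2$ is an algebraic combination of $\widehat T_0$ and its derivatives, whence Propositions~\ref{prop:closum}--\ref{prop:clocomp} apply directly; the identity $\widehat f_{\text{aux}}=\widehat f_2$ is then verified by computing the Fatou coordinate of~\eqref{eq:t2}. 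To make your argument airtight you should supply this explicit description of $\widehat f_{\text{aux}}$ rather than invoking closure properties on the conjugation.

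A cosmetic remark: you record the integration exponent of each $\widehat R_i$ from~\eqref{eq:ma1} as $(\beta_i-\alpha,-m)$, which is indeed what Definition~\ref{def:isk} yields upon substituting $z=e^{-1/\boldsymbol\ell}$; the paper's own aside that the second exponent equals $m$ appears to carry a sign slip, so you need not be troubled by the discrepancy.
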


\begin{proof} \

$(1)$ We construct $\widehat\varphi$ algorithmically, eliminating block by block from $\widehat f$, and choosing $0$ as formal integration constant for each block. 
Let $\widehat f=z+z^\alpha \widehat T_0(\boldsymbol\ell)+\mathrm{h.o.b.}$, where $\widehat T_0(\boldsymbol\ell)=-\boldsymbol\ell^m+\mathrm{h.o.t.}$ is $\log$-Gevrey of order strictly bigger than $\frac{\alpha-1}{2}$ on $V$.  We first reduce $\widehat f$ to an \emph{auxiliary normal form} $\widehat f_2$, given as the time one map of the following formal field:
\begin{equation}\label{eq:t2}
\widehat f_2(z)=\mathrm{Exp}\Big(\frac{z^\alpha \widehat T_0(\boldsymbol\ell)}{1+\frac{\alpha}{2} z^{\alpha-1}\widehat T_0(\boldsymbol\ell)+b z^{\alpha-1}\widehat T_0(\boldsymbol\ell)\boldsymbol\ell}\frac{d}{dz}\Big).\mathrm{id}.
\end{equation}
It can easily be checked that $\widehat f_2$ is again a parabolic generalized Dulac transseries, since all blocks are $\log$-Gevrey of order strictly bigger than $\frac{\alpha-1}{2}$ on $\boldsymbol\ell$-cusp $\boldsymbol\ell(V)$, by closedness of $\log$-Gevrey classes to summation, multiplication and differentiation in Propositions~\ref{prop:closum}-\ref{prop:clodif}. Moreover, $\widehat f_2$ has the unchanged initial block $z^\alpha \widehat T_0(\boldsymbol\ell)$, and $b\in\mathbb R$ is chosen (unique) such that $\widehat f_2$ has the same formal invariants $(\alpha,m,\rho)$ as $\widehat f_1$. Also, its formal Fatou coordinate is simply given by:
$$
\widehat \Psi_{2}=\int \frac{z^{-\alpha+1}}{T_0(\boldsymbol\ell)\boldsymbol\ell^2}d\boldsymbol\ell-\frac{\alpha}{2}\boldsymbol\ell^{-1}+b \boldsymbol\ell_2^{-1}.
$$
Therefore, the \emph{auxiliary normal form} $\widehat f_2$ can be reduced to a normal form $\widehat f_1$ of $\widehat f$ given by \eqref{eq:norma1} simply by one additional parabolic change of variables that can be explicitely expressed as $\widehat h_0^{-1}\in\widehat{\mathcal L}^{\mathrm{id}}(\mathbb R)$: 
\begin{align}\label{eq:lll}
&\widehat h_0^{-1}:=\widehat \Psi_{2}^{-1}\circ \widehat\Psi_{0}=\\
&=\!\!\Big(\int \frac{z^{-\alpha+1}}{\widehat T_0(\boldsymbol\ell)\boldsymbol\ell^2}d\boldsymbol\ell-\frac{\alpha}{2}\boldsymbol\ell^{-1}+b \boldsymbol\ell_2^{-1}\Big)^{-1}\!\!\!\!\!\circ\! \Big(-\int \frac{z^{1-\alpha}}{\boldsymbol\ell^{m+2}}d\boldsymbol\ell-\frac{\alpha}{2}\boldsymbol\ell^{-1}+\big(\frac m 2+\rho\big)\boldsymbol\ell_2^{-1}\Big).\nonumber
\end{align}
Here, we fix the formal integration constant to be $0$. Note that $\widehat h_0^{-1}$ belongs to $\widehat{\mathcal L}^{\mathrm{id}}(\mathbb R)$, since $b$ is chosen such that $\widehat f_2$ and $\widehat f_1$ belong to the same $\widehat{\mathcal L}$-formal class. We now reduce $\widehat f$ to its auxiliary normal form $\widehat f_2$ by a parabolic reduction $\widehat h\in\widehat{\mathcal L}^{\mathrm{id}}(\mathbb R)$. Since all blocks in $\widehat f$ and $\widehat f_2$ are integrally summable of order $0$ ($\log$-Gevrey), and we do not eliminate the first block, it is obvious by construction of blockwise changes of variables in elimination algorithm  described in Lemma~\ref{lem:elim1} and by Definition~\ref{def:dai} that $\widehat h=\circ_{i\in\mathbb N}\widehat \varphi_i$, where $\widehat\varphi_i(z)=z+z^{\beta_i}\widehat R_i(\boldsymbol\ell)$ are elementary changes, $\beta_i$ are strictly increasing and $\widehat R_i(\boldsymbol\ell)$ in $\widehat\varphi_i$, $i\in\mathbb N$, belong to a sequence  $\widehat{\mathcal H}$ of integrally summable series and their algebraic combinations of the form \eqref{eq:hak}.  This is concluded by precise control of blocks in the reduced $\widehat f$ after each blockwise elimination, together with formula \eqref{eq:ma1} in Lemma~\ref{lem:elim1} for blockwise eliminations. It is important to note that in formula \eqref{eq:ma1} for deducing elementary changes we choose $0$ as the formal integration constant, in order to get \emph{elementary changes} (containing only one block after $z$). Note also by formula \eqref{eq:ma1} that exponents $p$ from Definition~\ref{def:isk} of integral summability of $\widehat R_i(\boldsymbol\ell)$ are equal to $m$ for all $\widehat R_i(\boldsymbol\ell)\in\widehat{\mathcal H}$ that are integrally summable of some length.

Finally, we reduce $\widehat f_2$ to $\widehat f_1$ by $\widehat h_0^{-1}$ given above. Now, the reduction $\widehat \varphi:=\widehat h\circ\widehat h_0^{-1}$ of $\widehat f$ to $\widehat f_1$ is block iterated integrally summable in the sense of Definition~\ref{def:dai}. Indeed, let us denote by $\widehat R_0(\boldsymbol\ell)\in\boldsymbol\ell R[[\boldsymbol\ell]]$ the first block of $\widehat\varphi$, $\widehat\varphi(z)=z+z\widehat R_0(\boldsymbol\ell)+\mathrm{h.o.b.}$ It eliminates the first block of $\widehat f$, and is therefore, by Lemma~\ref{lem:elim}, related to the first block $\widehat T_0(\boldsymbol\ell)$ of $\widehat f$ by formula \eqref{eq:kkk}.

$(2)$ In Proposition~\ref{prop:ok} below, we prove that, if $\widehat \varphi$ is a formal normalization of $\widehat f$, then \emph{any} other formal normalization of $\widehat f$ is given by $\widehat\varphi\circ \widehat f_c$, $c\in\mathbb R$. Therefore, any formal normalization $\widehat\varphi$ of $\widehat f$ admits the decomposition~\eqref{eq:jkl} as its unique decomposition \eqref{eq:jkl}, up to precomposition by $\widehat f_c$. 
\hfill $\Box$

\bigskip

The algorithm of reduction of a generalized Dulac series to $\widehat{\mathcal L}(\mathbb R)$-normal form $\widehat f_1$ by blockwise eliminations described in Lemma~\ref{lem:elim} is not unique. The following proposition shows, however, that all formal changes of variables differ from one another in a controlled way.

\begin{prop}\label{prop:ok}\ Let $\widehat f(z)=z-z^\alpha\boldsymbol\ell^m+\ldots$, $\alpha>1$,\ $m\in\mathbb Z$, be a parabolic normalized generalized Dulac series and let $\widehat \Psi\in \widehat{\mathcal L}_2^\infty(\mathbb R)$ be its formal Fatou coordinate\footnote{A formal Fatou coordinate is a formal solution of \emph{Abel's equation} $\widehat\Psi\circ\widehat f-\widehat \Psi=1$, unique in $\widehat{\mathfrak L}(\mathbb R)$ up to an additive constant. It is related to the embedding in a vector field, as it represents flow time. For more information, as well as construction of formal Fatou coordinate for parabolic Dulac germs, see \cite{MRRZ2Fatou}.} $($with constant term equal to $0)$. Let 
$\widehat{f}_1(z)$ from \eqref{eq:norma1} and let
$$
\widehat \Psi_0(z)=-\int \frac{dz}{z^\alpha\boldsymbol\ell^m} +\frac{\alpha}{2}\log z+\big(\frac{m}{2}+\rho\big)\boldsymbol\ell_2^{-1}
$$
be its formal Fatou coordinate $($with constant term equal to $0)$. 

\begin{enumerate}
\item The formal change of variables $\widehat\varphi\in\widehat{\mathcal L}^{\mathrm{id}}(\mathbb R)$  such that $\widehat f=\widehat \varphi\circ \widehat f_1\circ \widehat{\varphi}^{-1}$ is \emph{unique up to precomposition with $\widehat f_c$, $c\in\mathbb R$}. Here, $\widehat f_c=\mathrm{Exp}\big(c X_1\frac{d}{dz}\big).\mathrm{id},\ c\in\mathbb R,$ and $X_1$ from \eqref{eq:norma1}. 
\item For any formal change of variables $\widehat\varphi$ reducing $\widehat f$ to $\widehat f_1$, there exists a constant $C\in\mathbb R$ such that $\widehat \Psi+C=\widehat \Psi_0\circ \widehat\varphi^{-1}$.
\end{enumerate}
\end{prop}
\noindent The proof is classical and can be found in the Appendix.
\medskip
\subsection{The notion of generalized block iterated integral summability.}\
\smallskip

The following definition is introduced for the purpose of describing formal conjugacies between two parabolic generalized Dulac transseries in the same formal class, and not just formal reductions to normal forms, see Proposition~\ref{prop:gDul}below.
\begin{defi}\label{def:giis}
Let $\widehat \varphi\in\widehat{\mathcal L}^{\mathrm{id}}(\mathbb R)$. Let $(\alpha,m,\rho)\in\mathbb R_{>1}\times\mathbb Z\times\mathbb R$, and let $V$ be a petal of opening $\frac{2\pi}{\alpha-1}$. We say that $\widehat\varphi$ is \emph{generalized block iterated integrally summable} on $V$ with parameters $(\alpha,m,\rho)$ if there exists a decomposition
\begin{equation}\label{eq:tzu}
\widehat\varphi=\widehat\varphi_1\circ \widehat\varphi_2^{-1},
\end{equation}
where $\widehat \varphi_{1},\ \widehat \varphi_{2}$ are, up to precompositions by $\widehat f_c$, $c\in\mathbb R$, block iterated integrally summable on $V$ with parameters $(\alpha,m,\rho)$. Here, $f_c$, $c\in\mathbb R$, is a time-$c$ map of the $(\alpha,m,\rho)$-model field $X_1$ as in \eqref{eq:norma1}.
\end{defi}

Note that we do not ask for uniqueness of this decomposition.

\begin{prop}
Two (normalized) parabolic generalized Dulac germs $f$ and $g$ are $\widehat{\mathcal L}(\mathbb R)$-\emph{formally conjugated}\footnote{there exists a formal change of variables $\widehat\varphi\in\widehat{\mathcal L}^{\mathrm{id}}(\mathbb R)$ such that, for their generalized Dulac expansions, it holds that:
$$
\widehat g=\widehat\varphi^{-1}\circ\widehat f\circ\widehat\varphi. 
$$}
if and only if they belong to the same $\widehat{\mathcal L}(\mathbb R)$-formal class $(\alpha,m,\rho)$.
\end{prop}
\begin{proof} Let $f_1$ be the $(\alpha,m,\rho)$-normal form as in \eqref{eq:norma1}. Let $\widehat \varphi_1$ be a formal normalization of $f$ that reduces it to $f_1$, $\widehat\varphi_1^{-1}\circ \widehat f\circ\widehat\varphi_1=\widehat f_1$. Putting this in $
\widehat g=\widehat\varphi^{-1}\circ\widehat f\circ\widehat\varphi,
$ we get that $\widehat \varphi_2:=\widehat \varphi^{-1}\circ\widehat\varphi_1\in\widehat{\mathcal L}^{\mathrm{id}}(\mathbb R)$ is a normalization of $\widehat g$ to the same normal form $f_1$.  On the other hand, if $\widehat \varphi_1\in\widehat{\mathcal L}^{\mathrm{id}}(\mathbb R)$ is a formal normalization of $f$ and $\widehat\varphi_2\in\widehat{\mathcal L}^{\mathrm{id}}(\mathbb R)$ is a formal normalization of $g$, then $\widehat\varphi:=\widehat\varphi_1\circ\widehat\varphi_2^{-1}\in \widehat{\mathcal L}^{\mathrm{id}}(\mathbb R)$ is a formal conjugacy of $f$ to $g$.
\end{proof} 

\medskip

\begin{prop}\label{prop:gDul} Let $\widehat f$ and $\widehat g$ be two normalized parabolic generalized Dulac transseries belonging to the formal class $(\alpha,m,\rho)$, $\alpha>1,\ m\in\mathbb Z,\ \rho\in\mathbb R$. Let $V$ be a petal of opening $\frac{2\pi}{\alpha-1}$. Then:

\begin{enumerate}
\item there exists a parabolic formal change of variables $\widehat\varphi\in\widehat{\mathcal L}^{\mathrm{id}}(\mathbb R)$ conjugating\footnote{$\widehat\varphi^{-1}\circ \widehat f\circ \widehat\varphi=\widehat g$} $\widehat f$ to $\widehat g$ which is  generalized block iterated integrally summable on $V$ with parameters $(\alpha,m,\rho)$. 
\item \emph{every} conjugacy $\widehat \varphi\in\widehat{\mathcal L}^{\mathrm{id}}(\mathbb R)$ conjugating $\widehat f$ to $\widehat g$ is generalized block iterated integrally summable. 
\end{enumerate}
\end{prop}

\begin{proof} 

$(1)$ Put $\widehat\varphi:=\widehat \varphi_1\circ\widehat\varphi_2^{-1}$, where $\widehat\varphi_1\in \widehat{\mathcal L}^{\mathrm{id}}(\mathbb R)$ is a reduction of $\widehat f$ to its $(\alpha,m,\rho)$-normal form $\widehat f_1$, and $\widehat\varphi_2\in \widehat{\mathcal L}^{\mathrm{id}}(\mathbb R)$ is a reduction of $\widehat g$ to its $(\alpha,m,\rho)$-normal form $\widehat f_1$ which are both block iterated integrally summable with parameters $(\alpha,m,\rho)$ on $V$, by Proposition~\ref{prop:rednorm}. 

$(2)$ Every $\widehat\varphi$ can be decomposed as:
$$
\widehat\varphi=\widehat\varphi_1\circ \widehat \varphi_2^{-1},
$$
where $\widehat\varphi_2$ is a reduction of $\widehat g$ to $\widehat f_1$ and $\widehat\varphi_1$ is a reduction of $\widehat f$ to $\widehat f_1$. Indeed, take $\widehat\varphi_1$ to be any normalization of $\widehat f$. By Proposition~\ref{prop:rednorm}, up to precomposition by $\widehat f_c$, it is block iterated integrally summable with parameters $(\alpha,m,\rho)$ . Then $\widehat\varphi^{-1}\circ\widehat\varphi_1$ is a normalization of $g$. By Proposition~\ref{prop:rednorm}, up to precomposition by $\widehat f_c$, it is also block iterated integrally summable with parameters $(\alpha,m,\rho)$ .
\end{proof}
Note that this proof gives us the exact description of non-uniqueness of the conjugacy $\widehat\varphi$ of $\widehat g$ and $\widehat f$, as: $$\widehat\varphi_1\circ \widehat f_c\circ\widehat\varphi_2^{-1}, \ c\in\mathbb R,$$ where $\widehat \varphi_1$, $\widehat\varphi_2$ are \emph{any} normalizations of $\widehat f$, $\widehat g$ respectively. Indeed, by Proposition~\ref{prop:ok}, such normalizations are unique, up to precompositions by $\widehat f_c$, $c\in\mathbb R$.

\medskip

\bigskip
Note that the decomposition \eqref{eq:tzu} of a generalized block iterated integrally summable  $\widehat\varphi\in\widehat{\mathcal L}^{\mathrm{id}}(\mathbb R)$ may not be unique. Indeed, suppose that $\widehat\varphi$ is a conjugacy between two parabolic generalized Dulac transseries that is generalized block iterated integrally summable. In general, there may exist different pairs of parabolic generalized Dulac transseries $\widehat f,\,\widehat g$ and $\widehat f_1,\,\widehat g_1$ that are conjugated by $\widehat\varphi$. 
\medskip

We denote the class of generalized block iterated integrally summable parabolic\linebreak transseries with parameters $(\alpha,m,\rho)$ on some petal\footnote{Here, the petal is considered as a germ (its radius is not important).} $V$ of opening $\frac{2\pi}{\alpha-1}$ by: 
$$\widehat{\mathcal S}^{\alpha,m,\rho}(V)\subset \widehat{\mathcal L}^{\mathrm{id}}(\mathbb R).$$  For simplicity of notation, in the sequel we will not mention sector $V$, although the notion of (generalized, iterated) integral summability is always related to a sector.

As a trivial case of Proposition~\ref{prop:rednorm}, $\widehat\varphi:=\mathrm{id}$ is trivially block iterated integrally summable with every triple of parameters and on any petal $V$. Therefore, block iterated integrally summable parabolic transseries with parameters $(\alpha,m,\rho)$ form a subclass of generalized block iterated integrally summable parabolic transseries with parameters $(\alpha,m,\rho)$. 

Moreover, parabolic generalized Dulac series $\widehat f(z)=z-z^{\alpha}\boldsymbol\ell^m+\ldots$ belong to $\widehat{\mathcal S}^{\alpha,m,\rho}(V)$, where $V$ is any petal of opening strictly less than $\frac{2\pi}{\alpha-1}$, for every triple of parameters $(\alpha,*,*)$. It can be proven by decomposition in a sequence of elementary changes whose blocks are $0$-integrally summable, using Proposition~\ref{prop:grupa}.

\begin{obs} Note that $\widehat{\mathcal L}^{\mathrm{id}}(\mathbb R)$ is a group under composition. However, we are not able to prove the group property for the class 
$\widehat{\mathcal S}^{\alpha,m,\rho}(V)$ of all parabolic generalized block iterated integrally summable series on $V$ with parameters $(\alpha,m,\rho)$. We are even not able to prove the group property for the subclass of all parabolic transseries that are \emph{realizable} as formal conjugacies of two parabolic generalized Dulac transseries in the formal class $(\alpha,m,\rho)$. 

Indeed, even if $\widehat \varphi\in\widehat{\mathcal S}^{\alpha,m,\rho}(V)$ can be realized as a conjugacy between two parabolic generalized Dulac series $\widehat f$ and $\widehat g$ from the class $(\alpha,m,\rho)$, and $\widehat \eta\in \widehat{\mathcal S}^{\alpha,m,\rho}(V)$ can be realized as a conjugacy between another two parabolic generalized Dulac series $\widehat h$ and $\widehat k$ from the same formal class $(\alpha,m,\rho)$, we see no reason in general to believe that the composition $\widehat \eta\circ\widehat\varphi$ will be a conjugacy between two generalized parabolic Dulac series if the pairs are not \emph{chainable}. If they are, that is, if $\widehat g=\widehat h$, then $\widehat\varphi\circ\widehat\eta$ is again a conjugacy between $\widehat f$ and $\widehat k$, and, by Proposition~\ref{prop:gDul}, it is indeed generalized block iterated integrally summable with parameters $(\alpha,m,\rho)$.

Moreover, we do not claim that the formal inverse of a parabolic generalized block iterated integrally summable series is again a parabolic generalized block iterated integrally summable series. However, if $\widehat\varphi$ is a parabolic generalized block iterated integrally summable series with parameters $(\alpha,m,\rho)$ that can be realized as a formal conjugacy between two parabolic generalized Dulac germs $\widehat f$ and $\widehat g$, then its formal inverse $\widehat\varphi^{-1}$ is a formal conjugacy between $\widehat g$ and $\widehat f$. Moreover, they belong to the formal class $(\alpha,m,\rho)$. Therefore, by Proposition~\ref{prop:gDul}, $\widehat\varphi^{-1}$ is again generalized block iterated integrally summable with parameters $(\alpha,m,\rho)$. 
\end{obs}

%\begin{cory} Let $\widehat\varphi:=\widehat \Psi_0^{-1}\circ \widehat\Psi$. Let $\widehat\varphi_1$ be the formal change of varaibles reducing $\widehat f$ to $f_0=z+z^\alpha\boldsymbol\ell^{-m}$ constructed algorithmically in Theorem~\ref{thm:change}. Then there exists a $c\in\mathbb C$ such that
%$$
%\widehat \varphi=\widehat\varphi_1\circ \widehat f_c.
%$$
%\end{cory}

%\begin{proof}Obviously, $\widehat\varphi,\ \widehat\varphi_1\in\widehat{\mathcal L_0^2}$ both satisfy $\widehat\varphi\circ\widehat f=\widehat f_0\circ \widehat\varphi$. By Proposition \ref{prop:ok}, the result follows. \end{proof}

%\begin{prop}\label{rem:gd} The class of generalized Dulac germs is closed under finite compositions of blockwise changes of variables reducing the germ to its normal form $($in $\widehat{\mathcal L}_1(\mathbb R)$ or in $\widehat{\mathcal L}_2(\mathbb R))$.
%\end{prop}

%In particular, if we start with a Dulac germ $f$ and apply finitely many blockwise changes, the germ becomes \emph{generalized Dulac}, see Definition~\ref{def:gD}. More generally,  In particular, if $f(z)=z-az^\alpha \boldsymbol\ell^{m}+h.o.t.$, $a>0,\ m\in\mathbb N_-,$ is a Dulac germ, its prenormalization $\tilde f(z)=z-z^\alpha \boldsymbol\ell^{m}+\rho z^{2\alpha-1}\boldsymbol\ell^{2m+1}+h.o.t.$, $\rho\in\mathbb R$, is generalized Dulac germ.
\bigskip

\section{Asymptotic expansions of sectorially analytic reductions for parabolic generalized Dulac germs.}\label{subsec:dva}\

Let $f$ be a parabolic generalized Dulac germ and $\widehat f(z)=z-z^\alpha\boldsymbol\ell^m+\text{h.o.t.},\ \alpha>1,\ m\in\mathbb Z,$ its generalized Dulac expansion. Let $\widehat \Psi\in\widehat{\mathcal L}_2^\infty(\mathbb R)$ be its formal Fatou coordinate and $\widehat \varphi\in\widehat{\mathcal L}^{\mathrm{id}}(\mathbb R)$ a formal change of variables that reduces it to its $\widehat{\mathcal L}(\mathbb R)$-formal normal form $\widehat f_1(z)=\text{Exp}(X_1).\mathrm{id}$, where $X_1$ is as in \eqref{eq:norma1}. In this section we suppose that we have already proven the existence of sectorially analytic Fatou coordinates $\Psi_{j}^\pm$ on petals $V_j^{\pm}$, $j\in\mathbb Z$, of standard quadratic domain $\mathcal R_C$. This will be proven by construction in the proof of Theorem A $(1)$ in Section~\ref{sec:proofA}. Note that for this we use only assumptions that $f$ is analytic on $\mathcal R_C$ and that it admits uniform estimate \eqref{eq:uniest} in the first two terms. It is well-known that the relation between the conjugacy and the Fatou coordinate is given through the Fatou coordinate $\Psi_0$ of the formal normal form $\widehat f_1$, which is analytic and defined on the whole $\mathcal R_C$ (up to an additive constant in $\Psi$, that is, up to precomposition by $f_c=\text{Exp}(cX_1).\mathrm{id}$, $c\in\mathbb R$, in $\varphi$):
$$
\widehat\Psi=\Psi_0\circ\widehat\varphi,\ \Psi_{\pm}^j=\Psi_0\circ\varphi_{\pm}^j \text { on $V_{\pm}^j$, $j\in\mathbb Z$}.   
$$ 
Sectorial analyticity of the Fatou coordinate therefore immediately implies sectorial analyticity of changes of variables $\varphi_{\pm}^j$ on $V_j^{\pm}$, $j\in\mathbb Z$.
\smallskip

In Propositions~\ref{prop:Fatouexp} and \ref{prop:gi} in Subsections~\ref{subsec:fatouexp} and \ref{subsec:konj}, we state that, by the choice of \emph{integral sections}, the formal Fatou coordinate, i.e. the formal conjugacy, become \emph{unique} sectional asymptotic expansions of sectorial Fatou coordinates, i.e. conjugacies. For more on section functions and sectional transserial expansions, see \cite{MRRZ2Fatou} and Definition~\ref{def:assy} in the Appendix (a complex version).
\smallskip

\subsection{Sectorially analytic Fatou coordinates}\label{subsec:fatouexp}\ 

\noindent In Proposition~\ref{prop:Fatouexp} below, by \emph{integral asymptotic expansion} we mean sectional asymptotic expansion where section functions at limit ordinal steps (that is, sums of series in $\boldsymbol\ell$ in each block) are chosen as \emph{integral sums of length $1$}, as defined in Definition~\ref{def:isk} (2). 
\begin{prop}[Integral asymptotic expansion of Fatou coordinate of a parabolic generalized Dulac germ]\label{prop:Fatouexp}
Let $f$ be a parabolic generalized Dulac germ and $\widehat f(z)=z-z^\alpha\boldsymbol\ell^{m}+\text{h.o.t.},\ \alpha>1,\ m\in\mathbb Z,$ be its generalized Dulac expansion. Let\footnote{$\Psi_{\pm}^j(z)\sim_{z\to 0}-\frac{1}{\alpha-1}z^{-\alpha+1}\boldsymbol\ell^{-m}$ means that: $\lim_{z\to 0,\,z\in V_{\pm}^j}\frac{\Psi_{\pm}^j(z)}{-\frac{1}{\alpha-1}z^{-\alpha+1}\boldsymbol\ell^{-m}}=1$ in $V_{\pm}^j.$} $\Psi_{\pm}^j(z)\sim_{z\to 0}-\frac{1}{\alpha-1}z^{-\alpha+1}\boldsymbol\ell^{-m}$ be its analytic Fatou coordinates on petals $V_{\pm}^j$, $j\in\mathbb Z$, which are constructed in Theorem~A. 
\begin{enumerate}
\item Up to a constant, the Fatou coordinates $\Psi_{\pm}^j$ admit common integral asymptotic expansion $\widehat\Psi$, as $z\to 0$ in $V_{\pm}^j$. 

\item Let $\Phi_{\pm}^j$ be any other Fatou coordinate for $f$ on petals $V_{\pm}^j$ such that $$\Phi_{\pm}^j(z)\sim_{z\to 0}-\frac{1}{\alpha-1}z^{-\alpha+1}\boldsymbol\ell^{-m}.$$ Then, on every petal $V_{\pm}^j$, the Fatou coordinate $\Phi_{\pm}^j$ differs from $\Psi_{\pm}^j$ only by an additive constant. Up to a constant, $\Phi_{\pm}^j$ also admit $\widehat\Psi$ as their integral asymptotic expansion, as $z\to 0$ in $V_{\pm}^j.$
\end{enumerate}
\end{prop}
\begin{proof} 

$(1)$ The proof is similar to the proof of \cite[Theorem]{MRRZ2Fatou} for parabolic Dulac germs, the only difference is that \emph{coefficients} are not polynomials in $\boldsymbol\ell^{-1}$, but $\log$-Gevrey sums on $\boldsymbol\ell$-cusps ($\boldsymbol\ell$-images of petals). Suppose that $f(z)=z-z^{\alpha} R_1^{j,\pm}(\boldsymbol\ell)+O(z^{\alpha+\delta})$, $z\in V_j^{\pm}$,\ $\alpha>1$, $\delta>0$. Here, $R_1^{j,\pm}(\boldsymbol\ell)$ are analytic $\log$-Gevrey sums of order strictly bigger than $\frac{\alpha-1}{2}$ of their formal counterpart $\widehat R_1(\boldsymbol\ell)$ on $\boldsymbol\ell$-cusps $\boldsymbol\ell(V_j^{\pm})$, where $V_{j}^{\pm}$ are petals of opening $\frac{2\pi}{\alpha-1}$. Thus, for the Fatou coordinate of a generalized Dulac germ, the \emph{integral sums of length $1$} on $\boldsymbol\ell$-cusps $\boldsymbol\ell(V_{j}^{\pm})$ from Definition~\ref{def:isk}, instead of former integral sums, become the right choice of sections in each block. By Propositions~\ref{prop:closum} - \ref{prop:clodif} of Section~\ref{sec:classes}, the class of all $\log$-Gevrey summable series of order strictly bigger than some fixed $r>0$ on $\boldsymbol\ell(V_{j}^{\pm})$ remains closed to summation, multiplication and differentiation. Note also that the germ $\boldsymbol\ell\mapsto R_1^{j,\pm}(\boldsymbol\ell)$ in the first block of $f$ does not have accumulation of singularities at $0$ in $\boldsymbol\ell(V_j^{\pm}),\ j\in\mathbb Z$. Indeed, if it had, its asymptotic expansion $\widehat R_1(\boldsymbol\ell)$ in power-log scale on $\boldsymbol\ell(V_{j}^{\pm})$ would be $0$. Therefore, the algorithm for constructing the Fatou coordinate block by block, by Taylor expansion of the Abel equation, goes through similarly as in the proof of \cite[Theorem]{MRRZ2Fatou}. 

$(2)$ Fix one petal, e.g. $V_j^{+}$, $j\in\mathbb Z$. Then \begin{equation}\label{eq:sto}\Phi_+^j\circ (\Psi_+^j)^{-1}(w)=w+o(w),\ w\in\Psi_+^j(V_j^+),\ |w|\to\infty.\end{equation} Indeed, $\Psi_+^j(z)$ is injective on $V_{+}^j$, sufficiently close to $0$. This can easily be checked in the logarithmic chart $w=\log z$, where petal $V_+^j$ becomes a convex set. Suppose that $\tilde \Psi_+^j(w):=\Psi_+^j(e^w)$ is not injective in the image of $V_j^+$ in the logarithmic chart, locally around $\{\mathrm{Re}(w)=-\infty\}$. By complex Rolle's theorem, there exist sequences $(w_n)$ and $(v_n)$ such that $\mathrm{Re}(w_n),\,\mathrm{Re}(v_n)\to-\infty$, and such that $\mathrm{Re}\big((\tilde \Psi_+^j)'(w_n)\big)=0$ and $\mathrm{Im}\big((\tilde \Psi_+^j)'(v_n)\big)=0$. Due to continuity of the real and of the imaginary part of the Fatou coordinate, this implies that $(\Psi_+^j)'(0)=0$. This is in contradiction with $(\Psi_+^j)'(z)\sim z^{-\alpha}\boldsymbol\ell^{-m}$, as $z\to 0$ in $V_+^j$, from the construction of the Fatou coordinate $\Psi_+^j$ in Theorem A. Now, from the asymptotic behavior of $\Psi_+^j(z)$, we easily get the exact asymptotic behavior of the inverse $(\Psi_+^j)^{-1}(w)$, as $|w|\to\infty$, $w\in\Psi_+^j(V_j^+)$. It can be computed directly by putting $w=\Psi_+^j(z)=-\frac{1}{\alpha-1}z^{-\alpha+1}\boldsymbol\ell^{-m}\big(1+o(1)\big)$, taking the logarithm and expressing $z$ by $w$. Then the claim \eqref{eq:sto} follows simply by composing with $\Phi_+^j(z)=-\frac{1}{\alpha-1}z^{-\alpha+1}\boldsymbol\ell^{-m}+o(z^{-\alpha+1}\boldsymbol\ell^{-m})$, $z\to 0$. 

Since $\Psi_+^j(V_j^+)$ contains $\{w:\text{Re}(w)>R\}$ for some $R>0$, and since, as in the proof of Lemma~\ref{lem:loic} below, $\Phi_+^j\circ (\Psi_+^j)^{-1}-\mathrm{id}$ is periodic on this domain, we can extend it by periodicity to whole $\mathbb C$, and conclude, using Liouville's theorem, that $\Phi_+^j-\Psi_+^j=C_j$, $C_j\in\mathbb R$.
\end{proof}

\medskip

\subsection{Sectorially analytic normalizations}\label{subsec:konj}\
\smallskip

Let $\widehat\varphi\in\widehat{\mathcal S}^{\alpha,m,\rho}(V)$, $\widehat\varphi(z)=z+z\widehat R_0(\boldsymbol\ell)+\mathrm{h.o.b.}$, where $\widehat R_0(\boldsymbol\ell)\in\boldsymbol\ell\mathbb R[[\boldsymbol\ell]]$, be a block iterated integrally summable series with parameters $(\alpha,m,\rho)\in\mathbb R_{>1}\times\mathbb Z\times\mathbb R$ on petal $V$ of opening $\frac{2\pi}{\alpha-1}$ in the sense of Definition~\ref{def:dai}. Let $\widehat h_0(z)$ be as defined from $\widehat R_0(\boldsymbol\ell)$ in \eqref{eq:kkkk} in Definition~\ref{def:dai}. Let $h_0(z)$ be the analytic germ on $V$, given by the analytic counterpart of formula \eqref{eq:kkkk} for $\widehat h_0(z)$:
\begin{align}\label{eq:kkkka}
h_0(z):=\Big(-\int_{\boldsymbol\ell_0}^{\boldsymbol\ell}&\frac{e^{\frac{\alpha-1}{\eta}}}{\eta^{m+2}}d\eta-\frac{\alpha}{2}(\boldsymbol\ell^{-1}-\boldsymbol\ell_0^{-1})+\big(\frac m 2+\rho\big)(\boldsymbol\ell_2^{-1}+\log\boldsymbol\ell_0)\Big)^{-1}\circ\nonumber\\
&\circ \Big(\int_{\boldsymbol\ell_0}^{\boldsymbol\ell}\frac{e^{\frac{\alpha-1}{\eta}}}{T_0(\eta)\eta^{2}}d\eta-\frac{\alpha}{2}(\boldsymbol\ell^{-1}-\boldsymbol\ell_0^{-1})+b(\boldsymbol\ell_2^{-1}+\log\boldsymbol\ell_0)\Big), \ z\in V.
\end{align}
Here, $T_0$ is the $\log$-Gevrey sum of $\widehat T_0$ from \eqref{eq:kkkk} on $\boldsymbol\ell(V)$, and $\boldsymbol\ell_0\in\boldsymbol\ell(V)$. Note that $h_0$ is unique up to the choice of a constant of integration $\boldsymbol\ell_0\in\boldsymbol\ell(V)$.
\smallskip

Let $\widehat\varphi\circ\widehat h_0$ decompose as in Definition~\ref{def:dai} as:
\begin{equation}\label{eq:ggg} \widehat\varphi\circ\widehat h_0=\circ_{i\in\mathbb N}\widehat\varphi_i,\ \widehat\varphi_i(z)=z+z^{\beta_i}\widehat R_i(\boldsymbol\ell),\end{equation} where $\widehat\varphi_i(z)$ are elementary changes, $\beta_i>1$ are strictly increasing, either finite or tending to $+\infty$, and $\widehat R_i(\boldsymbol\ell)$, $i\in\mathbb N$, belong to a sequence  $\widehat{\mathcal H}$ of integrally summable series and their algebraic combinations of the form \eqref{eq:hak}.
\smallskip

\begin{defi}[Block iterated integral asymptotic expansion]\label{def:iiexp}
Let $\varphi(z)=z+o(z)$ be analytic on some open petal $V$ of opening $\frac{2\pi}{\alpha-1}$, for some $\alpha>1$. Let $\widehat\varphi\in \widehat{\mathcal S}^{\alpha,m,\rho}(V)$ be block iterated integrally summable on $V$ with respect to parameters $(\alpha,m,\rho)$. Let $\widehat h_0\in\widehat{\mathcal L}^{\mathrm{id}}(\mathbb R)$ and $h_0$ analytic on $V$ be as defined in \eqref{eq:kkkka}. Let $\widehat{\mathcal H}$ in decomposition of $\widehat\varphi\circ\widehat h_0$ be as above.

We say that the germ $\varphi$, up to precomposition by $f_c$, $c\in\mathbb R$, admits $\widehat\varphi$ as its \emph{block iterated integral asymptotic expansion} with parameters $(\alpha,m,\rho)$ on $V$ if, for every integration constant $\boldsymbol\ell_0\in\boldsymbol\ell(V)$ in \eqref{eq:kkkka} for $h_0$, and for every integral sum $\mathcal H^{h_0}=\{R_i(\boldsymbol\ell):i\in\mathbb N\}$ of $\widehat {\mathcal H}$ on $\boldsymbol\ell(V)$, there exists $c\in\mathbb R$ such that $(\varphi\circ f_c)\circ h_0$ can be written as the following \emph{infinite asymptotic\footnote{the request is weaker than the pointwise convergence of $\circ_{i\in\mathbb N}\varphi_i$ to $\varphi\circ h_0$} composition}:
\begin{equation}\label{eq:ass}
(\varphi\circ f_c)\circ h_0\sim \circ_{i\in\mathbb N}\varphi_i,\ z\in V,\ z\to 0,
\end{equation}
where $\varphi_i(z)=z+z^{\beta_i}R_i(\boldsymbol\ell)$\footnote{The sums $\varphi_i(\boldsymbol\ell)$ depend on the choice of the constant of integration in $h_0(z)$ and on the choice of constants of integration in all former $R_k(\boldsymbol\ell),\ k\leq i$.}. Here, \emph{asymptotic composition} $\sim$ in \eqref{eq:ass} means that there exists a sequence $(\gamma_n)_n$ of strictly increasing positive numbers tending to $+\infty$, such that, for every $n\in\mathbb N$:
\begin{align}\label{eq:ohh}
(\varphi\circ f_c)\circ h_0=&\varphi_1\circ\ldots\circ\varphi_n+o(z^{\gamma_n}),\ \text{ i.e.}\\
\varphi_n^{-1}\circ\ldots\circ\varphi_1^{-1}\circ\big((\varphi\circ f_c)\circ h_0\big)=&z+o(z^{\gamma_n}),\quad z\in V,\ z\to 0. \nonumber
\end{align}
\end{defi}

%\begin{obs}\label{rem:uh}  
%\begin{enumerate}\

%\item 

Note that $\varphi_i(z)$ admit $\widehat\varphi_i(z)=z+z^{\beta_i}\widehat R_i(\boldsymbol\ell)$, $i\in\mathbb N$, as their Poincar\' e asymptotic expansions, since $R_i(\boldsymbol\ell)$ is integral sum of $\widehat R_i(\boldsymbol\ell)$ of some length. This can be checked in Definition~\ref{def:isk}.

%\item Due to strict increasing of $\beta_i$ in $\widehat\varphi_i(z)=z+z^{\beta_i}\widehat R_i(\boldsymbol\ell)$, as $i\to\infty$, and by $(1)$, the asymptotic condition in \eqref{eq:ohh} is equivalent to the condition that, for every $n\in\mathbb N_0$, the asymptotic expansion of $\varphi_n^{-1}\circ\ldots\circ\varphi_1^{-1}\circ\big((\varphi\circ f_c)\circ h_0\big)$, as $z\to 0$, is equal to $\widehat\varphi_{n+1}(z)=z+z^{\beta_{n+1}}\widehat R_{n+1}(\boldsymbol\ell)$. Here, $\beta_n,\ n\in\mathbb N$, are as in decomposition \eqref{eq:ggg}. 
%\end{enumerate}
%\end{obs}

\begin{obs}We do not claim uniqueness of a block iterated integral asymptotic expansion of a general analytic germ $\varphi$ on $V$. However, if $\varphi$ is a normalization on $V$ of a parabolic Dulac germ $f$ from formal class $(\alpha,m,\rho)$ that admits a block iterated integral asymptotic expansion, then its block iterated integral asymptotic expansion on $V$ with parameters $(\alpha,m,\rho)$, if it exists, is unique, up to precomposition by $\widehat f_c$, where $f_c$ is a time-$c$ map of $(\alpha,m,\rho)$-model field $X_1$ as in \eqref{eq:norma1}. Indeed, every block iterated integral asymptotic expansion of analytic normalization on $V$ is a formal normalization of $\widehat f$, by \eqref{eq:ohh} and since $\varphi_i$ admits $\widehat\varphi_i$ as its Poincar\' e asymptotic expansion. On the other hand, formal normalization of $\widehat f$ is by Proposition~\ref{prop:ok} unique up to precomposition by $\widehat f_c$, $c\in\mathbb R$. 
\end{obs} 

%Indeed, from the first block $z\widehat R_0(\boldsymbol\ell)$, $\widehat R_0(\boldsymbol\ell)\in\boldsymbol\ell\mathbb R[[\boldsymbol\ell]]$ of the asymptotic expansion of $\varphi$ on $V$, by \eqref{eq:kkkka}, we first determine $h_0$ uniquely up to constants of integration.

\begin{prop} [Block iterated integral asymptotic expansions of analytic reductions to the formal normal form]\label{prop:gi}

Let $f(z)=z-z^\alpha \boldsymbol\ell^m+o(z^\alpha \boldsymbol\ell^m),\ \alpha>1,\ m\in\mathbb Z,$ be a parabolic generalized Dulac germ on $\mathcal R_C$ from the formal class $(\alpha,m,\rho)$. Let $\widehat f$ be its generalized Dulac expansion. Let $V_j^{\pm}$, $j\in\mathbb Z$, be petals of $f$ on $\mathcal R_C$. Let $\widehat\varphi\in\widehat{\mathcal L}^{\mathrm{id}}(\mathbb R)$ be the formal change of variables reducing $\widehat f$ to its $\widehat{\mathcal L}(\mathbb R)$-normal form $\widehat f_1$, which is block iterated integrally summable $($see Proposition~\ref{prop:rednorm}$)$. 

1. There exist analytic changes of variables $\varphi_{\pm}^j(z)=z+o(z)$, $j\in\mathbb Z$, on open petals $V_j^{\pm}$, conjugating\footnote{i.e. such that $f\circ \varphi_j^{\pm}=\varphi_j^{\pm}\circ f_1,\ z\in V_j^{\pm}$.} $f$ to $f_1$ which, up to a precomposition by $f_c$, $c\in\mathbb R$, admit the formal change of variables $\widehat\varphi$ as their  \emph{block iterated integral asymptotic expansion} with parameters $(\alpha,m,\rho)$, as $z\to 0$ on $V_j^\pm$. Here, different choices of constants $c$ are related to different choices of constants in integral sums.

2. Let $\eta_j^{\pm}$ be any other analytic changes of variables on $V_j^{\pm}$ conjugating $f$ to $f_1$, such that $\eta_j^{\pm}(z)=z+o(z),\ z\in V_j^{\pm}$. Then, there exist constants $c_j$, $j\in\mathbb Z$, such that $\eta_j^{\pm}=\varphi_j^{\pm}\circ f_{c_j}$. Moreover, $\eta_j^{\pm}$, up to a precomposition with $f_c$, $c\in\mathbb R$, admit $\widehat\varphi$ as block iterated integral asymptotic expansions with parameters $(\alpha,m,\rho)$, as $z\to 0$ in $V_j^{\pm}$. 
\end{prop}

Due to its importance in view of Definition~\ref{def:jedan} of analytic normalization of a parabolic Dulac germ on a standard quadratic domain, we re-state and prove the statement 2 of Proposition~\ref{prop:gi} in separate Lemma~\ref{lem:loic}, whose proof is in the Appendix. It shows that, in Definition~\ref{def:jedan}, we do not need any assumption on asymptotic expansion of the normalization in power-logarithmic scale, since it necessarily follows.
\begin{lem}[Tangent to identity analytic normalizations of parabolic Dulac germs necessarily admit power-logarithmic asymptotic expansions]\label{lem:loic} Let $f$ be a normalized parabolic generalized Dulac germ in the formal class $(\alpha,m,\rho)$ and let $f_1$ be its $\widehat{\mathcal{L}}^{\mathrm{id}}(\mathbb R)$-formal normal form, with formal conjugacy $\widehat\varphi\in\widehat{\mathcal{L}}^{\mathrm{id}}(\mathbb R)$\footnote{unique only as a \emph{class}, up to precomposition by $f_c$.}. Let $V_j^{\pm}$ be petals of $f$, as in Theorem~A. Let $h_j^{\pm}$ be analytic conjugacies of $f$ to $f_1$ on petals $V_j^{\pm}$, $j\in\mathbb Z$, such that $h_j^{\pm}(z)=z+o(z).$\footnote{Here, $o(z)$ is meant uniformly on petal, but may depend on $j$.} Then, the germs $h_j^{\pm}$ admit the formal conjugacy $\widehat{\varphi}\in\widehat{\mathcal L}^{\mathrm{id}}(\mathbb R)$ as their common block iterated integral asymptotic expansion with parameters $(\alpha,m,\rho)$, as $z\to 0$ in $V_j^{\pm}$, up to precomposition by $f_{c_j}$, $c_j\in\mathbb R$, $j\in\mathbb Z$. 
\end{lem}
\noindent The same statement holds for $\widehat{\mathcal L}_2^{\mathrm{id}}(\mathbb R)$-formal normal form, with formal conjugacy $\widehat\varphi\in\widehat{\mathcal L}_2^\mathrm{id}(\mathbb R)$.
\bigskip

\noindent \emph{Proof of Proposition~\ref{prop:gi}.} \

\emph{Proof of $(1)$}.

\emph{Step 1. Existence of a petalwise analytic normalization $\varphi_j^\pm$ on $V_j^\pm$.} The existence of analytic  conjugating changes $\varphi_j^{\pm}$ on petals $V_j^{\pm}$, $j\in\mathbb Z$, of opening $\frac{2\pi}{\alpha-1}$, where petals are as in Theorem~A, is proven by the existence (by construction) of an analytic Fatou coordinate $\Psi_j^{\pm}$ on $V_j^{\pm}$ in the proof of Theorem~A in Section~\ref{sec:proofA}. Let $\Psi_0$ be a Fatou coordinate of the normal form $f_1$ on $V_j^{\pm}$, and let $\Psi_j^{\pm}$ be a Fatou coordinate of $f$, analytic on $V_j^{\pm}$, $j\in\mathbb Z$, constructed in Theorem~A.  Then $\Psi_j^{\pm}(z),\ \Psi_0(z)\sim_{z\to 0} -\frac{1}{\alpha-1}z^{-\alpha+1}\boldsymbol\ell^{-m},\ z\to 0$. Consequently, $\varphi_j^\pm$ defined by \begin{equation}\label{eq:defif}\varphi_j^\pm(z):=(\Psi_j^{\pm})^{-1}\circ \Psi_0(z)=z+o(z),\ z\in V_j^{\pm},\end{equation} is a petalwise analytic normalization of between $f$. 

Note that, in definition \eqref{eq:defif}, we may have chosen another Fatou coordinate of $f$, differing from $\Psi_j^\pm$ by some additive constants on petals. This, similarly as in the final step of the proof of Lemma~\ref{lem:loic}, gives a conjugacy $\tilde\varphi_j^\pm=\varphi_j^\pm\circ f_{c_j}$, $j\in\mathbb Z$, that differs from $\varphi_j^\pm$ by precomposition by $f_{c_j}$, $c_j\in\mathbb R$.
\smallskip

%Furthermore, if $\eta_j^{\pm}$ is \emph{any} other petalwise conjugacy of $f$ to $f_1$, $(\eta_j^{\pm})^{-1}\circ f\circ \eta_j^\pm=f_1$ on $V_j^{\pm}$, of the form $\eta_j^{\pm}(z)=z+o(z)$ on $V_j^\pm$, then $\Phi_j^{\pm}:=\Psi_0\circ (\eta_j^{\pm})^{-1}$ is, by Abel equation, a petalwise Fatou coordinate of $f$. Moreover, it admits a power-log asymptotic behavior, as $z\to 0$ on petals $V_j^\pm$. Therefore, by Proposition~\ref{prop:Fatouexp}, it follows that, on every petal $V_j^\pm$, there exists $C_j\in\mathbb R$, such that:
%\begin{align*}
%\Psi_j^{\pm}&=\Phi_j^{\pm}+C_j,\\
%\Psi_0\circ(\varphi_j^{\pm})^{-1}&=\Psi_0\circ (\eta_j^{\pm})^{-1}+C_j.
%\end{align*}
%Therefore, $(\varphi_j^{\pm})^{-1}=\Psi_0^{-1}\big(\Psi_0\circ(\eta_j^{\pm})^{-1}+C_j\big)=f_{C_j}\circ(\eta_j^{\pm})^{-1}$. That is, putting $c_j:=-C_j$, $\tilde \varphi_j^{\pm}=\eta_j^\pm\circ f_{c_j}$. We have proven that $(2)$ follows from $(1)$.

\smallskip
\emph{Step 2. Proof of the block iterated integral asymptotic expansions of analytic normalizations.} We now prove that \emph{any} conjugacy $\varphi_j^{\pm}$ tangent to the identity admits the block iterated integrally summable formal conjugacy $\widehat\varphi\in\widehat S^{\alpha,m,\rho}(V_j^\pm)$ as its block iterated integral asymptotic expansion on $V_j^\pm$ with parameters $(\alpha,m,\rho)$, in the sense of Definition~\ref{def:iiexp}. %We prove furthermore that precompositions correspond to the choice of constants of integration $\boldsymbol\ell_0\in\mathbb R$ in integral sums used in the definition of the iterated integral asymptotic expansion.

Recall that $\widehat\varphi$ decomposes uniquely (see \eqref{eq:ggg}) as:
$$
\widehat\varphi\circ\widehat h_0=\circ_{i\in\mathbb N}\widehat\varphi_i,
$$
where $\widehat h_0^{-1}$ is a formal normalization of $\widehat f_2$ (defined in \eqref{eq:t2}) given by \eqref{eq:lll}, taking zero as a constant of formal integration for uniqueness and $b\in\mathbb R$ such that $\widehat h_0^{-1}$ does not contain iterated logarithms. Moreover, \begin{equation}\label{eq:tt1}\widehat\varphi_i(z)=z+z^{\beta_i}\widehat R_i(\boldsymbol\ell),\ i\in\mathbb N,\end{equation} with $\beta_i>1$ strictly increasing, are blockwise \emph{elementary} changes reducing $\widehat f$ to $\widehat f_2$, obtained as in Lemma~\ref{lem:elim1}, taking $0$ as formal integration constant in \eqref{eq:ma1} for $\widehat R_i(\boldsymbol\ell)$, in order to get elementary changes.
Let $\widehat{\mathcal H}=\{\widehat R_i(\boldsymbol\ell):i\in\mathbb N\}$ as described in \eqref{eq:ggg}.

On the other hand, let $(h_0^{j,\pm})^{-1}(z)$ given by formula \eqref{eq:kkkka}, for a fixed choice of integration constant $\boldsymbol\ell_0\in\boldsymbol\ell(V_j^\pm)$, be one analytic normalization of $f_2^{j,\pm}$ on $V_j^\pm$. Compared to formal formula \eqref{eq:lll}, it is an analytic counterpart of $\widehat h_0^{-1}(z)$. Here,
$f_2^{j,\pm}(z),\ z\in V_j^{\pm}$, are defined by \eqref{eq:t2}:
\begin{equation*}
f_2^{j,\pm}(z)=\mathrm{Exp}\Big(\frac{z^\alpha T_0^{j,\pm}(\boldsymbol\ell)}{1+\frac{\alpha}{2}z^{\alpha-1}T_0^{j,\pm}(\boldsymbol\ell)+ bz^{\alpha-1}T_0^{j,\pm}(\boldsymbol\ell)\boldsymbol\ell}\frac{d}{dz}\Big).\mathrm{id}.
\end{equation*}
Here, $\widehat T_0(\boldsymbol\ell)$ is the first block of the generalized Dulac expansion $\widehat f$, and $T_0^{j,\pm}(\boldsymbol\ell)$ are its $\log$-Gevrey sums on $\boldsymbol\ell(V_j^{\pm}),\ j\in\mathbb Z$. 
Note that $f_2^{j,\pm}$ is not necessarily a parabolic generalized Dulac germ, but only a petal-wise germ, since it may not glue on intersections of petals.

Let $\mathcal H^{h_0^{j,\pm}}:=\{R_i^{j,\pm}(\boldsymbol\ell):i\in\mathbb N\}$ be one fixed sequence of integral sums on $\boldsymbol\ell(V_j^{\pm})$, for a fixed choice of integrating constants $\boldsymbol\ell_0\in\boldsymbol\ell(V_j^{\pm})$ in $h_0^{j,\pm}(z)$, and for fixed choices of integrating constants $\boldsymbol\ell_0\in\boldsymbol\ell(V_j^{\pm})$ in succesive integral sums (in each analytic integral \eqref{eq:ma1} defining $R_i^{j,\pm}(\boldsymbol\ell)$, see Definition~\ref{def:isk}). We now put \begin{equation}\label{eq:tt2}\varphi_i^{j,\pm}(z):=z+z^{\beta_i}\widehat R_i(\boldsymbol\ell),\ i\in\mathbb N,\ z\in V_j^{\pm}.\end{equation} The procedure is explained in more detail in the following algorithm for block-by-block changes reducing $\widehat f$ to $\widehat f_2$, deduced in every step formally and analytically in parallel.

\medskip
\emph{Description of block-by-block algorithm.} For simplicity, we do not write here indices of petals. We simply put $V:=V_j^{\pm}$, for any petal. Analogously, $f_2:=f_2^{j,\pm}$, $\varphi_i:=\varphi_i^{j,\pm}$, $h_0:=h_0^{j,\pm}$, analytic on $V$. 

We construct simultaneously the sequence $(\widehat\varphi_i)_{i\in\mathbb N}$ from \eqref{eq:tt1} of formal elementary changes of variables and a sequence $(\varphi_i)_{i\in\mathbb N}$ from \eqref{eq:tt2} of analytic changes of variables on $V$ by \emph{blockwise eliminations}, transforming $f$ to $f_2$ on $V$. %Note that $\widehat\varphi_i(z)=z+z^{\beta_i}\widehat R_i(\boldsymbol\ell)$, with $\beta_i>1$ strictly increasing, $i\in\mathbb N$, and $\widehat R_i(\boldsymbol\ell)$ integrally summable of some order on $V_j^\pm$ as in Definition~\ref{def:isk}. Since the formal change $\widehat\varphi\in\widehat{\mathcal S}^{\alpha,m,\rho}(V_j^\pm)$ is iterated integrally summable, $\widehat h:=\widehat\varphi\circ\widehat h_0=\circ_{i\in\mathbb N}\widehat\varphi_i$ is its unique decomposition from \eqref{eq:decs} in Definition~\ref{def:dai}.

\begin{enumerate}
\item[0.] Put $\widehat {^1f}:=\widehat f$, $^1f:=f$.

\item[1.] Let $i\in\mathbb N$. By Lemma~\ref{lem:elim1}, we find the $i$-th elementary change $\widehat\varphi_{i}(z)=z+z^{\beta_{i}} \widehat R_{i}(\boldsymbol\ell)$, such that:
\begin{equation}\label{eq:jaoj}\widehat R_i(\boldsymbol\ell)=-e^{-\frac{\beta_i-\alpha}{\boldsymbol\ell}}\boldsymbol\ell^{m}\int e^{\frac{\beta_i-\alpha}{\boldsymbol\ell}}\boldsymbol\ell^{-2m-2}\widehat T_i(\boldsymbol\ell) d\boldsymbol\ell.
\end{equation}
In the first step $(i=1)$, $\widehat T_1(\boldsymbol\ell)$ is the difference of the series in $\boldsymbol\ell$ in the second block of $\widehat f$ and of $\widehat f_2$ ($\log$-Gevrey on $V$). Further, for $i\in\mathbb N$, $i>1$, we compute:
$$\widehat {^i f}(z):=\widehat \varphi_{i-1} \circ \widehat {^{i-1} f} \circ \widehat \varphi_{i-1}^{-1}(z)=\widehat f_2(z)+\big(z^{\gamma_i}\widehat T_i(\boldsymbol\ell)+\text{h.o.b}\big).$$ Here, $\gamma_i> \alpha$ strictly increase to $+\infty$, and $\widehat T_i(\boldsymbol\ell)$ (as also every other \emph{coefficient} of new $\widehat {^i f}$) is an algebraic combination (with operations $+,\cdot,\frac{d}{d\boldsymbol\ell}$) of series integrable of length strictly lower than $i$ (of the \emph{coefficients} of $\widehat {^{i-1} f}$, of $\widehat f_2$ and of $\widehat R_j(\boldsymbol\ell)$ from all the previous elementary changes, $1\leq j<i$).  The constant of formal integration is always taken to be $0$, in order to get \emph{elementary}, one-block changes of variables $\widehat\varphi_i$.

We put the sequence of $\widehat R_i(\boldsymbol\ell)$ obtained in this formal step-by-step construction in set $\widehat{\mathcal H}$. The corresponding integral exponents are by \eqref{eq:jaoj} equal to $\beta_i$, and they are strictly increasing and tend to $+\infty$ by the algorithm.
\smallskip

\item[2.] On the other hand, \eqref{eq:jaoj} can be simultaneously solved \emph{analytically} on the $\boldsymbol\ell$-cusp $\boldsymbol\ell(V)$, 
\begin{equation}\label{eq:jaoj1} R_i(\boldsymbol\ell)=-e^{-\frac{\beta_i-\alpha}{\boldsymbol\ell}}\boldsymbol\ell^{m}\int_{*}^{\boldsymbol\ell} e^{\frac{\beta_i-\alpha}{\eta}}\eta^{-2m-2} T_i(\eta) d\eta,\ \boldsymbol\ell\in\boldsymbol\ell(V).
\end{equation}
Here, $*$ is $0$ if $(\beta_i-\alpha,\mathrm{ord}(\widehat T_i)-2m-1)\succ (0,0)$ (lexicographically) in this step, and $\boldsymbol\ell_0\in\boldsymbol\ell(V)$, if not (if the subintegral function is not bounded at $0$). Note that in the first \emph{finitely many} steps we may choose $\boldsymbol\ell_0$ arbitrarily, which \emph{changes the constant of integration}. The integration path is not important, as long as it stays in $\boldsymbol\ell(V)$, since the subintegral function is analytic on $\boldsymbol\ell(V)$ and the cusp simply connected.

Since $\widehat T_i(\boldsymbol\ell)$ is an algebraic expression (with respect to $+,\cdot,\frac{d}{d\boldsymbol\ell}$) in integrally summable series in $\widehat{\mathcal H}$ of lengths strictly smaller than $i$ (of the \emph{coefficients} of $\widehat {^{i-1} f}$, of $\widehat f_2$ and of $\widehat R_j(\boldsymbol\ell)$ from all the previous elementary changes, $1\leq j<i$), once that we have prescribed the summation constants in previous steps, by Proposition~\ref{prop:uis} it is uniquely summable. Supposing that we have chosen integration constants for $R_j(\boldsymbol\ell)$ in \eqref{eq:jaoj1} in all previous steps $j<i$, this sum is unique, analytic on $V$, and we denote it by $T_i(\boldsymbol\ell)$. Now we again freely chose the $i$-th constant of integration  in \eqref{eq:jaoj1} in the $i$-th step. This free choice is done in first finitely many steps, while $(\beta_i-\alpha,\text{ord}(\widehat T_i)-2m-1)\prec (0,0)$; in later steps we always choose $0$. So the sum $R_i(\boldsymbol\ell)$ of $\widehat R_i(\boldsymbol\ell)$ is \emph{unique} up to choice of integrating constants of $\eqref{eq:jaoj1}$ in all previous steps up to $i$, included. We denote by $\mathcal H^{h_0}=\{R_i(\boldsymbol\ell):i\in\mathbb N\}$ one such choice of integration constants in all steps, as in \eqref{eq:ih}. It thus becomes one integral sum of $\widehat {\mathcal H}$.

We put now:
$$
\varphi_i(z):=z+z^{\beta_i}R_i(\boldsymbol\ell).
$$
The function $\varphi_i$ is analytic on $V$. We do it for every petal, and thus get $\varphi_{j,\pm}^i(z)$ analytic on $V_j^\pm$, $j\in\mathbb Z$. 

Note that the integration path in \eqref{eq:jaoj1} does not matter. For any petal $V=V_j^{\pm}$ and any step of iteration, we may take any path lying inside the petal. Indeed, all \emph{coefficient functions} in blocks of $f$ and $f_2$, since $f$ is a generalized Dulac germ, are analytic functions on corresponding $\boldsymbol\ell$-cusps $\boldsymbol\ell(V)$. Moreover, the operations in algebraic combinations for $\widehat T_i(\boldsymbol\ell)$ exclude division, by the above described algorithm. Therefore, the subintegral function in the formula \eqref{eq:jaoj1} for $R_i(\boldsymbol\ell)$, $i\in\mathbb N$, does not have any singularities on $\boldsymbol\ell$-cusps $\boldsymbol\ell(V)$, which are simply connected. 
\end{enumerate}

\smallskip

\noindent Note that $\varphi_{j,\pm}^i$, $i\in\mathbb N$, from \eqref{eq:tt2}, obtained by the above algorithm, are analytic on petals $V_j^\pm$, $j\in\mathbb Z$, but in general they \emph{do not glue} to global analytic germs $\varphi^i(z)$ on $\mathcal R_C$. Neither we claim that $^i f(z)$, for any $i>1$, are analytic on $\mathcal R_C$ (they are at best analytic petalwise).

Every elementary change $\varphi_i(z)$ is an analytic function on $V$, by \eqref{eq:jaoj1}. For $i$ such that $\big(\beta_i-\alpha,\text{ord}(\widehat T_i)-2m-1\big)\succ (0,0)$, we choose a canonical way of integration, $\int_0^{\boldsymbol\ell} * \,dw$, so the integral is a unique analytic function on $V$. Otherwise, we freely choose the initial point $\boldsymbol\ell_0\in\boldsymbol\ell(V)$ in the integral $\int_{\boldsymbol\ell_0}^{\boldsymbol\ell} * \,dw$, so the integral is unique only up to a term $C_i e^{-\frac{\alpha-\beta_i}{\boldsymbol\ell}}\boldsymbol\ell^{m}$, $C_i\in\mathbb R$. This ambiguity corresponds to adding an exponentially small term $C_i e^{-\frac{\alpha-\beta_i}{\boldsymbol\ell}}\boldsymbol\ell^{m}$ to $\varphi_i(z)$, $C_i\in\mathbb R$, for all the changes of variables before the residual ($\beta_i\leq \alpha$).

\bigskip

Put again $V:=V_j^{\pm}$. Take \emph{any} choice of integrating constant $\boldsymbol\ell_0\in \boldsymbol\ell(V)$ in $h_0(z)$ (that is, take \emph{any} analytic normalization $h_0^{-1}$ of $f_2$ on $V$) and \emph{any} choice of integral sums $\mathcal H^{h_0}=\{R_i(\boldsymbol\ell):i\in\mathbb N\}$ of $\widehat {\mathcal H}$ (choices of integrating constants $\boldsymbol\ell_0\in\boldsymbol\ell(V)$), from the above algorithm. In the algorithm steps $R_i(\boldsymbol\ell)\in\mathcal H^{h_0},\ i\in\mathbb N,$ are deduced solving the corresponding Lie bracket equations for block-by-block eliminations, germwise. Now put:
$$\varphi_i(z):=z+z^{\beta_i}R_i(\boldsymbol\ell),\ i\in\mathbb N.$$ Since, in the algorithm, we remove block by block, we get that there exists a strictly increasing sequence $(\gamma_n)_n$, $\gamma_n>1$, tending to $+\infty$, such that, for every $n\in\mathbb N$, on $V$ it holds that:
\begin{align}\label{eq:prvaa}                                                                                                      
(\varphi_1^{-1}\circ\ldots\circ\varphi_n^{-1})\circ f\circ (\varphi_1\circ\ldots\circ\varphi_n)&=f_2+o(z^{\gamma_n})\\
&=h_0^{-1}\circ f_1\circ h_0+o(z^{\gamma_n}),\ z\to 0.\nonumber
\end{align}
On the other hand, for \emph{any} analytic normalization $\varphi$ of $f$ on $V$ it holds that:
\begin{equation}\label{eq:drugaa}
\varphi^{-1}\circ f\circ\varphi=f_1.
\end{equation}
Now, putting \eqref{eq:drugaa} in \eqref{eq:prvaa}, and denoting by $T_n:=\varphi^{-1}\circ (\varphi_1\circ\ldots\circ\varphi_n)$, we get:
\begin{align}\label{eq:jj}
T_n^{-1}\circ &f_1\circ T_n=h_0^{-1}\circ f_1\circ h_0+o(z^{\gamma_n}),\nonumber \\
h_0\circ T_n^{-1}\circ &f_1\circ T_n\circ h_0^{-1}=f_1+o(z^{\gamma_n}),\ z\to 0.
\end{align}
Using $f_c\circ f_1=f_1\circ f_c$, $c\in\mathbb R$, and defining $r_n:=T_n\circ h_0^{-1}-f_c$, $r_n(z)=O(z^{>1}),\ n\in\mathbb N$, we get from \eqref{eq:jj}:
\begin{align*}
&f_1\circ (f_c+r_n)=(r_n+f_c)\circ f_1+o(z^{\gamma_n}),\\
&f_1\circ (f_c+r_n)-f_1\circ f_c=r_n\circ f_1+o(z^{\gamma_n}).
\end{align*}
We deduce, comparing the leading terms of both sides in the last equality, that $r_n(z)=o(z^{\gamma_n-\alpha}),\ z\to 0,\ n\in\mathbb N$. Thus, we get:
$$\varphi\circ f_c=(\varphi_1\circ\ldots\varphi_n)\circ h_0^{-1}+o(z^{\gamma_n-\alpha}).$$ This proves that $\widehat\varphi$ is the block iterated integral asymptotic expansion of \emph{any} analytic normalization $\varphi$ of $f$ on $V$, by Definition~\ref{def:iiexp}. 
\medskip

\emph{Proof of $(2)$}. Lemma~\ref{lem:loic}.
\hfill $\Box$
\bigskip

Note that the existence of a power-logarithmic asymptotic expansion is not immediate for a sectorial Fatou coordinate or a sectorial conjugacy of a parabolic generalized Dulac germ, as the following Example~\ref{ex:exampl} shows. However, it is verified if we assume an asymptotic behavior as in Propositions~\ref{prop:Fatouexp} and \ref{prop:gi} (2). See also Lemma~\ref{lem:loic}.

\begin{example}\label{ex:exampl}[A Fatou coordinate for a parabolic generalized Dulac germ which does not admit a sectional asymptotic expansion in $\widehat{\mathfrak L}^\infty(\mathbb R)$]

In Theorem~A and its proof in Section~\ref{sec:proofA}, we have constructed a holomorphic Fatou coordinate $\Psi$ of a parabolic generalized Dulac germ $f$ on an invariant petal $V$ of opening $\frac{2\pi}{\alpha-1}$, which admits an integral sectional asymptotic expansion equal to the formal Fatou coordinate $\widehat \Psi\in\widehat{\mathcal L}_2^\infty(\mathbb R)$, up to a constant. Now we define another holomorphic Fatou coordinate $\Psi_1$ on $V$ by
$$
\Psi_1(z):=\Psi(z)+g_0(e^{2\pi i\Psi(z)}),\ z\in V,
$$
where $g_0$ is \emph{any} germ analytic on the doubly punctured sphere (without poles $0$ and $\infty$). We may take, for example,
\begin{align*}
&\Psi_1(z):=\Psi(z)+\sin (2\pi \Psi(z)),\ z\in V,\ \text{or}\\
&\Psi_2(z):=\Psi(z)+ce^{2\pi i\Psi(z)},\ c\in\mathbb R,\ z\in V.
\end{align*}
Due to the unbounded exponential term $\sin (2\pi \Psi(z))$ or $ce^{2\pi i\Psi(z)}$, $\Psi_{1,2}$ are Fatou coordinates of $f$ that do not admit sectional asymptotic expansions in $\widehat{\mathfrak L}^\infty(\mathbb R)$ on $V$, as $z\to 0$.
\end{example}
\smallskip

\subsection{Examples on $\mathbb R_+$.}\

Note that, by Propositions~\ref{prop:Fatouexp} and \ref{prop:gi}, there exists a Fatou coordinate $\Psi$ i.e. conjugacy $\varphi$, \emph{unique} up to a simple transformation, analytic on a petal of opening $\frac{2\pi}{\alpha-1}$ and admitting a sectional asymptotic expansion in $\widehat{\mathfrak L}^\infty(\mathbb R)$. Moreover, the expansion is then necessarily (iterated) integral sectional expansion. However, for parabolic Dulac germs defined only on $\mathbb R_+$, as in \cite{MRRZ2Fatou}, this is not the case.

Indeed, let $f$ be a parabolic Dulac germ on $\mathbb R_+$. We construct different Fatou coordinates analytic on $(0,d)$, $d>0$, that admit sectional power-logarithmic asymptotic expansions in $\widehat{\mathcal L}_2^\infty(\mathbb R)$. In \cite{MRRZ2Fatou}, it is proven that on $(0,d)$ there exists a \emph{unique}, up to an additive constant, Fatou coordinate for a parabolic Dulac germ $f$ that admits an \emph{integral} sectional expansion, as $x\to 0$. Moreover that, up to a constant, this expansion is equal to the formal Fatou coordinate $\widehat\Psi\in\widehat{\mathcal L}_2^\infty(\mathbb R)$. 

We give in Example~\ref{ex:tri} an example of two different Fatou coordinates of a parabolic Dulac germ on $\mathbb R_+$ with the same sectional asymptotic expansion in $\widehat{\mathfrak L}^\infty(\mathbb R)$, as $x\to 0$, but with respect to different section functions.

\begin{example}\label{ex:tri} Take a parabolic Dulac germ $f$ on $(0,d)$. Let
$\Psi$ be its analytic Fatou coordinate on $(0,d)$ constructed algorithmically as in \cite[Theorem]{MRRZ2Fatou}. It admits the integral sectional asymptotic expansion, unique up to an additive constant, and equal to the formal Fatou coordinate $\widehat\Psi=\sum_{i=1}^{\infty}\widehat T_i(\boldsymbol\ell)x^{\alpha_i}\in\widehat{\mathcal L}_2^\infty(\mathbb R),\ \widehat T_i\in \widehat{\mathcal L}_{\boldsymbol\ell}^\infty(\mathbb R)$, with $(\alpha_i)_i$ strictly increasing to $+\infty$ or finite. Let us now define another Fatou coordinate $\Psi_1$ on $(0,d)$ by:
$$
\Psi_1(x):=\Psi(x)+\sin (2\pi \Psi(x)),\ x\in(0,d).
$$
The germ $\Psi_1$ is obviously also a Fatou coordinate for $f$ and analytic on $(0,d)$. Let
$$
\Psi(x)=T_1(\boldsymbol\ell)x^{\alpha_1}+T_2(\boldsymbol\ell)x^{\alpha_2}+T_3(\boldsymbol\ell)x^{\alpha_3}+\mathrm{h.o.b},
$$
be the integral sectional asymptotic expansion for $\Psi$ constructed algorithmically in \cite{MRRZ2Fatou}. Here, $T_i(\boldsymbol\ell)$ are analytic on $(0,d)$ and admit the Poincar\' e asymptotic expansion $\widehat T_i(\boldsymbol\ell)\in\widehat{\mathcal L}_{\boldsymbol\ell}^\infty(\mathbb R)$, $i\in\mathbb N$. Obviously, $\alpha_1<0$. The Fatou coordinate $\Psi_1$ admits the same sectional asymptotic expansion $\widehat\Psi$, as $x\to 0$:
$$
\Psi_1(x)=\tilde T_1(\boldsymbol\ell)x^{\alpha_1}+\tilde T_2(\boldsymbol\ell)x^{\alpha_2}+\tilde T_3(\boldsymbol\ell)x^{\alpha_3}+\mathrm{h.o.b},
$$
but with the following choice of sections:
\begin{align*}
&\tilde T_1(\boldsymbol\ell)=T_1(\boldsymbol\ell)+\sin \big(2\pi \Psi(e^{-\frac{1}{\boldsymbol\ell}})\big)\cdot e^{\frac{\alpha_1}{\boldsymbol\ell}},\\
&\tilde T_k(\boldsymbol\ell)= T_k(\boldsymbol\ell),\ k\geq 2,\ \boldsymbol\ell\in\boldsymbol\ell(0,d).
\end{align*} 
Obviously, for the Poincar\' e power expansions of $\tilde T_1$ and $T_1$, as $\boldsymbol\ell\to 0$, it holds that $\widehat {\tilde T_1}(\boldsymbol\ell)=\widehat T_1(\boldsymbol\ell)$, since $\sin (2\pi \Psi(e^{-1/\boldsymbol\ell}))e^{\frac{\alpha_1}{ \boldsymbol\ell}}$ for $\alpha_1<0$ is exponentially small with respect to $\boldsymbol\ell$, as $\boldsymbol\ell\to 0$. This is due to the boundedness of the sine function on $\mathbb R$. Note that on a sector in $\mathbb C$ around the $x$-axis of an arbitrarily small opening this is not any more true. 
\end{example}               
\medskip

\subsection{Sectorially analytic conjugacies.}\label{subsec:types}\ 
\smallskip

In Subsection~\ref{subsec:konj} in Proposition~\ref{prop:gi} we have proved that there exists a unique, up to precomposition by $f_c$, $c\in\mathbb R$, sectorially analytic reduction of a generalized Dulac germ $f$ to its formal normal form $f_1$ which admits formal reduction $\widehat\varphi\in\widehat{\mathcal L}^{\mathrm{id}}(\mathbb R)$ as its block iterated integral asymptotic expansion. Here, we derive similar results, just slightly more complicated, for the conjugacies conjugating two parabolic generalized Dulac germs. 
\smallskip

We now introduce the following definition:
\begin{defi}\label{def:giiaa} Let $f$ and $g$ be two normalized parabolic generalized Dulac germs which are $\widehat{\mathcal L}(\mathbb R)$-formally conjugated and let $V_{j}^{\pm}$ be their common\footnote{Since $f$ and $g$ belong to the same formal class, the invariants $\alpha$ and $m$ are the same, so, by Theorem~A $(1)$, they share common petals $V_j^{\pm}$ of opening $\frac{2\pi}{\alpha-1}$,\ $j\in\mathbb Z$.} petals, $j\in\mathbb Z$. Let $\varphi_{j}^{\pm}$ be analytic conjugacies of $g$ and $f$ on petals $V_j^{\pm}$, $j\in\mathbb Z$. We say that $\varphi_j^{\pm}$ admit $\widehat\varphi$ as its \emph{generalized block iterated integral asymptotic expansion} if they can be decomposed as $\varphi_j^{\pm}=\varphi_{j,1}^{\pm}\circ (\varphi_{j,2}^{\pm})^{-1}$, $j\in\mathbb Z$, where:

(1) $\varphi_{j,1}^{\pm}$ and $\varphi_{j,2}^{\pm}$ are analytic reductions of $g$ resp. $f$ to the formal normal form $f_1$ on $V_j^{\pm}$,  

(2) $\varphi_{j,1}^{\pm}$ and $\varphi_{j,2}^{\pm}$ admit formal conjugacies $\widehat\varphi_1$ resp. $\widehat\varphi_2$ as their block iterated integral asymptotic expansions.
\end{defi}
We see that, if such a decomposition into analytic reductions of $f$ and $g$ exists, it is unique up to a precomposition of $\varphi_1,\,\varphi_2$ by $f_c,\ c\in\mathbb R$, due to the uniqueness of analytic reductions in Proposition~\ref{prop:gi}. We have shown furthermore in Subsection~\ref{subsec:dva} that block iterated integral asymptotic expansions are always meant only up to such precompositions (Definition~\ref{def:iiexp}). 
\smallskip

Now, we can finally state the more general form of Proposition~\ref{prop:gi} for sectorial analytic conjugacies between two parabolic generalized Dulac germs. The proof is similar and omitted.
\begin{prop} [Generalized block iterated integral asymptotic expansions of sectorially analytic conjugacies]\label{prop:gic}

Let $f$ and $g$ be $\widehat{\mathcal L}(\mathbb R)$-formally conjugated normalized parabolic generalized Dulac germs on $\mathcal R_C$. Let $\widehat\varphi\in\widehat{\mathcal S}(V_j^\pm)$ be a formal change of variables\footnote{unique up to intermediate composition with flow $\widehat f_c$, $c\in\mathbb R$, as described after the proof of Proposition~\ref{prop:gDul}} conjugating $\widehat f$ to $\widehat g$.
Then, there exist analytic changes of variables $\varphi_{\pm}^j(z)=z+o(z)$, $j\in\mathbb Z$, on open petals $V_j^{\pm}$, conjugating $f$ to $g$, which admit the formal change of variables $\widehat\varphi$ as their  \emph{generalized block iterated integral asymptotic expansion}, as $z\to 0$. 

Moreover, any other sectorially analytic changes of variables on $V_j^{\pm}$ conjugating $f$ to $g$, such that $\eta_j^{\pm}(z)=z+o(z),\ z\in V_j^{\pm}$, admit $\widehat\varphi$ as their generalized block iterated integral asymptotic expansions, as $z\to 0$ in $V_j^{\pm}$. 
\end{prop}

Note that the second part of Proposition~\ref{prop:gic} is a counterpart of Lemma~\ref{lem:loic}, but for analytic conjugacies between two parabolic generalized Dulac germs. It proves Proposition~\ref{prop:natkonj} from Section~\ref{sec:introduction}. It states that, if two parabolic generalized Dulac germs from the same $\widehat{\mathcal L}(\mathbb R)$-formal class are analytically conjugated in the \emph{weak} sense of Definition~\ref{def:jedan}, then their conjugacy expands asymptotically in the generalized block iterated integrally summable class. This is important in the proof of Theorem~B, stating that $f$ and $g$ are analytically conjugated inside generalized block iterated integrally summable class if and only if they have the same horn maps.
%If neither of $f,\ g$ is from model analytic class (analytically conjugated to formal normal form), then $h$ is unique. Indeed, suppose that $h$ and $h_1$ are parabolic analytic conjugacies of $f$ and $g$. Take $\Psi_f$ a Fatou coordinate of $f$ on some petal. Then, on this petal,
%$\Psi_f\circ h=\Psi_f\circ h_1+C$, since both of them are Fatou coordinates of $g$ admitting sectional asymptotic expansion (unique up to a constant). Therefore, $h_1=\Psi_f^{-1}(\Psi_f\circ h-C)=f_C\circ h$, where $f_C$ is time-$C$ flow of $f$ on each petal (corresponding to $\Psi_f$). It is not global. 
%\medskip
%Obviously, $3.\Rightarrow 2.\Leftrightarrow 1.$, but the request 3. is strictly stronger than 2. In Proposition~\ref{prop:contr} we show that 2. does not imply 3.
%\smallskip

%Moreover, the request 3. is in fact equivalent to: $f$ and $g$ are analytically conjugated (in the sense of 1.) and formally conjugated in $\widehat{\mathcal L}$, then analytic conjugacy expan. Indeed, since $1.\Rightarrow 2.$, $\widehat f$ and $\widehat g$ are formally conjugated in $\widehat{\mathfrak L}$. On the other hand, they are formally conjugated in $\widehat{\mathcal L}\subset\widehat{\mathfrak L}$. The result follows by uniqueness of the formal conjugacy in $\widehat{\mathfrak L}$.

\medskip
\emph{Proof of Proposition~\ref{prop:natkonj}}.
It is a direct corollary of Proposition~\ref{prop:gic}, in the case that $f$ and $g$ are analytically conjugated. In this case all sectorial changes $\varphi_{\pm}^j$, $j\in\mathbb Z$, glue to an analytic conjugacy $\varphi$ on a standard quadratic domain.

\hfill $\Box$

\section{Proof of Theorem A: the flower dynamics and the Fatou coordinate of a parabolic generalized Dulac germ}\label{sec:proofA}

Let 
$$
f(z)=z-az^\alpha \boldsymbol\ell^m+o(z^\alpha \boldsymbol\ell^m),\ z\in\mathcal R_C,\ a>0,\ \alpha>1,\ m\in\mathbb Z,
$$
be a parabolic generalized Dulac germ defined on a standard quadratic domain $\mathcal R_C$, $C>0$, see Definition~\ref{def:gD}.
\smallskip

We prove here Theorem~A stated in Section~\ref{sec:introduction}. We first prove statement (1) of Theorem~A, separately stated in more general Proposition~\ref{prop:leau} from Section~\ref{sec:introduction}. Proposition~\ref{prop:leau} states that the dynamics of a germ $f$ satisfying uniform estimates \eqref{eq:q} on $\mathcal R_C$ is a generalization to the standard quadratic domain of the \emph{Leau-Fatou} flower-like dynamics for parabolic analytic germs, see e.g. \cite{loray} for description. More precisely, we show, using only \emph{uniform} estimate \eqref{eq:uniest} on the asymptotic behavior of $f$ and the fact that $f$ is analytic on $\mathcal R_C$, that there exist infinitely many overlapping attracting and repelling dynamically invariant petals $V_j^+$ resp. $V_j^-$, $j\in\mathbb Z$, along $\mathcal R_C$ (or along a smaller standard quadratic domain, in the sense of germs). The petals are centered at countably many tangential complex directions $1^{\frac{1}{\alpha-1}}$ resp. $(-1)^{\frac{1}{\alpha-1}}$ on $\mathcal R_C$. Each petal is of opening angle $\frac{2\pi}{\alpha-1}$. Note here that $\alpha-1>0$ is in general a \emph{real number}, and not necessarily an integer. The estimate \eqref{eq:uniest} is important for proving that the size of invariant petals for the dynamics does not drop to $0$ faster than prescribed by a standard quadratic domain.
\smallskip

The following proposition shows that the uniform estimate \eqref{eq:uniest} in the definition of a parabolic generalized Dulac germs is automatically satisfied for parabolic Dulac germs, due to the uniform Dulac asymptotics.

\begin{prop}\label{prop:unidulac} Let $f(z)=z-az^\alpha\boldsymbol\ell^m+o(z^\alpha\boldsymbol\ell^m)$, $a>0,\ \alpha>1,\ m\in\mathbb N_0^-$, be a parabolic Dulac germ, defined on a standard quadratic domain $\mathcal R_C$.
 Then there exists a uniform constant $c>0$ such that:
$$
|f(z)-z+az^{\alpha}\boldsymbol\ell^m|\leq c |z^{\alpha}\boldsymbol\ell^{m+1}|,\ z\in\mathcal R_{C}.
$$
\end{prop}
\noindent The proof is in the Appendix.  
\medskip

Since on repelling petals we work with the germ $f^{-1}$ instead of $f$, we need the following Proposition:
\begin{prop}\label{prop:grupa} \

\begin{enumerate}
\item Formal parabolic generalized Dulac series form a group under composition.
\item Parabolic generalized Dulac germs on a standard quadratic domain (germified) form a group under composition.
\end{enumerate}
\end{prop}
\noindent The proof is in the Appendix.

\bigskip

\noindent \emph{Proof of Theorem~A.} We adapt the methods from e.g. \cite[Theorem 2.4.1]{loray} or \cite{loray2} for parabolic analytic germs. Estimates similar to those that we use in the proof have already been deduced in \cite{MRRZ2Fatou} for parabolic Dulac germs. We repeat here only the crucial steps.
\medskip

\noindent \emph{Proof of statement (1): petals and local discrete dynamics.} 

\smallskip

\emph{Proof of Proposition~\ref{prop:leau}}.
Consider the chart \begin{equation}\label{eq:wch}w=-\frac{1}{a(\alpha-1)}z^{-\alpha+1}\boldsymbol\ell^{-m}.\end{equation} It is a multivalued function, univalued on $\mathcal R_C$. It maps bijectively $\mathcal R_C$ to a neighborhood of the infinity on the Riemann surface $\mathcal R$ of the logarithm, which we will denote by $w\in \widetilde{\mathcal R}$. That is, $\widetilde{\mathcal R}$ is the image of $\mathcal R_C$ in the $w$-chart. %The sector of opening $\frac{2\pi}{\alpha-1}$ centered at direction $\varphi=i\varphi_0$ and of radius $R>0$ is by this map obviously mapped to a sector at infinity of opening $2\pi-\varepsilon(R)$, where $\varepsilon(R)\to 0$ as $R\to\infty$ (due to logarithmic part in the change of variables), centered at the direction $\varphi=\pi+(-\alpha+1)\varphi_0$. 
In this new chart, $z\mapsto f(z)$ transforms to $w\mapsto F(w)$, which is \emph{almost} a translation by $1$. Note that $-\frac{1}{a(\alpha-1)}z^{-\alpha+1}\boldsymbol\ell^{-m}$ is in fact the first asymptotic term of the Fatou coordinate for $f$. Indeed, by the above change of variables, \eqref{eq:q} directly transforms to:
\begin{equation}\label{eq:before}
|F(w)-(w+1)|\leq c (\log R)^{-1},\ c>0,
\end{equation}
where $w\in\widetilde{\mathcal R}$. For the change of variables and a detailed computational proof of \eqref{eq:before}, see \cite[Proof of Proposition 6.2, (6.15),\,(6.16)]{MRRZ2Fatou}. Here, $R>0$ can be taken arbitrarily big, if $|z|$ is made sufficiently small (i.e. if $|w|$ is taken sufficiently big, $|w|>R$). 
%where 
%\begin{equation}
%h(w)=\begin{cases}
%&C(\log w)^{-\ell}+o((\log w)^{-\ell}),\ \ell\in\mathbb N, \text { i.e. } \\
%&Cw^{-\beta}(\log w)^r+o(w^{-\beta}(\log w)^r),\ \beta>0,\ r\in\mathbb Z,\ C\in\mathbb C,
%\end{cases}
%\end{equation}
%depending on the shape of $f$ (see \cite{MRRZ2Fatou}) for details. For sufficiently small $|z|>R_1$, we have that $|w|>R$, $R>0$, and we get (in both cases) the following estimate:
%\begin{equation}\label{eq:before}
%|h(w)|\leq c(\log R)^{-\ell},\ c>0.
%\end{equation}
Here, the constant $c$ does not depend on the level of the Riemann surface of the logarithm $\widetilde{\mathcal R}$, that is, there exists a global constant for all arguments as $|z|\to 0$, due to the \emph{uniform} estimate \eqref{eq:q}. %and footnote\footnote{Note that on a standard quadratic domain we can bound uniformly $|z^\varepsilon(\log z)^m|\leq |z|^\varepsilon\big(\sqrt{\log^2 |z|+\varphi^2}\big)^m\leq \begin{cases} C |z|^{\varepsilon}\log^{2m} |z|$, & m\in\mathbb N,\\  C |z|^{\varepsilon}(-\log |z|)^m$, & m\in\mathbb N_0^{-},\end{cases}$ $\varphi=\text{Arg}(z)$, $\varepsilon>0,$ since $|\varphi|<\log^2 |z|$ and $|\varphi|$ cannot increase to $+\infty$ uncontrolled by $|z|$.}.% The function $F$ is defined on each level of the Riemann surface at infinity $\mathcal R_\infty$.%, that is, on open sectors of opening $2\pi$ at infinity, centered at $2k\pi$, $k\in\mathbb Z$, and increasing radii. This directions correspond tangentially at 0 to the directions 
%$$
%1^{-\frac{1}{\alpha-1}}
%$$
%in the original chart on $\mathcal R_c$. For the repelling dynamics (for $f^{-1}$), the corresponding directions are
%$$
%\frac{1}{1-\alpha}\Big(2k\pi\Big)^{-\frac{1}{\alpha-1}},\ k\in\mathbb Z.
%$$

We deduce now \emph{invariant domains for the dynamics} of $F(w)$ in the $w$-chart on $\widetilde{\mathcal R}$, and show that they correspond to open attractive petals at the origin of $\mathcal R_{C}$ for the dynamics of $f(z)$. Then, repeating the same procedure for the inverse $f^{-1}(z)=z+az^\alpha\boldsymbol\ell^{m}+o(z^\alpha\boldsymbol\ell^{m})$, which, by Proposition~\ref{prop:grupa}, satisfies $|f^{-1}(z)-z-az^\alpha\boldsymbol\ell^{m}|\leq d|z^\alpha\boldsymbol\ell^{m+1}| ,\ d>0,\ z\in\mathcal R_C$, we get the repelling petals at the origin of $\mathcal R_C$.

\noindent Define, as in \cite{MRRZ2Fatou}, the angle $\alpha_R>0$ such that:
$$
\sin(\alpha_R):=c(\log R)^{-1}.
$$
Obviously, as $R\to\infty$, $\alpha_R\to 0$. Due to \eqref{eq:before}, there exists $R_0>0$ such that, for every $R>R_0$ sufficiently big, the sectors $W_{R,\alpha_R}\cap \widetilde{\mathcal R}$ at infinity (of radius $|w|>R$ and opening $2\alpha_R$), centered at $\varphi=2k\pi$, $k\in\mathbb Z$, are invariant for the dynamics. Note that $\alpha_R$ do not change with the level of $\mathcal R_{C}$. Increasing the radius $R\to\infty$, we get petals at infinity of opening $2\pi$ centered at $2k\pi,\ k\in\mathbb Z$, covering $\mathcal R_{C}$, as the union. Returning to the original variable $z$, the result follows. Indeed, returning to the variable $z$, the preimages of the directions $2k\pi$ are tangential to $1^{-\frac{1}{\alpha-1}}$ at $z=0$. We repeat the procedure for the inverse $f^{-1}$, using Proposition~\ref{prop:grupa}. We denote invariant petals on $\mathcal R_{C}$ constructed in this way by $V_{j}^\pm$, $j\in\mathbb Z$.
\hfill $\Box$
\smallskip

The statement $(1)$ now follows by Propositions~\ref{prop:unidulac} and \ref{prop:grupa}, which prove that a parabolic generalized Dulac germ and its inverse satisfy uniform bounds \eqref{eq:q}, and then directly applying Proposition~\ref{prop:leau}.
\medskip

\noindent \emph{Proof of statement (2): the sectorial Fatou coordinates and their asymptotic expansion.}

We now repeat the procedure for constructing the Fatou coordinate of a parabolic Dulac germ from \cite[Section 6]{MRRZ2Fatou}, but for a parabolic generalized Dulac germ in the complex domain (on invariant attractive petals just constructed above). 

The construction of the Fatou coordinate is carried out block-by-block, using Abel equation, simultaneously formally and in the sense of germs on petals $V_j^{\pm}$. 

By formal construction of the Fatou coordinate $\widehat\Psi$ of $\widehat f$ in Proposition~\ref{prop:fffatou} in Section~\ref{sec:summa}, the \emph{infinite part} of the Fatou coordinate for generalized parabolic Dulac germs is a sum of \emph{finitely many} first blocks $\widehat\Psi_1,\ldots,\widehat \Psi_n$, $n\in\mathbb N$, given, by \eqref{eq:fati}, by integrals of the type:
$$
\widehat\Psi_i(z)=\int e^{-\frac{\beta_i}{\boldsymbol\ell}}2\widehat H_i(\boldsymbol\ell)d\boldsymbol\ell,
$$ 
where $\beta_i$, $i\in\{1,\ldots,n\}$, are strictly increasing, such that $(\beta_i,\mathrm{ord}_{\boldsymbol\ell}(\widehat H_i))\preceq (0,-1)$, and $\widehat H_i(\boldsymbol\ell)$ is $0$-integrally summable on $\boldsymbol\ell(V_j^{\pm})$. Note that only first \emph{finitely many} blocks  deduced consecutively from the Abel equation have negative powers of $z$, since exponents $\alpha_n$ in $\widehat f(z)$ are strictly increasing and finitely generated. 

Let $\Psi_1^{j,\pm},\ldots,\Psi_n^{j,\pm}$ be holomorphic counterparts of $\widehat \Psi_1,\ldots,\widehat\Psi_n$ on $V_j^{\pm}$, given by integrals of the form:
$$
\Psi_i^{j,\pm}(z)=\int_{\boldsymbol\ell_0}^{\boldsymbol\ell} e^{-\frac{\beta_i}{w}} H_i^{j,\pm}(w)\,dw,
$$
Here, $\boldsymbol\ell_0\in\boldsymbol\ell(V_j^{\pm})$, and $H_i^{j,\pm}(\boldsymbol\ell)$ is $0$-integral sum of $\widehat H_i(\boldsymbol\ell)$ on $\boldsymbol\ell(V_j^{\pm})$. %Different choices in integration constants result in the additive constant in the Fatou coordinate $\Psi^j_{\pm}$ on $V_j^\pm$.

We now put $\Psi_\infty^{j,\pm}:=\Psi_1^{j,\pm}+\ldots+\Psi_n^{j,\pm}$. It is well defined and holomorphic on $V_j^{\pm}$, $j\in\mathbb Z$, and we call it the \emph{infinite part} of the analytic Fatou coordinate $\Psi_j^{\pm}$ on $V_j^{\pm}$.  Different choices of the initial condition $\boldsymbol\ell_0\in V_j^{\pm}$ are related only to different additive real constants in $\Psi_\infty^{j,\pm}$. Therefore, $\Psi_\infty^{j,\pm}$ is a holomorphic function on $V_j^{\pm}$, unique up to an additive constant.% It maps bijectively to a neighborhood of infinity of the Riemannn surface of the logarithm.

\noindent Now, the Abel equation
$$
\Psi_j^{\pm}(f(z))-\Psi_j^{\pm}(z)=1 \text{ on }V_j^{\pm},
$$
transforms to
\begin{equation}\label{eq:later}
R_j^{\pm}(f(z))-R_j^{\pm}(z)=\delta_j^{\pm}(z),\ z\in V_j^{\pm},
\end{equation}
where $\Psi_j^{\pm}=\Psi_{\infty}^{j,\pm}+R_j^{\pm}$, $j\in\mathbb Z$. Note that, to prove the existence (and uniqueness) of petalwise analytic Fatou coordinate $\Psi_j^{\pm}$, we need yet to prove the existence (and uniqueness) of holomorphic $R_j^{\pm}(z)=o(1)$\footnote{We may suppose that $R_j^\pm(z)=o(1)$ , as $z\to 0$ on $V_j^\pm$, since we have eliminated infinite part of the Fatou coordinate and we request the analytic Fatou coordinate to have asymptotic expansion in power-logarithmic scale, equal to the formal Fatou coordinate.}  on $V_j^\pm$ satisfying \eqref{eq:later}.

\noindent We show as in \cite{MRRZ2Fatou} that \begin{equation}\label{eq:oh}\delta_j^\pm(z)=O(z^{\alpha-1}\boldsymbol\ell^{m+2}),\  z\in V_j^{\pm},\end{equation} where $\delta_j^{\pm}(z):=1-\Psi_\infty^{j,\pm}(f(z))+\Psi_{\infty}^{j,\pm}(z)$ is analytic on $V_j^\pm$. Indeed, $\widehat R(z)$ is, by continuation of the formal algorithm from Proposition~\ref{prop:fffatou}, a power-logarithmic transseries, and infinitesimal, so its lowest-order monomial is $\boldsymbol\ell$ or bigger. Since $R_j^{\pm}(z)$ admit $\widehat R(z)$ as asymptotic expansion, as $z\to 0$, by Cauchy integral formula we conclude that $(R_j^{\pm})'(z)=O(\frac{\boldsymbol\ell^2}{z})$. Therefore, $R_j^{\pm}(f(z))-R_j^{\pm}(z)\sim (R_j^{\pm})'(z)\cdot (-z^\alpha\boldsymbol\ell^m)=O(z^{\alpha-1}\boldsymbol\ell^{m+2})$, as $z\to 0$ in $V_j^\pm$.

We prove now the existence of (unique) analytic $R_j^\pm=o(1)$ on $V_j^\pm$ satisfying \eqref{eq:later}. We prove it for the attracting petals $V_j^+$, $j\in\mathbb Z$. On repelling petals, we work with the inverse $f^{-1}$, since, by Abel equation, $\Psi_f:=-\Psi_{f^{-1}}$. Let us consider again the $w$-chart, as defined in \eqref{eq:wch} in part $(1)$. Using \eqref{eq:before}, we get the estimate:
$$                                                                                
|F^{\circ n}(w)-w-n|\leq 
cn(\log R)^{-1},
$$
so \begin{equation}\label{eq:FC}|F^{\circ n}(w)|\geq C_R\cdot n,\ C_R>0,\ w\in W_{R,\alpha_R}\cap \widetilde{\mathcal R}.\end{equation} Here, $\widetilde {\mathcal R}$ and $\alpha_R$ and $W_{R,\alpha_R}$, $R>R_0$, are as in $(1)$. Returning to the variable $z$, we get:
\begin{equation}\label{eq:haa}
|f^{\circ n}(z)|\leq D_R n^{-\frac{1}{\alpha-1}}\big|\boldsymbol\ell(F^{\circ n}(w))\big|^{-\frac{m}{\alpha-1}},\ w=-\frac{1}{a(\alpha-1)}z^{-\alpha+1}\boldsymbol\ell^{-m},\ D_R>0,
\end{equation}
on the sectors on $\mathcal R_{C}$ which are the preimages of $W_{R,\alpha_R}\cap \widetilde{\mathcal R}$ by \linebreak $w=-\frac{1}{a(\alpha-1)}z^{-\alpha+1}\boldsymbol\ell^{-m}$. They are subsectors whose unions form the petal $V_j^+$. Note that $\alpha_R$, $C_R$ and $D_R$ are uniform for all petals $V_j^\pm$, so \eqref{eq:haa} holds uniformly by levels $j\in\mathbb Z$ of the Riemann surface, on subsectors of the same opening of petals $V_j^+$.

\noindent Under the assumption that $R_j^+(z)$ have an asymptotic behavior as $z\to 0$, it holds that $R_j^+(z)=o(1)$, as $z\to 0$ in $V_j^+$, up to an additive constant. Iterating \eqref{eq:later}, since $R_j^+(z)=o(1)$, we get that $R_j^+$ is necessarily of the form:
\begin{equation}\label{eq:eq}
R_j^+(z)=-\sum_{i=1}^{\infty}\delta (f^{\circ i}(z)),\ z\in V_j^+,
\end{equation}
where $\delta(z)=O(z^{\alpha-1}\boldsymbol\ell^{m+2})$, uniformly on all petals. By \eqref{eq:haa}, the series converges uniformly on each subsector which is the preimage of $W_{R,\alpha_R}\cap \widetilde{\mathcal R}$. Indeed, we conclude by \eqref{eq:oh} and \eqref{eq:haa} as follows:
\begin{align*}
|\delta(f^{\circ i}(z))|\leq C_R|f^{\circ i}(z)|^{\alpha-1}\big|\boldsymbol\ell(f^{\circ i}(z))\big|^{m+2}\leq D_R i^{-1} |\boldsymbol\ell(F^{\circ i}(w))\big|^{-m} \big|\boldsymbol\ell(f^{\circ i}(z))\big|^{m+2}.
\end{align*}
Since $f^{\circ i}(z)=h^{-1}\circ F^{\circ i}(w)$, where $h(z):=-\frac{1}{a(\alpha-1)}z^{-\alpha+1}\boldsymbol\ell^{-m}$, there exists $c>0$ such that $\big|\boldsymbol\ell(f^{\circ i}(z))\big|^{m+2}\leq c\,|\boldsymbol\ell(F^{\circ i}(w))|^{m+2}$. By \eqref{eq:FC}, there exist $d_R>0,\, d_R'>0$ such that:
$$
|\delta(f^{\circ i}(z))|\leq d_R i^{-1}|\boldsymbol\ell(F^{\circ i}(w))|^2\leq d_R' i^{-1}(-\boldsymbol\ell (i))^2,\ i\geq 2.
$$
By Weierstrass theorem, the series \eqref{eq:eq} converges to an analytic function $R_j^+$ on $V_j^+$, $j\in\mathbb Z$. This defines the analytic Fatou coordinate on $V_j^+$, $\Psi_j^+:=\Psi_\infty^{j,\pm}+R_j^+$.

\smallskip

It is left to prove that the analytic Fatou coordinate constructed on each petal as the limit of the uniformly convergent series \eqref{eq:eq} admits the integral sectional asymptotic expansion which is, up to an additive constant, equal to the formal Fatou coordinate $\widehat\Psi\in\widehat{\mathcal L}_2^\infty(\mathbb R)$. We follow the procedure similar as in \cite{loray2}, in the same way as in \cite[Subsection 6.1.4]{MRRZ2Fatou} for the Fatou coordinate of a parabolic Dulac germ on the real line. We continue to subtract consecutively blocks from the Fatou coordinate (analytic counterparts of formal blocks, as for $\widehat \Psi_i$ at the beginning of the proof). We prove the asymptotics after subtraction of each block, using the \emph{modified Abel equation} after subtraction:
$$
h(f(z))-h(z)=\delta(z),
$$
$\delta(z)=O(z^\gamma)$, where $\gamma$ tends to $+\infty$ as we subtract blocks. Therefore, we use the following auxiliary proposition, whose proof is in the Appendix.

\begin{prop}[Generalization to complex domain of Proposition~6.2 in \cite{MRRZ2Fatou}, see also \cite{loray2}]\label{prop:auxil}
Let $f$ be a parabolic generalized Dulac germ on $\mathcal R_C$ such that $f(z)=z-az^{\alpha}\boldsymbol\ell^m+o(z^{\alpha}\boldsymbol\ell^m),\ a>0,\ \alpha>1,\ m\in\mathbb Z$. Let $\delta$ be a holomorphic germ\footnote{obtained by the blockwise subtractions in the Abel equation described above.} on an attracting petal $V_j^+,\ j\in\mathbb Z$, such that $\delta(z)=O(z^\gamma\boldsymbol\ell ^r)$, $\gamma>0$, $r\in\mathbb Z$, $(\gamma,r)\succeq (\alpha-1,m+2)$. Then the series
\begin{equation}\label{eq:new}
h_j(z):=-\sum_{k=0}^{\infty} \delta\big(f^{\circ k}(z)\big),
\end{equation}
converges uniformly on each compact subsector $W$ of the attracting petal $V_j^+$, and thus defines a holomorphic function $h_j$ on $V_j^+$. Moreover, for every small $\delta>0$, such that $\gamma-(\alpha-1)-\delta>0$, there exists $C_W>0$, such that it holds:
\begin{equation}\label{eq:old}
|h_j(z)|\leq \begin{cases}C_W\boldsymbol\ell(|z|)^{r-m-1}, &\gamma=\alpha-1,\, r\geq m+2,\\
C_W |z|^{\gamma-(\alpha-1)-\delta}, &\gamma>\alpha-1,
\end{cases}
\end{equation}
on every subsector $W$ of the attracting petal $V_j^{+}$.
\end{prop}

\noindent We now apply Proposition~\ref{prop:auxil} on the right-hand side $\delta(z)$ of the Abel equation and use \eqref{eq:old} to prove the asymptotic expansion. Moreover, each block in the Fatou coordinate evidently has a Poincar\' e expansion correspondent to the associated formal block.
\medskip

Finally, the proof of the uniqueness up to a constant of the analytic Fatou coordinate on each petal admitting the integral sectional asymptotic expansion $\widehat\Psi\in\widehat{\mathcal L}_2^\infty(\mathbb R)$ is given in Proposition~\ref{prop:Fatouexp}.

\medskip

\noindent \emph{Proof of statement (3): the sectorial conjugacies and their asymptotic expansion.} Proven in Proposition~\ref{prop:gi}, up to the first change of variables given by the homotecy $\varphi_0(z)=a^{-\frac{1}{\alpha-1}}z$.
\end{proof}

\bigskip

\section{Proof of Theorem B: moduli of analytic classification for parabolic generalized Dulac germs}\label{sec:proofB}

For simplicity, we restrict to parabolic generalized Dulac germs with $a=1,\ \alpha=2$:
$$
f(z)=z-z^2\boldsymbol\ell^m+o(z^2\boldsymbol\ell^m),\ z\in\mathcal R_C,\ m\in\mathbb Z.
$$

This is a small computational simplification, since there is only one petal (attracting or repelling) on each level of the Riemann surface $\mathcal R_C$. 
It is additionally justified by the following proposition:
\begin{prop}\label{prop:dva} Two parabolic generalized Dulac germs \begin{align*}f(z)=z-z^\alpha\boldsymbol\ell^m+o(z^\alpha\boldsymbol\ell^m)\text{ and } g(z)=z-z^\alpha\boldsymbol\ell^m+o(z^\alpha\boldsymbol\ell^m)\end{align*} are analytically conjugated on a standard quadratic domain if and only if parabolic generalized Dulac germs $$\tilde f=h_\alpha^{-1}\circ f\circ h_\alpha=z-z^2\boldsymbol\ell^m +o(z^2\boldsymbol\ell^m)\text{ and } \ \tilde g=h_\alpha^{-1}\circ g\circ h_\alpha=z-z^2\boldsymbol\ell^m+o(z^2\boldsymbol\ell^m)$$ are analytically conjugated on a standard quadratic domain, where $h_\alpha(z):=(\alpha-1)^{-\frac{m}{\alpha-1}} z^{\frac{1}{\alpha-1}}$.
\end{prop}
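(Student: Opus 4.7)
\smallskip

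\noindent\emph{Proof proposal for Proposition~\ref{prop:dva}.} The plan is to show that $h_\alpha$ is a biholomorphism between standard quadratic domains (up to germification), that it conjugates parabolic generalized Dulac germs to parabolic generalized Dulac germs, and that a tangent-to-the-identity analytic conjugacy on a standard quadratic domain transports in both directions. First I would work in the logarithmic chart $\zeta=-\log z$. Writing $h_\alpha(z)=a z^{1/(\alpha-1)}$ with $a=(\alpha-1)^{-m/(\alpha-1)}$, in the logarithmic chart $h_\alpha$ becomes the affine map
$$
\zeta\;\longmapsto\;\frac{\zeta}{\alpha-1}-\log a,
$$
which is a biholomorphism of the Riemann surface of the logarithm. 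Since $(\alpha-1)>0$, positive-real rescaling followed by a real translation preserves the class of standard quadratic domains: for every $C,R>0$ there exist $C',R'>0$ with $h_\alpha(\mathcal R_{C'})\subseteq \mathcal R_C$ and $h_\alpha^{-1}(\mathcal R_{C'})\subseteq \mathcal R_C$, and vice versa. Hence, viewed as germs on standard quadratic domains, $h_\alpha$ and $h_\alpha^{-1}$ are well-defined bijections.

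Next I would verify that $\tilde f=h_\alpha^{-1}\circ f\circ h_\alpha$ has the claimed leading asymptotics. From the affine form in the log chart one reads off $\boldsymbol\ell(h_\alpha(z))\sim (\alpha-1)\boldsymbol\ell(z)$ as $z\to 0$, and a direct computation gives
$$
h_\alpha^{-1}\bigl(f(h_\alpha(z))\bigr)=z-(\alpha-1)^{m+1}a^{\alpha-1}\,z^{2}\boldsymbol\ell^{m}+o(z^{2}\boldsymbol\ell^{m}),
$$
and the constant $a$ is chosen precisely so that the coefficient of $z^2\boldsymbol\ell^m$ equals $-1$. The same computation applies to $g$, giving the desired normal form for $\tilde g$. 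I would then check that $\tilde f$ and $\tilde g$ are parabolic generalized Dulac germs in the sense of Definition~\ref{def:gD}: analyticity and realness on $\{\mathrm{arg}(z)=0\}$ are preserved because $h_\alpha$ maps this ray to itself with positive scaling; the uniform estimate \eqref{eq:uniest} is inherited from that of $f$ by the same substitution; and the petals $V_j^\pm$ of $f$, of opening $\frac{2\pi}{\alpha-1}$, correspond under $h_\alpha^{-1}$ to petals of opening $2\pi$ for $\tilde f$, on which the generalized Dulac asymptotic expansion of $f$ pulls back to one for $\tilde f$. The $\log$-Gevrey property of the coefficients on the corresponding $\boldsymbol\ell$-cusps is transported because, in the log chart, $h_\alpha$ is merely an affine rescaling of the sector, which preserves log-Gevrey of order (possibly modified by the factor $\alpha-1$, but certainly remaining strictly bigger than $\frac{2-1}{2}=\frac12$).

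Finally, I would transport the conjugacy. Given a tangent-to-the-identity analytic conjugacy $\varphi$ of $f$ and $g$ on $\mathcal R_C$ as in Definition~\ref{def:jedan}, define
$$
\tilde\varphi:=h_\alpha^{-1}\circ\varphi\circ h_\alpha.
$$
A routine verification gives $\tilde g=\tilde\varphi^{-1}\circ\tilde f\circ\tilde\varphi$ on some $\mathcal R_{C'}$ by Paragraph 1, and $\tilde\varphi$ is analytic, bounded, and tangent to the identity (the latter because $h_\alpha$ and $\varphi$ both fix the positive real tangential direction, and by the same computation as for $\tilde f$ one sees $\tilde\varphi(z)=z+o(z)$). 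Moreover $\tilde f$ and $\tilde g$ lie in the same $\widehat{\mathcal L}(\mathbb R)$-formal class since formal conjugacy is transported by $h_\alpha$ in the same way. The converse is entirely symmetric, replacing $h_\alpha$ by $h_\alpha^{-1}$ and setting $\varphi:=h_\alpha\circ\tilde\varphi\circ h_\alpha^{-1}$.

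The potentially delicate step is the transport of the generalized Dulac structure and in particular of the $\log$-Gevrey estimates on $\boldsymbol\ell$-cusps, since the substitution $z\mapsto z^{1/(\alpha-1)}$ alters the relation between the $z$- and $\boldsymbol\ell$-variables. However, because in the log chart $h_\alpha$ is simply an affine map and because $\log$-Gevrey of some positive order on a cusp is stable under such rescalings (with order multiplied by $\alpha-1$, which is still strictly positive and, after adjustment, exceeds $\frac12$), this step reduces to bookkeeping rather than to a genuine obstacle. The remainder of the argument is a direct symmetric transport of conjugacies.
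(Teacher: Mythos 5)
Your approach is the same as the paper's (which is very terse --- it simply observes that $h_\alpha$ is an analytic bijection of standard quadratic domains, that the generalized Dulac property is preserved, and that $\varphi\mapsto h_\alpha^{-1}\circ\varphi\circ h_\alpha$ transports the conjugacy). You supply the supporting detail the paper leaves implicit, and the overall argument is sound. Two small bookkeeping points are off, though neither damages the conclusion. First, your asserted leading coefficient $(\alpha-1)^{m+1}a^{\alpha-1}$ evaluates, with the paper's $a=(\alpha-1)^{-m/(\alpha-1)}$, to $\alpha-1$, not $1$; so $\tilde f(z)=z-(\alpha-1)z^2\boldsymbol\ell^m+o(z^2\boldsymbol\ell^m)$. (To force coefficient $-1$ one would need $a=(\alpha-1)^{-(m+1)/(\alpha-1)}$; alternatively, as the paper discusses earlier, a further real homothecy normalizes the coefficient and is irrelevant for the conjugacy question.) Second, and more importantly for the logic: the $\log$-Gevrey order of the transported coefficients is \emph{divided} by $\alpha-1$, not multiplied. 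Using $\boldsymbol\ell(z)\sim(\alpha-1)\boldsymbol\ell(\tilde z)$ in \eqref{eq:lg1}, a coefficient of order $m>\tfrac{\alpha-1}{2}$ in $\boldsymbol\ell(z)$ becomes of order $m/(\alpha-1)>\tfrac12$ in $\boldsymbol\ell(\tilde z)$, which is exactly the threshold required for $\alpha=2$. With ``multiplied by $\alpha-1$'' as you wrote, the inequality would actually fail for $\alpha<2$, so the direction of the factor is not mere bookkeeping --- it is what makes the threshold come out right.
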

\begin{proof}
Let $h_\alpha(z):=(\alpha-1)^{\frac{-m}{\alpha-1}} z^{\frac{1}{\alpha-1}}$. It is an analytic bijection from a standard quadratic domain to a standard quadratic domain. It is easy to check that $\tilde f$ and $\tilde g$ keep the generalized Dulac expansion property, that is, they are again parabolic generalized Dulac germs. To a conjugation $\varphi$ between $f$ and $g$ there corresponds the conjugation $\tilde\varphi=h_\alpha^{-1}\circ \varphi\circ h_\alpha$ between $\tilde f$ and $\tilde g$. It is obviously parabolic. Moreover, $f$ and $g$ are $\widehat{\mathcal L}(\mathbb R)$-formally conjugated if and only if $\tilde f$ and $\tilde g$ are.  
\end{proof}
\medskip

\subsection{The construction of moduli (proof of Theorem~B)}\ 
\smallskip 

\noindent Let $f(z)=z-z^2\boldsymbol\ell^m+o(z^2\boldsymbol\ell^m)$ be a parabolic generalized Dulac germ on a standard quadratic domain $\mathcal R_C$. 
\medskip

\noindent Let $V_j^{-}$ denote the maximal repelling petals for the dynamics of $f$ along $\mathcal R_C$. They are constructed in Theorem A $(1)$. They are of opening $\big(2(j-1)\pi,2 j\pi\big)$, $j\in\mathbb Z$. Note that, by the proof of Theorem A (1), the petals are of the same \emph{shape} along the standard quadratic domain (due to uniform estimates for $\alpha_R$), only getting \emph{smaller} as dictated by the decrease of radius of the standard quadratic domain. Let $r_j^{0,-}$ and $r_j^{\infty,-}$ denote the \emph{radii} of the petal $V_j^-$ measured along directions $2j\pi-\frac{3\pi}{2}$ and $2j\pi-\frac{\pi}{2}$ respectively, $j\in\mathbb Z$. Similarly, let $V_j^+$ denote the maximal attracting petals of opening $\big((2j-1)\pi,(2j+1)\pi\big)$. Let $r_j^{\infty,+}$ and $r_j^{0,+}$ denote the radii of the petal $V_j^+$ measured along directions $2j\pi-\frac{\pi}{2}$ and $2j\pi+\frac{\pi}{2}$ respectively, $j\in\mathbb Z$. We now define: $$r_{0}^{j+1}:=\min\{r_{j+1}^{0,-},r_j^{0,+}\},\ r_\infty^j:=\{r_j^{\infty,-},r_j^{\infty,+}\},\ j\in\mathbb Z.$$ 
Note that radii $r_0^{j+1},\, r_\infty^j$, $j\in\mathbb Z$, correspond to radii of intersections of consecutive petals $V_0^{j+1}$ and $V_\infty^j$ (defined in \eqref{eq:centr} below) at central directions $2 j \pi +\frac{\pi}{2}$ and $2j \pi -\frac{\pi}{2}$. 

Since a parabolic generalized Dulac germ is defined at least on a standard quadratic domain, there exist $C>0,\ K>0$ such that it holds: \begin{equation}\label{eq:radii}r_0^{j},\,r_\infty^j \geq K e^{-C\sqrt {|j|}},\ j\in\mathbb Z.\end{equation} That is, the rate of decrease of radii $r_0^j$ and $r_\infty^j$ is bounded from below by a rate of decrease of radii of a standard quadratic domain at directions $2 j \pi +\frac{\pi}{2}$ and $2j \pi -\frac{\pi}{2}$, as $|j|\to\infty$, see \eqref{eq:sie}. That means that the radii of petals do not drop too quickly to zero along levels of $\mathcal R_C$.
\medskip

\noindent Let $\widehat\Psi\in\widehat{\mathcal L}_2^\infty(\mathbb R)$ be the formal Fatou coordinate of the generalized Dulac asymptotic expansion $\widehat f$ of $f$. Let 
$
\Psi_j^\pm,\ j\in\mathbb Z,
$
be analytic sectorial Fatou coordinates admitting the integral sectional asymptotic expansion $\widehat \Psi$ on respective petals $V_{j}^\pm$, $j\in\mathbb Z$, as $z\to 0$, as in Theorem~A (2). They are unique up to additive constants.
\smallskip

Denote by:
\begin{align}\label{eq:centr}
&V_{0}^{j+1}:=V_{j+1}^{-}\cap V_j^+,\nonumber\\
&V_{\infty}^j:=V_j^{-}\cap V_j^{+},\ j\in\mathbb Z,
\end{align}
the intersections of attracting and repelling petals, of opening $\pi$. The petals $V_0^j$ and $V_{\infty}^j$ satisfy a lower bound \eqref{eq:radii} on their radii $r_0^j$, $r_\infty^j$. 
\medskip

\begin{figure}[h!]
\vspace{-0.3cm}
\includegraphics[scale=0.2]{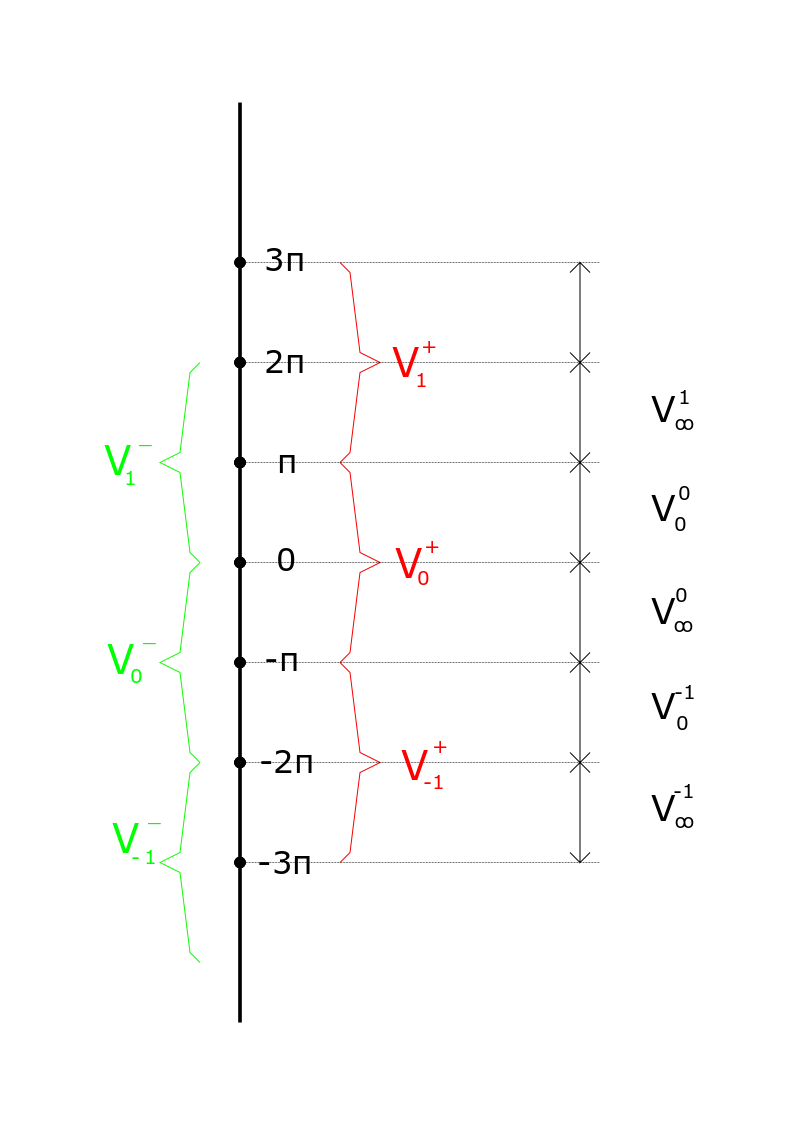}
\vspace{-0.5cm}
\caption{The position of petals along a standard quadratic domain on the Riemann surface of the logarithm.}
\end{figure}

The construction of \emph{horn maps} now mimics the construction of Voronin \cite{ecalle, voronin} for parabolic analytic germs. 

\noindent At each intersection of petals, the difference $\Psi_+^*-\Psi_-^*$ of appropriate realizations of the Fatou coordinate is constant along the closed trajectories of $f$. We represent the space of trajectories of each petal by a doubly punctured Riemann sphere, using the composition of the Fatou coordinate on the petal and of the exponential mapping. Each trajectory corresponds to one point of the sphere. The closed trajectories correspond to points in punctured neighborhoods of poles $0$ and $\infty$. The germs $(h_{0}^j,\ \tilde{h}_{\infty}^j)_{j\in\mathbb Z}$ below, defined at punctured neighborhoods of $0$ i.e. $\infty$, represent the \emph{horn maps} of $f$. They map correspondent closed orbits on neighboring spheres, as dictated by the dynamics of $f$ on the intersection of petals where we have closed trajectories:
\begin{align}\label{eq:moduli}
&h_{0}^j(t):=e^{-2\pi i \Psi_+^{j-1}\circ (\Psi_-^j)^{-1}\big(-\frac{\ln t}{2\pi i}\big)},\ t\approx 0,\\
&\tilde h_{\infty}^j(t):=e^{-2\pi i \Psi_-^{j}\circ (\Psi_+^j)^{-1}\big(-\frac{\ln t}{2\pi i}\big)},\ t\approx \infty,\nonumber \\ 
&h_{\infty}^j(t):=e^{2\pi i \Psi_-^{j}\circ (\Psi_+^j)^{-1}\big(\frac{\ln t}{2\pi i}\big)},\ t\approx 0, \quad
j\in\mathbb Z.\nonumber
\end{align}
%that is
%\begin{align*}
%&h_{up}^i(t):=e^{2\pi i \Psi_+^{i}\circ (\Psi_-^i)^{-1}\big(-\frac{\ln t}{2\pi i}\big)},\ t\approx 0,\\
%&h_{low}^i(\frac{1}{t}):=e^{-2\pi i \Psi_+^{i-1}\circ (\Psi_-^i)^{-1}\big(\frac{\ln t}{2\pi i}\big)},\ t\approx \infty, \quad
%i\in\mathbb N_0^-.
%\end{align*}
By construction, $h_0^j$ and $h_\infty^j$, $j\in\mathbb Z$, are analytic germs of diffeomorphisms defined on punctured neighborhoods of $0$ and tending to $0$, as $t\to 0$. Therefore, they can be analytically extended to $0$ by Riemann's theorem on removable singularities.

\smallskip
In this way we get an \emph{infinite necklace} of Riemann spheres, indexed by $\mathbb Z$, connected by analytic diffeomorphisms at their poles. Each sphere is connected at one pole with the preceding sphere, and at the other pole with the following sphere:

\begin{figure}[h!]
\includegraphics[scale=0.2]{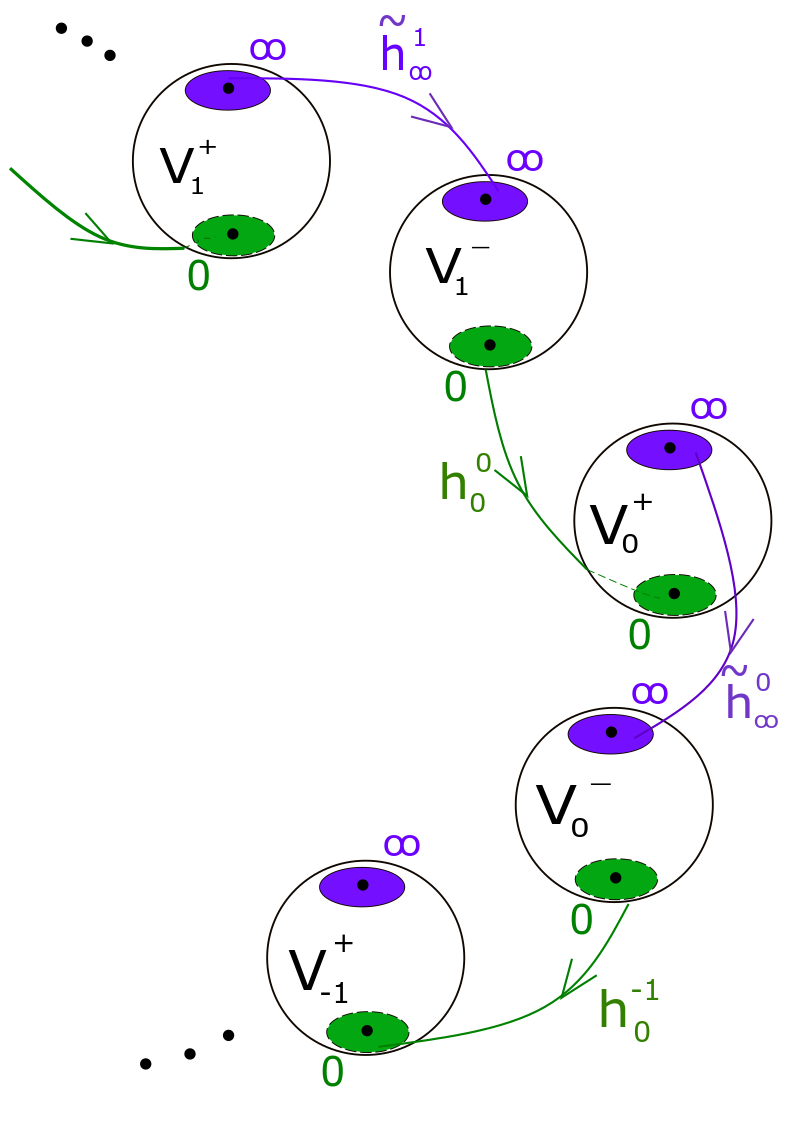}
\caption{The necklace of spheres and the \emph{horn maps} of a parabolic generalized Dulac germ.}
\end{figure}
\medskip

The maximal radii of convergence of $h_{0}^j$ and $h_\infty^j$, $j\in\mathbb Z$, are $R_j$ such that:
\begin{equation}\label{eq:gg}
|t|<R_j\ \Leftrightarrow \Big|(\Psi_\pm^j)^{-1}\big(\pm\frac{\ln t}{2\pi i}\big)\Big|<r_j,
\end{equation}
where $r_j$ is as in \eqref{eq:radii} (the rate of decrease follows a standard quadratic domain).
Since $\Psi_\pm^j(z)\sim -\frac{1}{z}\boldsymbol\ell^{-m}$, \emph{uniformly\footnote{Due to the uniform bound \eqref{eq:uniest} for $f$, see \cite[Lemma 5.2]{drugi}.} in $j\in\mathbb Z$}, as $z\to 0$, we get that, uniformly in $j\in\mathbb Z$:
$$
\Big|(\Psi_\pm^j)^{-1}\big(\pm\frac{\ln t}{2\pi i}\big)\Big|\sim \frac{1}{-\log(|t|)\log^m (-\log|t|)},\ t\to 0.
$$
From \eqref{eq:gg}, we get that $R_j$ are such that the set
$$
\Big\{R_je^{\frac{C}{r_j(-\log r_j)^m}}:\ j\in\mathbb Z\Big\}
$$ is bounded from above and from below by positive constants.
The bound \eqref{eq:radii} on the speed of decrease of $r_j$ now gives the bound for the speed of decrease of $R_j$, $j\to\infty$, as the following. There exist constants $C,\ K,\ K_1>0$ such that:
\begin{equation*}
R_j\geq K_1 e^{-Ke^{C\sqrt{|j|}}(\sqrt{|j|})^m},\ j\in\mathbb Z.
\end{equation*}
Equivalently, there exist (some other) constants $C$ and $K,\ K_1$ such that:
\begin{equation*}\label{eq:velrad}
R_j\geq K_1 e^{-Ke^{C\sqrt{|j|}}},\ j\in\mathbb Z.
\end{equation*}
\smallskip

Let us now justify identifications \eqref{eq:ekvij} in Theorem~B. With this equivalence relation, the horn maps become equivalence classes. Indeed, it is easy to see that the possible change of additive constants in Fatou coordinates of a parabolic generalized Dulac germ $f$ on petals results in conjugation of its horn maps as described in \eqref{eq:ekvij}. Alternatively, this can be considered as reparametrizations of doubly punctured spheres fixing the poles (M\" obius transforms fixing the poles are just homothecies). Moreover, we can always restrict a parabolic generalized Dulac \emph{germ} $f$ to an inscribed standard quadratic domain, resulting in a change in asymptotics of radii of convergence described in \eqref{eq:ekvij}. The type of the asymptotics remains the same.
\bigskip

\noindent \emph{Proof of Theorem~B}.
Suppose $f$ and $g$ are two analytically conjugated normalized parabolic generalized Dulac germs on some standard quadratic domain $\mathcal R_C$. Then there exists a tangent to the identity \emph{global} conjugacy $\varphi(z)=z+o(z)$, admitting the formal conjugacy $\widehat\varphi\in\widehat{\mathcal L}(\mathbb R)$ as its generalized block iterated integral asymptotic expansion, see Proposition~\ref{prop:gic}. Since $f$ and $g$ belong to the same $\widehat{\mathcal L}(\mathbb R)$-formal class, we can identify their petals. Let $\widehat\Psi_f,\ \widehat\Psi_g\in\widehat{\mathcal L}_2^\infty(\mathbb R)$ be the formal Fatou coordinates for $f$ and $g$ respectively, which exist and are unique (up to an additive constant) by Proposition~\ref{prop:fffatou}. By Theorem~A $(2)$, denote by $^f \Psi_{\pm}^j$ the Fatou coordinates of $f$, analytic on petals $V_j^{\pm}$, and admitting the formal Fatou coordinate $\widehat\Psi_f$ as their integral asymptotic expansion, as $z\to 0$ in $V_j^{\pm}$. Then, by Abel equation,
$
^f \Psi_{\pm}^j\circ \varphi
$
are Fatou coordinates for $g$, analytic on respective petals $V_{\pm}^j$, which admit formal Fatou coordinate $\widehat{\Psi}_g$ as their integral asymptotic expansions. By Theorem~A (2), such Fatou coordinates of $g$ are unique on each petal, up to a constant. Therefore, there exists a choice of Fatou coordinates for $f$ and $g$ on petals (that is, of additive constants), such that: $$(^g \Psi_+^{j-1})\circ (^g \Psi_-^j)^{-1}=(^f \Psi_+^{j-1})\circ \varphi\circ\varphi^{-1}\circ (^f \Psi_-^i)^{-1}=(^f \Psi_+^{i-1})\circ (^f \Psi_-^i)^{-1},\ j\in\mathbb Z,$$ and 
$$(^g \Psi_-^{j})\circ (^g \Psi_+^j)^{-1}=(^f \Psi_-^{j})\circ \varphi\circ\varphi^{-1}\circ (^f \Psi_+^j)^{-1}=(^f \Psi_-^{j})\circ (^f \Psi_+^j)^{-1},\ j\in\mathbb Z,$$ on the respective images of intersections $V_{0}^j$ and $V_{\infty}^j,\ j\in\mathbb Z$. Thus, the horn maps given by \eqref{eq:moduli} are equal.

\smallskip
Now suppose that $f$ and $g$ have the same (up to identifications) horn maps. We can take common petals, since $f$ and $g$ belong to the same formal class. By Theorem~A, on respective petals $V_{\pm}^j$, there exist analytic Fatou coordinates $^{f,g}\Psi_{\pm}^j$ for $f$ and $g$, admitting the formal Fatou coordinates as their integral asymptotic expansions. On each petal, let us define an analytic conjugacy function
$$
\varphi_{\pm}^j(z):=(^f\Psi_{\pm}^j)^{-1}\circ (^g\Psi_{\pm}^j)(z),\ z\in V_{\pm}^j.
$$
We show that there exists a choice of constants in Fatou coordinates for $f$ and $g$ on each petal such that $\varphi_{\pm}^j$ \emph{glue} analytically on overlappings $V_{0}^j,\ V_\infty^j$ of consecutive repelling and attracting petals to a global analytic conjugacy germ $\varphi$ on a standard quadratic domain. But this is exactly ensured by the equality of horn maps.
\hfill $\Box$
\smallskip

\begin{prop}[Symmetry of horn maps]\label{prop:sim} Let $f(z)=z-z^2\boldsymbol\ell^m+o(z^2\boldsymbol\ell^m)$ be a parabolic generalized Dulac germ on a standard quadratic domain, and let $(h_0^j,h_{\infty}^j)_{j\in\mathbb Z}$ be its sequence of horn maps. Then, up to identifications \eqref{eq:ekvij}, it holds that
\begin{equation}\label{eq:sim}
\big(h_{0}^{-j+1}\big)^{-1}(t)\equiv \overline{h_{\infty}^j(\,\overline t\,)},\ j\in\mathbb Z.
\end{equation}
That is, the necklace of spheres is \emph{symmetric} with respect to the real axis. The horn maps with negative indices are symmetric in the sense \eqref{eq:sim} to the horn maps with positive indices.
\end{prop}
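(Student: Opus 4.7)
The plan is to exploit the reality of $f$: since the coefficients of the generalized Dulac expansion are real and $f$ maps $\{\arg z=0\}\cap \mathcal R_C$ into itself, we have the identity $\overline{f(z)}=f(\bar z)$ on $\mathcal R_C$, where complex conjugation $\sigma\colon z\mapsto \bar z$ is viewed as an anti-holomorphic involution of the Riemann surface of the logarithm. A quick check of the tangential directions shows that $\sigma$ sends the attracting petal $V_j^+$ (centered at $\arg z=2j\pi$) to $V_{-j}^+$, and the repelling petal $V_j^-$ (centered at $(2j-1)\pi$) to $V_{-j+1}^-$. Consequently $\sigma$ interchanges the intersections \eqref{eq:centr} as
\[
\sigma\bigl(V_\infty^j\bigr)=V_0^{-j+1},\qquad \sigma\bigl(V_0^{j+1}\bigr)=V_\infty^{-j},
\]
so the two families of horn maps are \emph{a priori} matched as claimed.

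Next I would establish a symmetry for the sectorial Fatou coordinates. Given $\Psi_\pm^j$ on $V_j^\pm$ admitting $\widehat\Psi$ as integral asymptotic expansion (Theorem~A(2)), define
\[
\widetilde\Psi_+^j(z):=\overline{\Psi_+^j(\bar z)},\qquad \widetilde\Psi_-^j(z):=\overline{\Psi_-^j(\bar z)}.
\]
The Abel equation $\Psi_\pm^j(f(z))-\Psi_\pm^j(z)=1$ together with $\overline{f(z)}=f(\bar z)$ shows that $\widetilde\Psi_+^j$ (resp.\ $\widetilde\Psi_-^j$) is a Fatou coordinate on $V_{-j}^+$ (resp.\ on $V_{-j+1}^-$). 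Since $\widehat\Psi\in\widehat{\mathcal L}_2^\infty(\mathbb R)$ has real coefficients, $\widetilde\Psi_\pm^j$ admits the same formal Fatou coordinate $\widehat\Psi$ as its integral asymptotic expansion. By the uniqueness statement in Proposition~\ref{prop:Fatouexp}(2), we may choose the additive constants in the Fatou coordinates on each petal so that
\[
\overline{\Psi_+^j(\bar z)}=\Psi_+^{-j}(z),\qquad \overline{\Psi_-^j(\bar z)}=\Psi_-^{-j+1}(z),\qquad j\in\mathbb Z.
\]

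With these relations in hand, \eqref{eq:sim} becomes a direct substitution. Starting from the definition in \eqref{eq:moduli} and using $\log\bar t/(2\pi i)=-\overline{\log t/(2\pi i)}$, one computes
\[
\overline{h_\infty^j(\bar t)}
=\exp\!\Bigl(-2\pi i\,\overline{\Psi_-^j\circ(\Psi_+^j)^{-1}\!\bigl(-\overline{\log t/(2\pi i)}\bigr)}\Bigr).
\]
Setting $w=(\Psi_+^j)^{-1}\bigl(-\overline{\log t/(2\pi i)}\bigr)$ gives $\overline{\Psi_+^j(w)}=\log t/(2\pi i)$, so by the first symmetry relation $\Psi_+^{-j}(\bar w)=\log t/(2\pi i)$, i.e.\ $\bar w=(\Psi_+^{-j})^{-1}(-(-\log t/(2\pi i)))$; applying the second symmetry relation to the outer Fatou coordinate produces
\[
\overline{h_\infty^j(\bar t)}=\exp\!\Bigl(-2\pi i\,\Psi_-^{-j+1}\!\circ(\Psi_+^{-j})^{-1}\bigl(-\log t/(2\pi i)\bigr)\Bigr),
\]
which, by inverting the definition of $h_0^{-j+1}$ in \eqref{eq:moduli}, is exactly $(h_0^{-j+1})^{-1}(t)$.

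The main technical nuisance is the bookkeeping: one must keep track of the index shift $j\leftrightarrow -j+1$ coming from the mismatch between the centers of repelling and attracting petals under $\sigma$, together with the differing signs in the two lines of \eqref{eq:moduli}, and one must verify that the additive constants in the sectorial Fatou coordinates can be chosen compatibly on \emph{all} petals at once (as opposed to petal by petal). The latter is handled by noting that any failure of compatibility is absorbed into the identification sequences $(a_i),(b_i)\in\mathbb C^*$ from \eqref{eq:ekvij}, so that \eqref{eq:sim} holds as an equality of equivalence classes of horn maps, which is precisely the statement.
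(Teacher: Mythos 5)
Your approach is essentially identical to the paper's: both proofs start from the Schwarz-reflection identity $f(z)=\overline{f(\bar z)}$ (following from reality of $f$ and invariance of the positive real axis), deduce the conjugation symmetry of the sectorial Fatou coordinates $\Psi_\pm^j$ across petals $V_j^\pm\leftrightarrow V_{-j}^\pm$ (resp.\ $V_{-j+1}^-$) using the Abel equation and the uniqueness of Fatou coordinates with given integral asymptotic expansion, and then substitute into the definitions \eqref{eq:moduli}. The paper compresses your $\widetilde\Psi_\pm^j$ step into the identities $\Psi_-^{-j+1}=K\circ\Psi_-^j\circ K$, $\Psi_+^{-j}=K\circ\Psi_+^j\circ K$ and proceeds by the same substitution. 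One small caution: in your chain of equalities you momentarily write $\overline{\Psi_+^j(w)}=\log t/(2\pi i)$ where it should be $-\log t/(2\pi i)$ (a second sign slip then compensates, so your final displayed formula is correct); in a clean write-up you should carry these signs carefully, as you yourself flag.
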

\begin{proof} Let $f$ be a parabolic normalized generalized Dulac germ on a standard quadratic domain $\mathcal R_C$. It holds that $f\big(\{\mathrm{arg}(z)=0\}\cap \mathcal R_C\big)\subseteq \{\mathrm{arg}(z)=0\}\cap\mathcal R_C$ and $f$ is holomorphic on $\mathcal R_C$. Then, by Schwarz reflection principle, it holds that $f(z)=\overline {f(\overline z)}$ on whole $\mathcal R_C$. It is then easy to see, by the analyticity of the Fatou coordinate and by the Abel equation, that:
\begin{align}\label{eq:hha}
&\Psi_-^{-j+1}=K\circ \Psi_-^j\circ K,\ z\in V_-^{-j+1},\ j\in\mathbb N,\nonumber\\
&\Psi_+^{-j}=K\circ \Psi_+^j\circ K,\ z\in V_+^{-j},\ j\in\mathbb N,\nonumber\\
&\Psi_+^0\Big|_{V_{\infty}^{0}}=K\circ \Psi_+^0\Big|_{V_{0}^1}\circ K.
\end{align}
Here, $K(z)=\overline z$ is the complex conjugation on the Riemann surface of the logarithm. By \eqref{eq:moduli} and \eqref{eq:hha}, \eqref{eq:sim} follows.
More precisely, by \eqref{eq:moduli} and \eqref{eq:hha}, 
\begin{align*}
(h_0^{-j+1})^{-1}(t)&=e^{-2\pi i \Psi_-^{-j+1}\circ (\Psi_+^{-j})^{-1}(-\frac{\log t}{2\pi i})},\\
&=e^{-2\pi i\,K\circ \Psi_-^{j}\circ (\Psi_+^{j})^{-1}\circ K(-\frac{\log t}{2\pi i})},\ \ t\in(\mathbb C,0).
\end{align*}
Since $K\big(\frac{-\log t}{2\pi i}\big)=\frac{\log \overline t}{2\pi i}$, $t\in\mathbb (\mathbb C,0)$, and $K(e^{-2\pi i\overline z})=e^{2\pi i z},\ z\in\mathbb C$, we get:
$$
\overline{(h_0^{-j+1})^{-1}(t)}=h_\infty^j(\overline t),\ t\in (\mathbb C,0).
$$
\end{proof}

\section{Appendix}\label{sec:app}

We give here a generalization of the definition of \emph{sectional asymptotic expansions} on $\mathbb R_+$ (already defined in \cite{MRRZ2Fatou}) to complex sectors:
\begin{defi}[Sectional asymptotic expansions in $\mathbb C$]\label{def:assy} We say that a germ $f$ analytic on a petal $P$ admits on $P$ the sectional asymptotic expansion $$\widehat f(z)=\sum_{i=1}^{\infty}\widehat f_i(\boldsymbol\ell)z^{\alpha_i}\in\widehat {\mathcal L}_n^\infty,$$ where $(\alpha_i)_i,\ \alpha_i\in\mathbb R,$ is a strictly increasing sequence, either finite or tending to $+\infty$, if there exist germs $f_i$ analytic on $\boldsymbol\ell$-cusps $\boldsymbol\ell(P)$ admitting $\widehat f_i\in\widehat{\mathcal L}_{n-1}^\infty$ as their sectional asymptotic expansions on $\boldsymbol\ell(P)$\footnote{In the following sense: for every proper $\boldsymbol\ell$-subcusp $\boldsymbol\ell(V)\subset \boldsymbol\ell(P)$ it holds that $f_i(\boldsymbol\ell)$ admits $\widehat f_i(\boldsymbol\ell)$ as sectional asymptotic expansion, as $\boldsymbol\ell\to 0$.}, such that, on every proper subsector\footnote{By \cite{loray2}, we say that a complex germ admits a power asymptotic expansion on an open domain $D$ with zero in its boundary if this asymptotic expansion holds on every proper subsector $V\subset D$ with vertex at the origin (possibly with different speeds of convergence of remainders, depending on the sector).} $V\subset P$ it holds that
\begin{equation}\label{eq:haj}
f(z)-\sum_{i=1}^{n}f_i(\boldsymbol\ell)z^{\alpha_i}=o(z^{\alpha_{n+1}-\delta}),\ \delta>0,\ n\in\mathbb N,\ z\to 0,\ z\in V.
\end{equation}
\end{defi}
The bounding constants in \eqref{eq:haj} may vary with sectors $V\subset P$, but the germs $f_i$ and the sequence $(\alpha_i)$ are the same for all sectors. 
%\smallskip
%\begin{lem}\label{prop:ha}
%Let $f$ be a holomorphic germ on an open sector $U$ at zero and let $f(z)=O(z^\alpha)$, $\alpha\in\mathbb R$, as $z\to 0$ on $U$. Then on every proper subsector $V\subset U$ it holds $f^{(k)}(z)=O(z^{\alpha-k})$, as $z\to 0$ on $V$, with bounding constant dependent on the chosen subsector.
%\end{lem}
%The proof is directly by the Cauchy integral formula as in the proof of \cite[Theorem 8.8]{wasow}.
\smallskip

We have shown in \cite{MRRZ2Fatou} that, for a given choice of section, the sectional asymptotic expansion of a germ is unique.
\bigskip

\noindent \emph{Proof of Proposition~\ref{prop:clomult}}. Let $\widehat f(\boldsymbol\ell)=\sum_{k=0}^{\infty} a_k \boldsymbol\ell^k$ and $\widehat g(\boldsymbol\ell)=\sum_{k=0}^{\infty} b_k \boldsymbol\ell^k$ be the $\log$-Gevrey asymptotic expansions of $f$ and $g$ respectively.
First, by classical theory of asymptotic expansions, it follows that the formal product $\widehat f(\boldsymbol\ell)\cdot \widehat g(\boldsymbol\ell)$ is the power asymptotic expansion of $f\cdot g$ on $S$. Let $(c_k)_k$ be the coefficients of the power series $\widehat f(\boldsymbol\ell)\widehat g(\boldsymbol\ell)$. The sequence $(c_k)_k$ is then the convolution of sequences of coefficients $(a_k)_k$ and $(b_k)_k$. The power asymptotic expansion of $f\cdot g$ on $\boldsymbol\ell$-cusp $S$ is obviously unique. It is left to prove the $\log$-Gevrey bounds of order $r$ for $f\cdot g$ on $S$, for every $0<r<s$. 

Let $S'\subset S$ be a $\boldsymbol\ell$-subcusp of $S$. By direct multiplication of series as in \cite[Theorem 1]{bahlser}, we get:
\begin{align}\label{eq:jaojao}
|f(\boldsymbol\ell)g(\boldsymbol\ell)-\sum_{k=0}^{N-1}c_k\boldsymbol\ell^k|&\leq |f(\boldsymbol\ell)|\cdot\Big|g(\boldsymbol\ell)-\sum_{k=0}^{N-1}c_k\boldsymbol\ell^k\Big|+\nonumber\\
&+\sum_{k=0}^{N-1}\Big(|b_k||\boldsymbol\ell|^k\cdot \big|f(\boldsymbol\ell)-\sum_{i=0}^{N-1-k} a_i \boldsymbol\ell^i\big|\Big), \ \boldsymbol\ell\in S',
\end{align}
for every $N\in\mathbb N$. Since $f$ and $g$ are $\log$-Gevrey of order respectively $m,\,n$, there exists a constant $C>0$ such that:
\begin{align}\label{eq:jjjoj}
|b_k||\boldsymbol\ell|^k&\leq \Big|g(\boldsymbol\ell)-\sum_{j=0}^{k}b_j\boldsymbol\ell^j\Big|+\Big|g(\boldsymbol\ell)-\sum_{j=0}^{k-1}b_j\boldsymbol\ell^j\Big|\nonumber\\
&\leq C n^{-k-1} \log(k+1)^{k+1} e^{-\frac{k+1}{\log(k+1)}}|\boldsymbol\ell|^{k+1}+ C n^{-k} \log^k k e^{-\frac{k}{\log k}}|\boldsymbol\ell|^{k},\nonumber\\
&\leq C n^{-k} \log^k k \cdot e^{-\frac{k}{\log k}}(|\boldsymbol\ell|^k+|\boldsymbol\ell|^{k+1}),\ \boldsymbol\ell\in S',\ k\in\mathbb N,\ k\geq 2.
\end{align}
To derive the last line, we use the fact that the function $x\mapsto n^{-x} \log^{x}x \cdot e^{-\frac{x}{\log x}}$ is strictly increasing on $[2,+\infty)$. Moreover,
\begin{equation*}
\lim_{k\to\infty}\frac{n^{-k-1} \log(k+1)^{k+1} e^{-\frac{k+1}{\log(k+1)}}}{n^{-k} \log^k k \cdot e^{-\frac{k}{\log k}}}\frac{1}{\log \sqrt k}=1.
\end{equation*}
For $0<p<n$, it holds that $(n/p)^{-k}\log \sqrt k\to 0$, as $k\to\infty$. Now dividing \eqref{eq:jjjoj} by $|\boldsymbol\ell|^k$ and passing to the limit as $|\boldsymbol\ell|\to 0$, we get that there exists a constant $C>0$ such that, for every $0<p<n$, it holds that:
\begin{equation}\label{eq:izvod}
|b_k|\leq C p^{-k} \log^k k\cdot e^{-\frac{k}{\log k}},\ k\in\mathbb N,\ k\geq 2.
\end{equation}
Using the fact that $f\in LG_m(S)$ and $g\in LG_n(S)$ and \eqref{eq:izvod}, from \eqref{eq:jaojao} we get that there exist constants $A>0,\ B>0,\ D>0, \ E>0$ such that:
\begin{align*}
&|f(\boldsymbol\ell)g(\boldsymbol\ell)-\sum_{k=0}^{N-1}c_k\boldsymbol\ell^k|\leq B n^{-N}\log^N N\cdot e^{-\frac{N}{\log N}}|\boldsymbol\ell|^N+\\
&\quad +AC\sum_{k=2}^{N-2}p^{-k}\log^k k\cdot e ^{-\frac{k}{\log k}}|\boldsymbol\ell|^k\cdot m^{-(N-k)}\log^{N-k}(N-k)\cdot e ^{-\frac{N-k}{\log(N-k)}}|\boldsymbol\ell|^{N-k}\\
&\quad \leq D s^{-N} \log^N N \cdot e^{-\frac{N}{\log N}}\cdot N\cdot |\boldsymbol\ell|^N\leq E r^{-N} \log^N N \cdot e^{-\frac{N}{\log N}}\cdot |\boldsymbol\ell|^N,\\
&\qquad\qquad\qquad\qquad\qquad\qquad\qquad\qquad \qquad\qquad\qquad\qquad\boldsymbol\ell\in S',\ N\in\mathbb N,\ N\geq 2.
\end{align*}
Here, we put $s:=\min\{m,p\}$ and use the fact that $\log k,\ \log(N-k)\leq \log N$ and $e^{-\frac{1}{\log k}},\ e^{-\frac{1}{\log (N-k)}}\leq e^{-\frac{1}{\log N}}$ for $2\leq k\leq N-2$. In the last line we take any $0<r<s$.
\hfill $\Box$
\bigskip

\noindent \emph{Proof of Proposition~\ref{prop:clodif}}. 
Let $\widehat f(\boldsymbol\ell)=\sum_{k=0}^{\infty}a_k\boldsymbol\ell^k$. We prove inductively, by showing that the first derivative is $\log$-Gevrey of every order $0<r<m$ on $S$, and then continuing similarly (using the bounds for the first derivative) for the second derivative etc. Since $f$ admits $\widehat f$ as $\log$-Gevrey asymptotic expansion of order $m>0$, for every $\boldsymbol\ell$-subcusp $S'=\boldsymbol\ell(V')\subset S$, $V'$ a proper subsector of $V$, there exists a constant $C>0$ such that:
\begin{equation}\label{eq:ne}
\Big|f(\boldsymbol\ell)-\sum_{k=0}^{n-1}a_k\boldsymbol\ell^k\Big|\leq C m^{-n} (\log n)^n e^{-\frac{n}{\log n}}|\boldsymbol\ell|^n, \ \boldsymbol\ell\in S',\ n\in\mathbb N,\ n\geq 2. 
\end{equation}
Take a small $\delta>0$ and a proper $\boldsymbol\ell$-subcusp $S_\delta\subset S'$ such that $S_\delta=\boldsymbol\ell(V_\delta)$, where $V_\delta\subset V'$ is obtained from $V'$ by shrinking by the angle $\arcsin(\delta)$ from both sides. We show that there exists a constant $D>0$ and a function $\delta\mapsto e(\delta)>0$ such that:
\begin{equation}\label{eq:nee}
\Big|\frac{d}{d\boldsymbol\ell}f(\boldsymbol\ell)-\sum_{k=1}^{n-1}ka_k\boldsymbol\ell^{k-1}\Big|\leq D \big(m-e(\delta)\big)^{-n} (\log n)^n e^{-\frac{n}{\log n}}|\boldsymbol\ell|^{n-1},\ \boldsymbol\ell\in S_\delta,\ n\in\mathbb N,\ n\geq 2.
\end{equation}
We also prove that $e(\delta)\searrow 0$, as $\delta\to 0$. Since $S_\delta\nearrow S'$, as $\delta\to 0$, this proves that $\frac{d}{d\boldsymbol\ell}f(\boldsymbol\ell)$ admits the formal derivative $\frac{d}{d\boldsymbol\ell}\widehat f(\boldsymbol\ell)$ as $\log$-Gevrey asymptotic expansion of every order $0<r<m$ on whole $S$.

We now prove \eqref{eq:nee}. We define: 
$$
\tilde H_n(z):=f(\frac{1}{-\log z})-\sum_{k=0}^{n-1} a_k \big(-\frac{1}{\log z}\big)^k,\ z\in V',\ n\in\mathbb N,
$$
where $z=e^{-1/\boldsymbol\ell}$. By the Cauchy integral formula in the variable $z$ and using bound \eqref{eq:ne}, we get:
\begin{align}\label{eq:a}
\Big|\frac{d}{dz}\tilde H_n(z)\Big|&\leq \int_{K(z,\delta|z|)}\frac{|\tilde H_n(\xi)|}{|z-\xi|^2}d|\xi|\\
&\leq Cm^{-n}(\log n)^n e^{-\frac{n}{\log n}}\frac{1}{\delta^2|z|^2}\sup_{|w|=\delta |z|}\Big|\frac{1}{-\log(z+w)}\Big|^n 2\pi \delta |z|\nonumber\\
&\leq D m^{-n}(\log n)^n e^{-\frac{n}{\log n}}\Big|\frac{1}{-\log z}\Big|^n\sup_{|w|=\delta |z|}\Big |1+\frac{\log(1+\frac{w}{z})}{\log z}\Big|^{-n} \frac{1}{|z|}\nonumber\\
&\leq D \big(m(1-e(\delta))\big)^{-n}(\log n)^n e^{-\frac{n}{\log n}}|\boldsymbol\ell|^n \frac{1}{|z|},\ z\in V_\delta,\ n\in\mathbb N,\ n\geq 2. \nonumber
\end{align}
Here, $e(\delta):=c\delta$, $\delta>0$, where $c>0$ depends only on the radius of $V$. Obviously, $e(\delta)\to 0$, as $\delta\to 0$. The constants do not depend on $n\in\mathbb N$. 

Indeed, by the triangle inequality,
\begin{align*}
\Big|1+\frac{\log(1+\frac{w}{z})}{\log z}\Big|^{-n}\leq \Big(1-\frac{|\log(1+\frac{w}{z})|}{|\log z|}\Big)^{-n},\ z\in V_\delta,\ |w|=\delta|z|.
\end{align*}
For $\delta=\big|\frac{w}{z}\big|<1$, there exists a constant $c_1>0$ (independent of $\delta>0$) such that
$$
\frac{|\log(1+\frac{w}{z})|}{|\log z|}\leq \frac{c_1|\frac{w}{z}|}{-\log |z|}=\frac{c_1\delta}{-\log |z|}\leq \frac{c_1\delta}{-\log R}, \ z\in V_\delta,\ |w|=\delta|z|,
$$
where $R$ is the radius of $V$. Now put $c:=\frac{c_1}{-\log R}$ and $e(\delta):=c\delta$. The last inequality in \eqref{eq:a} follows.
\medskip

Now,
\begin{align}\label{eq:aaa}
\frac{d}{dz}\tilde H_n(z)=\frac{d\boldsymbol\ell}{dz}\frac{d}{d\boldsymbol\ell}\tilde H_n(z)=\frac{\boldsymbol\ell^2}{z}\Big(\frac{d}{d\boldsymbol\ell}f(\boldsymbol\ell)-\sum_{k=1}^{n-1} k a_k \boldsymbol\ell^{k-1}\Big).
\end{align}
Now, by \eqref{eq:a} and \eqref{eq:aaa}, we get:
\begin{align}\label{eq:aa}
\Big|\frac{d}{d\boldsymbol\ell}f(\boldsymbol\ell)-&\sum_{k=1}^{n-1} ka_k\boldsymbol\ell^{k-1}\Big|\leq \\
&\leq\frac{|z|}{|\boldsymbol\ell|^2}D \big(m(1-e(\delta))\big)^{-n}(\log n)^n e^{-\frac{n}{\log n}}|\boldsymbol\ell|^n \frac{1}{|z|},\nonumber\\
&\leq \ D \big(m(1-e(\delta))\big)^{-n}(\log n)^n e^{-\frac{n}{\log n}}|\boldsymbol\ell|^{n-2},\ \boldsymbol\ell \in S_\delta,\ n\in\mathbb N,\ n\geq 2.
\nonumber\end{align}
To get \eqref{eq:nee} from \eqref{eq:aa}, we bound the coefficient $|a_{n-1}|,\,n\in\mathbb N,\ n\geq 3$, by \eqref{eq:izvod}. There exists a constant $C_1>0$ such that:
\begin{equation}\label{eq:fini}
|a_{n-1}|\leq C_1 m^{-(n-1)}\log^{n-1} (n-1) e^{-\frac{n-1}{\log (n-1)}}\leq C_1 m^{-n}\log^{n} n e^{-\frac{n}{\log n}},\ n\in\mathbb N,\ n\geq 3.
\end{equation}
From \eqref{eq:aa}, we get:
\begin{align*}
\Big|\frac{d}{d\boldsymbol\ell}f(\boldsymbol\ell)-\sum_{k=0}^{n-2}ka_k\boldsymbol\ell^{k-1}\Big|\leq \Big|\frac{d}{d\boldsymbol\ell}f(\boldsymbol\ell)-\sum_{k=1}^{n-1}ka_k\boldsymbol\ell^{k-1}\Big|+(n-1)|a_{n-1}| |\boldsymbol\ell|^{n-2},\ n\in\mathbb N.
\end{align*}
%It is easy to see that, for every $\delta>0$, there exists $n_\delta\in\mathbb N$ such that $(n-1)m^{-n+1}<\big(m(1-r(\delta))\big)^{-n}$ for every $n\geq n_\delta$. 
Therefore, by \eqref{eq:fini}, there exists a constant $D_1>0$ such that, for every $n\geq 3,$
\begin{align*}
\Big| \frac{d}{d\boldsymbol\ell}f(\boldsymbol\ell)-\sum_{k=1}^{n-2}ka_k\boldsymbol\ell^{k-1}\Big|\leq D_1 \big(m(1-e(\delta))\big)^{-n}(\log n)^n& e^{-\frac{n}{\log n}}|\boldsymbol\ell|^{n-2},\\
 &\boldsymbol\ell\in S_\delta,\ n\in\mathbb N,\,n\geq 3.
\end{align*}
Thus, \eqref{eq:nee} is proven. Finally, using bound \eqref{eq:nee} for the first derivative, we repeat the same procedure for the second derivative etc.
\hfill $\Box$
\bigskip

\noindent \emph{Proof of Proposition~\ref{prop:clocomp}}.\

\begin{enumerate}
\item
Let $0<r<m$. Let $r_F:=\mathrm{ord}(F)\in\mathbb Z$ and let $r_f:=\mathrm{ord}(f)\geq 1$. Then $r_{F\circ f}:=\mathrm{ord}(F\circ f)=r_F\cdot r_f$. Let 
\begin{equation}\label{eq:g}
\boldsymbol\ell^{-r_{F\circ f}}\cdot (\widehat F\circ\widehat f)(\boldsymbol\ell):=\sum_{i=0}^\infty a_i \boldsymbol\ell^i,\ a_i\in\mathbb R, \ i\in\mathbb N.
\end{equation}
 We will prove, using \emph{Fa\` a di Bruno's} formula for derivatives, that, on every subcusp $V\subseteq S$, there exists constant $C_V>0$ such that:
\begin{equation*}
\sup_{\boldsymbol\ell\in V}\,\big|\big(\boldsymbol\ell^{-r_{F\circ f}}\cdot (F\circ f)\big)^{(n)}(\boldsymbol\ell)\big|\leq C_{V} \cdot n! \cdot r^{-n}\cdot (\log n)^n \cdot e^{-\frac{n}{\log n}},\ n\in\mathbb N.
\end{equation*}
This is sufficient since $$\frac{d^{n}}{d\boldsymbol\ell^{n}}\Big(\boldsymbol\ell^{-r_{F\circ f}}\cdot (F\circ f)(\boldsymbol\ell)-\sum_{i=0}^{n-1} a_i\boldsymbol\ell^i\Big)=\frac{d^{n}}{d\boldsymbol\ell^{n}}\big(\boldsymbol\ell^{-r_{F\circ f}}(F\circ f)(\boldsymbol\ell)\big),\ n\in\mathbb N,$$ where $a_i$, $i\in\mathbb N$, are as in \eqref{eq:g},
and since

\begin{equation*}
\frac{d^{k}}{d\boldsymbol\ell^{k}}\Big(\boldsymbol\ell^{-r_{F\circ f}}\cdot (F\circ f)(\boldsymbol\ell)-\sum_{i=0}^{n-1} a_i\boldsymbol\ell^i\Big)(0)=0,\ k=0,\ldots,n-1.
\end{equation*}
The last line is the consequence of the standard fact that $\boldsymbol\ell\mapsto \boldsymbol\ell^{-r_{F\circ f}}\cdot (F\circ f)(\boldsymbol\ell)$ admits $\boldsymbol\ell^{-r_{F\circ f}}\cdot (\widehat F\circ \widehat f)(\boldsymbol\ell)$ as its power asymptotic expansion, as $\boldsymbol\ell\to 0$.
\smallskip

Since $F$ is analytic, and since $f$ admits $\log$-Gevrey expansion of order $m$ and $\mathrm{ord}(f)\geq 1$, it holds that, for every subcusp $V\subseteq S$, there exist constants $C_V>0,\ D_V>0$ and $E_V>0$ such that:
\begin{align}\label{eq:eee}
&\sup_{\boldsymbol\ell\in V}\,\big|(\boldsymbol\ell^{-\min\{r_F,0\}}F)^{(k)}(\boldsymbol\ell)\big|\leq C_{V}\cdot k!\cdot D_V^k,\ k\in\mathbb N,\nonumber\\
&\sup_{\boldsymbol\ell\in V}\,\big|f^{(k)}(\boldsymbol\ell)\big|\leq E_{V} \cdot k! \cdot r^{-k}\cdot (\log k)^k \cdot e^{-\frac{k}{\log k}},\ k\in\mathbb N.
\end{align}
Here, since $\boldsymbol\ell^{-r_F}F$ is analytic at $0$, by diminishing the radius of sector $S$, $D_V>0$ can be made arbitrarily small.

By Fa\` a di Bruno's formula \cite{Mloday} and estimates \eqref{eq:eee}, for every $n\in\mathbb N$, it holds that:
\begin{align*}
&|\big((\boldsymbol\ell^{-\min\{r_F,0\}}\cdot F)\circ f\big)^{(n)}|=\\
&=\Big|\sum_{\substack{0\leq k_i\leq n,\\ \sum_{j=1}^n jk_j=n}}\frac{n!}{k_1!\cdots k_{n}!}(\boldsymbol\ell^{-\min\{r_F,0\}}\cdot F)^{(k)}(f(\boldsymbol\ell))\prod_{j=1}^{n} \Big(\frac{f^{(j)}(\boldsymbol\ell)}{j!}\Big)^{k_j}\Big|\\
&\leq C_V n!\cdot (\log n)^n e^{-\frac{n}{\log n}} m^{-n}\sum_{\substack{0\leq k_i\leq n,\\ \sum_{j=1}^n jk_j=n}} \frac{k!}{k_1!\ldots k_n!} \cdot D_V^{k}\cdot E_V^k =\\
&= C_V n!\cdot (\log n)^n e^{-\frac{n}{\log n}} m^{-n}E_VD_V(1+E_VD_V)^{n-1}\\
&\leq C_V'\cdot n!\cdot (\log n)^n e^{-\frac{n}{\log n}} (m(1-\delta))^{-n},
\end{align*}
where $\delta>0$ can be made arbitrarily small by diminishing the radius of $S$ (and thus making $D_V>0$ arbitrarily small). Here, $k:=k_1+\ldots+k_n$. We use $\log j\leq \log n$, $j=1,\ldots n$. The last sum is evaluated by multinomial theorem. We now put $r:=m(1-\delta)$, so $r$ can be any number $0<r<m$.

\noindent
We have now proven that 
\begin{equation}\label{eq:pomi}
(\boldsymbol\ell^{-\min\{r_F,0\}}\cdot F)\circ f\in LG_r(V),
\end{equation} 
for every $0<r<m$, and admits $(\boldsymbol\ell^{-\min\{r_F,0\}}\cdot \widehat F(\boldsymbol\ell))\circ \widehat f(\boldsymbol\ell)$ as its $\log$-Gevrey expansion of every order $0<r<m$. We show now that \eqref{eq:pomi} implies \begin{equation}\label{eq:la}\boldsymbol\ell^{-\min\{r_{F\circ f},0\}}\cdot \big(F\circ f\big)(\boldsymbol\ell)\in LG_r(V),\end{equation} for every $0<r<m$, and admits $\boldsymbol\ell^{-\min\{r_{F\circ f},0\}}\big(\widehat F\circ \widehat f\big)(\boldsymbol\ell)$ as its $\log$-Gevrey asymptotic expansion of order $r$. This finally proves the statement $(2)$.

Indeed, if $r_F\geq 0$, using $r_{F\circ f}=r_F\cdot r_f$ and $r_f>0$, \eqref{eq:la} follows directly from \eqref{eq:pomi}. If $r_F<0$, then $r_{F\circ f}=r_F\cdot r_f<0$ and 
\begin{align}\label{ali}
&\big(\boldsymbol\ell^{-\min\{r_F,0\}}\cdot F\big)(f(\boldsymbol\ell))=\big(\boldsymbol\ell^{-r_F}\cdot F\big)(f(\boldsymbol\ell))=f(\boldsymbol\ell)^{-r_F}\cdot (F\circ f)(\boldsymbol\ell)=\nonumber\\
&\qquad\quad\quad\quad =a^{-r_F}\cdot \boldsymbol\ell^{-r_F\cdot r_f}\cdot \big(1+a^{-1}\boldsymbol\ell^{-r_f}(f-a\boldsymbol\ell^{r_f})\big)^{-r_F}\cdot (F\circ f)(\boldsymbol\ell),\nonumber\\
&\boldsymbol\ell^{-r_F\cdot r_f}\cdot (F\circ f)(\boldsymbol\ell)=\big(\boldsymbol\ell^{-r_F}\cdot F\big)(f(\boldsymbol\ell))\cdot a^{r_F}\big(1+a^{-1}\boldsymbol\ell^{-r_f}(f-a\boldsymbol\ell^{r_f})\big)^{r_F}.
\end{align}
Here, $a\neq 0$ is such that $\boldsymbol\ell^{-r_f}\cdot f=a+o(1)$, $\boldsymbol\ell\to 0$.
Note that $x\mapsto (1+x)^{r_F}$ is an analytic function at $0$, and $\boldsymbol\ell^{-r_f}\cdot (f-a\boldsymbol\ell^{r_f})\in LG_r(V),\ 0<r<m$,\ with $\mathrm{ord}\big(\boldsymbol\ell^{-r_f}\cdot (f-a\boldsymbol\ell^{r_f})\big)\geq 1$. By \eqref{eq:pomi}, $\big(\boldsymbol\ell^{-r_F}\cdot F\big)(f(\boldsymbol\ell))\in LG_r(V),\ 0<r<m$, so by \eqref{ali} and Proposition~\ref{prop:clomult} it follows that $\boldsymbol\ell^{-r_F\cdot r_f}\cdot (F\circ f)(\boldsymbol\ell)\in LG_r(V),\ 0<r<m$.
\smallskip

\item Let $r_g:=\mathrm{ord}(g)$. Write $\frac{h(\boldsymbol\ell)}{g(\boldsymbol\ell)}=h(\boldsymbol\ell)\cdot\boldsymbol\ell^{-\mathrm{ord}(g)+1}\cdot \frac{1}{\boldsymbol\ell^{-\mathrm{ord}(g)+1}g(\boldsymbol\ell)}$. Function $y\mapsto \frac{1}{y}$ is a meromorphic germ at $0$, so the statement follows directly from statement $(1)$ and Proposition~\ref{prop:clomult}.
\end{enumerate}
\hfill $\Box$

\bigskip

\noindent \emph{Proof of Lemma~\ref{lem:elim}}.

1. A particular case is the removal of the first block $z^\alpha \widehat T_0(\boldsymbol\ell)$, $\widehat T_0(\boldsymbol\ell)=-\boldsymbol\ell^m+h.o.t.$ We search for an elementary change $\widehat\varphi_0(z)=z+z \widehat R_0(\boldsymbol\ell)$, $\widehat R_0(\boldsymbol\ell)\in \boldsymbol\ell\,\mathbb R[[\boldsymbol\ell]]$, which eliminates the first block, except for the first term $-\boldsymbol\ell^m$, which by \cite{mrrz2} cannot be eliminated in $\widehat{\mathfrak L}(\mathbb R)$. This is evident, due to the fact that $\widehat R_0(\boldsymbol\ell)$ contains only non-negative powers of $\boldsymbol\ell$. That is, we search for $\widehat\varphi_0$ such that:
\begin{equation}\label{eq:jes}\widehat\varphi_0^{-1}\circ\widehat f\circ\widehat\varphi_0(z)=z-z^\alpha\boldsymbol\ell^m+ z^{\beta_i}\widehat T_i(\boldsymbol\ell)+h.o.b.\footnote{higher-order blocks.},\ \beta_i>\alpha.\end{equation}
Due to the fact that, for the first block, $\mathrm{ord}_z(\widehat\varphi_0-\mathrm{id})=1$, $\widehat\varphi_0$ from \eqref{eq:jes} does not satisfy a simple Lie bracket equation of the type \eqref{eq:laksa}, as is the case for the higher-order blocks later. As a consequence, it cannot be expressed as a solution of a linear ordinary differential equation \eqref{eq:ekki}. Instead, to get a formula for $\widehat\varphi_0$, we proceed as follows. Let $\widehat\Psi$ be the formal Fatou coordinate of $\widehat f$ and $\widehat\Psi_0$ the formal Fatou coordinate of its normal form $\widehat f_1$ from \eqref{eq:norma1}. Then, by Abel equation $\widehat\Psi\circ\widehat f-\widehat\Psi=1$, such $\widehat\varphi_0$ (such that $\widehat\varphi_0^{-1}\circ\widehat f\circ\widehat\varphi_0-\widehat f_1$ contains blocks of order in $z$ strictly bigger than $\alpha$) exists if and only if it satisfies: 
\begin{equation}\label{eq:kk}
\mathrm{Lb}(\widehat\Psi_0)=\mathrm{Lb}(\widehat\Psi\circ \widehat{\varphi}_0)=\mathrm{Lb}(\widehat\Psi)\circ \widehat\varphi_0,\ \widehat\varphi_0=z+z\widehat R_0(\boldsymbol\ell),\ \widehat R_0\in\boldsymbol\ell\,\mathbb R[[\boldsymbol\ell]].
\end{equation}
Here, $\mathrm{Lb}(.)$ denotes the leading block of a transseries. By formal Taylor expansion of the Abel equation, it follows that
$$
\mathrm{Lb}(\widehat\Psi)=\int \frac{z^{-\alpha}}{\widehat T_0(\boldsymbol\ell)}dz=\int\frac{z^{1-\alpha}}{\widehat T_0(\boldsymbol\ell)\boldsymbol\ell^2}d\boldsymbol\ell,
$$
and, similarly,
$$
\mathrm{Lb}(\widehat\Psi_0)=\int\frac{z^{1-\alpha}}{\boldsymbol\ell^{m+2}}d\boldsymbol\ell.
$$
Since $\widehat\varphi_0(z)=z(1+\widehat R_0(\boldsymbol\ell))$, the statement \eqref{eq:ma} for $\beta_1=\alpha$ follows from \eqref{eq:kk}. It can easily be checked by formal integration and composition that $\widehat R_0(\boldsymbol\ell)$ given by \eqref{eq:ma} belongs to $\boldsymbol\ell\,\mathbb R[[\boldsymbol\ell]]$.

\smallskip
2. After removing of the first block, $\widehat f$ is transformed to \eqref{eq:jes}. To remove the first block $z^{\beta_i}\widehat T_i(\boldsymbol\ell)$, $\widehat T_i(\boldsymbol\ell)\in\mathbb R((\boldsymbol\ell))$, $\beta_i> \alpha,$ we search for $\gamma_i>1$ and $\widehat R_i(\boldsymbol\ell)\in\mathbb R((\boldsymbol\ell))$ that solve the Lie bracket\footnote{Given two germs $f$ and $g$ of one variable $z$, we define (by abuse) their Lie bracket $[f,g]$
as the function $[f,g]:=f'g-g'f$. In fact, considering the vector fields $X_f=f\frac{\partial}{\partial z}$ and $X_g=g\frac{\partial}{\partial z}$ and their Lie bracket  $[X_f,X_g]$, then 
$[X_f,X_g]=[f,g]\frac{\partial}{\partial z}$. We make the same abuse on formal level. The notation was introduced in \cite[Section 3]{mrrz2}.} equation (for details, see the proof of \cite[Theorem A]{mrrz2}):
\begin{equation}\label{eq:laksa}
[-z^\alpha\boldsymbol\ell^{m},\,z^{\gamma_i} \widehat R_i(\boldsymbol\ell)]=-z^{\beta_i} \widehat T_i(\boldsymbol\ell).
\end{equation}
 
Evaluating the Lie bracket equation
\begin{align*}
-z^{\gamma_i+\alpha-1}\Big[\big((\alpha-\gamma_i)\boldsymbol\ell^{
m}+m\boldsymbol\ell^{m+1}\big)\widehat R_i(\boldsymbol\ell)-\boldsymbol\ell^{m+2}\widehat R_i'(\boldsymbol\ell)\Big]=-z^{\beta_i}\widehat T_i(\boldsymbol\ell).
\end{align*}
We choose $\gamma_i$ such that $\gamma_i=\beta_i-\alpha+1$, and $\widehat R_i$ as a formal solution of a linear ordinary differential equation:
$$
\big((\alpha-\gamma_i)\boldsymbol\ell^{-2}+m\boldsymbol\ell^{-1}\big)\widehat R_i(\boldsymbol\ell)-\widehat R_i'(\boldsymbol\ell)=\boldsymbol\ell^{-m-2}\widehat T_i(\boldsymbol\ell).
$$
The solution with zero constant of integration is given by:
\begin{equation}\label{eq:ekki}
\widehat R_i(\boldsymbol\ell)=-z^{\alpha-\gamma_i}\boldsymbol\ell^{m}\int z^{\gamma_i-\alpha}\boldsymbol\ell^{-2m-2}\widehat T_i(\boldsymbol\ell) d\boldsymbol\ell.
\end{equation}
\hfill $\Box$
\bigskip

\noindent \emph{Proof of Proposition~\ref{prop:u}}. We prove for the length $n=1$. Suppose that there exist two exponents of integration for $\widehat F$, $(\alpha,p)\neq (\beta,q)\in(\mathbb R,\mathbb Z)$. There exist $\widehat P_1,\ \widehat Q_1\,\in \widehat {LG}_r(S_\theta)$, $r>\pi/\theta,$ such that:
$$
\frac{d}{d\boldsymbol\ell}\big(z^\alpha \boldsymbol\ell^p\widehat F(\boldsymbol\ell)\big)=z^{\alpha} \boldsymbol\ell^{2p-2}\widehat P_1(\boldsymbol\ell),\ \frac{d}{d\boldsymbol\ell}\big(z^\beta \boldsymbol\ell^q \widehat F(\boldsymbol\ell)\big)=z^{\beta} \boldsymbol\ell^{2q-2}\widehat Q_1(\boldsymbol\ell). 
$$
We compute:
\begin{align*}
z^{\alpha}\boldsymbol\ell^{2p-2}\widehat P_1(\boldsymbol\ell)&=\frac{d}{d\boldsymbol\ell}\big(z^\alpha \boldsymbol\ell^p\widehat F(\boldsymbol\ell)\big)=\frac{d}{d\boldsymbol\ell}\big(z^{\alpha-\beta}\boldsymbol\ell^{p-q}z^\beta\boldsymbol\ell^q\widehat F(\boldsymbol\ell)\big)=\\
&=\frac{d}{d\boldsymbol\ell}(z^{\alpha-\beta}\boldsymbol\ell^{p-q})\cdot z^\beta\boldsymbol\ell^q\widehat F(\boldsymbol\ell)+z^{\alpha-\beta}\boldsymbol\ell^{p-q}z^{\beta}\boldsymbol\ell^{2q-2}\widehat Q_1(\boldsymbol\ell)=\\
&=z^{\alpha}\boldsymbol\ell^{p-2}\big((\alpha-\beta)+(p-q)\boldsymbol\ell\big)\cdot \widehat F(\boldsymbol\ell)+z^{\alpha}\boldsymbol\ell^{p+q-2}\widehat Q_1(\boldsymbol\ell).
\end{align*}
We get:
\begin{align*}
\big((\alpha-\beta)+(p-q)\boldsymbol\ell\big)\widehat F(\boldsymbol\ell)&=\boldsymbol\ell^{p}\widehat P_1(\boldsymbol\ell)-\boldsymbol\ell^{q}\widehat Q_1(\boldsymbol\ell).
\end{align*}
The term $\boldsymbol\ell^{p}\widehat P_1(\boldsymbol\ell)-\boldsymbol\ell^{q}\widehat Q_1(\boldsymbol\ell)$ by Propositions~\ref{prop:closum}-\ref{prop:clodif} belongs to $\widehat{LG}_s(S_\theta)$ for some $r>s>\pi/\theta$. For $(\alpha,p)\neq (\beta,q)$ this is a contradiction, since the right-hand side is integrally summable of order $0$, while the left-hand side is not.

Finally, by the similar proof, if $\widehat F$ is integrally summable of length $n\in\mathbb N$, $n\geq 2,$ its exponent of integration $(\alpha,p)\in(\mathbb R,\mathbb Z)$ is unique. 
\hfill $\Box$
\bigskip

\noindent \emph{Proof of Lemma~\ref{lem:axi}}. 
Take $\widehat R_k^r(\boldsymbol\ell)$, $1\leq r\leq r_k$, an integrally summable series of length $k$, $k\leq n$, with respect to $\{\widehat R_i^{j_i}(\boldsymbol\ell),\ i=1,\ldots,k-1,\ j_i=1,\ldots,r_i\}$, with exponent of integration $(\alpha_k,p_k)$. 
That is,
$$
\widehat R_k^r(\boldsymbol\ell)=\frac{\int e^{-\frac{\alpha_k}{\boldsymbol\ell}} \boldsymbol\ell^{-2p_k-2}\widehat S_{\leq k-1}^{\widehat R_1^{1},\ldots,\widehat R_1^{r_1};\ldots;\widehat R_{k-1}^{1},\ldots,\widehat R_{k-1}^{r_{k-1}}}(\boldsymbol\ell) d\boldsymbol\ell}{e^{-\frac{\alpha_k}{\boldsymbol\ell}}\boldsymbol\ell^{p_k}},
$$
where $\widehat S_{\leq k-1}^{\widehat R_1^{1},\ldots,\widehat R_1^{r_1};\ldots;\widehat R_{k-1}^{1},\ldots,\widehat R_{k-1}^{r_{k-1}}}(\boldsymbol\ell)$ is an algebraic combination (with operations $+,\cdot,/,\frac{d}{d\boldsymbol\ell}$) of integrally summable series $\widehat R_i^{j_i}(\boldsymbol\ell),\ i=1,\ldots,k-1,\ j_i=1,\ldots,r_i,$ with respect to previous ones of strictly lower lengths and of integrally summable series of length $0$. Then, for its derivatives, it holds:
\begin{align*}
\frac{d}{d\boldsymbol\ell}\widehat R_k^r(\boldsymbol\ell)=&\frac{e^{-\frac{2\alpha_k}{\boldsymbol\ell}}\boldsymbol\ell^{-p_k-2}\widehat S_{\leq k-1}^{\widehat R_1^{1},\ldots,\widehat R_1^{r_1};\ldots;\widehat R_{k-1}^{1},\ldots,\widehat R_{k-1}^{r_{k-1}}}(\boldsymbol\ell)-\widehat R_k(\boldsymbol\ell)\cdot e^{-\frac{2\alpha_k}{\boldsymbol\ell}}\boldsymbol\ell^{2p_k-2}(\alpha_k-p_k\boldsymbol\ell)}{e^{-\frac{2\alpha_k}{\boldsymbol\ell}}\boldsymbol\ell^{2p_k}}=\\
=&\boldsymbol\ell^{-3p_k-2}\widehat S_{\leq k-1}^{\widehat R_1^{1},\ldots,\widehat R_1^{r_1};\ldots;\widehat R_{k-1}^{1},\ldots,\widehat R_{k-1}^{r_{k-1}}}(\boldsymbol\ell)-\widehat R_k^r(\boldsymbol\ell)\boldsymbol\ell^{-2}(\alpha_k-p_k\boldsymbol\ell).
\end{align*}
We proceed further by induction.
\hfill $\Box$
\bigskip

\noindent \emph{Proof of Proposition~\ref{prop:ok}.}

(1) Let $\widehat{\varphi}_1,\ \widehat{\varphi}_2\in\widehat{\mathcal L}(\mathbb R)$ be two formal changes that reduce $\widehat f$ to $\widehat f_1$:
$$
\widehat f=\widehat{\varphi}_1\circ\widehat f_2\circ \widehat{\varphi}_1^{-1}=\widehat{\varphi}_2\circ\widehat f_2\circ \widehat{\varphi}_2^{-1}.
$$
Let $\widehat \Psi_0$ as in the statement be a formal Fatou coordinate for $\widehat f_1$, without the constant term.  Directly by Abel equation, $\widehat\Psi_0\circ\widehat{\varphi}_1^{-1},\ \widehat\Psi_0\circ\widehat{\varphi}_2^{-1}\in \widehat{\mathcal L}_2^\infty(\mathbb R)$ are Fatou coordinates for $\widehat f$. Since by \cite{MRRZ2Fatou} the Fatou coordinate of $\widehat f$ is unique in $\widehat{\mathfrak L}^\infty(\mathbb R)$ up to a constant term, we get that there exists $c\in\mathbb R$ such that:
$$
\widehat \Psi_0\circ \widehat{\varphi}_1^{-1}=\widehat \Psi_0\circ \widehat{\varphi}_2^{-1}+c.
$$
Composing by $\widehat \varphi_2\in\widehat{\mathcal L}(\mathbb R)$ from the left, we get
$$
\widehat \Psi_0\circ \widehat{\varphi}_1^{-1}\circ\widehat{\varphi}_2=\widehat\Psi_0+c,\ \widehat{\varphi}_1^{-1}\circ\widehat{\varphi}_2=\widehat \Psi_0^{-1}\circ (\widehat \Psi_0+c)=\widehat f_c.
$$

(2) By Abel equation, $\widehat \Psi_0\circ\widehat\varphi^{-1}$ is a Fatou coordinate for $\widehat f$ in $\widehat{\mathfrak L}^\infty(\mathbb R)$, so it differs from its other Fatou coordinate $\widehat\Psi$ by a constant $C$, $C\in\mathbb R$.
\hfill $\Box$  

\bigskip

\noindent \emph{Proof of Lemma~\ref{lem:loic} $($The idea of the proof by Lo\" ic Teyssier$)$}. For simplicity, denote by $V$ a fixed petal $V_{j}^{\pm}$ and put $h:=h_j^{\pm}$. Then $h(z)=z+o(z)$ analytic on $V$, and $f=h\circ f_1\circ h^{-1}$ on $V$. Let $\Psi_0$ be the Fatou coordinate of $f_1$ on $V$. By Proposition~\ref{prop:gi} (1), there exists an analytic conjugacy $\varphi$ on $V$ conjugating $f$ to $f_1$, of the form $\varphi(z)=z+o(z)$ and admitting, up to precomposition by $f_c$, $c\in\mathbb R$, block iterated integral asymptotic expansion $\widehat\varphi$. By Abel equation, $\Psi_1:=\Psi_0\circ h^{-1}$ and $\Psi_2:=\Psi_0\circ \varphi^{-1}$ are two analytic Fatou coordinates of $f$ on $V$. Since $h$ and $\varphi$ are both tangent to the identity, $\Psi_1(z),\,\Psi_2(z)\sim_{z\to 0}-\frac{1}{\alpha-1}z^{-\alpha+1}\boldsymbol\ell^m$. Indeed, they have the same asymptotic behavior as $\Psi_0$, as $z\to 0$. The image $\Psi_2(V)$ contains, for $R$ sufficiently big, the half-plane $\{w\in\mathbb C:\text{Re}(w)>R\}\subset \mathbb C$. Now consider the difference
$$
\Psi_1\circ \Psi_2^{-1}(w+1)=\Psi_1\circ \Psi_2^{-1}(w)+1,\ \text{Re}(w)>R.
$$
Here, $\Psi_1\circ \Psi_2^{-1}-\mathrm{id}$ is obviously an analytic, periodic function on $\text{Re}(w)>R$, for some $R>0$. Due to  the periodicity, it can be holomorphically extended to whole $\mathbb C$. On the other hand,
$$
\Psi_1\circ \Psi_2^{-1}(w)-w=\Psi_0\circ h^{-1}\circ \varphi\circ \Psi_0^{-1}(w)-w,\ \text{Re}(w)>R.
$$
Note that $h^{-1}\circ \varphi(z)=z+o(z)$, as $z\to 0$ on $V$, and $\Psi_0(z)\sim z^{-\alpha+1}\boldsymbol\ell^m+o(z^{-\alpha+1}\boldsymbol\ell^m)$ has an asymptotic behavior, so we get
\begin{align*}
&\Psi_1\circ \Psi_2^{-1}(w)-w=o(w),\ |w|\to \infty, \ \text{Re}(w)>R,\\
&\frac{\Psi_1\circ \Psi_2^{-1}(w)-w}{w}=o(1),\ |w|\to \infty, \ \text{Re}(w)>R.
\end{align*} 
Due to the periodicity of the function in the numerator, we conclude that for the entire periodic extension $L(w)$ of the function $\Psi_1\circ \Psi_2^{-1}(w)-w$ in the numerator defined on the image $\Psi_2(V)$ it also holds:
$$
\frac{L(w)}{w}=o(1),\ |w|\to \infty.
$$
The function in the numerator is entire, so the function on the left-hand side has a pole in $0$. Subtracting $\frac{C}{w}$, for the appropriate residuum $C\in\mathbb C$, we get an entire function on the left-hand side, and on the right-hand-side we get:
$$
\frac{L(w)}{w}-\frac{C}{w}=o(1)-\frac{C}{w},\ |w|\to \infty.
$$
The right-hand side is then also entire. Moreover, it is obviously bounded as $|w|\to\infty$, and tends to zero. Thus, by Liouville's theorem, it is equal to the constant zero.
Thus, $L(w)=C$ on $\mathbb C$ and $\Psi_1\circ \Psi_2^{-1}(w)-w=C$ on the image $\Psi_2(V)$. Therefore, $\Psi_1$ and $\Psi_2$ differ on each petal $V$ \emph{only by a constant}, and the following holds on $V$:
\begin{align*}
&\Psi_2-\Psi_1=C,\\
&\Psi_0\circ h^{-1}\circ\varphi-\Psi_0=C,\\  
&h^{-1}\circ\varphi=\Psi_0^{-1}\big(\Psi_0+C\big),\\
&h^{-1}\circ\varphi=f_C,\ \ C\in\mathbb R.
\end{align*}
Therefore, $h=\varphi\circ f_{-C}$. Since $\varphi$ admits $\widehat\varphi$ as its block iterated integral asymptotic expansion, which is by Definition~\ref{def:iiexp} well-defined up to precomposition by $f_c,\ c\in\mathbb R,$ it follows that also $h$ admits $\widehat\varphi$ as its block iterated integral asymptotic expansion.
\hfill $\Box$
\medskip

\noindent \emph{Proof of Proposition~\ref{prop:unidulac}}. By \cite[Section 24E]{ilya}, for small $\delta>0$, we get that there exists $C>0$ such that:
$$
\big|f(z)-z-z^{\alpha}P(-\log z)\big|\leq C|z|^{\alpha+\delta},\ z\in\mathcal R_C.
$$
Here, $P(-\log z)$ denotes the first polynomial block of $f$. 
Note furthermore that it holds that $z^\alpha\boldsymbol\ell^m=o(z^{\beta}\boldsymbol\ell^k)$, uniformly as $|z|\to 0$ on $\mathcal R_C$, if and only if $(\beta,k)\prec (\alpha,m)$, due to the fact that, on a standard quadratic domain, the argument is \emph{controlled} by the radius (for $z=re^{i\varphi}$ from a standard quadratic domain, it holds that $\varphi<\log^2 r$). We conclude that there exists a uniform constant $D>0$ such that:
\begin{align*}
|f(z)-z+az^{\alpha}\boldsymbol\ell^m|&\leq |f(z)-z-z^\alpha P(-\log z)|+|z|^\alpha \big|P(-\log z)+a\boldsymbol\ell^m\big|\\
&\leq D |z|^{\alpha}|\boldsymbol\ell|^{m+1},\ z\in\mathcal R_C.
\end{align*}\hfill $\Box$
\medskip

\noindent \emph{Proof of Proposition~\ref{prop:grupa}}. We prove $(1)$ and $(2)$ simultaneously, by considering inverses and compositions separately.

\emph{Step 1. The inverse.} Since $f(z)=z+o(z)$ is a tangent to the identity germ, with $o(z)$, $z\to 0$, uniform on $\mathcal R_C$, it is injective and, by the inverse function theorem, analytically invertible around $z=0$ on a standard quadratic domain $\mathcal R_C$. This can be checked in the logarithmic chart, similarly as in the proof of Proposition~\ref{prop:Fatouexp} $(2)$. Note that $f'(z)=1+o(1)$, $z\to 0$, where $o(1)$ is uniform on $\mathcal R_C$, by the Cauchy formula.  Due to bijectivity, $f^{-1}$ maps the positive real line again to the positive real line. Moreover, $f^{-1}(w)=w+o(w)$, $o(w)$ uniform on $\mathcal R_C$, as $w\to 0$, follows directly from the similar bound on $f$. Furthermore, an uniform bound for $f^{-1}(w)$:
$$
|f^{-1}(w)-w-aw^{\alpha}\boldsymbol\ell^m|\leq D|w^\alpha\boldsymbol\ell^{m+1}|, \ w\in\mathcal R_C,\ D>0,
$$
follows directly by putting $z=f^{-1}(w)$ in the uniform bound \eqref{eq:uniest}. By Proposition~\ref{prop:leau}, the attracting petals for $f$ thus become repelling for the inverse $f^{-1}$, and the other way round. 

We now prove that $f^{-1}(w)$ admits a generalized Dulac asymptotic expansion on every petal $V_j^{\pm}$. We use the fact that $f$ admits a generalized Dulac expansion on $V_j^\pm$. First, we prove by the operator formula (see e.g. \cite{mrrz2}) that the formal inverse $\widehat f^{-1}$ of its generalized Dulac expansion $\widehat f$ is again parabolic generalized Dulac series:
\begin{align}\label{eq:kl}
\widehat f^{-1}=&\mathrm{id}+\widehat g+(\widehat g\circ\widehat f-\widehat g)+\big((\widehat g\circ\widehat f-\widehat g)\circ\widehat f-(\widehat g\circ\widehat f-\widehat g)\big)+\ldots\nonumber\\
=&\mathrm{id}+\widehat g+(\widehat g'\cdot\widehat g+\frac{1}{2!}\widehat g''\cdot\widehat g^2+\ldots)+\nonumber\\
&\qquad+\Big((\widehat g'\cdot\widehat g+\frac{1}{2!}\widehat g''\cdot\widehat g+\ldots)'\cdot \widehat g+\frac{1}{2!}(\widehat g'\cdot\widehat g+\frac{1}{2!}\widehat g''\cdot\widehat g+\ldots)''\cdot \widehat g^2\Big)+\ldots
\end{align}
Here, $\widehat g=\mathrm{id}-\widehat f$. From this formula, we can conclude that the \emph{coefficient} of each block of $\widehat f^{-1}$ is a finite combination, with operations $+,\cdot,\frac{d}{d\boldsymbol\ell}$, of \emph{coefficients} $\widehat R_i(\boldsymbol\ell)$ of $\widehat f$. Since $f$ is a generalized Dulac germ, $\widehat R_i(\boldsymbol\ell)\in\widehat {LG}_m(\boldsymbol\ell(V_j^{\pm}))$, for some $m>\frac{\alpha-1}{2}$, $i\in\mathbb N$. By Propositions~\ref{prop:closum}-\ref{prop:clodif}, their finite algebraic combinations belong to $\widehat {LG}_m(\boldsymbol\ell(V_j^{\pm}))$, for some $r<m$. We take $r>\frac{\alpha-1}{2}$. Their sums respect the operations. Therefore, $\widehat f^{-1}$ is a parabolic generalized Dulac series. We now prove that it is generalized Dulac expansion of $f^{-1}$ on petals.
 
Now, deducing the expansion of $f^{-1}(w)$ block-by-block from the expansion of $f$ (in blocks in strictly increasing powers of $w$), it can be checked that the \emph{coefficients} in $\boldsymbol\ell$ of every block are $\log$-Gevrey sums of order $r$ of their formal counterparts deduced in the formal inverse \eqref{eq:kl}.
\smallskip

\emph{Step 2. The composition.} The proof for the composition is done similarly, using the fact that the germs and their expansions are parabolic. We first prove that the formal composition of two parabolic generalized Dulac series is again a parabolic generalized Dulac series. Then we prove that, for $f$ and $g$ parabolic generalized Dulac germs, $\widehat f\circ \widehat g$ is the generalized Dulac expansion of $f\circ g$ on appropriate petals.
\hfill $\Box$

\bigskip

\noindent \emph{Proof of Proposition~\ref{prop:auxil}}. Exactly as in \cite{loray2}, we compute that, for a sector $W_\theta$ at infinity of opening $2\pi-2\theta$
for any small $\theta>0$, there exists a sufficiently big radius $R_\theta$, such that, for $w\in W_\theta\cap \widetilde{\mathcal R}$, $|w|>R_\theta$, there exists a constant $C_\theta>0$ such that:
\begin{equation}\label{eq:too}
|F^{\circ n}(w)|\geq C_\theta(|w|+n),\ n\in\mathbb N,\ w\in W_{\theta},\,|w|>R_\theta.
\end{equation}
Note that $R_\theta$ does not depend on the level of $\widetilde{\mathcal R}$. Since $f^{\circ n}(z)=h^{-1}\circ F^{\circ n}(w)$, where $w=h(z)=-\frac{1}{a(\alpha-1)}z^{-\alpha+1}\boldsymbol\ell^{-m}$, there exists $C>0$ such that:
$$
|f^{\circ n}(z)|\leq C|F^{\circ n}(w)|^{-\frac{1}{\alpha-1}}\big|\boldsymbol\ell(F^{\circ n}(w))\big|^{-\frac{m}{\alpha-1}}.
$$
Thus, for every sub-sector $W$ of an attracting petal $V_+^j$, there exists a constant $C_W>0$ such that:
\begin{equation}\label{eq:i}
|f^{\circ n}(z)|\leq C_W \cdot (|w|+n)^{-\frac{1}{\alpha-1}}\big|\boldsymbol\ell(F^{\circ n}(w))\big|^{-\frac{m}{\alpha-1}},\ n\in\mathbb N,\ z\in W.
\end{equation}
Now, 
\begin{align*}
|\delta(f^{\circ n}(z))|\leq C|f^{\circ n}(z)|^\gamma|\boldsymbol\ell(f^{\circ n}(z))|^r&\leq C_W (|w|+n)^{-\frac{\gamma}{\alpha-1}}|\boldsymbol\ell(F^{\circ n}(w))|^{r-\frac{m\gamma}{\alpha-1}}\\
&\!\!\!\!\!\!\!=C_W (|z|^{-\alpha+1}|\boldsymbol\ell|^{-m}+n)^{-\frac{\gamma}{\alpha-1}}|\boldsymbol\ell(F^{\circ n}(w))|^{r-\frac{m\gamma}{\alpha-1}}.
\end{align*}
We now distinguish two cases:
\begin{enumerate}
\item[1.] Suppose $\gamma=\alpha-1$. Then $r\geq m+2$. Then, by \eqref{eq:too},
\begin{align*}
&|\delta(f^{\circ n}(z))|\leq C_W\big(|z|^{-\alpha+1}|\boldsymbol\ell|^{-m}+n\big)^{-1}\Big(\frac{1}{\log (|z|^{-\alpha+1}|\boldsymbol\ell|^{-m}+n)}\Big)^{r-m}\\
&\ \leq C_W |z|^{\alpha-1}|\boldsymbol\ell|^m  \boldsymbol\ell(|z|)^{r-m} \cdot\\
&\ \quad \cdot\frac{1}{1+n|z|^{\alpha-1}|\boldsymbol\ell|^m}\Big(\frac{1}{1-\frac{m}{\alpha-1}\boldsymbol\ell(|z|)\log|\boldsymbol\ell|+\frac{1}{\alpha-1}\boldsymbol\ell(|z|)\log(1+n|z|^{\alpha-1}|\boldsymbol\ell|^{m})}\Big)^{r-m}.
\end{align*}
Indeed, $\frac{1}{|\log F^{\circ n}(w)|}\leq \frac{1}{\log|F^{\circ n}(w)|}\leq \frac{1}{\log(|w|+n)}$, and $r-m>0$.
\bigskip

\item[2.] Suppose $\gamma>\alpha-1$. Then, for $\delta_1>0$ and $\varepsilon>0$, $\delta(z)=O(z^{\gamma-\delta_1})$. From \eqref{eq:i}, it follows:
$$
|f^{\circ n}(z)|\leq C_W \cdot (|w|+n)^{-\frac{1}{\alpha-1}+\varepsilon},\ n\in\mathbb N,\ z\in W.
$$
Now, by \eqref{eq:too},
\begin{align*}
|\delta(f^{\circ n}(z))|&\leq C_W\big(|z|^{-\alpha+1}|\boldsymbol\ell|^{-m}+n\big)^{-\frac{\gamma}{\alpha-1}+\delta_2}\\
&\leq C_W |z|^{\gamma-\delta_2(\alpha-1)}|\boldsymbol\ell|^{m(\frac{\gamma}{\alpha-1}-\delta_2)}(1+n|z|^{\alpha-1}|\boldsymbol\ell|^{m})^{-\frac{\gamma}{\alpha-1}+\delta_2}.
\end{align*}
Here, by choice of $\varepsilon>0$ and $\delta_1>0$, $\delta_2:=\frac{\delta_1}{\alpha-1}+\varepsilon(\gamma-\delta_1)$ can be made arbitrarily small, such that still $\frac{\gamma}{\alpha-1}-\delta_2>1$. Now $\delta:=\delta_2(\alpha-1)$ can be made arbitrarily small, such that $\gamma-(\alpha-1)-\delta>0$.
\end{enumerate}

\noindent Putting 1. or 2. in \eqref{eq:new}, we get uniform convergence of the series on each subsector $W\subset V_j^+$, so $h_j$ is analytic on $V_+^j$ by Weierstrass theorem. By integral approximation of the series as in \cite{MRRZ2Fatou}, we get \eqref{eq:old}. 
\hfill $\Box$

\bigskip

\textbf{Acknowledgement.} The authors would like to warmly thank Jean-Philippe Rolin for numerous discussions on the subject, and to Lo\" ic Teyssier and Daniel Panazzolo for ideas and comments that helped in the realization of this paper. We also thank the referee for his or her very detailed report and many suggestions that helped us to improve the paper.
\bigskip

\bigskip

\emph{Address:}$\quad$$^{1}$: Institut de Math\' ematiques de Bourgogne, UMR 5584, CNRS, Universit\' e Bourgogne Franche-Comt\' e, F-21000 Dijon, France 

$^{2}$ : University of Zagreb, Faculty of Science, Department of Mathematics, Bijeni\v cka 30, 10000 Zagreb, Croatia

\end{document}